\documentclass[aop,preprint]{imsart}

\usepackage{amsthm,amsmath,amsfonts,amssymb}
\usepackage[numbers]{natbib}

\startlocaldefs
\numberwithin{equation}{section}
\theoremstyle{plain}
\newtheorem{theorem}{Theorem}[section]
\newtheorem{proposition}[theorem]{Proposition}
\newtheorem{lemma}[theorem]{Lemma}
\newtheorem{corollary}[theorem]{Corollary}
\theoremstyle{definition}
\newtheorem{assumption}[theorem]{Assumption}
\newtheorem{definition}[theorem]{Definition}
\newtheorem{example}[theorem]{Example}
\newtheorem{remark}[theorem]{Remark}
\endlocaldefs

\begin{document}
\begin{frontmatter}
\title{Intrinsic Tangential Hadamard Differentiability of Rough-BSDE Solution Maps}
\runtitle{Intrinsic Differentiability of Rough-BSDE Solution Maps}

\begin{aug}
\author{\fnms{Yuhao}~\snm{Wang}}
\end{aug}

\begin{abstract}
We study first-order sensitivity of a scalar backward stochastic
differential equation with a deterministic rough driver.  The driver belongs
to the nonlinear space of step-two weakly geometric $p$-rough paths,
$2<p<3$, so an ordinary Banach-space difference quotient is not available.
At a fixed rough path $\mathbf x$, we use a weakly geometric,
finite-$(p,p/2)$-variation tensor realization of the Qian--Tudor tangent
structure,
represented intrinsically by a first-level direction $h$ and a compatible
second-level direction $\kappa$.  Admissible rough-path secants are required
to converge in a strong levelwise variation topology.  Under bounded smooth
rough vector fields, a bounded terminal condition, and a globally Lipschitz
generator with the stated smoothness assumptions, we construct a continuous
linear map
\[
 A_{\mathbf x}:\mathbb T_{\mathbf x}^p
 \longrightarrow
 \mathbb S^\infty\times\mathbb H^2_{\mathrm{BMO}}.
\]
The proof first establishes a uniform four-jet expansion for reset rough
flows along bounded realizations of full rough tangents.  A
Doss--Sussmann transformation transfers this expansion to a family of
quadratic generators.  Uniform BMO and reverse-H\"older estimates then yield
a local difference-quotient theorem, which is propagated over a fixed
deterministic partition and reconstructed in the original coordinates.  The
resulting derivative is independent of the joint lift, central
decomposition, and radial realization used in the proof.  Consequently,
solution
difference quotients converge to $A_{\mathbf x}(h,\kappa)$ for varying
directions and arbitrary admissible secants with strong levelwise variation
contact.  This is intrinsic tangential Hadamard differentiability on that
tensor-coordinate tangent class.  Locally bounded ray-homogeneous selections
give Fr\'echet-differentiable chart pullbacks at the parameter origin.  The
latter statement is chartwise; it is not Fr\'echet differentiability of the
solution map in the homogeneous rough-path metric.
\end{abstract}

\begin{keyword}[class=MSC]
\kwdgroup[type=primary]{\kwd{60H10}}
\kwdgroup[type=secondary]{\kwd{60L20}}
\end{keyword}

\begin{keyword}
\kwd{backward stochastic differential equation}
\kwd{rough path}
\kwd{Hadamard differentiability}
\kwd{BMO martingale}
\kwd{tangent fibre}
\end{keyword}
\end{frontmatter}

\begin{center}
\textit{Preprint. Version 2026.07.29 (29 July 2026).}
\end{center}

\section{Introduction}
\label{sec:introduction}

The robust formulation of a stochastic equation asks that its solution vary
continuously with the enhanced driving signal.  For backward stochastic
differential equations with rough drivers, this program was carried out by
Diehl and Friz \cite{DiehlFriz2012}: a Doss--Sussmann transformation converts
the rough equation into a family of classical BSDEs with quadratic growth,
and stability of the transformed equations yields continuity in rough-path
topology.  Continuity is the correct threshold for well-posedness and for
Wong--Zakai type approximation.  It does not, however, identify the
first-order effect of a perturbation of the rough signal.  The purpose of this
paper is to obtain such an effect without replacing the rough-path domain by
a linear surrogate.

The distinction is already visible at step two.  If
$\mathbf x=(1,x,\mathbb x)$ is a weakly geometric $p$-rough path with
$2<p<3$, then a first-order perturbation is not specified by a path $h$
alone.  It also contains a second-level component $\kappa$ satisfying the
linearized Chen and symmetry identities at $\mathbf x$.  A central
second-level perturbation may change an RDE through a Lie-bracket drift even
when its first level is zero; see, for example, the rough-path translation
mechanism in \cite{FrizOberhauser2009}.  Discarding $\kappa$ would therefore
erase legitimate first-order directions.  It would also make the proposed
derivative depend on how a joint lift of $(x,h)$ had been chosen.

The area response is not only a bookkeeping effect.  At the zero base
driver, take two scalar vector fields $H_1(y)=\cos y$ and
$H_2(y)=\sin y$, and perturb only the $(1,2)$ central area at constant
speed.  Their bracket is the constant field one.  For a zero generator and
a constant terminal value $c$, the resulting solution is
$Y_t^\varepsilon=c-\varepsilon(1-t/T)$ and
$Z^\varepsilon=0$.  The response is exactly linear although the first-level
perturbation vanishes.  If the relevant vector fields commute, the same
area derivative is zero.  These two cases separate a genuine second-level
mechanism from a generic nondegeneracy claim.

This observation determines the domain of the derivative.  We work with the
curve-equivalence differential structure introduced by Qian and Tudor
\cite{QianTudor2011}, restricted to weakly geometric variation curves and
completed in the levelwise variation coordinates used here.  The resulting
space $\mathbb T_{\mathbf x}^p$ consists of pairs $\tau=(h,\kappa)$, where
$h$ has finite $p$-variation and $\kappa$ has finite $p/2$-variation,
subject to the two linearized geometric identities.  It is a Banach space
in the levelwise variation norm.  Young first-level perturbations together
with independent central area perturbations form a continuous linear
subspace.  They provide a useful prototype because the two mechanisms can
be seen separately; the theorem itself is stated for every pair satisfying
the two linearized identities.

The nonlinear ambient space creates a second problem.  The expression
$\mathbf x+\varepsilon\tau$ is not generally a rough path, and different
joint lifts can realize the same tangent pair.  We therefore formulate the
limit through genuine weakly geometric rough-path secants.  Their normalized
first and second tensor levels converge separately, in $p$-variation and
$p/2$-variation, respectively.  This strong levelwise contact is more
demanding than uniform tensor contact.  Its role is analytic: it permits a
uniform first-order remainder when both the tangent and the realizing secant
vary.  The geometry is handled by a quotient version of the
Lyons--Victoir extension theorem \cite{LyonsVictoir2007}.  Tangent balls admit
representatives with controlled lift budgets, and every admissible secant can
be recoded exactly as a bounded radial realization of a nearby tangent.
Neither construction requires a canonical or continuous choice of lift.

This normalization should be distinguished from Carnot scaling.  In the
tangent calculation, the first and second tensor increments are divided by
the same parameter, so that $\kappa$ remains first-order data.  This is the
linearization of the two geometric identities at the fixed base path.
Homogeneous rough-path metrics instead assign degree two to area.  The two
conventions answer different questions: the former describes tensor-level
sensitivity, whereas the latter measures ambient rough-path distance.  We
use the levelwise convention because it retains every direction in the
tensor-coordinate tangent class and
supports a representation-independent linear response.  The sequential
Hadamard formulation then tests that response against changing directions
and changing genuine rough paths, rather than only along one selected
coordinate curve.

There is also a stochastic obstruction.  Under the assumptions of this
paper, the original generator is globally Lipschitz.  After the
Doss--Sussmann transformation, the second spatial derivative of the rough
flow produces a term proportional to $|z|^2$.  The differentiability problem
is therefore a parameter problem for quadratic BSDEs, not for uniformly
Lipschitz BSDEs.  Existence of bounded scalar solutions follows from the
quadratic theory of \cite{Kobylanski2000}.  Parameter differentiability for
quadratic BSDEs is available in finite-dimensional settings under suitable
hypotheses \cite{AnkirchnerImkellerDosReis2007}, but it cannot be inserted as
a black box here.  The coefficients arise from rough-flow jets, the parameter
directions live in a rough tangent fibre, and the relevant estimates must be
uniform over changing secants.  The $Z$-coefficient in the linearized
equation has only BMO control.  Reverse-H\"older estimates and a fixed local
partition are thus part of the proof, rather than ancillary integrability
facts.

We now state the conclusion informally.  Let $\mathcal S(\mathbf x)$ denote
the Doss--Sussmann solution pair in
$\mathbb S^\infty\times\mathbb H^2_{\mathrm{BMO}}$.  At every fixed
$\mathbf x$ there is a continuous linear operator
\[
 A_{\mathbf x}:\mathbb T_{\mathbf x}^p
 \longrightarrow
 \mathbb S^\infty\times\mathbb H^2_{\mathrm{BMO}}
\]
with the following property.  Suppose $\varepsilon_n\to0$, suppose
$\tau_n\to\tau$ in the tensor-coordinate tangent class, and let $\mathbf x_n$ be any
weakly geometric secants having strong levelwise variation contact with
$(\varepsilon_n,\tau_n)$ at $\mathbf x$.  Then
\[
 \frac{\mathcal S(\mathbf x_n)-\mathcal S(\mathbf x)}{\varepsilon_n}
 \longrightarrow A_{\mathbf x}\tau
 \quad\text{in }\mathbb S^\infty\times\mathbb H^2_{\mathrm{BMO}}.
\]
The limit depends only on $(h,\kappa)$.  In particular, it is independent of
the joint lift, the split between cross and central second-level terms, and
the self-area inserted to obtain a genuine radial rough path.  The quantifier
over changing directions and changing genuine secants is the reason for the
term \emph{intrinsic tangential Hadamard differentiability}.  A fixed-curve
directional derivative would not imply this statement.

The derivative is most transparent after local transformation.  Fix a
deterministic reset interval and let $\phi$ be the backward rough flow that
absorbs the rough driver.  The first variation of the flow is first
constructed from a representative of $\tau$.  Its local Davie expansion can
then be written only in terms of $(x,\mathbb x)$ and $(h,\kappa)$.  The
linearized Chen identity cancels the potentially non-sewable terms, leaving a
three-point defect of order $3/p>1$.  Sewing gives an intrinsic variation,
and uniqueness shows that all representatives yield the same object.  This
step is also where linearity on the tensor-coordinate tangent class is
proved; it is not
deduced from a family of unrelated one-parameter limits.

The flow variation alone is insufficient for the BSDE.  The transformed
generator contains the flow, its first two spatial derivatives, inverse
Jacobians, and compositions with $f$.  We therefore establish a uniform
four-jet expansion on every interval of one fixed reset partition.  The
estimate is uniform on tangent balls after choosing representatives with a
bounded realization budget.  It yields a quadratic-weighted expansion for
the generator itself and the natural continuity bounds for its $y$- and
$z$-derivatives.  We do not impose an artificial second-order Taylor estimate
on all three coefficients; such an estimate would introduce stochastic
weights that are neither needed nor supplied by the BMO argument.

For the transformed BSDE, difference quotients satisfy linear equations with
a stochastic $Z$-coefficient.  Uniform BMO bounds give a common change of
measure and common reverse-H\"older exponents.  On a sufficiently short
interval, the remaining critical terms are absorbed in a conditional source
norm.  This proves convergence to the linearized transformed BSDE.  The
partition is selected from the base rough control and the uniform BMO budget
before the perturbation is chosen.  Fixed endpoints make the terminal
difference quotient on one interval equal to the value handed back from the
next interval.  Finite backward induction then yields a global derivative,
and differentiation of the reconstruction formulas returns to the original
coordinates.

Three points in this argument deserve emphasis.  First, representation
independence is an analytic conclusion.  Equality of Qian--Tudor tangent
classes identifies the desired first-order data, but it does not by itself
control solution difference quotients.  The intrinsic sewn flow variation,
the weighted generator expansion, and the local BSDE estimate supply that
control.  Second, uniformity is asserted on bounded selections of
representatives.  The derivative associated with any two fixed finite-budget
representatives is the same, but no estimate is claimed uniformly over
representatives whose self-area becomes unbounded.  Third, the second tensor
level is measured linearly in the tangent norm.  A pure central displacement
of size $\varepsilon$ has homogeneous rough-path distance of order
$|\varepsilon|^{1/2}$.  A nonzero linear response in the area chart is thus
not a Fr\'echet derivative in the homogeneous rough-path metric.

The RDE literature contains several distinct notions of sensitivity.  Friz
and Victoir \cite{FrizVictoir2010} study
differentiability along Young translations of a fixed rough path.  Coutin
and Lejay \cite{CoutinLejay2018} obtain H\"older and Lipschitz dependence of
the It\^o map on its parameters through an Omega-lemma argument, while
Bailleul \cite{Bailleul2015} proves Fr\'echet regularity in controlled-path
Banach spaces.  A Friz--Victoir Young translation corresponds here to the
$a=0$ part of the selected Young--central slice in
Corollary~\ref{cor:fixed-slices}.  The independent central coordinate in
that slice, and the general second-level coordinate $\kappa$ used in the
main theorem, are separate data.  These results are relevant to the
fixed-representation flow calculations used below.  The changing-secant
limit requires the additional uniform estimates developed here.

Qian and Tudor \cite{QianTudor2011} introduce the curve-equivalence
differential structure used as the starting point for
Section~\ref{sec:tangent-geometry}.  In this paper, that structure is
realized by weakly geometric, finite-$(p,p/2)$-variation tensors and
completed in the associated norm.  Geller and Lyons
\cite{GellerLyons2026} instead
construct a vector space $H^p(V)$ and an action
$\Omega_p(V)\times H^p(V)\to\Omega_p(V)$, written
$(\mathbf X,H)\mapsto\mathbf X\boxplus H$, for finite displacements of a
rough path.  No identification or inclusion between their displacement
space and the base-dependent tangent fibre is used here.  The Young--central
curves in
Corollary~\ref{cor:fixed-slices} are selected explicit slices; the main
theorem treats all tensor directions in
Definition~\ref{def:tangent-fibre} by joint lifts and strong levelwise
secants, without invoking the displacement action.

On the BSDE side, Diehl and Friz \cite{DiehlFriz2012} establish
well-posedness and rough-driver continuity in the scalar setting.  Eddahbi
and S\`ene \cite{EddahbiSene2014} consider quadratic rough-driver BSDEs with
an $L^2$ terminal condition, while Diehl and Zhang
\cite{DiehlZhang2017} treat continuous Young drifts by a direct fixed-point
argument.  Liang and Tang \cite{LiangTang2025} obtain multidimensional
results under smallness or componentwise structure and also treat linear,
random, time-varying rough drifts with square-integrable terminal data.  Li,
Zhang, and Zhang \cite{LiZhangZhang2026} develop reflected rough BSDEs and
a penalization approximation.  Recent preprints address nonlinear
space--time Young drivers \cite{SongZhangZhang2025I,SongZhangZhang2025II}
and discontinuous Young drivers with forward or Marcus jump conventions
\cite{BechererSun2025}.  The present paper instead studies the
representative-independent first-order response under changing genuine
secants, within the scalar, bounded-terminal, and smooth-coefficient setting
stated above.

The argument has four main components.  A concrete Banach model of the
selected tangent class provides quantitative bounded realizations and an
exact recoding of admissible secants.  Tangent sewing then gives the
intrinsic flow variation and a uniform four-jet remainder along full rough
tangents.  The resulting expansion passes to the Doss--Sussmann generator
with the weights required by quadratic BSDE analysis.  Common BMO and
reverse-H\"older bounds yield a local difference-quotient theorem, which is
propagated over fixed reset intervals to obtain the changing-secant Hadamard
limit.  For a locally bounded ray-homogeneous selection of realizations, the
associated chart pullback is also Fr\'echet differentiable at the parameter
origin.  This is a coordinate corollary, not a replacement for the intrinsic
theorem.

The chart statement has a practical but limited role.  It permits ordinary
linear perturbation calculus after a Banach parameter has been mapped into
the tangent fibre and a ray-homogeneous realization has been selected.  The
selection need not be canonical, and two selections can generate different
nonlinear charts.  Their first derivatives agree after identification with
$A_{\mathbf x}$ because the intrinsic derivative is representation
independent.  This is precisely the information needed for finite-dimensional
coordinate sensitivity or a subsequent delta method, but those applications
require their own state and estimation assumptions and are not asserted
here.

The scope is deliberate.  The solution $Y$ is scalar; the rough driver and
tangent directions are deterministic; the terminal condition is bounded and
does not vary with the scenario; $2<p<3$; and the rough vector fields satisfy
$C_b^9$ regularity.  We do not address a random rough driver, vector-valued
quadratic BSDEs, higher roughness levels, or differentiability in the
homogeneous rough metric.  These restrictions isolate the interaction that
the theorem is designed to resolve: full rough tangent geometry, uniform
flow jets, and quadratic-BSDE linearization.

The paper is organized as follows.  Section~\ref{sec:setting} fixes the
spaces, solution convention, and main statements.  Section~\ref{sec:strategy}
records the proof dependencies.  Section~\ref{sec:tangent-geometry} develops
the tangent and secant geometry.  Sections~\ref{sec:flow-jets} and
\ref{sec:generator} establish the flow-jet and transformed-generator
expansions.  Section~\ref{sec:local-bsde} proves local differentiability of
the transformed quadratic BSDE.  Section~\ref{sec:patching} performs the
fixed-endpoint propagation and reconstruction.  The intrinsic theorem is
proved in Section~\ref{sec:main-proof}, followed by the chart corollary and
structural examples.  Technical probability estimates are collected in the
appendix.

\section{Setting and main results}
\label{sec:setting}

\subsection{Probability space and solution norms}

Let $(\Omega,\mathcal F,\mathbb F,\mathbb P)$ satisfy the usual conditions,
where $\mathbb F=(\mathcal F_t)_{0\leq t\leq T}$ is the usual augmentation of
the natural filtration of an $\mathbb R^m$-valued Brownian motion $W$:
\begin{equation}\label{eq:brownian-filtration}
  \mathcal F_t
  =\sigma(W_s:0\leq s\leq t)\vee\mathcal N_{\mathbb P},
  \qquad 0\leq t\leq T.
\end{equation}
Thus $\mathbb F$ has the martingale representation property with respect to
$W$. In particular, no martingale orthogonal to $W$ is needed in the BSDEs
below.

We use the spaces
\begin{align}
 \mathbb S^\infty
 &:={\left\{Y:Y\text{ is continuous and adapted},\quad
       \left\|\sup_{0\leq t\leq T}|Y_t|\right\|_{L^\infty}<\infty\right\}},
       \label{eq:S-infinity}\\
 \mathbb H^2_{\mathrm{BMO}}
 &:={\left\{Z:Z\text{ is predictable and }Z\mathbin{\cdot}W
       \in\mathrm{BMO}_2\right\}}.\label{eq:H-BMO}
\end{align}
Their norms are
\begin{align}
 \|Y\|_{\mathbb S^\infty}
 &:=\left\|\sup_{0\leq t\leq T}|Y_t|\right\|_{L^\infty},\label{eq:S-norm}\\
 \|Z\|_{\mathbb H^2_{\mathrm{BMO}}}^2
 &:=\sup_{\tau\leq T}
   \left\|
     \mathbb E\left[\left.\int_\tau^T|Z_s|^2\,ds\right|\mathcal F_\tau\right]
   \right\|_{L^\infty},\label{eq:BMO-norm}
\end{align}
where the supremum is over all $[0,T]$-valued stopping times. We write
\begin{equation}\label{eq:solution-target}
 \mathcal X:=\mathbb S^\infty\times\mathbb H^2_{\mathrm{BMO}},
 \qquad
 \|(Y,Z)\|_{\mathcal X}
 :=\|Y\|_{\mathbb S^\infty}+\|Z\|_{\mathbb H^2_{\mathrm{BMO}}}.
\end{equation}

\subsection{Weakly geometric rough paths and tangent coordinates}

Set $E=\mathbb R^d$, fix $2<p<3$, and put
\begin{equation}\label{eq:q-definition}
 q:=\frac p2\in(1,3/2).
\end{equation}
All tensor spaces carry fixed finite-dimensional admissible norms. For a
two-index map $K$ on
$\Delta_T:=\{(s,t):0\leq s\leq t\leq T\}$, let
\begin{equation}\label{eq:delta-convention}
 (\delta K)_{s,u,t}:=K_{s,t}-K_{s,u}-K_{u,t}.
\end{equation}
For a Banach space $F$ and $r\geq1$, define
\begin{align}
 \mathcal V_0^r(F)
 &:={\left\{h\in C([0,T];F):h_0=0,
       \ \|h\|_{r\text{-var}}<\infty\right\}},\label{eq:based-var-space}\\
 \mathcal V_2^r(F)
 &:={\left\{K\in C(\Delta_T;F):K_{t,t}=0,
       \ \|K\|_{r\text{-var};2}<\infty\right\}},\label{eq:two-index-var-space}
\end{align}
where
\begin{equation}\label{eq:two-index-var-norm}
 \|K\|_{r\text{-var};2}
 :=\left(\sup_{D=(t_i)}\sum_i|K_{t_i,t_{i+1}}|^r\right)^{1/r}.
\end{equation}
The subscript $0$ in $\mathcal V_0^r$ means only that paths are based at
zero. It does not impose approximation by smooth paths.

A step-two weakly geometric $p$-rough path is written
\begin{equation}\label{eq:base-rough-path}
 \mathbf x=(1,x,\mathbb x)\in WG\Omega_p(E).
\end{equation}
Thus $x\in\mathcal V_0^p(E)$ and
$\mathbb x\in\mathcal V_2^q(E\otimes E)$ satisfy
\begin{align}
 (\delta\mathbb x)_{s,u,t}
 &=x_{s,u}\otimes x_{u,t},\label{eq:chen-base}\\
 \operatorname{Sym}\mathbb x_{s,t}
 &=\frac12x_{s,t}^{\otimes2},\label{eq:symmetry-base}
\end{align}
where $\operatorname{Sym}A=(A+A^\top)/2$ and $A^\top$ denotes the tensor
flip.

\begin{definition}[Tensor model of the tangent fibre]\label{def:tangent-fibre}
For $\mathbf x\in WG\Omega_p(E)$, let $\mathbb T_{\mathbf x}^p$ be the
space of pairs $\tau=(h,\kappa)$ such that
\begin{equation}\label{eq:tangent-components}
 h\in\mathcal V_0^p(E),
 \qquad
 \kappa\in\mathcal V_2^q(E\otimes E),
\end{equation}
and, for $0\leq s\leq u\leq t\leq T$,
\begin{align}
 (\delta\kappa)_{s,u,t}
 &=x_{s,u}\otimes h_{u,t}+h_{s,u}\otimes x_{u,t},
 \label{eq:linearized-chen}\\
 \operatorname{Sym}\kappa_{s,t}
 &=\frac12\bigl(x_{s,t}\otimes h_{s,t}
                   +h_{s,t}\otimes x_{s,t}\bigr).
 \label{eq:linearized-symmetry}
\end{align}
We equip this space with
\begin{equation}\label{eq:tangent-norm}
 \|\tau\|_{\mathbb T_{\mathbf x}^p}
 :=\|h\|_{p\text{-var}}+\|\kappa\|_{q\text{-var};2}.
\end{equation}
\end{definition}

The second component $\kappa$ is first-order tangent data. It is not
additive in general; its Chen defect is prescribed by
\eqref{eq:linearized-chen}. Section~\ref{sec:tangent-geometry} identifies
Definition~\ref{def:tangent-fibre} with the weakly geometric,
finite-$(p,q)$-variation Qian--Tudor class selected here and proves that it
is a Banach space.

\begin{definition}[Strong levelwise variation contact]
\label{def:strong-contact}
Fix $\mathbf x\in WG\Omega_p(E)$. Let $\varepsilon_n\to0$ with
$\varepsilon_n\neq0$, let
$\tau_n=(h_n,\kappa_n)\to\tau=(h,\kappa)$ in
$\mathbb T_{\mathbf x}^p$, and let
$\mathbf x_n=(1,x_n,\mathbb x_n)\in WG\Omega_p(E)$. We say that
$\mathbf x_n$ has strong levelwise variation contact with
$(\varepsilon_n,\tau_n)$ at $\mathbf x$ if
\begin{equation}\label{eq:strong-contact}
 \|x_n-x-\varepsilon_n h_n\|_{p\text{-var}}
 +\|\mathbb x_n-\mathbb x-\varepsilon_n\kappa_n\|_{q\text{-var};2}
 =o(|\varepsilon_n|).
\end{equation}
Such a sequence is called an admissible secant at $\mathbf x$.
\end{definition}

Condition~\eqref{eq:strong-contact} is imposed separately at the two tensor
levels. It is stronger than uniform tensor contact. This strength is used to
control the first-order remainder when both the tangent and the realizing
rough path vary with $n$.

\subsection{The rough BSDE and its solution convention}

The equation of interest is
\begin{equation}\label{eq:rough-bsde-formal}
 Y_t^{\mathbf x}
 =\xi+\int_t^T f(r,Y_r^{\mathbf x},Z_r^{\mathbf x})\,dr
   +\int_t^T H_i(Y_r^{\mathbf x})\,d\mathbf x_r^i
   -\int_t^T Z_r^{\mathbf x}\,dW_r.
\end{equation}
The rough term in \eqref{eq:rough-bsde-formal} is interpreted through the
following Doss--Sussmann construction. This convention does not require a
joint lift of $W$ and $\mathbf x$.

Let $I=[u,v]\subset[0,T]$ be a deterministic reset interval. Define the
backward rough flow
\begin{equation}\label{eq:backward-flow}
 \phi_t^{\mathbf x,I}(y)
 =y+\int_t^v H_i\bigl(\phi_r^{\mathbf x,I}(y)\bigr)\,d\mathbf x_r^i,
 \qquad \phi_v^{\mathbf x,I}(y)=y.
\end{equation}
Write $J=\partial_y\phi$ and $K=\partial_{yy}\phi$. On an interval where
$J$ is bounded away from zero, set
\begin{equation}\label{eq:transformed-generator}
 F^{\mathbf x,I}(t,y,z)
 :=\frac{1}{J_t(y)}
     f\bigl(t,\phi_t(y),J_t(y)z\bigr)
   +\frac12\frac{K_t(y)}{J_t(y)}|z|^2.
\end{equation}
For a bounded terminal value $\eta\in L^\infty(\mathcal F_v)$, solve
\begin{equation}\label{eq:transformed-bsde}
 \widetilde Y_t
 =\eta+\int_t^vF^{\mathbf x,I}
       (r,\widetilde Y_r,\widetilde Z_r)\,dr
       -\int_t^v\widetilde Z_r\,dW_r,
 \qquad t\in I,
\end{equation}
and reconstruct
\begin{equation}\label{eq:reconstruction}
 Y_t=\phi_t(\widetilde Y_t),
 \qquad
 Z_t=J_t(\widetilde Y_t)\widetilde Z_t.
\end{equation}
A preliminary finite deterministic partition with sufficiently small rough
control on each interval is used to obtain bounded local solutions and an
initial global bound.  At that stage an arbitrary bounded solution supplied
by the scalar existence theorem may be selected on each interval; the
preliminary construction is only an existence-and-bound witness.  In
Section~\ref{sec:patching} a further deterministic partition is chosen on
which the critical local uniqueness estimate holds.  Backward uniqueness
there removes both the preliminary solution selection and the reset
partition.  For $\mathbf x\in WG\Omega_p(E)$, the resulting well-defined
Doss--Sussmann solution is denoted by
\begin{equation}\label{eq:solution-map}
 \mathcal S(\mathbf x)
 :=(Y^{\mathbf x},Z^{\mathbf x})\in\mathcal X.
\end{equation}
The flow in \eqref{eq:backward-flow} is defined directly for weakly
geometric drivers. The definition does not use approximation in the same
$p$-variation topology by smooth lifts. On geometric drivers, this is the
Doss--Sussmann convention of \cite{DiehlFriz2012}; the direct
weakly geometric extension is the convention adopted here.

\begin{assumption}\label{ass:coefficients}
The following conditions hold.
\begin{enumerate}
 \item The terminal condition $\xi$ belongs to $L^\infty(\mathcal F_T)$
 and does not depend on the rough scenario.
 \item The solution $Y$ is scalar, and
 $H=(H_1,\ldots,H_d)$ satisfies $H_i\in C_b^9(\mathbb R)$.
 \item The generator
 \[
 f:\Omega\times[0,T]\times\mathbb R\times\mathbb R^m\longrightarrow\mathbb R
 \]
 is progressively measurable in $(\omega,t)$ and is $C^3$ in $(y,z)$.
 There are deterministic constants $L_0,L_1,L_2,L_3<\infty$ such that,
 almost surely,
 \begin{align}
  |f(t,0,0)|&\leq L_0,\label{eq:f-origin-bound}\\
  |f(t,y,z)-f(t,y',z')|
  &\leq L_1\bigl(|y-y'|+|z-z'|\bigr),\label{eq:f-lipschitz}\\
  \sup_{y,z}|D_{(y,z)}^k f(t,y,z)|&\leq L_k,
  \qquad k=2,3.\label{eq:f-higher-derivatives}
 \end{align}
 The second and third derivatives are uniformly continuous in $(y,z)$,
 uniformly in $(\omega,t)$ outside one null set.
 \item The base rough path, tangent directions, joint lifts, central
 residuals, and admissible secants are deterministic.
\end{enumerate}
Uniform estimates are stated on fixed bounded rough and tangent sets and
under a fixed bound on the realization budget introduced below.
\end{assumption}

The original generator $f$ is globally Lipschitz. The quadratic term in
\eqref{eq:transformed-generator} is produced by the Doss--Sussmann
transformation.

\subsection{Radial realizations and the main theorem}

Let $\tau=(h,\kappa)\in\mathbb T_{\mathbf x}^p$. A representation of
$\tau$ is a pair $(\mathbf Z,a)$ such that
\begin{equation}\label{eq:representative-data}
 \mathbf Z\in WG\Omega_p(E\oplus E),
 \qquad
 \pi_1(\mathbf Z)=\mathbf x,
 \qquad
 \pi_2(\mathbf Z)^1=h,
\end{equation}
and
\begin{equation}\label{eq:representative-split}
 \kappa=C_{\mathbf Z}+a,
 \qquad
 C_{\mathbf Z}:=\pi_{1,2}(\mathbf Z)+\pi_{2,1}(\mathbf Z),
\end{equation}
where $a$ is the increment map of a path in
$\mathcal V_0^q(\mathfrak{so}(E))$. We write
$\operatorname{Rep}_{\mathbf x}(\tau)$ for the collection of such pairs.
Let
\begin{equation}\label{eq:representative-budget}
 \mathfrak b(\mathbf Z,a)
 :=\rho_p(\mathbf Z)+\|a\|_{q\text{-var}},
\end{equation}
where $\rho_p$ is the homogeneous group-metric $p$-variation size. If
$\mathbb h_{\mathbf Z}:=\pi_2(\mathbf Z)^2$, define the radial realization
\begin{equation}\label{eq:radial-realization}
 \mathbf x^{\varepsilon;\mathbf Z,a}
 :=\left(
  1,
  x+\varepsilon h,
  \mathbb x+\varepsilon\kappa
       +\varepsilon^2\mathbb h_{\mathbf Z}
 \right).
\end{equation}
Section~\ref{sec:tangent-geometry} proves that
\eqref{eq:radial-realization} belongs to $WG\Omega_p(E)$ and that tangent
balls admit representations with uniformly bounded budgets.

The next theorem is the main statement of the paper. Its proof is deferred
to Sections 5--9, where the deterministic flow expansion, the transformed
quadratic-BSDE estimates, fixed-endpoint patching, and the Hadamard closure
are developed.

\begin{theorem}[Intrinsic tangential Hadamard differentiability]
\label{thm:intrinsic-hadamard}
Suppose Assumption~\ref{ass:coefficients} holds, and fix
$\mathbf x\in WG\Omega_p(E)$. There is a unique continuous linear map
\begin{equation}\label{eq:intrinsic-derivative}
 A_{\mathbf x}:\mathbb T_{\mathbf x}^p
 \longrightarrow\mathcal X
\end{equation}
with the following properties.

\begin{enumerate}
 \item Let
 \[
  \mathfrak r:\{\tau\in\mathbb T_{\mathbf x}^p:
                    \|\tau\|_{\mathbb T_{\mathbf x}^p}\leq1\}
  \longrightarrow\bigcup_\tau\operatorname{Rep}_{\mathbf x}(\tau)
 \]
 be any set-theoretic assignment such that
 $\mathfrak r(\tau)\in\operatorname{Rep}_{\mathbf x}(\tau)$ and
 \begin{equation}\label{eq:bounded-assignment}
  \sup_{\|\tau\|_{\mathbb T_{\mathbf x}^p}\leq1}
  \mathfrak b\bigl(\mathfrak r(\tau)\bigr)<\infty.
 \end{equation}
 Writing $\mathfrak r(\tau)=(\mathbf Z_\tau,a_\tau)$, one has
 \begin{equation}\label{eq:uniform-radial-expansion}
  \lim_{\varepsilon\to0}
  \sup_{\|\tau\|_{\mathbb T_{\mathbf x}^p}\leq1}
  \left\|
   \frac{\mathcal S(\mathbf x^{\varepsilon;\mathbf Z_\tau,a_\tau})
         -\mathcal S(\mathbf x)}{\varepsilon}
   -A_{\mathbf x}\tau
  \right\|_{\mathcal X}=0.
 \end{equation}

 \item Let $\varepsilon_n\to0$, $\varepsilon_n\neq0$, let
 $\tau_n\to\tau$ in $\mathbb T_{\mathbf x}^p$, and let
 $\mathbf x_n\in WG\Omega_p(E)$ have strong levelwise variation contact
 with $(\varepsilon_n,\tau_n)$ in the sense of
 Definition~\ref{def:strong-contact}. Then
 \begin{equation}\label{eq:hadamard-secant-limit}
  \frac{\mathcal S(\mathbf x_n)-\mathcal S(\mathbf x)}{\varepsilon_n}
  \longrightarrow A_{\mathbf x}\tau
  \qquad\text{in }\mathcal X.
 \end{equation}

 \item The value $A_{\mathbf x}\tau$ depends only on the tensor tangent
 $(h,\kappa)$. It is independent of the joint lift, the decomposition of
 $\kappa$ into cross and central parts, and the self-area used in a radial
 realization.
\end{enumerate}
\end{theorem}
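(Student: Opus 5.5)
The plan follows the architecture announced in the introduction, and the first step is to define $A_{\mathbf x}$ intrinsically.

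\textbf{Step 1: the flow variation.} On each interval $I$ of a fixed deterministic reset partition, take a representation $(\mathbf Z,a)\in\operatorname{Rep}_{\mathbf x}(\tau)$. Differentiate the joint flow driven by $\mathbf Z$ to obtain a candidate variation $\partial\phi^{I}[\tau]$, together with its first two spatial derivatives. Then write the local Davie expansion of this variation using only $(x,\mathbb x,h,\kappa)$, via $\kappa=C_{\mathbf Z}+a$. The linearized Chen identity \eqref{eq:linearized-chen} cancels the non-sewable terms, leaving a three-point defect of order $3/p>1$. The sewing lemma then produces a germ-determined object, and uniqueness in sewing shows that it does not depend on $(\mathbf Z,a)$. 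It is linear in $\tau$ because the germ is linear in $(h,\kappa)$.

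\textbf{Step 2: the transformed BSDE and $A_{\mathbf x}$.} Differentiate $F^{\mathbf x,I}$ in \eqref{eq:transformed-generator} along these jets to get a linear BSDE. Its $z$-coefficient $\partial_zF$ is bounded by $C(1+|\widetilde Z|)$, so it is only BMO. Uniform BMO bounds for $\widetilde Z$ on bounded rough sets give a Girsanov change of measure and reverse-H\"older exponents that are common to all perturbations. These yield existence, uniqueness and $\mathcal X$-bounds for the linearized solution. Solving backwards over the partition, with the terminal datum on $I_k$ equal to the value handed back from $I_{k+1}$, and differentiating the reconstruction \eqref{eq:reconstruction}, defines $A_{\mathbf x}\tau$. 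Linearity and continuity follow from the linear bounds in the tangent norm. This also proves item (3), since every ingredient is intrinsic.

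\textbf{Step 3: item (1), the radial expansion.} Fix an assignment $\mathfrak r$ with bounded budget. Establish a uniform four-jet expansion of the reset flows $\phi^{\mathbf x^{\varepsilon;\mathbf Z_\tau,a_\tau},I}$ around $\phi^{\mathbf x,I}$. The remainder should be $o(\varepsilon)$ uniformly over $\|\tau\|\le1$; the uniformity comes from the budget bound and continuous dependence of RDE jets in the joint $p$-variation metric, with $H\in C_b^9$ supplying enough derivatives. Passing this to $F$ gives a weighted expansion
\[
|F^\varepsilon-F-\varepsilon\,\partial F[\tau]|\le o(\varepsilon)(1+|z|^2),
\]
together with continuity of $\partial_yF$ and $\partial_zF$.

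The difference quotient $\Delta^\varepsilon=(\widetilde Y^\varepsilon-\widetilde Y)/\varepsilon$ solves a linear BSDE whose coefficients are averaged derivatives. Subtract the linearized equation and estimate the difference under the common measure. The source term has size $o(1)\cdot(1+|\widetilde Z|^2)$, which is controlled by the BMO energy inequalities. The coefficient-difference terms are handled by H\"older's inequality with the reverse-H\"older exponent and by the $L^2$ convergence $\widetilde Z^\varepsilon\to\widetilde Z$ from quadratic BSDE stability.

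\emph{This local step is the main obstacle.} The critical cross term, $(\partial_zF^\varepsilon-\partial_zF)\cdot\widetilde Z^{\text{lin}}$, is quadratic-type. I would absorb it on short intervals in a conditional source norm $\sup_\sigma\|\mathbb E[\int_\sigma^v|\cdot|\,dr\mid\mathcal F_\sigma]\|_\infty$. The partition must be chosen from the base control and the uniform BMO budget \emph{before} the perturbation, so that the absorption constant is below $1$ for all small $\varepsilon$. Finite backward induction over the intervals, followed by differentiation of \eqref{eq:reconstruction} (with $\phi,J$ expansions and $\widetilde Y$ converging in $\mathbb S^\infty$), gives \eqref{eq:uniform-radial-expansion}.

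\textbf{Step 4: item (2), the Hadamard limit.} Use the quotient Lyons--Victoir recoding from Section~\ref{sec:tangent-geometry}. An admissible secant $\mathbf x_n$ can be written exactly as $\mathbf x^{\varepsilon_n;\mathbf Z_n,a_n}$ for a nearby tangent $\tau_n'$ with bounded budget and $\|\tau_n'-\tau_n\|\to0$. To build it, set $h_n'=(x_n-x)/\varepsilon_n$ and put the residual into $\kappa_n'$ and the self-area. Choose an assignment $\mathfrak r$ on the ball of radius $2\sup_n\|\tau_n'\|$ that takes these values at the points $\tau_n'$ and has bounded budget elsewhere. Item (1), rescaled, then gives
\[
\bigl\|(\mathcal S(\mathbf x_n)-\mathcal S(\mathbf x))/\varepsilon_n-A_{\mathbf x}\tau_n'\bigr\|_{\mathcal X}\to0.
\]
Continuity of $A_{\mathbf x}$ gives $A_{\mathbf x}\tau_n'\to A_{\mathbf x}\tau$, which proves item (2).

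\textbf{Uniqueness of $A_{\mathbf x}$.} Item (1) applied to any fixed bounded assignment determines $A_{\mathbf x}\tau$ as the limit of difference quotients along one radial realization, so there is at most one such map.
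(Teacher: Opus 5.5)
Your architecture matches the paper's: intrinsic sewing for the reset-flow variation, a weighted expansion of the transformed generator, a deterministic partition fixed before the perturbation, fixed-endpoint handoff and reconstruction, and exact Lyons--Victoir recoding of secants for item (2). The local BSDE step, however, has a genuine gap. The critical term sits in the $y$-coefficient, which you never treat.

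By \eqref{eq:exact-Fy-formula}, $F_y$ contains $\frac12\partial_y(K/J)\,|z|^2$. So the linearized coefficient $\alpha_s=F_y(s,\widetilde Y_s,\widetilde Z_s)$ is of order $|\widetilde Z_s|^2$, and the averaged coefficient in the exact quotient equation is of order $|\widetilde Z_s|^2+|\widetilde Z^\varepsilon_s|^2$. For \eqref{eq:critical-linear-bsde}, the representation $R_t=\mathbb E^{\mathbb Q}_t[\Lambda_{t,v}\eta+\int_t^v\Lambda_{t,s}r_s\,ds]$, with $\Lambda_{t,s}=\exp(\int_t^s\alpha_\ell\,d\ell)$, needs conditional moments of order $r_A>1$ of $\exp(\int_t^v\alpha^+_s\,ds)$. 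A $\mathbb Q$-BMO bound $b_{\mathbb Q}$ on $\widetilde Z$ gives $\mathbb E^{\mathbb Q}_\tau\exp(c\int_\tau^v|\widetilde Z_s|^2\,ds)\le(1-cb_{\mathbb Q}^2)^{-1}$ only when $cb_{\mathbb Q}^2<1$, and in general nothing for larger $c$.

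Hence your Step~2 claim that BMO, Girsanov and reverse H\"older ``yield existence, uniqueness and $\mathcal X$-bounds'' for the linearized BSDE is unsupported. The same defect undermines the uniform bounds on the quotients and uniqueness of the transformed solution; Kobylanski's uniqueness condition is not available here.

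The missing idea is the critical linear estimate, Proposition~\ref{prop:critical-linear-bsde}. It is fed by the structure bound $\alpha^+\le a+\delta(|Z^0|^2+|Z^1|^2)$ and the smallness condition \eqref{eq:critical-reset-smallness}. That smallness is reachable because $K/J$ and $\partial_y(K/J)$ vanish at the right reset endpoint and are $O(\omega_I^{1/p})$ on the strip. Meanwhile the state and BMO budgets are fixed before the partition is chosen. The order of choices in Lemma~\ref{lem:common-reset-partition} is $b_{\mathbb Q}$, then $r_A$, then $\delta_*$, then the partition, and finally $\varepsilon_0$.

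With that budget in place, the term you single out as the main obstacle needs no absorption. The critical estimate applied to the exact quotient equation gives the uniform bound \eqref{eq:uniform-quotient-bound} on $(P^\varepsilon,Q^\varepsilon)$. Write the error equation with the perturbed mean coefficients. Its source is $(\alpha^\varepsilon-\alpha)U+(\beta^\varepsilon-\beta)\cdot V+(\chi^\varepsilon-\dot F_I[\tau])$. By the parameter and state-continuity bounds, this is $|\varepsilon|$ times products of $\widetilde Z$, $Q^\varepsilon$ and $V$. It is therefore $O(|\varepsilon|)$ in the conditional source norm by Lemma~\ref{lem:conditional-bmo-products}.

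Your substitute, unconditional $L^2$ convergence $\widetilde Z^\varepsilon\to\widetilde Z$, cannot deliver convergence in $\mathbb S^\infty\times\mathbb H^2_{\mathrm{BMO}}$. Both norms require $L^\infty$ control of conditional expectations. Moreover, no off-the-shelf quadratic stability theorem covers this generator.

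A smaller point concerns Step~3. Continuity of the It\^o--Lyons map does not give an $o(\varepsilon)$ jet remainder uniformly over an infinite-dimensional tangent ball. The paper instead fixes the joined driver $\mathbf W^{\mathbf Z,a}$, makes the vector fields affine in $\varepsilon$, and uses second-order controlled estimates whose constants depend only on the budget.
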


If $A_{\mathbf x}\tau=(U^\tau,V^\tau)$, the pair is constructed by the
intrinsic reset-flow variation, a local transformed linear BSDE, backward
handoff at the fixed reset endpoints, and reconstruction through the inverse
Doss--Sussmann coordinates. This construction does not require a global
original-coordinate linear rough BSDE driven by an unspecified lift of
$(\mathbf x,h)$.

\begin{corollary}[Selected chart pullbacks]\label{cor:chart-pullback}
Let $B$ be a Banach space and let
$J:B\to\mathbb T_{\mathbf x}^p$ be continuous and linear. Suppose a
set-theoretic representation $\mathfrak r(Ju)=(\mathbf Z_{Ju},a_{Ju})$ is
defined for $u\in B$, is bounded in the budget
\eqref{eq:representative-budget} on bounded subsets of $B$, and is
ray-homogeneous:
\begin{equation}\label{eq:ray-homogeneous-selection}
 \mathbf Z_{\lambda\tau}=D_{1,\lambda*}\mathbf Z_\tau,
 \qquad
 a_{\lambda\tau}=\lambda a_\tau,
 \qquad
 D_{1,\lambda}(v,w)=(v,\lambda w).
\end{equation}
Define $\chi(0)=\mathbf x$ and, near zero,
\begin{equation}\label{eq:selected-chart}
 \chi(u):=\mathbf x^{1;\mathbf Z_{Ju},a_{Ju}}.
\end{equation}
Then the pullback $\mathcal S\circ\chi:B\to\mathcal X$ is Fr\'{e}chet
differentiable at zero, with
\begin{equation}\label{eq:chart-derivative}
 D(\mathcal S\circ\chi)(0)=A_{\mathbf x}\circ J.
\end{equation}
\end{corollary}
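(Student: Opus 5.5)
The natural route is to deduce the corollary from part~1 of Theorem~\ref{thm:intrinsic-hadamard}, the uniform radial expansion, by rewriting the chart as a radial realization along a normalized ray. Fix $u\in B$ with $u\neq0$, put $\varepsilon:=\|u\|_B$ and $e:=u/\varepsilon$, and set $\tau:=Je$, so that $\|\tau\|_{\mathbb T^p_{\mathbf x}}\leq\|J\|$. Ray-homogeneity \eqref{eq:ray-homogeneous-selection} with $\lambda=\varepsilon$ gives $\mathbf Z_{Ju}=D_{1,\varepsilon*}\mathbf Z_{Je}$ and $a_{Ju}=\varepsilon a_{Je}$. The dilation acts on the second factor only, so the first level of $\pi_2$ scales by $\varepsilon$, the cross terms $\pi_{1,2},\pi_{2,1}$ scale by $\varepsilon$, and $\mathbb h$ scales by $\varepsilon^2$. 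Therefore $C_{\mathbf Z_{Ju}}+a_{Ju}=\varepsilon\kappa_{Je}$ and $\mathbb h_{\mathbf Z_{Ju}}=\varepsilon^2\mathbb h_{\mathbf Z_{Je}}$. Reading off \eqref{eq:radial-realization}, this yields the exact identity
\[
 \chi(u)=\mathbf x^{1;\mathbf Z_{Ju},a_{Ju}}
 =\mathbf x^{\varepsilon;\mathbf Z_{Je},a_{Je}} .
\]
The first step is to check this identity carefully, including that $(\mathbf Z_{Je},a_{Je})\in\operatorname{Rep}_{\mathbf x}(Je)$, which is immediate from the definition of $\mathfrak r$.

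Next we build an admissible assignment on the unit ball of $\mathbb T^p_{\mathbf x}$ so that Theorem~\ref{thm:intrinsic-hadamard}(1) applies. The unit sphere of $B$ is bounded, so by hypothesis $\sup_{\|e\|=1}\mathfrak b(\mathfrak r(Je))=:M<\infty$. Rescale by $c:=\max(\|J\|,1)$ and work with $\tau_e:=Je/c$ in the unit ball. By ray-homogeneity, $\mathfrak r(Je/c)$ is the $1/c$-dilation of $\mathfrak r(Je)$, and its budget is controlled by $M$, because $\rho_p$ of a dilation by $\lambda\le1$ is at most $\rho_p$ of the original. Define $\mathfrak r'$ on the unit ball to agree with these choices on the set $\{Je/c\}$; if one $\tau$ arises from several $e$, pick one. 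Elsewhere, take any representative from a bounded-budget selection; such selections exist because tangent balls admit bounded-budget representations, as stated after \eqref{eq:radial-realization}.

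Because $\mathfrak r$ is a function of $Ju$, different $e$ with the same $Je$ give the same representative, so no conflict arises. Then
\[
 \chi(u)=\mathbf x^{\varepsilon c;\mathbf Z'_{\tau_e},a'_{\tau_e}} ,
\]
and \eqref{eq:uniform-radial-expansion} gives
\[
 \sup_{\|e\|=1}\Bigl\|\frac{\mathcal S(\chi(\varepsilon e))-\mathcal S(\mathbf x)}{\varepsilon c}-A_{\mathbf x}\tau_e\Bigr\|_{\mathcal X}\to0 .
\]
Multiplying by $c$ and using linearity, $cA_{\mathbf x}\tau_e=A_{\mathbf x}Je$. This gives
\[
 \|\mathcal S(\chi(u))-\mathcal S(\mathbf x)-A_{\mathbf x}Ju\|_{\mathcal X}=o(\|u\|_B) ,
\]
which is Fréchet differentiability at $0$ with derivative $A_{\mathbf x}\circ J$. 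This operator is continuous and linear as a composition of such maps.

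The step I expect to need the most care is the budget monotonicity under dilation. It has to cover both the $\rho_p$ term of $D_{1,\lambda*}\mathbf Z$, which is controlled via the homogeneous norm on $G^2(E\oplus E)$, and the requirement that the representative for $\tau_e$ be exactly the dilated one. The identification of $\chi(0)=\mathbf x$ with the $\varepsilon=0$ realization is trivial by definition.
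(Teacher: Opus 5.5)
Your proposal is correct and follows essentially the same route as the paper. Ray-homogeneity turns $\chi(u)$ into the radial realization $\mathbf x^{\|u\|_B;\mathbf Z_{Jv},a_{Jv}}$ with $v=u/\|u\|_B$, and the uniform radial expansion over the bounded set $J\{\|v\|_B=1\}$, with bounded budgets, gives the $o(\|u\|_B)$ remainder.

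The paper simply quotes Proposition~\ref{prop:radial-expansion} directly, taking the supremum over all representatives with budget at most $B$ on a tangent ball of radius $R\ge\|J\|$. This makes your rescaling by $c$ and your extension of the assignment to the whole unit ball unnecessary. The dilation-monotonicity step you flag is also moot: $\{e/c:\|e\|_B=1\}$ is itself bounded in $B$, so the hypothesis bounds those budgets directly.
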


Corollary~\ref{cor:chart-pullback} concerns a selected local pullback. It
does not assert Fr\'{e}chet differentiability in the homogeneous rough-path
metric or between fixed-base controlled-path Banach models. The chart itself
may depend on the set-theoretic lift selection, although its first derivative
in \eqref{eq:chart-derivative} does not.
\section{Strategy of proof}
\label{sec:strategy}

The proof separates the geometry of the rough perturbation from the
stochastic stability argument.  This separation is needed because
\(WG\Omega_p(\mathbb R^d)\) is not a linear space.  In particular, the
expression \(\mathbf x+\varepsilon h\) does not specify a rough path when
\(h\) is only a first-level path, and it does not record the first-order
variation of the second level.

Fix \(2<p<3\), put \(q=p/2\), and let
\(\mathbf x=(1,x,\mathbb x)\in WG\Omega_p(\mathbb R^d)\).  An admissible
first-order displacement is described by a pair
\(\tau=(h,\kappa)\) in the tangent fibre
\(\mathbb T_{\mathbf x}^p\).  Thus \(h\) has finite \(p\)-variation,
\(\kappa\) has finite \(q\)-variation, and
\begin{align}
(\delta\kappa)_{s,u,t}
  &=x_{s,u}\otimes h_{u,t}
    +h_{s,u}\otimes x_{u,t},                                      \label{eq:strategy-linear-chen}\\
\operatorname{Sym}\kappa_{s,t}
  &=\frac12\bigl(
      x_{s,t}\otimes h_{s,t}
      +h_{s,t}\otimes x_{s,t}\bigr).                              \label{eq:strategy-linear-sym}
\end{align}
The first relation is the linearization of Chen's identity.  The second is
the linearization of weak-geometric symmetry.  Both are needed to identify
the derivative intrinsically.

The argument has four parts.

\subsection{The deterministic flow}
We first study the reset flow.  On each reset interval, it is encoded as a
Banach-space-valued RDE whose state contains the first three spatial
derivatives.  The reset endpoints
are selected from the base rough path and are kept fixed for every
perturbation in a bounded tangent set.  For each
\(\tau\in\mathbb T_{\mathbf x}^p\), Section~\ref{sec:flow-tangent} constructs
a path \(P^\tau\) by an intrinsic Davie increment.  Its source terms involve
\(h\) and \(\kappa\), rather than a chosen joint lift.  The central
calculation is a cancellation between Chen's identity, the linearized Chen
identity \eqref{eq:strategy-linear-chen}, and the second-order terms in the
flow expansion.  The remaining three-point defect has order \(3/p>1\), so
it can be sewn.

This construction gives a continuous linear map
\[
  \mathbb T_{\mathbf x}^p\longrightarrow
  \mathcal V_p(I;\mathcal E_{\mathrm{jet}}),\qquad
  \tau\longmapsto P_I^\tau ,
\]
on every reset interval \(I\).  The path \(P_I^\tau\) is independent of the
joint lift used to construct it.  This first-order statement is not, by
itself, a uniform Taylor estimate for the nonlinear flow.  The latter
requires a separate expansion of the flow, inverse flow, and all spatial
jets used by the transformed generator.

\subsection{The transformed coefficients}
The uniform nonlinear flow-jet expansion is inserted into the
Doss--Sussmann formula for the transformed generator.  The estimates are
made only on the bounded strip reached by the transformed BSDE.  This
produces a continuous linear source
\(\tau\mapsto\dot F[\tau]\), together with the weighted remainders for
\(F\), \(F_y\), and \(F_z\).  The weights retain the linear and quadratic
\(z\)-growth generated by the transformation.  An unweighted expansion on
all of \(\mathbb R\times\mathbb R^m\) is neither used nor asserted.

\subsection{The stochastic estimate}
On a fixed reset interval, the transformed equations are scalar quadratic
BSDEs.  Uniform \(\mathbb S^\infty\) and
\(\mathbb H^2_{\mathrm{BMO}}\) estimates provide a common range for the
solutions and a common BMO bound for their martingale parts.  The
difference quotient is compared with a linear BSDE whose source is
\(\dot F[\tau]\).  Reverse-H\"older estimates are used with exponents
determined by the available BMO bounds.  No \(L^2\) reverse-H\"older exponent
is imposed without the corresponding smallness assumption.

\subsection{Globalization and the intrinsic limit}
The local derivative is propagated backward over the fixed reset
partition.  At each endpoint, the derivative of the preceding terminal
condition is the left-end value obtained on the next interval.  The
original variables are then recovered through the inverse
Doss--Sussmann map.  Finally, exact recoding of admissible rough-path
secants reduces a varying-secant limit to the uniform radial estimates.
This yields a continuous linear operator
\[
 A_{\mathbf x}:\mathbb T_{\mathbf x}^p
 \longrightarrow
 \mathbb S^\infty\times\mathbb H^2_{\mathrm{BMO}} .
\]
The last step uses strong levelwise variation contact.  It does not convert
the result into Fr\'echet differentiability for the homogeneous rough-path
metric.

All uniform constants are recorded on bounded sets.  They may depend on
\[
 p,d,m,T,\ \|\mathbf x\|_{p\text{-var}},\
 \|H\|_{C_b^9},\ \|\xi\|_\infty,
\]
the bounds on the derivatives of the original generator, the tangent-ball
radius, and the quantitative realization budget.  They do not depend on
the sign of the perturbation parameter, the particular tangent in the
fixed ball, or the representative used to display its first variation.
\section{Qian--Tudor tangent fibres and admissible secants}
\label{sec:tangent-geometry}

The rough-path domain is not a linear space, and adding an arbitrary pair of
first- and second-level increments does not preserve either Chen's relation
or weak-geometric symmetry. This section gives a tensor-coordinate tangent
space, constructs bounded radial realizations, and recodes every admissible
secant as an exact radial path. The analytic derivative is not used here.

\subsection{Identification of the tangent fibre}

Let $T_{\mathbf x}^{\mathrm{QT,wg};p,q}WG\Omega_p(E)$ denote the subclass
of Qian--Tudor curve-equivalence classes \cite{QianTudor2011} that admit
weakly geometric variation curves whose first and second tensor
derivatives have finite $p$- and $q$-variation, respectively.  This notation
records the weak-geometric and variation-topology choices made in this
paper; it is not attributed as a separate Banach completion to Qian and
Tudor.  If
$\mathbf Z\in WG\Omega_p(E\oplus E)$ has first marginal $\mathbf x$, set
\begin{equation}\label{eq:cross-sum}
 C_{\mathbf Z}:=\pi_{1,2}(\mathbf Z)+\pi_{2,1}(\mathbf Z).
\end{equation}

\begin{proposition}[Tensor-coordinate realization of the selected class]
\label{prop:tangent-identification}
The map
\begin{equation}\label{eq:qt-coordinate-map}
 \Psi_{\mathbf x}:
 T_{\mathbf x}^{\mathrm{QT,wg};p,q}WG\Omega_p(E)
 \longrightarrow\mathbb T_{\mathbf x}^p,
 \qquad
 [\mathbf Z,\varphi]_{\mathbf x}
 \longmapsto
 \left(
  \pi_2(\mathbf Z)^1,
  C_{\mathbf Z}+\varphi
 \right),
\end{equation}
is a linear bijection.  On this selected class, the restriction of the
Qian--Tudor tangent metric $\widetilde d_p$ satisfies
\begin{equation}\label{eq:exact-tangent-metric}
 \widetilde d_p(\tau,\widetilde\tau)
 =\max\left\{
   \|h-\widetilde h\|_{p\text{-var}},
   \|\kappa-\widetilde\kappa\|_{q\text{-var};2}
  \right\}.
\end{equation}
Consequently,
\begin{equation}\label{eq:tangent-metric-equivalence}
 \widetilde d_p(\tau,0)
 \leq\|\tau\|_{\mathbb T_{\mathbf x}^p}
 \leq2\widetilde d_p(\tau,0),
\end{equation}
and $\mathbb T_{\mathbf x}^p$ is a Banach space.
\end{proposition}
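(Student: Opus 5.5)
We verify four things in turn: that $\Psi_{\mathbf x}$ is well defined and linear, that it is a bijection, that it satisfies the metric identity, and that the tensor model is complete. The Qian--Tudor class is realised by curves $\varepsilon\mapsto\mathbf x^\varepsilon$ in $WG\Omega_p(E)$ through $\mathbf x$ whose levelwise derivatives at $\varepsilon=0$ exist in $p$- and $q$-variation. Each class in the selected subclass is recorded as $[\mathbf Z,\varphi]_{\mathbf x}$, where $\mathbf Z$ is a joint weakly geometric lift with first marginal $\mathbf x$ and $\varphi$ is a central $\mathfrak{so}(E)$-valued increment. The corresponding radial curve $(1,x+\varepsilon h,\mathbb x+\varepsilon(C_{\mathbf Z}+\varphi)+\varepsilon^2\mathbb h_{\mathbf Z})$ has levelwise derivative exactly $(\pi_2(\mathbf Z)^1,C_{\mathbf Z}+\varphi)$.

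\textbf{Well-definedness.} Two curves are equivalent exactly when their levelwise derivatives coincide, so $\Psi_{\mathbf x}$ is well defined on classes. Its image lies in $\mathbb T_{\mathbf x}^p$ by differentiating at $\varepsilon=0$ the Chen identity \eqref{eq:chen-base} and the symmetry identity \eqref{eq:symmetry-base} along the curve. For a direct check, Chen for $\mathbf Z$ gives $\delta C_{\mathbf Z}=x\otimes h+h\otimes x$. The central term satisfies $\delta\varphi=0$ because it is the increment map of a path. Weak-geometric symmetry of $\mathbf Z$ gives $\operatorname{Sym}C_{\mathbf Z}=\frac12(x\otimes h+h\otimes x)$, and $\operatorname{Sym}\varphi=0$. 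Finiteness of the $p$- and $q$-variation norms follows from $\rho_p(\mathbf Z)<\infty$ and $\varphi\in\mathcal V_0^q$.

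\textbf{Linearity.} The linear structure on classes is the one transported from curve derivatives, that is, from derivatives of sums and scalings of representatives in the Qian--Tudor sense. Since $\Psi_{\mathbf x}$ reads off the levelwise derivative, it intertwines that structure with the componentwise addition in $\mathbb T_{\mathbf x}^p$.

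\textbf{Injectivity.} This is immediate: equal images mean equal first-order derivatives, hence equal classes.

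\textbf{Surjectivity.} This is the main obstacle. Given $(h,\kappa)\in\mathbb T_{\mathbf x}^p$, we must produce a joint lift $\mathbf Z$ of $(x,h)$ in $WG\Omega_p(E\oplus E)$ with first marginal $\mathbf x$. I would apply the quotient Lyons--Victoir extension theorem \cite{LyonsVictoir2007} to the path $(x,h)$. The first diagonal block is prescribed as $\mathbb x$, and the extension is needed only modulo the central $\mathfrak{so}$ directions. This is where $2<p<3$ (so $p\notin\mathbb Z$) is used. Having obtained $\mathbf Z$, set $\varphi:=\kappa-C_{\mathbf Z}$. Both $\kappa$ and $C_{\mathbf Z}$ satisfy \eqref{eq:linearized-chen} and \eqref{eq:linearized-symmetry}. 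Hence $\delta\varphi=0$, so $\varphi$ is additive and is the increment map of a path. Also $\operatorname{Sym}\varphi=0$, so that path takes values in $\mathfrak{so}(E)$. It has finite $q$-variation because $\kappa$ and $C_{\mathbf Z}$ do, the latter being bounded by $\rho_p(\mathbf Z)$. Finally, the radial realization \eqref{eq:radial-realization} must be weakly geometric. Its Chen identity holds because the $\varepsilon^2$ correction $\mathbb h_{\mathbf Z}$ supplies exactly the missing $h\otimes h$ cross term. Its symmetry identity holds because $\operatorname{Sym}\mathbb h_{\mathbf Z}=\frac12h^{\otimes2}$. It therefore defines a curve in the selected class, and that curve maps to $(h,\kappa)$.

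\textbf{Metric identity and completeness.} The Qian--Tudor metric $\widetilde d_p$ is the maximum of the levelwise variation distances of the derivatives. Transported through $\Psi_{\mathbf x}$, this gives \eqref{eq:exact-tangent-metric}, and \eqref{eq:tangent-metric-equivalence} follows from $\max\le$ sum $\le2\max$. For completeness, take a Cauchy sequence $(h_n,\kappa_n)$ in $\mathbb T_{\mathbf x}^p$. It converges in $\mathcal V_0^p\times\mathcal V_2^q$, both of which are Banach spaces since variation norms dominate the uniform norm and are lower semicontinuous under pointwise limits. The identities \eqref{eq:linearized-chen} and \eqref{eq:linearized-symmetry} are linear and pass to pointwise limits, so the limit lies in $\mathbb T_{\mathbf x}^p$ and the space is closed.
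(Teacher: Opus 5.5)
Your proposal is correct and follows the paper's proof essentially step for step: the linearized Chen and symmetry identities come from the cross-block Chen relations and joint weak-geometric symmetry, surjectivity comes from the quotient Lyons--Victoir lift of $(\mathbf x,h)$ with the additive antisymmetric residual $\kappa-C_{\mathbf Z}$, injectivity and the metric identity are read off from the Qian--Tudor definitions, and completeness follows because $\mathbb T_{\mathbf x}^p$ is a closed subspace of $\mathcal V_0^p(E)\times\mathcal V_2^q(E\otimes E)$. The only difference is cosmetic: you check inline that the radial curve is weakly geometric, whereas the paper defers this to its exact-radial-realization lemma (and you also give a brief argument for linearity, which the paper simply asserts).
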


\begin{proof}
Put $h=\pi_2(\mathbf Z)^1$ and
$\kappa=C_{\mathbf Z}+\varphi$. The two cross blocks satisfy
\begin{align}
 \delta\pi_{1,2}(\mathbf Z)_{s,u,t}
 &=x_{s,u}\otimes h_{u,t},\label{eq:cross-chen-one}\\
 \delta\pi_{2,1}(\mathbf Z)_{s,u,t}
 &=h_{s,u}\otimes x_{u,t}.
 \label{eq:cross-chen-two}
\end{align}
The residual $\varphi$ is additive. Hence
\eqref{eq:linearized-chen} holds.

The symmetry condition requires one qualification.  Qian and Tudor's
curve-equivalence construction is the ambient differential structure.  We
select its weakly geometric subclass; differentiating the group-like
symmetry of every curve in that subclass gives
\begin{equation}\label{eq:curve-symmetry}
 \operatorname{Sym}\bigl(\mathbf x^\varepsilon\bigr)^2_{s,t}
 =\frac12\bigl(x_{s,t}+\varepsilon h_{s,t}\bigr)^{\otimes2}.
\end{equation}
Differentiating \eqref{eq:curve-symmetry} at zero gives
\eqref{eq:linearized-symmetry}. Thus the symmetry constraint in
Definition~\ref{def:tangent-fibre} is forced by admissibility; it is not an
extra smoothness condition.

Conversely, let $(h,\kappa)\in\mathbb T_{\mathbf x}^p$. Consider the
quotient of $G^2(E_x\oplus E_h)$ that removes the $h$-self brackets and the
$x$--$h$ cross brackets. This quotient is isomorphic to
$G^2(E_x)\times E_h$. Since its kernel is central and $p>2$, the
Lyons--Victoir extension \cite{LyonsVictoir2007} gives
\begin{equation}\label{eq:joint-lift-existence}
 \mathbf Z\in WG\Omega_p(E\oplus E),
 \qquad
 \pi_1(\mathbf Z)=\mathbf x,
 \qquad
 \pi_2(\mathbf Z)^1=h.
\end{equation}
Set $a:=\kappa-C_{\mathbf Z}$. Equations
\eqref{eq:linearized-chen}, \eqref{eq:cross-chen-one}, and
\eqref{eq:cross-chen-two} imply $\delta a=0$. Joint weak-geometric symmetry
and \eqref{eq:linearized-symmetry} imply $\operatorname{Sym}a=0$. Hence
$a$ is the increment map of a path in
$\mathcal V_0^q(\mathfrak{so}(E))$, and $(\mathbf Z,a)$ represents
$(h,\kappa)$. This proves surjectivity. Injectivity is the defining
Qian--Tudor equivalence, which identifies representatives with the same
$h$ and $C_{\mathbf Z}+\varphi$.

Rewriting the restricted Qian--Tudor metric in these two coordinates gives
\eqref{eq:exact-tangent-metric}, and
\eqref{eq:tangent-metric-equivalence} follows.
It remains to verify completeness, which is not part of that metric
identification. The spaces $\mathcal V_0^p(E)$ and
$\mathcal V_2^q(E\otimes E)$ are complete in their variation norms. Indeed,
a variation-norm Cauchy sequence is uniformly Cauchy; passage to the limit
on each finite partition preserves the Cauchy bound, after which the
supremum over partitions gives convergence in variation. The relations
\eqref{eq:linearized-chen} and \eqref{eq:linearized-symmetry} are closed
under this convergence. Therefore $\mathbb T_{\mathbf x}^p$ is a closed
linear subspace of the Banach product
$\mathcal V_0^p(E)\times\mathcal V_2^q(E\otimes E)$.
\end{proof}

\begin{remark}[The Young--central subspace]\label{rem:young-central}
Let $C_0^{r\text{-var}}(F)$ denote the closure of smooth based paths in the
$r$-variation norm. If
\[
 h\in C_0^{q\text{-var}}(E),
 \qquad
 a\in C_0^{q\text{-var}}(\mathfrak{so}(E)),
\]
Young integration defines
\begin{equation}\label{eq:young-cross-term}
 \mathcal C^{x,h}_{s,t}
 :=\int_s^t x_{s,r}\otimes dh_r
   +\int_s^t h_{s,r}\otimes dx_r.
\end{equation}
The map
\begin{equation}\label{eq:young-central-embedding}
 (h,a)\longmapsto(h,\mathcal C^{x,h}+a)
\end{equation}
is continuous and linear into $\mathbb T_{\mathbf x}^p$, because
$1/p+1/q=3/p>1$.  No surjectivity of this selected embedding is used or
claimed; the theorem is formulated on all pairs in
Definition~\ref{def:tangent-fibre}.
\end{remark}

\subsection{Quantitative lifts and bounded representations}

For a based path $\mathbf Z$ in a finite-dimensional homogeneous group $G$,
write
\begin{equation}\label{eq:group-variation-size}
 \rho_p(\mathbf Z)
 :=\left(\sup_{D=(t_i)}
   \sum_i\|\mathbf Z_{t_i,t_{i+1}}\|_G^p\right)^{1/p}.
\end{equation}

\begin{lemma}[Quantitative quotient lift]\label{lem:quantitative-lift}
Let $G$ be a fixed step-two normed Carnot group, and let
$K\subset\exp(W_2)$ be a fixed closed central subgroup. If $y$ is a based
$G/K$-valued path of finite $p$-variation, then there is a lift
$\widetilde y$ such that
\begin{equation}\label{eq:quantitative-lift}
 \pi(\widetilde y)=y,
 \qquad
 \rho_p(\widetilde y)
 \leq C_{\mathrm{LV}}\rho_p(y),
\end{equation}
where $C_{\mathrm{LV}}$ depends only on $p$, $G$, $K$, and the chosen
homogeneous norms.
\end{lemma}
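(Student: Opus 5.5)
The plan is to reduce Lemma~\ref{lem:quantitative-lift} to the Lyons--Victoir extension theorem on a fine, self-similar partition, using homogeneity of the group norms to make the constant scale-free. Write $\pi:G\to G/K$. Since $K$ lies in the centre $\exp(W_2)$ and is closed, $G/K$ is again a step-two homogeneous group; its dilations are induced by those of $G$, and $\|\cdot\|_{G/K}$ may be taken as the quotient norm $\|g\|_{G/K}=\inf_{k\in K}\|\tilde g k\|_G$, which is equivalent to any chosen homogeneous norm on $G/K$. The first step is a \emph{one-increment} lift with linear control: for every $g\in G/K$ there is $\tilde g\in G$ with $\pi(\tilde g)=g$ and $\|\tilde g\|_G\le C_0\|g\|_{G/K}$. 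This follows because the infimum defining the quotient norm is attained (by compactness of sublevel sets and closedness of $K$), and any homogeneous norm on $G/K$ is comparable to the quotient norm.

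The second step builds the lift on the whole path. Let $\omega(s,t):=\rho_{p,[s,t]}(y)^p$, which is a continuous superadditive control when $y$ is continuous. Following the Lyons--Victoir scheme, we lift $y$ dyadically in $\omega$-time. We choose times $t^n_k$ with $\omega(t^n_k,t^n_{k+1})\le 2^{-n}\omega(0,T)$, and we define $\widetilde y$ at these times by lifting each increment $y_{t^n_k,t^n_{k+1}}$ as in the first step. Refinement consistency is handled by the central correction: at level $n+1$, the product of the two lifted subincrements differs from the level-$n$ lifted increment by an element $k\in K$. That element is absorbed by splitting it as $k^{1/2}\cdot k^{1/2}$, using the dilation $\delta_{1/\sqrt2}$ on $K$, which preserves $K$ because $K$ is a closed subgroup of the vector group $W_2$ invariant under scaling. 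Since $\|k\|\lesssim \omega^{1/p}$ at scale $n$ and $p<3$, the accumulated corrections satisfy
\[
 \|\widetilde y_{s,t}\|_G\le C\,\omega(s,t)^{1/p}
\]
with a geometric series in $2^{-n(2/p-1/p)}$. Here the central part scales at homogeneous degree two, so one effectively needs $2/p>1$, which holds since $p<3$ gives $1/p>1/3$ and the central correction has size $\omega^{2/p}$ in $W_2$-coordinates, i.e.\ $\omega^{1/p}$ in the homogeneous norm. Extending to arbitrary $s<t$ by continuity and superadditivity then gives $\rho_p(\widetilde y)\le C_{\mathrm{LV}}\,\omega(0,T)^{1/p}=C_{\mathrm{LV}}\rho_p(y)$.

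An alternative shortcut, which I would try first, is to cite \cite{LyonsVictoir2007} directly. Their theorem lifts a $p$-variation path in $G/K$ with a bound of the form $\omega$-controlled, with constants depending only on the groups and $p\notin\mathbb N$, which here holds since $2<p<3$. The remaining task is then to check that the constant is linear in $\rho_p(y)$. This follows by scaling: apply the result to $\delta_{1/\rho_p(y)}y$, which has unit size, and then dilate back, using that $\pi$ commutes with dilations and that both norms are $1$-homogeneous. The main obstacle is precisely this homogeneity and scale-invariance bookkeeping, namely checking that $K$ is dilation-invariant so that $\pi\circ\delta_\lambda=\delta_\lambda\circ\pi$ holds. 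If $K$ is not dilation-stable (a closed subgroup such as a lattice), I would replace $K$ by its identity component, or note that a central closed subgroup of the vector group $W_2$ can be lifted through its linear span, and absorb the discrete part into the one-increment lift.
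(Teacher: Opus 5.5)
Your construction is the same Lyons--Victoir scheme the paper uses: a section lifting single increments with $\|i(\bar g)\|_G\le 2\|\bar g\|_{G/K}$, nested dyadic points in $\omega$-time (the paper's variation-to-H\"older time change), and the central defect $k$ split as $k^{-1/2}=\delta_{2^{-1/2}}k^{-1}$ on each half. The gap is in the step that makes the scheme converge. You write that one needs $2/p>1$ and that this follows from $p<3$. In fact $2/p<1$ for every $p\in(2,3)$, the bound $p<3$ plays no role in this lemma, and the condition actually used is $p>2$. Your bound ``$\|k\|\lesssim\omega^{1/p}$ at scale $n$'' also assumes a uniform H\"older constant for the already corrected level-$n$ increments, which is exactly what has to be proved.

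The missing estimate is the paper's recurrence. After the time change, let $C_y$ be the $1/p$-H\"older constant of $y$, and let $a_n$ be the maximum of $\|\widetilde y_I\|_G/|I|^{1/p}$ over level-$n$ intervals. For such an $I=I_1\cup I_2$ with section lifts $g_1,g_2$, the defect $k=\widetilde y_I^{-1}g_1g_2\in K$ satisfies $\|k\|_G\le(a_n+4C_y)|I|^{1/p}$. Hence $\|g_1k^{-1/2}\|_G\le 2C_y|I_1|^{1/p}+2^{-1/2}(a_n+4C_y)|I|^{1/p}$, and $|I|^{1/p}=2^{1/p}|I_1|^{1/p}$ gives $a_{n+1}\le 2^{1/p-1/2}a_n+cC_y$. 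Halving the central coordinate gains a factor $2^{-1/2}$, while the scale shrinks only by $2^{-1/p}$. So $\sup_n a_n\le c'C_y$ precisely because $p>2$.

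In your accumulated-corrections language, the defect of a level-$n$ increment relative to its section lift is $-\sum_{j=1}^{n}2^{-j}c_{n-j}$. Here $c_m\in W_2$ is the central defect created on the level-$m$ ancestor, of size $\lesssim C_y^2 2^{-2m/p}$. Measured against $2^{-2n/p}$, this is the series $\sum_j 2^{-j(1-2/p)}$, which converges because $2/p<1$, not because $2/p>1$.

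Your shortcut does not close this gap. Dilation turns a bound at unit size into a linear one. But the qualitative extension theorem gives no bound uniform over all $y$ with $\rho_p(y)=1$. The paper says so explicitly and extracts the constant from the proof in Lyons--Victoir (Proposition~6, Lemmas~11 and~13). Conversely, if you already had $\|\widetilde y_{s,t}\|_G\le C\omega(s,t)^{1/p}$ with $C=C(p,G,K)$, then summing over a partition and using superadditivity would give $\rho_p(\widetilde y)\le C\rho_p(y)$ directly, with no scaling. In both routes the content of the lemma is the uniform recurrence above.

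Your remark on non-dilation-invariant $K$ is not needed for the paper's applications, where $K$ is a linear span of bracket directions. As a general fix, however, it is not an argument, since the halving step requires $k^{1/2}\in K$.
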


\begin{proof}
This is the quantitative corollary extracted from the estimates in
Lyons--Victoir, Proposition 6, Lemma 11, and Lemma 13
\cite{LyonsVictoir2007}. Their homogeneous
section satisfies
\begin{equation}\label{eq:homogeneous-section-bound}
 \|\bar g\|_{G/K}\leq\|i(\bar g)\|_G
 \leq2\|\bar g\|_{G/K}.
\end{equation}
After the variation-to-H\"older time change, the step-two dyadic recurrence
in the proof is
\begin{equation}\label{eq:dyadic-recurrence}
 a_{m+1}
 \leq2^{1/p-1/2}a_m+2^{2+1/p-1/2}C.
\end{equation}
Since $p>2$, the coefficient $2^{1/p-1/2}$ is strictly smaller than one, so
\begin{equation}\label{eq:dyadic-bound}
 \sup_m a_m
 \leq
 \frac{2^{2+1/p-1/2}}{1-2^{1/p-1/2}}\,C.
\end{equation}
The dyadic extension estimate and reversal of the time change give
\eqref{eq:quantitative-lift}. The qualitative extension theorem alone does
not state this bound.
\end{proof}

For every second-level block $B$ of a joint lift, fixed homogeneous and
tensor norms give a constant $C_{\mathrm{blk}}$ such that
\begin{equation}\label{eq:block-bound}
 \|B\|_{q\text{-var};2}
 \leq C_{\mathrm{blk}}\rho_p(\mathbf Z)^2.
\end{equation}

\begin{lemma}[Bounded realizations of tangent balls]
\label{lem:bounded-realization}
Fix $\mathbf x\in WG\Omega_p(E)$ and $R<\infty$. There is a constant
\begin{equation}\label{eq:bounded-realization-constant}
 C_R=C_R\bigl(p,d,\text{tensor and homogeneous norms},
                   \rho_p(\mathbf x),R\bigr)<\infty
\end{equation}
such that every $\tau\in\mathbb T_{\mathbf x}^p$ with
$\|\tau\|_{\mathbb T_{\mathbf x}^p}\leq R$ has a representation
$(\mathbf Z_\tau,a_\tau)$ satisfying
\begin{equation}\label{eq:bounded-realization}
 \rho_p(\mathbf Z_\tau)+\|a_\tau\|_{q\text{-var}}
 \leq C_R.
\end{equation}
The representations may be selected to be homogeneous on each tangent ray.
No continuity, measurability, canonicity, or additivity of the selection is
asserted.
\end{lemma}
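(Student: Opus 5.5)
The idea is to build the representation from the quantitative quotient lift of Lemma~\ref{lem:quantitative-lift}, applied to a path in a quotient group whose data are $x$ and $h$. Let $G=G^2(E_x\oplus E_h)$ with a fixed homogeneous norm. Let $K$ be the central subgroup generated by the $h$--$h$ brackets and the $x$--$h$ cross brackets. As in the surjectivity part of Proposition~\ref{prop:tangent-identification}, $G/K\cong G^2(E_x)\times E_h$. Given $\tau=(h,\kappa)$ with $\|\tau\|\le R$, I form the based path $y=(\mathbf x,h)$ in $G/K$. For the product homogeneous norm, $\|y_{s,t}\|\lesssim\|\mathbf x_{s,t}\|+|h_{s,t}|$, so
\[
\rho_p(y)\le C(\rho_p(\mathbf x)+\|h\|_{p\text{-var}})\le C(\rho_p(\mathbf x)+R).
\]
Lemma~\ref{lem:quantitative-lift} then gives a lift $\mathbf Z_\tau$ with $\pi(\mathbf Z_\tau)=y$, that is $\pi_1(\mathbf Z_\tau)=\mathbf x$ and $\pi_2(\mathbf Z_\tau)^1=h$, and with $\rho_p(\mathbf Z_\tau)\le C_{\mathrm{LV}}C(\rho_p(\mathbf x)+R)$.

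Next I set $a_\tau:=\kappa-C_{\mathbf Z_\tau}$. The argument in Proposition~\ref{prop:tangent-identification} shows that $a_\tau$ is additive and antisymmetric, so it is the increment map of a path in $\mathcal V_0^q(\mathfrak{so}(E))$. For its size, additivity makes $\|a_\tau\|_{q\text{-var}}$ equal to the two-index $q$-variation of $a_\tau$. The triangle inequality and \eqref{eq:block-bound} applied to the two cross blocks give
\[
\|a_\tau\|_{q\text{-var}}\le\|\kappa\|_{q\text{-var};2}+2C_{\mathrm{blk}}\rho_p(\mathbf Z_\tau)^2\le R+2C_{\mathrm{blk}}C_{\mathrm{LV}}^2C^2(\rho_p(\mathbf x)+R)^2.
\]
Adding this to the bound on $\rho_p(\mathbf Z_\tau)$ gives $C_R$. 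Every constant depends only on $p$, $d$, the norms, $\rho_p(\mathbf x)$ and $R$, because $G$ and $K$ are fixed by $d$.

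For ray homogeneity, I choose one representation $(\mathbf Z_\sigma,a_\sigma)$ as above for each $\sigma$ in a set of ray representatives, meaning one point of norm $R$ on each ray. For $\tau=\lambda\sigma$ with $\lambda\in[0,1]$ I define $\mathbf Z_\tau:=D_{1,\lambda*}\mathbf Z_\sigma$ and $a_\tau:=\lambda a_\sigma$, and I treat $\tau=0$ with the lift $(\mathbf x,0)$. Dilating the second factor by $\lambda$ is a group endomorphism. It keeps $\pi_1=\mathbf x$, scales $h$ and the cross blocks by $\lambda$, and scales the $h$-self block by $\lambda^2$. Hence $C_{\mathbf Z_\tau}+a_\tau=\lambda\kappa$, and $(\mathbf Z_\tau,a_\tau)$ represents $\lambda\sigma$.

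The step I expect to be the main obstacle is the budget bound for this dilation. The map $D_{1,\lambda}$ is not a homogeneous dilation of the whole group. Still, for $|\lambda|\le1$ it does not increase the homogeneous norm, up to a constant fixed by the chosen norm. This holds because each coordinate is multiplied by a factor of modulus at most one, and any homogeneous norm is equivalent to $|v|+|w|+|A|^{1/2}$. So $\rho_p(\mathbf Z_\tau)\le C'\rho_p(\mathbf Z_\sigma)$ and $\|a_\tau\|\le\|a_\sigma\|$, and the bound persists with $C_R$ enlarged by a fixed factor. Negative multiples are handled the same way with $\lambda\in[-1,0)$, since $D_{1,-1}$ is also an automorphism. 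Every choice is made with the axiom of choice, so no continuity or measurability of the selection is claimed, which matches the statement.
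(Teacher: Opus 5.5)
Your proposal is correct and follows the paper's proof essentially step by step. Like the paper, you take the quantitative quotient lift of $(\mathbf x,h)$, set $a_\tau=\kappa-C_{\mathbf Z_\tau}$ and bound it with the block estimate, and obtain ray homogeneity by applying $D_{1,\lambda*}$ to one chosen representative per line; the only differences are cosmetic. You normalize the ray representatives at norm $R$, take $|\lambda|\le1$, and check explicitly that $D_{1,\lambda}$ then does not enlarge the homogeneous norm. The paper instead uses unit vectors and leaves implicit the equally easy fact that the budget scales by at most $\max\{1,|\lambda|\}$.
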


\begin{proof}
Apply Lemma~\ref{lem:quantitative-lift} to the quotient path
$(\mathbf x,h)$ used in \eqref{eq:joint-lift-existence}. The resulting joint
lift can be chosen so that
\begin{equation}\label{eq:joint-lift-bound}
 \rho_p(\mathbf Z_\tau)
 \leq C_{\mathrm{LV}}
 \bigl(\rho_p(\mathbf x)+c_{\mathrm{quot}}\|h\|_{p\text{-var}}\bigr).
\end{equation}
Set $a_\tau=\kappa-C_{\mathbf Z_\tau}$. It is additive and antisymmetric by
the proof of Proposition~\ref{prop:tangent-identification}. The block bound
\eqref{eq:block-bound} gives
\begin{align}
 \|a_\tau\|_{q\text{-var}}
 &\leq\|\kappa\|_{q\text{-var};2}
      +2C_{\mathrm{blk}}\rho_p(\mathbf Z_\tau)^2\notag\\
 &\leq R+2C_{\mathrm{blk}}C_{\mathrm{LV}}^2
  \bigl(\rho_p(\mathbf x)+c_{\mathrm{quot}}R\bigr)^2.
 \label{eq:central-residual-bound}
\end{align}
This proves \eqref{eq:bounded-realization}.

For ray homogeneity, select one unit vector $u$ on each one-dimensional
real subspace of $\mathbb T_{\mathbf x}^p$ and choose one bounded
representation $(\mathbf Z_u,a_u)$. If $\tau=\lambda u$, set
\begin{equation}\label{eq:ray-realization}
 \mathbf Z_{\lambda u}:=D_{1,\lambda*}\mathbf Z_u,
 \qquad
 a_{\lambda u}:=\lambda a_u.
\end{equation}
The first auxiliary level and the cross blocks scale by $\lambda$, the
self-area scales by $\lambda^2$, and the central correction scales by
$\lambda$. Thus the budget is bounded on every tangent ball. This is a
set-theoretic construction.
\end{proof}

\subsection{Radial curves and representation boundaries}

\begin{lemma}[Exact radial realization]\label{lem:exact-radial}
If $(\mathbf Z,a)\in\operatorname{Rep}_{\mathbf x}(h,\kappa)$, then
$\mathbf x^{\varepsilon;\mathbf Z,a}$ defined in
\eqref{eq:radial-realization} belongs to $WG\Omega_p(E)$ for every
$\varepsilon\in\mathbb R$. Its level-one and level-two first derivatives at
zero are $h$ and $\kappa$, respectively, in the levelwise variation sense.
\end{lemma}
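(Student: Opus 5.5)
The plan is to exhibit $\mathbf x^{\varepsilon;\mathbf Z,a}$ as the image of a genuine weakly geometric rough path under a group homomorphism, rather than to verify Chen and symmetry directly. Consider the linear map $L_\varepsilon:E\oplus E\to E$, $L_\varepsilon(v,w)=v+\varepsilon w$. It induces a homomorphism of step-two free nilpotent groups $G^2(E\oplus E)\to G^2(E)$, acting on the second level by $L_\varepsilon^{\otimes2}$. Push-forward by such a homomorphism preserves $WG\Omega_p$ and is Lipschitz for the homogeneous norms. Hence $L_{\varepsilon*}\mathbf Z\in WG\Omega_p(E)$. Its levels are computed blockwise:
\[
 (L_{\varepsilon*}\mathbf Z)^1=x+\varepsilon h,\qquad
 (L_{\varepsilon*}\mathbf Z)^2=\mathbb x+\varepsilon C_{\mathbf Z}+\varepsilon^2\mathbb h_{\mathbf Z}.
\]
Here $\pi_1(\mathbf Z)=\mathbf x$ gives the $(1,1)$ block, $C_{\mathbf Z}$ collects the two cross blocks, and the $(2,2)$ block is $\mathbb h_{\mathbf Z}$.

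Next, add the central correction. The path $a\in\mathcal V_0^q(\mathfrak{so}(E))$ acts on the second level of a step-two rough path by the central translation $\mathbf y\mapsto(1,y,\mathbb y+\varepsilon a)$. Because $a$ is additive, Chen's identity is untouched. Because $\operatorname{Sym}a=0$, the symmetry $\operatorname{Sym}\mathbb y=\frac12 y^{\otimes2}$ is also preserved. Finite $q$-variation of $a$ keeps the second level in $\mathcal V_2^q$. Combining this with the identity $\kappa=C_{\mathbf Z}+a$ from the representation gives exactly $\mathbf x^{\varepsilon;\mathbf Z,a}$, so it lies in $WG\Omega_p(E)$ for every real $\varepsilon$, including negative values and zero. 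I would state the central-translation closure as a short sublemma: weak geometricity in this paper is just Chen, symmetry and the variation bounds, and each is checked directly.

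For the derivative statement, the difference quotients are explicit:
\[
 \frac{(x+\varepsilon h)-x}{\varepsilon}-h=0,\qquad
 \frac{(\mathbb x+\varepsilon\kappa+\varepsilon^2\mathbb h_{\mathbf Z})-\mathbb x}{\varepsilon}-\kappa=\varepsilon\,\mathbb h_{\mathbf Z}.
\]
By the block bound \eqref{eq:block-bound}, $\|\mathbb h_{\mathbf Z}\|_{q\text{-var};2}\le C_{\mathrm{blk}}\rho_p(\mathbf Z)^2<\infty$. The second-level remainder is therefore $O(|\varepsilon|)$ in $q$-variation, and the level-one remainder is identically zero. This is the claimed levelwise differentiability. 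It also shows that $\mathbf x^{\varepsilon;\mathbf Z,a}$ has strong levelwise contact with $(\varepsilon,\tau)$ in the sense of Definition~\ref{def:strong-contact}, with a remainder controlled by the budget $\mathfrak b(\mathbf Z,a)$.

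The only step needing care is the claim that push-forward under $L_\varepsilon$ preserves weak geometricity with $p$-variation control. The algebraic half (Chen and symmetry) follows because $L_\varepsilon^{\otimes2}$ commutes with the flip and with tensor products of increments. The analytic half follows from $\|L_{\varepsilon*}g\|\le C(1+|\varepsilon|)\|g\|$ for homogeneous norms, which gives $\rho_p(L_{\varepsilon*}\mathbf Z)\le C(1+|\varepsilon|)\rho_p(\mathbf Z)$. I expect no genuine obstacle beyond recording these two facts.
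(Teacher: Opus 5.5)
Your proof is correct, but it is organized differently from the paper's. The paper does not factor the radial path. It substitutes directly: $\delta\mathbb x=x\otimes x$, the linearized Chen identity \eqref{eq:linearized-chen} for $\kappa$, and $\delta\mathbb h_{\mathbf Z}=h\otimes h$ from the auxiliary marginal give $\delta(\mathbb x+\varepsilon\kappa+\varepsilon^2\mathbb h_{\mathbf Z})=(x+\varepsilon h)\otimes(x+\varepsilon h)$. Then \eqref{eq:linearized-symmetry} together with $\operatorname{Sym}\mathbb h_{\mathbf Z}=\frac12h^{\otimes2}$ gives the symmetry relation.

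You instead write $\mathbf x^{\varepsilon;\mathbf Z,a}$ as $L_{\varepsilon*}\mathbf Z$ followed by the central translation by $\varepsilon a$. Chen and symmetry are then inherited from $\mathbf Z$ through the algebra homomorphism $1\oplus L_\varepsilon\oplus L_\varepsilon^{\otimes2}$, and an additive antisymmetric term leaves them untouched. You never invoke the linearized identities for $\kappa$ explicitly; they are encoded in the decomposition $\kappa=C_{\mathbf Z}+a$.

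What each route buys:
\begin{enumerate}
\item The paper's substitution is shorter. It makes clear that only $(h,\kappa)\in\mathbb T_{\mathbf x}^p$ and the weak geometricity of the self-area enter, so the cross blocks of $\mathbf Z$ matter only through $\kappa$. The same computation run in reverse is what Lemma~\ref{lem:secant-recoding} uses to place $\tau_n^*$ in the tangent fibre.
\item Your factorization gives a structural description of the radial path. It coincides with the push-forward/central-translation identity the paper invokes later in Section~\ref{sec:flow-jets} to identify the cutoff RDE with the fixed joined-driver equation.
\item Your explicit level-two remainder $\varepsilon\mathbb h_{\mathbf Z}$ spells out the derivative claim that the paper leaves implicit.
\end{enumerate}

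One small simplification: $WG\Omega_p(E)$ is defined here through levelwise variation norms. The analytic half of your last step therefore needs only the boundedness of $L_\varepsilon$ and $L_\varepsilon^{\otimes2}$; the homogeneous-norm Lipschitz estimate is not required.
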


\begin{proof}
The joint Chen identities give the cross terms in
\eqref{eq:linearized-chen}; the auxiliary marginal gives the quadratic
term $\varepsilon^2\mathbb h_{\mathbf Z}$. Substitution in Chen's relation
for \eqref{eq:radial-realization} yields
\[
 \delta\bigl(\mathbf x^{\varepsilon;\mathbf Z,a}\bigr)^2
 =(x+\varepsilon h)\otimes(x+\varepsilon h).
\]
The corresponding symmetric identity follows from
\eqref{eq:linearized-symmetry} and
$\operatorname{Sym}\mathbb h_{\mathbf Z}=h^{\otimes2}/2$. Finite variation
is immediate from the defining norms.
\end{proof}

If $(\mathbf Z,a)$ and $(\widetilde{\mathbf Z},\widetilde a)$ represent the
same tangent, their radial curves have identical first levels and
\begin{equation}\label{eq:radial-representative-difference}
 \bigl(\mathbf x^{\varepsilon;\mathbf Z,a}\bigr)^2
 -\bigl(\mathbf x^{\varepsilon;\widetilde{\mathbf Z},\widetilde a}\bigr)^2
 =\varepsilon^2
   \bigl(\mathbb h_{\mathbf Z}-\mathbb h_{\widetilde{\mathbf Z}}\bigr).
\end{equation}
Equation~\eqref{eq:radial-representative-difference} does not by itself
prove analytic representation independence for changing, unbounded
representatives. The proof of Theorem~\ref{thm:intrinsic-hadamard} uses
uniqueness of the intrinsic tangent equation and states uniform remainders
only under bounded realization budgets.

\subsection{Exact recoding of admissible secants}

We first fix a ray-homogeneous self-lift
\begin{equation}\label{eq:self-lift}
 \mathcal L(h)=(1,h,\mathbb L(h))\in WG\Omega_p(E)
\end{equation}
obtained from Lemma~\ref{lem:quantitative-lift}, with
\begin{equation}\label{eq:self-lift-bound}
 \mathcal L(\lambda h)=\delta_\lambda\mathcal L(h),
 \qquad
 \|\mathbb L(h)\|_{q\text{-var};2}
 \leq C_{\mathrm{self}}\|h\|_{p\text{-var}}^2.
\end{equation}
The choice is not required to be continuous.

\begin{lemma}[Exact recoding of admissible secants]
\label{lem:secant-recoding}
Let $\varepsilon_n\to0$, $\varepsilon_n\neq0$, let
$\tau_n=(h_n,\kappa_n)\to\tau$ in $\mathbb T_{\mathbf x}^p$, and let
$\mathbf x_n=(1,x_n,\mathbb x_n)\in WG\Omega_p(E)$ satisfy
\eqref{eq:strong-contact}. Define
\begin{align}
 h_n^*&:=\frac{x_n-x}{\varepsilon_n},
 \label{eq:recoded-first-level}\\
 \kappa_n^*&:=
 \frac{\mathbb x_n-\mathbb x
       -\varepsilon_n^2\mathbb L(h_n^*)}{\varepsilon_n}.
 \label{eq:recoded-second-level}
\end{align}
Then
\begin{equation}\label{eq:recoded-tangent-convergence}
 \tau_n^*:=(h_n^*,\kappa_n^*)
 \in\mathbb T_{\mathbf x}^p,
 \qquad
 \tau_n^*\longrightarrow\tau.
\end{equation}
There are representations
$(\mathbf Z_n^*,a_n^*)\in\operatorname{Rep}_{\mathbf x}(\tau_n^*)$ such
that
\begin{equation}\label{eq:recoded-budget}
 \sup_n\mathfrak b(\mathbf Z_n^*,a_n^*)<\infty
\end{equation}
and
\begin{equation}\label{eq:exact-secant-identity}
 \mathbf x_n
 =\mathbf x^{\varepsilon_n;\mathbf Z_n^*,a_n^*}
\end{equation}
for every $n$.
\end{lemma}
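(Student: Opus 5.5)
Three things have to be checked: that $\tau_n^*$ lies in the tangent fibre and converges to $\tau$; that it has a bounded-budget representation; and that the radial realization of that representation reproduces $\mathbf x_n$ exactly. The plan is to verify the two linearized identities of Definition~\ref{def:tangent-fibre} for $\tau_n^*$ by direct algebra, estimate $\tau_n^*-\tau_n$ from \eqref{eq:strong-contact}, and then build $(\mathbf Z_n^*,a_n^*)$ so that its self-area is precisely $\mathbb L(h_n^*)$.

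\emph{Membership.} Write $x_n=x+\varepsilon_n h_n^*$. Chen's identity for $\mathbf x_n$ and $\mathbf x$ gives
\[
\delta(\mathbb x_n-\mathbb x)_{s,u,t}=\varepsilon_n\bigl(x_{s,u}\otimes h^*_{n;u,t}+h^*_{n;s,u}\otimes x_{u,t}\bigr)+\varepsilon_n^2\,h^*_{n;s,u}\otimes h^*_{n;u,t}.
\]
Since $\mathcal L(h_n^*)$ is a weakly geometric rough path over $h_n^*$, we have $\delta\mathbb L(h_n^*)=h_n^*\otimes h_n^*$. Subtracting $\varepsilon_n^2\delta\mathbb L(h_n^*)$ and dividing by $\varepsilon_n$ yields \eqref{eq:linearized-chen} for $\kappa_n^*$. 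The symmetry identity follows in the same way from $\operatorname{Sym}\mathbb L(h_n^*)=\tfrac12(h_n^*)^{\otimes2}$, and finite variation is immediate.

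\emph{Convergence.} By \eqref{eq:strong-contact},
\[
\|h_n^*-h_n\|_{p\text{-var}}=o(1),\qquad \|\kappa_n^*-\kappa_n\|_{q\text{-var};2}\le o(1)+|\varepsilon_n|\,\|\mathbb L(h_n^*)\|_{q\text{-var};2}.
\]
The last term is at most $|\varepsilon_n|C_{\mathrm{self}}\|h_n^*\|_{p\text{-var}}^2\to0$, because $h_n^*$ is bounded. Together with $\tau_n\to\tau$ this gives \eqref{eq:recoded-tangent-convergence}.

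\emph{Representation.} We need a joint lift $\mathbf Z_n^*\in WG\Omega_p(E\oplus E)$ with first marginal $\mathbf x$, first level $h_n^*$ on the second factor, self-area block $\pi_2(\mathbf Z_n^*)^2=\mathbb L(h_n^*)$, and budget bounded uniformly in $n$. Generic lifts of $(\mathbf x,h_n^*)$ do not give that self-area; this is the main obstacle. I would first try to lift the quotient path $(\mathbf x,\mathcal L(h_n^*))$, which takes values in $G^2(E)\times G^2(E)$, through the quotient of $G^2(E\oplus E)$ that kills only the cross brackets. That kernel is central, so Lemma~\ref{lem:quantitative-lift} supplies a lift with
\[
\rho_p(\mathbf Z_n^*)\le C_{\mathrm{LV}}\bigl(\rho_p(\mathbf x)+\rho_p(\mathcal L(h_n^*))\bigr),
\]
and the right-hand side is bounded by \eqref{eq:self-lift-bound}. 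The two diagonal blocks are then $\mathbb x$ and $\mathbb L(h_n^*)$ exactly. Setting $a_n^*:=\kappa_n^*-C_{\mathbf Z_n^*}$ gives an additive antisymmetric residual, as in the proof of Proposition~\ref{prop:tangent-identification}. Its norm is controlled by $\sup_n\|\kappa_n^*\|_{q\text{-var};2}$ plus the block bound \eqref{eq:block-bound}, which proves \eqref{eq:recoded-budget}.

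\emph{Exact identity.} Substituting into \eqref{eq:radial-realization}, the first level is $x+\varepsilon_nh_n^*=x_n$. The second level is
\[
\mathbb x+\varepsilon_n\kappa_n^*+\varepsilon_n^2\mathbb L(h_n^*)=\mathbb x_n,
\]
directly from \eqref{eq:recoded-second-level}. This is \eqref{eq:exact-secant-identity}.
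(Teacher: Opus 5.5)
Your proposal is correct and follows essentially the same route as the paper. It verifies the linearized Chen and symmetry identities via $\delta\mathbb L(h_n^*)=h_n^*\otimes h_n^*$ and $\operatorname{Sym}\mathbb L(h_n^*)=\tfrac12(h_n^*)^{\otimes2}$, obtains convergence from strong contact and \eqref{eq:self-lift-bound}, and applies a quantitative Lyons--Victoir lift of $(\mathbf x,\mathcal L(h_n^*))$ through the quotient of $G^2(E\oplus E)$ by the central cross-bracket subgroup. That lift preserves both marginals, including the self-area $\mathbb L(h_n^*)$, so $a_n^*=\kappa_n^*-C_{\mathbf Z_n^*}$ is bounded by the block estimate and the exact identity follows by substitution, as in the paper.
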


\begin{proof}
Since $x_n=x+\varepsilon_nh_n^*$, Chen's relation for $\mathbf x_n$ and
$\mathbf x$ gives
\begin{align}
 \delta(\mathbb x_n-\mathbb x)_{s,u,t}
 ={}&\varepsilon_n\bigl(
  x_{s,u}\otimes(h_n^*)_{u,t}
  +(h_n^*)_{s,u}\otimes x_{u,t}\bigr)\notag\\
 &+\varepsilon_n^2(h_n^*)_{s,u}\otimes(h_n^*)_{u,t}.
 \label{eq:secant-chen-expansion}
\end{align}
Because
$\delta\mathbb L(h_n^*)=h_n^*\otimes h_n^*$,
equation \eqref{eq:recoded-second-level} implies
\begin{equation}\label{eq:recoded-chen}
 \delta\kappa_n^*
 =x\otimes h_n^*+h_n^*\otimes x.
\end{equation}
The step-two symmetry of $\mathbf x_n$, $\mathbf x$, and
$\mathcal L(h_n^*)$ similarly gives
\begin{equation}\label{eq:recoded-symmetry}
 \operatorname{Sym}\kappa_n^*
 =\frac12\bigl(x\otimes h_n^*+h_n^*\otimes x\bigr).
\end{equation}
Thus $\tau_n^*\in\mathbb T_{\mathbf x}^p$.

Strong contact gives
\begin{equation}\label{eq:recoded-first-convergence}
 \|h_n^*-h_n\|_{p\text{-var}}=o(1),
\end{equation}
so $(h_n^*)$ is bounded. By
\eqref{eq:strong-contact}, \eqref{eq:recoded-second-level}, and
\eqref{eq:self-lift-bound},
\begin{align}
 \|\kappa_n^*-\kappa_n\|_{q\text{-var};2}
 \leq{}&
 \frac{\|\mathbb x_n-\mathbb x
       -\varepsilon_n\kappa_n\|_{q\text{-var};2}}{|\varepsilon_n|}
 +|\varepsilon_n|\|\mathbb L(h_n^*)\|_{q\text{-var};2}
 \longrightarrow0.
 \label{eq:recoded-second-convergence}
\end{align}
This proves \eqref{eq:recoded-tangent-convergence}.

Let $K_{\mathrm{cross}}$ be the closed central subgroup generated by the
cross brackets in $G^2(E\oplus E)$. Then
\begin{equation}\label{eq:cross-quotient}
 G^2(E\oplus E)/K_{\mathrm{cross}}
 \simeq G^2(E)\times G^2(E).
\end{equation}
Apply Lemma~\ref{lem:quantitative-lift} to the quotient path
$(\mathbf x,\mathcal L(h_n^*))$. It yields a joint lift $\mathbf Z_n^*$
that preserves both marginals:
\begin{equation}\label{eq:recoded-marginals}
 \pi_1(\mathbf Z_n^*)=\mathbf x,
 \qquad
 \pi_2(\mathbf Z_n^*)=\mathcal L(h_n^*),
 \qquad
 \sup_n\rho_p(\mathbf Z_n^*)<\infty.
\end{equation}
Define
\begin{equation}\label{eq:recoded-central-residual}
 a_n^*:=\kappa_n^*-C_{\mathbf Z_n^*}.
\end{equation}
The proof of Proposition~\ref{prop:tangent-identification} shows that
$a_n^*$ is additive and antisymmetric. Equations
\eqref{eq:block-bound}, \eqref{eq:recoded-tangent-convergence}, and
\eqref{eq:recoded-marginals} give
$\sup_n\|a_n^*\|_{q\text{-var}}<\infty$, which proves
\eqref{eq:recoded-budget}. Finally,
\begin{align*}
 \bigl(\mathbf x^{\varepsilon_n;\mathbf Z_n^*,a_n^*}\bigr)^1
 &=x+\varepsilon_nh_n^*=x_n,\\
 \bigl(\mathbf x^{\varepsilon_n;\mathbf Z_n^*,a_n^*}\bigr)^2
 &=\mathbb x+\varepsilon_n\kappa_n^*
   +\varepsilon_n^2\mathbb L(h_n^*)
 =\mathbb x_n.
\end{align*}
This proves \eqref{eq:exact-secant-identity}.
\end{proof}

\begin{corollary}[Equivalent radial form of strong contact]
\label{cor:radial-contact}
Every admissible secant admits the exact representation
\eqref{eq:exact-secant-identity} with
$\tau_n^*\to\tau$ and uniformly bounded realization budgets. Conversely,
if $\tau_n\to\tau$ and
$\mathbf x_n=\mathbf x^{\varepsilon_n;\mathbf Z_n,a_n}$ for a uniformly
bounded sequence of representations, then $\mathbf x_n$ has strong
levelwise variation contact with $(\varepsilon_n,\tau_n)$.
\end{corollary}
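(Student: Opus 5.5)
The first assertion is a direct consequence of Lemma~\ref{lem:secant-recoding}, which has already done the substantive work. Let $\mathbf x_n$ be an admissible secant for $(\varepsilon_n,\tau_n)$ with $\tau_n\to\tau$. The recoding lemma supplies tangents $\tau_n^*\to\tau$ and representations $(\mathbf Z_n^*,a_n^*)\in\operatorname{Rep}_{\mathbf x}(\tau_n^*)$ with $\sup_n\mathfrak b(\mathbf Z_n^*,a_n^*)<\infty$. It also gives the exact identity $\mathbf x_n=\mathbf x^{\varepsilon_n;\mathbf Z_n^*,a_n^*}$. Hence the first half of the corollary holds as stated.

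For the converse, assume $\tau_n=(h_n,\kappa_n)\to\tau$ and $\mathbf x_n=\mathbf x^{\varepsilon_n;\mathbf Z_n,a_n}$ with $(\mathbf Z_n,a_n)\in\operatorname{Rep}_{\mathbf x}(\tau_n)$ and $\sup_n\mathfrak b(\mathbf Z_n,a_n)=:M<\infty$. Here I read the hypothesis as saying that the representations represent $\tau_n$ itself; otherwise the first level could not match. Then I compute the two contact quantities directly from \eqref{eq:radial-realization}. At the first level,
\[
 x_n-x-\varepsilon_nh_n=0 .
\]
At the second level,
\[
 \mathbb x_n-\mathbb x-\varepsilon_n\kappa_n=\varepsilon_n^2\,\mathbb h_{\mathbf Z_n}.
\]
The only estimate needed is a bound on the self-area block $\mathbb h_{\mathbf Z_n}=\pi_2(\mathbf Z_n)^2$. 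By the block bound \eqref{eq:block-bound},
\[
 \|\mathbb h_{\mathbf Z_n}\|_{q\text{-var};2}\le C_{\mathrm{blk}}\rho_p(\mathbf Z_n)^2\le C_{\mathrm{blk}}M^2 .
\]
The left side of \eqref{eq:strong-contact} is therefore at most $C_{\mathrm{blk}}M^2\varepsilon_n^2=o(|\varepsilon_n|)$. By Lemma~\ref{lem:exact-radial}, each $\mathbf x_n$ lies in $WG\Omega_p(E)$. So $\mathbf x_n$ has strong levelwise variation contact with $(\varepsilon_n,\tau_n)$ in the sense of Definition~\ref{def:strong-contact}.

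Neither direction has a real obstacle. The one point to check is that uniform boundedness of the budget, through $\rho_p(\mathbf Z_n)$, is exactly what makes the $\varepsilon_n^2$ self-area term negligible. This is also why the converse needs the boundedness hypothesis: without it, $\rho_p(\mathbf Z_n)$ could grow fast enough that $\varepsilon_n^2\rho_p(\mathbf Z_n)^2$ fails to be $o(|\varepsilon_n|)$.
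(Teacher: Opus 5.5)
Your proposal is correct and matches the paper's proof essentially line by line. The first assertion is quoted from Lemma~\ref{lem:secant-recoding}. For the converse, the first-level remainder vanishes and the second-level remainder $\varepsilon_n^2\mathbb h_{\mathbf Z_n}$ is $O(\varepsilon_n^2)$ in $q$-variation by the block bound \eqref{eq:block-bound} and the uniform budget; your explicit check via Lemma~\ref{lem:exact-radial} that each $\mathbf x_n$ lies in $WG\Omega_p(E)$ is a detail the paper leaves implicit.
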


\begin{proof}
The first assertion is Lemma~\ref{lem:secant-recoding}. For the converse,
the first-level remainder is zero, while the second-level remainder is
$\varepsilon_n^2\mathbb h_{\mathbf Z_n}$. The block estimate
\eqref{eq:block-bound} and the uniform budget give
\[
 \frac{\|\varepsilon_n^2\mathbb h_{\mathbf Z_n}\|_{q\text{-var};2}}
      {|\varepsilon_n|}
 \longrightarrow0.
\]
\end{proof}
\section{Uniform flow-jet expansion along full rough tangents}
\label{sec:flow-tangent}
\label{sec:flow-jets}

The obstruction in this section is representation dependence.  A tangent
\(\tau=(h,\kappa)\) can be displayed by several joint lifts of \((x,h)\).
The cross areas and the self-area of \(h\) can change with that display.
The first variation must nevertheless depend only on \((h,\kappa)\).  We
first construct this intrinsic variation and prove its uniqueness.  The
uniform nonlinear remainder needed later is stated separately at the end
of the section.

\subsection{Common reset intervals and the augmented flow}
\label{subsec:common-reset}

Let \(I=[u,v]\subset[0,T]\).  The backward reset flow is
\begin{equation}
 \phi_t^{\mathbf x,I}(y)
 =y+\int_t^v H_i\bigl(\phi_r^{\mathbf x,I}(y)\bigr)
      \,d\mathbf x_r^i,
 \qquad \phi_v^{\mathbf x,I}(y)=y.                                \label{eq:backward-reset-flow}
\end{equation}
It is defined directly for weakly geometric rough paths.  We use a time
reversal when invoking forward RDE estimates and retain the backward
notation in the displayed equations.

Write
\[
 J=\partial_y\phi,\qquad
 K=\partial_{yy}\phi,\qquad
 L=\partial_{yyy}\phi,
\]
and set
\[
 \mathcal E_{\mathrm{jet}}=C_b(\mathbb R;\mathbb R^4),\qquad
 \Theta=(\phi-\operatorname{Id},J-1,K,L).
\]
For a \(\mathcal E_{\mathrm{jet}}\)-valued path \(G\), set
\[
 \|G\|_{\mathcal V_p(I;\mathcal E_{\mathrm{jet}})}
 =\sup_{t\in I}\|G_t\|_{\mathcal E_{\mathrm{jet}}}
  +\|G\|_{p\text{-var};I;\mathcal E_{\mathrm{jet}}}.
\]
For a physical four-jet
\(\mathfrak J=(\mathfrak J_0,\ldots,\mathfrak J_3)\), we also use
\[
 \|\mathfrak J\|_{\mathfrak V_3^\Delta(I)}
 =\sum_{\ell=0}^3\sup_{y\in\mathbb R}
 \left\{
   \|\mathfrak J_\ell(\cdot,y)\|_{\infty;I}
   +\|\mathfrak J_\ell(\cdot,y)\|_{p\text{-var};I}
 \right\}.
\]
After applying a cutoff outside the region reached by the physical jets,
the augmented equation takes the form
\begin{equation}
 d\Theta_t=V_i(\Theta_t)\,d\mathbf x_t^i,\qquad \Theta_v=0.          \label{eq:augmented-reset-rde}
\end{equation}
Under \(H_i\in C_b^9(\mathbb R)\), the cutoff Nemytskii fields have more
than the
\(C_b^3(\mathcal E_{\mathrm{jet}};\mathcal E_{\mathrm{jet}})\)
regularity used below.  On an interval on which the
base rough control is sufficiently small, the cutoff solution stays on the
physical jet graph.  The four components then coincide with
\[
 \bigl(\phi-\operatorname{Id},
       \partial_y\phi-1,\partial_{yy}\phi,\partial_{yyy}\phi\bigr).
\]

Choose a control \(\omega_{\mathbf x}\) such that
\[
 |x_{s,t}|\le \omega_{\mathbf x}(s,t)^{1/p},
 \qquad
 |\mathbb x_{s,t}|\le \omega_{\mathbf x}(s,t)^{2/p}.
\]
The Davie expansion of \eqref{eq:augmented-reset-rde} is
\begin{align}
 \Theta_{s,t}
   &=V_i(\Theta_s)x_{s,t}^i
     +Q_{j,i}(\Theta_s)\mathbb x_{s,t}^{j,i}
     +\Theta^\natural_{s,t},                                      \label{eq:base-davie}\\
 |\Theta^\natural_{s,t}|
   &\le C_\Theta\omega_{\mathbf x}(s,t)^{3/p},                     \label{eq:base-davie-rem}
\end{align}
where
\[
 A_i=DV_i,\qquad
 Q_{j,i}=DV_iV_j,\qquad
 B_{j,i}=DQ_{j,i}.
\]

We next fix the reset endpoints.  Consider radial realizations
\begin{equation}
 \begin{split}
   ( \mathbf x^{\varepsilon,\tau})^1
      &=x+\varepsilon h,\\
   ( \mathbf x^{\varepsilon,\tau})^2
      &=\mathbb x+\varepsilon\kappa
        +\varepsilon^2\mathbb h,
 \end{split}                                                       \label{eq:radial-realization-flow}
\end{equation}
where \(\mathbb h\) is the self-area supplied by an admissible
representative.  On bounded tangent and realization sets, a control for
\(\mathbf x^{\varepsilon,\tau}\) satisfies
\begin{equation}
 \omega_\varepsilon(s,t)
 \le C\left\{
   \omega_{\mathbf x}(s,t)
   +|\varepsilon|^p\omega_h(s,t)
   +|\varepsilon|^q\omega_\kappa(s,t)
   +|\varepsilon|^p\omega_{\mathbb h}(s,t)
 \right\}.                                                        \label{eq:perturbed-control}
\end{equation}
Choose a finite partition \(\Pi(\mathbf x)\) so that the base term in
\eqref{eq:perturbed-control} is below one half of the local reset
threshold on each interval.  The other controls have uniformly bounded
total mass.  Hence one \(\varepsilon_0>0\) makes their combined
contribution smaller than the remaining half for every
\(|\varepsilon|\le\varepsilon_0\) and every tangent in the fixed ball.
The endpoints in \(\Pi(\mathbf x)\) therefore do not depend on
\(\varepsilon\), \(\tau\), or the concentration profile of the directional
variation.

\subsection{The intrinsic Davie increment}
\label{subsec:intrinsic-davie}

For a one-index path \(g\) and a two-index map \(k\), let
\[
 \|g\|_{r\text{-var};I}
  =\left(\sup_{\pi\subset I}
       \sum_{[s,t]\in\pi}|g_{s,t}|^r\right)^{1/r},
 \qquad
 \|k\|_{r\text{-var};I}
  =\left(\sup_{\pi\subset I}
       \sum_{[s,t]\in\pi}|k_{s,t}|^r\right)^{1/r}.
\]
The second definition does not require additivity.  The tangent fibre is
\begin{equation}
 \mathbb T_{\mathbf x}^p
 =\left\{(h,\kappa):
 \begin{array}{l}
 h_0=0,\quad \|h\|_{p\text{-var}}<\infty,\quad
 \|\kappa\|_{q\text{-var}}<\infty,\quad \kappa_{t,t}=0,\\
 (\delta\kappa)_{s,u,t}
 =x_{s,u}\otimes h_{u,t}+h_{s,u}\otimes x_{u,t},\\
 \operatorname{Sym}\kappa_{s,t}
 =\frac12(x_{s,t}\otimes h_{s,t}
          +h_{s,t}\otimes x_{s,t})
 \end{array}\right\},                                             \label{eq:tangent-fibre-flow}
\end{equation}
with norm
\[
 \|(h,\kappa)\|_{\mathbb T_{\mathbf x}^p}
 =\|h\|_{p\text{-var}}+\|\kappa\|_{q\text{-var}}.
\]

For \(\tau=(h,\kappa)\in\mathbb T_{\mathbf x}^p\), define
\begin{equation}
 \begin{split}
 \Xi^\tau_{s,t}(P_s)
 ={}&A_i(\Theta_s)P_sx_{s,t}^i
     +V_i(\Theta_s)h_{s,t}^i\\
    &+B_{j,i}(\Theta_s)P_s\mathbb x_{s,t}^{j,i}
     +Q_{j,i}(\Theta_s)\kappa_{s,t}^{j,i}.
 \end{split}                                                       \label{eq:intrinsic-xi}
\end{equation}
A continuous \(\mathcal E_{\mathrm{jet}}\)-valued path \(P\), with
\(P_v=0\), is an intrinsic
tangent solution if
\begin{equation}
 P_{s,t}=\Xi^\tau_{s,t}(P_s)+P^\natural_{s,t},\qquad
 |P^\natural_{s,t}|\le C_\tau\omega(s,t)^{3/p}                    \label{eq:intrinsic-solution}
\end{equation}
for a finite control \(\omega\).  Its first-order controlled remainder is
\begin{equation}
 R^P_{s,t}
 =P_{s,t}-A_i(\Theta_s)P_sx_{s,t}^i
          -V_i(\Theta_s)h_{s,t}^i.                                \label{eq:controlled-rem-P}
\end{equation}

\begin{proposition}[Intrinsic reset-flow variation]
\label{prop:intrinsic-flow-variation}
Assume
\[
 V_i\in
 C_b^3(\mathcal E_{\mathrm{jet}};\mathcal E_{\mathrm{jet}}).
\]
For every
\[
 \tau=(h,\kappa)\in\mathbb T_{\mathbf x}^p
\]
there is a unique intrinsic tangent solution \(P^\tau\).  The map
\[
  \mathbb T_{\mathbf x}^p
  \longrightarrow\mathcal V_p(I;\mathcal E_{\mathrm{jet}}),
  \qquad \tau\longmapsto P^\tau ,
\]
is continuous and linear.  More precisely,
\begin{equation}
 \|P^\tau\|_{\mathcal V_p(I;\mathcal E_{\mathrm{jet}})}
 +\|R^{P^\tau}\|_{q\text{-var};I;\mathcal E_{\mathrm{jet}}}
 \le C_I\|\tau\|_{\mathbb T_{\mathbf x}^p}.                        \label{eq:intrinsic-flow-bound}
\end{equation}
Every finite-norm joint-lift representative of \(\tau\) yields the same
path \(P^\tau\).  On a bounded tangent set, \(C_I\) can be chosen uniformly
over \(I\in\Pi(\mathbf x)\) and over representatives with the prescribed
quantitative realization budget.
\end{proposition}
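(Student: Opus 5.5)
The plan is to treat the intrinsic tangent equation \eqref{eq:intrinsic-solution} as a linear rough equation driven jointly by $\mathbf x$ and the tangent data, and to solve it by sewing. I will take existence from an explicit representative, and obtain uniqueness, linearity and the bound from a sewing/Gronwall argument that uses only $(h,\kappa)$.

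\textbf{Existence.} Fix a representation $(\mathbf Z,a)\in\operatorname{Rep}_{\mathbf x}(\tau)$ with bounded budget (Lemma~\ref{lem:bounded-realization}).
\begin{enumerate}
\item Consider the rough path $\mathbf Z$ on $E\oplus E$, augmented by the additive central path $a$. Solve the linear RDE
\[
dP=A_i(\Theta)P\,dx^i+V_i(\Theta)\,dh^i+(\text{bracket drift from }a),
\]
jointly with \eqref{eq:augmented-reset-rde}. This is the formal $\varepsilon$-derivative of the flow along the radial realization $\mathbf x^{\varepsilon;\mathbf Z,a}$.
\item Compute its Davie expansion. The level-two terms are $B_{j,i}(\Theta_s)P_s\mathbb x^{j,i}_{s,t}$ together with $DV_i V_j$-type terms against the cross blocks $\pi_{1,2},\pi_{2,1}$ and against $a$.
\item Because $Q_{j,i}=DV_iV_j$, the cross-block and central contributions combine into $Q_{j,i}(\Theta_s)(C_{\mathbf Z}+a)^{j,i}_{s,t}=Q_{j,i}(\Theta_s)\kappa^{j,i}_{s,t}$. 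The self-area $\mathbb h_{\mathbf Z}$ does not enter, since it is $O(\varepsilon^2)$.
\end{enumerate}
Hence $P$ satisfies \eqref{eq:intrinsic-solution} with remainder $O(\omega^{3/p})$. Standard linear RDE bounds, uniform on the reset interval $I\in\Pi(\mathbf x)$, control this remainder by the budget.

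\textbf{Uniqueness and bound.} The key computation is the three-point defect of $\Xi^\tau$. Expanding $\delta\Xi^\tau_{s,u,t}$ requires three inputs:
\begin{itemize}
\item Chen's identity \eqref{eq:chen-base};
\item the linearized Chen identity \eqref{eq:linearized-chen};
\item the controlled structure $P_{s,u}=A_iP_sx^i_{s,u}+V_ih^i_{s,u}+R^P_{s,u}$ and $\Theta_{s,u}=V_ix^i_{s,u}+O(\omega^{2/p})$.
\end{itemize}
The non-sewable products $x\otimes x$ and $x\otimes h+h\otimes x$ cancel exactly against $\delta\mathbb x$ and $\delta\kappa$. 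What remains is bounded by $C(\|P\|,\|R^P\|_{q})\,\omega^{3/p}$, where $\omega$ is a control dominating $\omega_{\mathbf x}$, $\|h\|^p_{p\text{-var}}$ and $\|\kappa\|^q_{q\text{-var}}$. Since $3/p>1$, the sewing lemma applies.

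For uniqueness, take the difference $D$ of two intrinsic solutions. It solves the homogeneous equation (with $h=0$ and $\kappa=0$) and $D_v=0$. Sewing gives
\[
\|D\|_{p\text{-var};[s,t]}\le C\,\omega_{\mathbf x}(s,t)^{1/p}\sup|D|.
\]
Applying this on small subintervals, stepping backward from $v$, forces $D\equiv0$.

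\textbf{Consequences.}
\begin{itemize}
\item \emph{Representation independence.} Every representative gives a solution of the same intrinsic equation, so uniqueness implies they all coincide.
\item \emph{Linearity.} $\Xi^\tau$ is linear in $(P,h,\kappa)$, so $P^{\tau}+P^{\tau'}$ solves the equation for $\tau+\tau'$ (and similarly for scalar multiples). Uniqueness then gives $P^{\tau+\tau'}=P^\tau+P^{\tau'}$.
\item \emph{Bound \eqref{eq:intrinsic-flow-bound}.} The sewing estimate, made linear in $\|\tau\|$ by an a priori Gronwall-type argument on a partition fixed by $\omega_{\mathbf x}$, gives the bound. It yields continuity, and the constant is uniform over $I\in\Pi(\mathbf x)$.
\end{itemize}

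\textbf{Main obstacle.} The hardest step is the defect computation. The coefficient of $\kappa$ must be exactly $Q_{j,i}$, and the cancellation must close without access to a lift-level identity for $h\otimes h$. The linearized symmetry \eqref{eq:linearized-symmetry} may also be needed to match $\operatorname{Sym}$ parts when identifying the representative solution with the intrinsic equation. I would check this first on the $E\oplus E$ Davie expansion, tracking the indices of $Q_{j,i}$ carefully.
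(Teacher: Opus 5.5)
Your proposal is correct and follows essentially the same route as the paper. It builds existence from a joint-lift representative through the mixed rough/Young linear equation, whose Davie expansion reduces to $\Xi^\tau$ via $C_{\mathbf Z}+a=\kappa$ and the bracket contraction. It then uses the Chen/linearized-Chen cancellation of the three-point defect, sewing, and uniqueness for the homogeneous $\mathbf x$-driven linear equation, from which linearity, representative independence and the bound follow. The one difference is cosmetic: the paper makes the bound linear by normalizing the direction inside the control $\omega=\omega_{\mathbf x}+\|\bar h\|_{p\text{-var}}^p+\|\bar\kappa\|_{q\text{-var}}^q$, which keeps the number of greedy partition pieces uniform, whereas you fix the partition by $\omega_{\mathbf x}$ and rescale using the linearity already obtained from uniqueness.
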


\subsection{Cancellation of the three-point defect}
\label{subsec:defect-cancellation}

We give the cancellation because it is the step that uses the complete
two-level tangent.  Put
\[
 r=\|\tau\|_{\mathbb T_{\mathbf x}^p}.
\]
The case \(r=0\) is immediate.  Otherwise set
\(\bar h=h/r\), \(\bar\kappa=\kappa/r\), and use the control
\begin{equation}
 \omega(s,t)=\omega_{\mathbf x}(s,t)
  +\|\bar h\|_{p\text{-var};[s,t]}^p
  +\|\bar\kappa\|_{q\text{-var};[s,t]}^q.                          \label{eq:normalized-control}
\end{equation}
Then
\begin{equation}
 |h_{s,t}|\le r\omega(s,t)^{1/p},\qquad
 |\kappa_{s,t}|\le r\omega(s,t)^{2/p}.                             \label{eq:tangent-control}
\end{equation}

Fix \(s<u<t\), and write a superscript \(s\) or \(u\) for evaluation at
\(\Theta_s\) or \(\Theta_u\).  By additivity of \(x\) and \(h\), Chen's
identity for \(\mathbb x\), and
\eqref{eq:strategy-linear-chen},
\begin{align}
 \delta\Xi^\tau(P)_{s,u,t}
 :={}&\Xi^\tau_{s,t}(P_s)
       -\Xi^\tau_{s,u}(P_s)-\Xi^\tau_{u,t}(P_u) \notag\\
 ={}&(A_i^sP_s-A_i^uP_u)x_{u,t}^i
     +(V_i^s-V_i^u)h_{u,t}^i                                      \notag\\
 &+(B_{j,i}^sP_s-B_{j,i}^uP_u)\mathbb x_{u,t}^{j,i}
     +(Q_{j,i}^s-Q_{j,i}^u)\kappa_{u,t}^{j,i}                      \notag\\
 &+B_{j,i}^sP_sx_{s,u}^jx_{u,t}^i
     +Q_{j,i}^s\bigl(
       x_{s,u}^jh_{u,t}^i+h_{s,u}^jx_{u,t}^i\bigr).                \label{eq:delta-xi-full}
\end{align}

Suppose first that \(P\) has the first-order controlled form
\begin{equation}
 P_{s,u}
 =A_k^sP_sx_{s,u}^k+V_k^sh_{s,u}^k+\rho^P_{s,u},
 \qquad
 |\rho^P_{s,u}|\le K_P\omega(s,u)^{2/p}.                           \label{eq:P-controlled-form}
\end{equation}
Taylor's formula and \eqref{eq:base-davie} give
\begin{align}
 A_i^uP_u-A_i^sP_s
   &=B_{k,i}^sP_sx_{s,u}^k
     +Q_{k,i}^sh_{s,u}^k+\rho^{AP,i}_{s,u},                        \label{eq:AP-expansion}\\
 V_i^u-V_i^s
   &=Q_{k,i}^sx_{s,u}^k+\rho^{V,i}_{s,u},                          \label{eq:V-expansion}
\end{align}
where
\begin{align}
 |\rho^{AP,i}_{s,u}|
   &\le C\bigl(\|P\|_{\infty;I}+K_P+r\bigr)
             \omega(s,u)^{2/p},                                  \label{eq:AP-rem}\\
 |\rho^{V,i}_{s,u}|
   &\le C\omega(s,u)^{2/p}.                                       \label{eq:V-rem}
\end{align}
The identity behind \eqref{eq:AP-expansion} is
\[
 DA_i[V_k]P+A_iA_kP=D(DV_iV_k)P=DQ_{k,i}P.
\]
The remaining coefficient increments satisfy
\begin{align}
 |B_{j,i}^uP_u-B_{j,i}^sP_s|
   &\le C\bigl(\|P\|_{\infty;I}+K_P+r\bigr)
             \omega(s,u)^{1/p},                                  \label{eq:BP-bound}\\
 |Q_{j,i}^u-Q_{j,i}^s|
   &\le C\omega(s,u)^{1/p}.                                       \label{eq:Q-bound}
\end{align}

Substitution of \eqref{eq:AP-expansion} and
\eqref{eq:V-expansion} into \eqref{eq:delta-xi-full} leaves the following
terms at order \(2/p\):
\begin{align*}
 &-B_{j,i}^sP_sx_{s,u}^jx_{u,t}^i
  -Q_{j,i}^sh_{s,u}^jx_{u,t}^i
  -Q_{j,i}^sx_{s,u}^jh_{u,t}^i\\
 &\quad
  +B_{j,i}^sP_sx_{s,u}^jx_{u,t}^i
  +Q_{j,i}^s\bigl(
      x_{s,u}^jh_{u,t}^i+h_{s,u}^jx_{u,t}^i\bigr)=0.
\end{align*}
The first cancellation comes from Chen's identity for \(\mathbb x\).  The
other two come from the linearized Chen identity for \(\kappa\).  It follows
from \eqref{eq:tangent-control}, \eqref{eq:AP-rem},
\eqref{eq:V-rem}, \eqref{eq:BP-bound}, and \eqref{eq:Q-bound} that
\begin{equation}
 |\delta\Xi^\tau(P)_{s,u,t}|
 \le C\bigl(\|P\|_{\infty;I}+K_P+r\bigr)
        \omega(s,t)^{3/p}.                                       \label{eq:delta-xi-bound}
\end{equation}
Since \(2<p<3\),
\[
 \frac3p>1,
\]
and the remaining defect is in the sewing regime.  The symmetry condition
in \eqref{eq:strategy-linear-sym} is not used in this three-point
cancellation.  Its role is to remove the symmetric part of the residual
between two joint-lift representations, as shown below.

\subsection{Existence from a representative}
\label{subsec:representative-existence}

Choose one finite-norm joint-lift representative of
\(\tau=(h,\kappa)\).  Write it as
\[
 \mathbf Z\in WG\Omega_p(\mathbb R^d\oplus\mathbb R^d),
 \qquad \pi_x\mathbf Z=\mathbf x,\qquad
 (\pi_h\mathbf Z)^1=h,
\]
together with an additive antisymmetric path \(a\) of finite
\(q\)-variation.  Let
\begin{equation}
 C_{s,t}^{j,i}
 =\mathbf Z_{s,t}^{x_j,h_i}
  +\mathbf Z_{s,t}^{h_j,x_i}.                               \label{eq:flow-cross-sum}
\end{equation}
The representative is chosen so that
\begin{equation}
 \kappa=C+a.                                                       \label{eq:kappa-cross-central}
\end{equation}
The associated radial rough path has levels
\[
 x+\varepsilon h,\qquad
 \mathbb x+\varepsilon(C+a)+\varepsilon^2\mathbb h_{\mathbf Z}.
\]

For this fixed joint driver, solve the mixed rough/Young equation
\begin{equation}
 \boxed{
 dP_t
 =A_i(\Theta_t)P_t\,d\mathbf Z_t^{x,i}
  +V_i(\Theta_t)\,d\mathbf Z_t^{h,i}
  +\sum_{i<j}[V_i,V_j](\Theta_t)\,da_t^{ij},
 \qquad P_v=0 .
 }                                                               \label{eq:representative-variation}
\end{equation}
The last integral is a Young integral because
\[
 \frac1p+\frac1q=\frac3p>1.
\]
The two rough integrals in \eqref{eq:representative-variation} are defined
with the same joint lift \(\mathbf Z\).  There is no stand-alone rough
integral against a general finite-\(p\)-variation path \(h\).

To identify the first variation, one may equivalently consider the
fixed-driver family
\begin{equation}
 \begin{split}
 d\Theta_t^\varepsilon
  ={}&V_i(\Theta_t^\varepsilon)\,d\mathbf Z_t^{x,i}
    +\varepsilon V_i(\Theta_t^\varepsilon)\,d\mathbf Z_t^{h,i}\\
   &+\varepsilon\sum_{i<j}
     [V_i,V_j](\Theta_t^\varepsilon)\,da_t^{ij},
 \qquad \Theta_v^\varepsilon=0 .
 \end{split}                                                       \label{eq:fixed-driver-family}
\end{equation}
The fixed-driver It\^o--Lyons regularity theorem
\cite{Bailleul2015} applies to this affine vector-field parameter.  Its
derivative at zero solves
\eqref{eq:representative-variation}.  This statement differentiates the
vector fields while holding \(\mathbf Z\) fixed.  It is not a claim of
Fr\'echet differentiability with respect to the rough driver.

On \(\mathcal E_{\mathrm{jet}}\times\mathcal E_{\mathrm{jet}}\), define
\[
 \widehat V_i^x(\theta,p)
   =(V_i(\theta),A_i(\theta)p),\qquad
 \widehat V_i^h(\theta,p)
   =(0,V_i(\theta)).
\]
Their second-level actions on the \(P\)-component are
\begin{align}
 \operatorname{pr}_P\!\left(
 D\widehat V_i^x\,\widehat V_j^x\right)
   &=B_{j,i}(\Theta)P,&
 \operatorname{pr}_P\!\left(
 D\widehat V_i^h\,\widehat V_j^x\right)
   &=Q_{j,i}(\Theta),                                             \label{eq:extended-fields-one}\\
 \operatorname{pr}_P\!\left(
 D\widehat V_i^x\,\widehat V_j^h\right)
   &=Q_{j,i}(\Theta),&
 \operatorname{pr}_P\!\left(
 D\widehat V_i^h\,\widehat V_j^h\right)
   &=0.                                                           \label{eq:extended-fields-two}
\end{align}
Thus the self-area of \(h\) has no first-order effect, and the two cross
blocks enter only through the sum \(C\).  The Davie expansion of
\eqref{eq:representative-variation} is
\begin{align}
 P_{s,t}
 ={}&A_i^sP_sx_{s,t}^i+V_i^sh_{s,t}^i
     +B_{j,i}^sP_s\mathbb x_{s,t}^{j,i}
     +Q_{j,i}^sC_{s,t}^{j,i} \notag\\
 &+\sum_{i<j}[V_i,V_j](\Theta_s)a_{s,t}^{ij}
     +P^{\natural,\mathbf Z,a}_{s,t},                             \label{eq:representative-davie}
\end{align}
with a \(3/p\)-order remainder for the fixed representative.

\subsection{Representative independence}
\label{subsec:representative-independence}

The cross-block Chen identities imply
\[
 (\delta C)_{s,u,t}
 =x_{s,u}\otimes h_{u,t}
  +h_{s,u}\otimes x_{u,t}.
\]
Comparison with \eqref{eq:strategy-linear-chen} shows that
\(a=\kappa-C\) is additive.  Weak-geometric symmetry of the joint lift gives
\[
 \operatorname{Sym}C_{s,t}
 =\frac12\bigl(
 x_{s,t}\otimes h_{s,t}
 +h_{s,t}\otimes x_{s,t}\bigr).
\]
Equation \eqref{eq:strategy-linear-sym} therefore implies
\(\operatorname{Sym}a=0\).

We use the convention
\[
 a_{s,t}
 =\sum_{i<j}a_{s,t}^{ij}
   (e_i\otimes e_j-e_j\otimes e_i),
 \qquad
 [V_i,V_j]=DV_jV_i-DV_iV_j.
\]
With \(Q_{j,i}=DV_iV_j\), direct contraction gives
\begin{equation}
 Q_{j,i}(\Theta_s)a_{s,t}^{j,i}
 =\sum_{i<j}[V_i,V_j](\Theta_s)a_{s,t}^{ij}.                       \label{eq:bracket-contraction}
\end{equation}
There is no factor \(1/2\) under this normalization.  It follows that
\[
 Q_{j,i}^sC_{s,t}^{j,i}
 +\sum_{i<j}[V_i,V_j](\Theta_s)a_{s,t}^{ij}
 =Q_{j,i}^s\kappa_{s,t}^{j,i}.
\]
Hence \eqref{eq:representative-davie} has the intrinsic increment
\eqref{eq:intrinsic-xi}.  This proves existence of an intrinsic tangent
solution.

\subsection{Uniqueness, estimates, and linearity}
\label{subsec:intrinsic-uniqueness}

The representative solution has a finite first-order controlled norm, so
\eqref{eq:delta-xi-bound} applies to its intrinsic increment.  The sewing
lemma from controlled rough integration \cite{Gubinelli2004} gives a unique
additive increment \(\mathcal I\Xi^\tau(P)\) such that
\begin{equation}
 \left|
   (\mathcal I\Xi^\tau(P))_{s,t}
   -\Xi^\tau_{s,t}(P_s)
 \right|
 \le C\bigl(\|P\|_{\infty;I}+K_P+r\bigr)
       \omega(s,t)^{3/p}.                                        \label{eq:sewn-increment}
\end{equation}
The increments of the representative solution are additive and satisfy the
same estimate.  They therefore coincide with
\(\mathcal I\Xi^\tau(P)\).  The exact defect cancellation above and the
uniqueness argument below are proved in this section rather than imported
from that reference.

The sewing estimate also closes the local a priori bound.  On
\([s,t]\subset I\), define
\[
 \begin{split}
 \|P\|_{\star;[s,t]}
 ={}&\sup_{r\in[s,t]}|P_r|
  +\sup_{a<b\in[s,t]}
       \frac{|P_{a,b}|}{\omega(a,b)^{1/p}}\\
 &+\sup_{a<b\in[s,t]}
       \frac{|R^P_{a,b}|}{\omega(a,b)^{2/p}} .
 \end{split}
\]
For \(\omega(s,t)\le\rho\), equations
\eqref{eq:intrinsic-xi}, \eqref{eq:delta-xi-bound}, and
\eqref{eq:sewn-increment} give
\begin{equation}
 \|P\|_{\star;[s,t]}
 \le C\bigl(|P_t|+r\bigr)
      +C\rho^{1/p}\|P\|_{\star;[s,t]}.                             \label{eq:star-absorption}
\end{equation}
After decreasing \(\rho\), the last term is absorbed.  In particular,
\begin{equation}
 |P^\natural_{s,t}|\le Cr\omega(s,t)^{3/p},\qquad
 |R^P_{s,t}|\le Cr\omega(s,t)^{2/p}                               \label{eq:intrinsic-remainders}
\end{equation}
on each small-control interval.  A finite greedy partition extends these
estimates to \(I\).

Let \(P\) and \(\widetilde P\) be two intrinsic solutions, and put
\(D=P-\widetilde P\).  Their source terms cancel:
\begin{equation}
 D_{s,t}
 =A_i^sD_sx_{s,t}^i
  +B_{j,i}^sD_s\mathbb x_{s,t}^{j,i}
  +D^\natural_{s,t},\qquad
 |D^\natural_{s,t}|\le C_D\omega(s,t)^{3/p}.                       \label{eq:homogeneous-difference}
\end{equation}
For a partition \(\pi\) of \([s,t]\),
\[
 \sum_{[r,r']\in\pi}|D^\natural_{r,r'}|
 \le C_D
 \left(\max_{[r,r']\in\pi}\omega(r,r')\right)^{3/p-1}
 \omega(s,t),
\]
which tends to zero with the \(\omega\)-mesh.  Equation
\eqref{eq:homogeneous-difference} therefore identifies \(D\) with the
solution of the homogeneous linear RDE
\[
 dD_t=A_i(\Theta_t)D_t\,d\mathbf x_t^i,\qquad D_v=0.
\]
On an interval with \(\omega(s,t)\le\rho\), the linear rough estimate yields
\[
 \sup_{r\in[s,t]}|D_r|
 \le |D_t|+C\rho^{1/p}\sup_{r\in[s,t]}|D_r|.
\]
Choose \(\rho\) so that the last term is absorbed.  Backward iteration over
a finite greedy partition gives \(D=0\).

The absorbed estimate \eqref{eq:star-absorption}, with the sources retained,
gives
\begin{align}
 &\sup_{r\in[s,t]}|P_r|
  +\|P\|_{p\text{-var};[s,t]}
  +\|R^P\|_{q\text{-var};[s,t]} \notag\\
 &\hspace{35mm}\le C\bigl(|P_t|+r\bigr)                            \label{eq:local-intrinsic-estimate}
\end{align}
on every small-control interval.  Backward iteration proves
\eqref{eq:intrinsic-flow-bound}.  Normalization by \(r\) in
\eqref{eq:normalized-control} prevents the constant from acquiring an
additional dependence on the size of the direction.

If \(\tau_1,\tau_2\in\mathbb T_{\mathbf x}^p\) and
\(\alpha,\beta\in\mathbb R\), then
\(\alpha P^{\tau_1}+\beta P^{\tau_2}\) satisfies the intrinsic equation
with source \(\alpha\tau_1+\beta\tau_2\).  Uniqueness gives
\[
 P^{\alpha\tau_1+\beta\tau_2}
 =\alpha P^{\tau_1}+\beta P^{\tau_2}.
\]
Applying \eqref{eq:intrinsic-flow-bound} to
\(\tau_1-\tau_2\) proves continuity.  Since every representative produces
the same intrinsic increment, uniqueness also proves independence of the
joint lift, the allocation between cross area and central correction, and
the self-area assigned to \(h\).

\subsection{Passage to the physical jet and the uniform-expansion interface}
\label{subsec:physical-jet}

For the scalar backward flow, the shifted four-jet vector field is
\[
 \mathsf H_i(r,j,k,\ell)
 =
 \begin{pmatrix}
 -H_i(r)\\
 -H_i'(r)j\\
 -H_i''(r)j^2-H_i'(r)k\\
 -H_i'''(r)j^3-3H_i''(r)jk-H_i'(r)\ell
 \end{pmatrix}.
\]
The cutoff makes the polynomial jet coordinates bounded.  Spatial
difference quotients and the closedness of differentiation identify the
components of \(P^\tau\) with the first variations of
\[
 \phi,\qquad \partial_y\phi,\qquad
 \partial_{yy}\phi,\qquad\partial_{yyy}\phi.
\]
Consequently Proposition~\ref{prop:intrinsic-flow-variation} supplies the
representation-free derivative of the full reset-flow jet in the
levelwise variation norm.

\medskip
For the first flow component \(U^\tau\), the backward sign convention and
the bracket normalization used above give
\begin{equation}
 \begin{split}
 dU_t^\tau
 ={}&-DH_i(\phi_t)U_t^\tau\,d\mathbf Z_t^{x,i}
     -H_i(\phi_t)\,d\mathbf Z_t^{h,i}\\
    &+\sum_{i<j}[H_i,H_j](\phi_t)\,da_t^{ij},
 \qquad U_v^\tau=0,
 \end{split}                                                       \label{eq:physical-first-variation}
\end{equation}
where
\[
 [H_i,H_j]=DH_jH_i-DH_iH_j.
\]
Thus the central perturbation has the plus sign in
\eqref{eq:physical-first-variation}.  The notation
\(d\mathbf Z^h\) records the cross lift required for a general
finite-\(p\)-variation direction.

\subsubsection{Interface with the uniform expansion}
The coefficient analysis in Section~\ref{sec:generator-expansion} invokes
a separate uniform nonlinear flow-jet proposition.  For radial
realizations with a common bounded representative budget, that proposition
has the estimate
\begin{equation}
 \sup_{I\in\Pi(\mathbf x)}
 \sup_{\|\tau\|_{\mathbb T_{\mathbf x}^p}\le1}
 \left\|
   \mathfrak J_I(\varepsilon;\tau)
   -\mathfrak J_I(0)-\varepsilon P_I^\tau
 \right\|_{\mathfrak V_3^\Delta(I)}
 \le C\varepsilon^2.                                              \label{eq:required-uniform-four-jet}
\end{equation}
Here \(\mathfrak J_I\) contains the flow and the three spatial jets on the
reset interval \(I\).  Proposition~\ref{prop:intrinsic-flow-variation}
identifies the derivative \(P_I^\tau\) and proves its representative
independence.  The bound \eqref{eq:required-uniform-four-jet} is a separate
estimate: it uses the cutoff augmented RDE, second-order fixed-driver
parameter estimates, and uniform control of the representative drivers.
The weighted expansions of \(F\), \(F_y\), and \(F_z\) use both results.
\subsection{Uniform nonlinear expansion on bounded tangent sets}
\label{subsec:uniform-full-jet}

The intrinsic equation identifies the only possible first variation.  We
now prove the uniform nonlinear estimate needed to pass that variation to
the transformed generator.  The argument is carried out for a fixed joint
driver.  Uniformity follows from the quantitative bound on the chosen
representatives, not from continuity of a lift selection.

For \(r\ge1\), write
\[
 \mathfrak J_I^{[r]}(\varepsilon;\mathbf Z,a)
 =
 \bigl(
 \phi^\varepsilon-\operatorname{Id},
 \partial_y\phi^\varepsilon-1,
 \partial_y^2\phi^\varepsilon,\ldots,
 \partial_y^r\phi^\varepsilon
 \bigr)
\]
and set
\begin{equation}
 \|G\|_{\mathfrak V_r^\Delta(I)}
 =
 \sum_{\ell=0}^r\sup_{y\in\mathbb R}
 \left\{
  \|G_\ell(\cdot,y)\|_{\infty;I}
  +\|G_\ell(\cdot,y)\|_{p\text{-var};I}
 \right\}.                                                       \label{eq:full-jet-levelwise-norm}
\end{equation}
For a three-component coefficient tuple \(R=(R_1,R_2,R_3)\), set
\begin{equation}
 \|R\|_{\mathfrak C_3(I)}
 =
 \sum_{j=1}^3\sup_{y\in\mathbb R}
 \left\{
  \|R_j(\cdot,y)\|_{\infty;I}
  +\|R_j(\cdot,y)\|_{p\text{-var};I}
 \right\}.                                                       \label{eq:rational-coefficient-norm}
\end{equation}
The same expression applied to
\((\phi-\operatorname{Id},\partial_y\phi-1,\ldots)\) is denoted by
\(\|\mathfrak J_I^{[r]}\|_{\mathfrak V_r^{\mathrm{aff}}(I)}\).

Let \(\rho_p\) denote the homogeneous \(p\)-variation size of a
weakly geometric rough path.  A representative of
\(\tau=(h,\kappa)\in\mathbb T_{\mathbf x}^p\) consists of
\[
 \mathbf Z\in
 WG\Omega_p(\mathbb R_x^d\oplus\mathbb R_h^d)
 \quad\text{and}\quad
 a\in\mathcal V_0^q(\mathfrak{so}(d))
\]
such that
\[
 \pi_x\mathbf Z=\mathbf x,\qquad
 (\pi_h\mathbf Z)^1=h,\qquad
 \kappa=C_{\mathbf Z}+a,
\]
where
\[
 C_{\mathbf Z}^{j,i}
 =\mathbf Z^{x_j,h_i}+\mathbf Z^{h_j,x_i}.
\]
Put \(\mathbb h_{\mathbf Z}=\mathbf Z^{h,h}\).  The corresponding radial
rough path is
\begin{equation}
 \begin{split}
  (\mathbf x^{\varepsilon;\mathbf Z,a})^1
    &=x+\varepsilon h,\\
  (\mathbf x^{\varepsilon;\mathbf Z,a})^2
    &=\mathbb x+\varepsilon\kappa
      +\varepsilon^2\mathbb h_{\mathbf Z}.
 \end{split}                                                       \label{eq:full-jet-radial-path}
\end{equation}

\begin{proposition}[Uniform full-tangent flow-jet expansion]
\label{prop:uniform-full-jet-expansion}
Assume \(H_i\in C_b^9(\mathbb R)\).  Fix
\(M_{\mathbf x},L_{\rm br}<\infty\).  There are constants
\[
 \delta>0,\qquad \varepsilon_0>0,\qquad C<\infty,\qquad N<\infty
\]
with the following property.  For every
\(\mathbf x\in WG\Omega_p(\mathbb R^d)\) satisfying
\(\rho_p(\mathbf x)\le M_{\mathbf x}\), there is a partition
\(\Pi(\mathbf x)\) with at most \(N\) intervals.  Its endpoints depend only
on the base control.  Let
\[
 \|\tau\|_{\mathbb T_{\mathbf x}^p}\le1
\]
and choose any representative satisfying
\begin{equation}
 \rho_p(\mathbf Z)+\|a\|_{q\text{-var}}\le L_{\rm br}.             \label{eq:full-jet-representative-budget}
\end{equation}
Then, for \(|\varepsilon|\le\varepsilon_0\), every interval
\(I\in\Pi(\mathbf x)\) is a valid reset interval for
\(\mathbf x^{\varepsilon;\mathbf Z,a}\), and
\begin{equation}
 \frac12\le
 \partial_y\phi_t^{\varepsilon,I}(y)
 \le\frac32
 \quad\text{for all }(t,y)\in I\times\mathbb R.                   \label{eq:full-jet-J-lower}
\end{equation}

There is a representative first variation
\(P_I^{\mathbf Z,a,[5]}\) such that
\begin{align}
 &\sup_{I\in\Pi(\mathbf x)}
 \left\|
 \mathfrak J_I^{[5]}(\varepsilon;\mathbf Z,a)
 -\mathfrak J_I^{[5]}(0)
 -\varepsilon P_I^{\mathbf Z,a,[5]}
 \right\|_{\mathfrak V_5^\Delta(I)}
 \le C\varepsilon^2,                                              \label{eq:full-five-jet-remainder}\\
 &\sup_{I\in\Pi(\mathbf x)}
 \left\{
 \|\mathfrak J_I^{[5]}(\varepsilon;\mathbf Z,a)
   \|_{\mathfrak V_5^{\mathrm{aff}}(I)}
 +\|P_I^{\mathbf Z,a,[5]}\|_{\mathfrak V_5^\Delta(I)}
 \right\}
 \le C.                                                          \label{eq:full-five-jet-bound}
\end{align}
Moreover,
\begin{equation}
 \sup_{I\in\Pi(\mathbf x)}
 \|P_I^{\mathbf Z,a,[5]}\|_{\mathfrak V_5^\Delta(I)}
 \le C\|\tau\|_{\mathbb T_{\mathbf x}^p}.                         \label{eq:full-five-jet-linear-bound}
\end{equation}
The projection of \(P_I^{\mathbf Z,a,[5]}\) onto spatial orders
\(0,\ldots,3\) is the intrinsic variation \(P_I^\tau\) from
Proposition~\ref{prop:intrinsic-flow-variation}.  Consequently,
\begin{equation}
 \boxed{
 \sup_{I\in\Pi(\mathbf x)}
 \left\|
 \mathfrak J_I^{[3]}(\varepsilon;\mathbf Z,a)
 -\mathfrak J_I^{[3]}(0)-\varepsilon P_I^\tau
 \right\|_{\mathfrak V_3^\Delta(I)}
 \le C\varepsilon^2
 }                                                               \label{eq:intrinsic-uniform-four-jet}
\end{equation}
and
\begin{equation}
 \sup_{I\in\Pi(\mathbf x)}
 \|P_I^\tau\|_{\mathfrak V_3^\Delta(I)}
 \le C\|\tau\|_{\mathbb T_{\mathbf x}^p}.                         \label{eq:intrinsic-jet-linear-bound}
\end{equation}
In particular, the fourth and fifth spatial derivatives obey
\begin{equation}
 \sup_{\substack{I\in\Pi(\mathbf x)\\
                  |\varepsilon|\le\varepsilon_0}}
 \sum_{\ell=4}^5
 \sup_y\left\{
 \|\partial_y^\ell\phi^{\varepsilon,I}(\cdot,y)\|_{\infty;I}
 +\|\partial_y^\ell\phi^{\varepsilon,I}(\cdot,y)\|_{p\text{-var};I}
 \right\}
 \le C.                                                          \label{eq:fourth-fifth-flow-bound}
\end{equation}

Let
\(\psi_t^{\varepsilon,I}=(\phi_t^{\varepsilon,I})^{-1}\), and define
\[
 \mathfrak I_I^{[3]}(\varepsilon)
 =
 \bigl(
 \psi^\varepsilon-\operatorname{Id},
 \partial_y\psi^\varepsilon-1,
 \partial_y^2\psi^\varepsilon,
 \partial_y^3\psi^\varepsilon
 \bigr).
\]
There is a continuous linear inverse-flow variation
\(\dot{\mathfrak I}_I^\tau\) for which
\begin{equation}
 \sup_{I\in\Pi(\mathbf x)}
 \left\|
 \mathfrak I_I^{[3]}(\varepsilon)
 -\mathfrak I_I^{[3]}(0)
 -\varepsilon\dot{\mathfrak I}_I^\tau
 \right\|_{\mathfrak V_3^\Delta(I)}
 \le C\varepsilon^2.                                              \label{eq:inverse-three-jet-remainder}
\end{equation}
It satisfies
\begin{equation}
 \sup_{I\in\Pi(\mathbf x)}
 \|\dot{\mathfrak I}_I^\tau\|_{\mathfrak V_3^\Delta(I)}
 \le C\|\tau\|_{\mathbb T_{\mathbf x}^p}.                         \label{eq:inverse-jet-linear-bound}
\end{equation}
Its first component is
\begin{equation}
 \dot\psi^\tau
 =-\left(\frac{U^\tau}{J}\right)\circ\psi,                         \label{eq:inverse-first-variation}
\end{equation}
and its remaining components are the first three spatial derivatives of
\eqref{eq:inverse-first-variation}.

Finally, put
\[
 \mathcal Q(\phi,J,K,L)
 =
 \left(
 J^{-1},\frac KJ,\frac LJ-\frac{K^2}{J^2}
 \right).
\]
The same constants give
\begin{equation}
 \sup_{I\in\Pi(\mathbf x)}
 \left\|
 \mathcal Q^\varepsilon_I-\mathcal Q^0_I
 -\varepsilon\dot{\mathcal Q}_I^\tau
 \right\|_{\mathfrak C_3(I)}
 \le C\varepsilon^2,                                              \label{eq:rational-jet-remainder}
\end{equation}
where
\begin{equation}
 \dot{\mathcal Q}^\tau
 =
 \left(
 -\frac{\dot J}{J^2},\
 \frac{\dot K}{J}-\frac{K\dot J}{J^2},\
 \frac{\dot L}{J}-\frac{L\dot J}{J^2}
 -\frac{2K\dot K}{J^2}
 +\frac{2K^2\dot J}{J^3}
 \right).                                                        \label{eq:rational-jet-derivative}
\end{equation}
All constants depend only on
\[
 p,d,T,M_{\mathbf x},L_{\rm br},
 \max_i\|H_i\|_{C_b^9},
\]
the fixed tensor norms, and the fixed cutoff profile.  They do not depend
on \(\tau\), its selected representative within
\eqref{eq:full-jet-representative-budget}, the sign of \(\varepsilon\), or
the concentration profile of the directional variation.
\end{proposition}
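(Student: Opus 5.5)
The plan is to work throughout with one fixed joint driver and treat $\varepsilon$ as a parameter in the vector fields rather than in the rough path. Given a representative $(\mathbf Z,a)$ with budget at most $L_{\rm br}$, the radial path $\mathbf x^{\varepsilon;\mathbf Z,a}$ is the image of the rough path $(\mathbf Z,a)$ under the linear map $(v,w)\mapsto v+\varepsilon w$, with the Young central component $a$ entering through bracket fields. Hence the backward flow driven by $\mathbf x^{\varepsilon;\mathbf Z,a}$ coincides with the solution of the fixed-driver family \eqref{eq:fixed-driver-family}, lifted to the five-jet state space: $\Theta^{[5]}=(\phi-\operatorname{Id},\partial_y\phi-1,\ldots,\partial_y^5\phi)$, with the cutoff Nemytskii fields built from the jet vector fields $\mathsf H_i$ extended to order five.

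$H_i\in C_b^9$ leaves four spare derivatives, so these fields are $C_b^4$ on the cutoff state space. This is enough for second-order parameter differentiability with uniform bounds, by the It\^o--Lyons/Omega-lemma regularity already invoked from \cite{Bailleul2015}.

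The first step is the partition. I take $\Pi(\mathbf x)$ from the control \eqref{eq:perturbed-control}: the base part of the control is chosen below half the reset threshold on each interval, and $\varepsilon_0$ is chosen from $L_{\rm br}$. The number of intervals is then bounded by $N\sim (M_{\mathbf x}^p/\text{threshold})+1$.

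On each interval the small control gives two facts. First, the cutoff solution stays on the physical jet graph. Second, $|\partial_y\phi-1|\le\frac12$, which is the standard linear rough estimate for $J$ with small control and gives \eqref{eq:full-jet-J-lower}.

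Next comes the Taylor step in $\varepsilon$. Let $P^{\mathbf Z,a,[5]}$ be the derivative at $\varepsilon=0$, solving the linear equation \eqref{eq:representative-variation} at jet order five. The remainder $\Theta^\varepsilon-\Theta^0-\varepsilon P$ solves a linear rough/Young equation whose source is quadratic in $\varepsilon$. Local absorption as in \eqref{eq:star-absorption}, iterated over at most $N$ intervals, then gives the $C\varepsilon^2$ bound in $p$-variation, uniformly in $y$; here uniformity in $y$ comes from the $C_b$ state norm.

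The bounds \eqref{eq:full-five-jet-bound} and \eqref{eq:fourth-fifth-flow-bound} follow from the same a priori estimate applied to the nonlinear equation. The linear bound \eqref{eq:full-five-jet-linear-bound} needs more care: I rescale by $r=\|\tau\|$ using the ray-homogeneous dilation $D_{1,\lambda}$ of Lemma~\ref{lem:bounded-realization}. The first variation is linear in the drivers $(\mathbf Z^h,a)$, and these scale by $\lambda$. The alternative is to apply \eqref{eq:intrinsic-flow-bound} directly after the identification below.

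The order-$\le3$ projection of $P^{\mathbf Z,a,[5]}$ satisfies the intrinsic Davie relation, by the computation in \S\ref{subsec:representative-independence}. By the uniqueness part of Proposition~\ref{prop:intrinsic-flow-variation} it therefore equals $P_I^\tau$. This yields \eqref{eq:intrinsic-uniform-four-jet} and \eqref{eq:intrinsic-jet-linear-bound}, and the linear bound at orders $\le3$ follows from \eqref{eq:intrinsic-flow-bound}.

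For the inverse flow, $\psi^\varepsilon$ is defined by $\phi^\varepsilon(\psi^\varepsilon(y))=y$. Differentiating in $\varepsilon$ gives \eqref{eq:inverse-first-variation}. For the $\varepsilon^2$ remainder I write
\[
\psi^\varepsilon-\psi^0+\varepsilon\,(U/J)\circ\psi^0
\]
and Taylor expand $\phi^\varepsilon$ around $\psi^0$. I use $J\ge\frac12$ for the inverse-function bounds, together with the five-jet remainder to control the composition. The composition with the $\varepsilon$-dependent argument $\psi^\varepsilon$ costs one extra spatial derivative, and $p$-variation in $t$ of the composite costs another. This is exactly why five jets are carried in order to control three inverse jets.

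The spatial derivatives up to order three of the inverse are then explicit rational expressions in $J,K,L$ (and in the fourth derivative of $\phi$ for $\partial^3\psi$), evaluated at $\psi$. The same Taylor argument applies to them. The estimate \eqref{eq:rational-jet-remainder} follows because $\mathcal Q$ is smooth on $\{J\ge\frac12\}$ with bounded jets, and $p$-variation is stable under composition with a $C^2$ map: $\|g(F^\varepsilon)-g(F^0)-Dg(F^0)\dot F\varepsilon\|$ is bounded by second-order terms.

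I expect the main obstacle to be the $p$-variation (not just sup) estimate of compositions $G(t,\psi^\varepsilon_t(y))$ uniformly in $y$. The time increment mixes $G_{s,t}$ with $\partial_yG\cdot\psi_{s,t}$, so variation bounds require one more spatial derivative of $G$, with bounds uniform in $y$. I would handle this by writing every composite increment as
\[
G_{s,t}(\psi_t)+[G_s(\psi_t)-G_s(\psi_s)],
\]
bounding the second term by $\|\partial_yG\|_\infty|\psi_{s,t}|$, and tracking the extra derivatives against the five-jet budget.
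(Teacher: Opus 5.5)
Your overall architecture is the paper's. You use a fixed joined driver with $\varepsilon$ moved into the fields, the order-five cutoff prolongation with $C_b^4$ fields, and a linear remainder equation with $O(\varepsilon^2)$ source. You identify the order-$\le3$ projection with $P_I^\tau$ by intrinsic sewing uniqueness, and finish with five-jet inverse and rational calculus.

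\textbf{The gap: orders four and five of the linear bound.} Your argument for \eqref{eq:full-five-jet-linear-bound} at spatial orders four and five does not work as written. The bound is claimed for \emph{every} representative with budget at most $L_{\rm br}$.
\begin{itemize}
\item Dilating that representative by $D_{1,1/r}$, with $r=\|\tau\|_{\mathbb T_{\mathbf x}^p}$, does not keep the budget bounded. When $r$ is small, $h$ and the symmetric parts are small, but the antisymmetric parts of the cross blocks $\pi_{1,2}(\mathbf Z),\pi_{2,1}(\mathbf Z)$ and of the self-area need not be.
\item For example, a large antisymmetric cross area can be cancelled by $a$ so that $\kappa=C_{\mathbf Z}+a$ is small. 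After dilation these quantities, and $a/r$, grow like negative powers of $r$.
\item Linearity of the first variation in $(\mathbf Z^h,a)$ only bounds it by the individual blocks $h,\pi_{1,2}(\mathbf Z),\pi_{2,1}(\mathbf Z),a$, not by $\|(h,\kappa)\|$.
\item Your fallback \eqref{eq:intrinsic-flow-bound} covers only orders $\le3$.
\end{itemize}

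What is missing is representative independence of the whole five-jet variation. The paper obtains it from three facts: each $\Theta^\varepsilon$ lies on the physical jet graph, this graph is closed in $\mathfrak V_5^\Delta(I)$, and the difference quotients converge there by the $\varepsilon^2$ remainder. Hence the order-four and order-five components of $P_I^{\mathbf Z,a,[5]}$ are $\partial_y^4U^\tau$ and $\partial_y^5U^\tau$, so they depend only on $\tau$ and are linear in it. Only then may you evaluate on a bounded ray-homogeneous realization of $\tau/\|\tau\|$ and scale. Alternatively, running the intrinsic sewing argument of Proposition~\ref{prop:intrinsic-flow-variation} directly on $\mathcal E_{\mathrm{jet}}^{[5]}$, where the fields are $C_b^4$, would also close this step.

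\textbf{Smaller corrections.}
\begin{itemize}
\item \emph{Absorption partition.} In the fixed-driver formulation, the driver $\mathbf W$ carries $h$, the cross blocks, and $a$ at full size; only the fields carry $\varepsilon$. So the control of $\mathbf W$ is not small on the reset intervals. The local absorption must be done on an auxiliary greedy partition for $\omega_{\mathbf W}$, whose cardinality is bounded through \eqref{eq:joined-driver-budget}, not on the at most $N$ reset intervals.
\item \emph{Uniformity of the second-order estimate.} Uniformity over all drivers in the budget class is what the paper proves by hand in Lemma~\ref{lem:joined-controlled-composition}; this is also where the fourth field derivative is consumed. Citing \cite{Bailleul2015} instead requires that its constants depend on $\mathbf W$ only through $\rho_p(\mathbf W)$.
\item \emph{Inverse-jet slip.} $\partial_y^3\psi$ involves only $J$, $K$, $L$ composed with $\psi$, as in \eqref{eq:inverse-jet-three}. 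The fourth and fifth derivatives of $\phi$ enter only through the parameter and time variations of these composites.
\end{itemize}
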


\subsubsection{The joined driver and the cutoff equation}
\label{subsubsec:joined-cutoff-driver}

Fix one tangent and one representative satisfying
\eqref{eq:full-jet-representative-budget}.  Regard \(a\) as a path with
values in
\[
 \mathbb R^{d(d-1)/2}.
\]
We use the convention
\[
 a_{s,t}
 =\sum_{i<j}a_{s,t}^{ij}
 (e_i\otimes e_j-e_j\otimes e_i).
\]
The Young joint lift of \(\mathbf Z\) and \(a\) is well defined because
\[
 \frac1p+\frac1q=\frac3p>1,
 \qquad
 \frac2q=\frac4p>1.
\]
Denote this fixed weakly geometric rough path by
\(\mathbf W=\mathbf W^{\mathbf Z,a}\).  Young--Loeve estimates for the
two cross blocks and the self-lift of \(a\) give
\begin{equation}
 \rho_p(\mathbf W)
 \le C_{p,d}\bigl(
  \rho_p(\mathbf Z)+\|a\|_{q\text{-var}}
  +\rho_p(\mathbf Z)\|a\|_{q\text{-var}}
  +\|a\|_{q\text{-var}}^2
 \bigr)
 \le M_W,                                                        \label{eq:joined-driver-budget}
\end{equation}
where \(M_W\) depends only on \(p,d,L_{\rm br}\).

We use an order-five auxiliary jet.  The extra orders belong to the proof,
not to the conclusion: carrying the fifth coordinate closes the spatial
difference-quotient identification of the fourth derivative, and projection
then gives the physical four-jet remainder required in
\eqref{eq:intrinsic-uniform-four-jet}.  No fifth-order physical response is
asserted.  For a scalar vector field \(F\), let \(\mathcal P_5F\) be its
fifth prolongation:
\[
 \bigl(\mathcal P_5F(z)\bigr)_\ell
 =
 \left.
 \partial_y^\ell F(g(y))
 \right|_{\partial_y^kg=z_k,\ 0\le k\le\ell},
 \qquad 0\le\ell\le5.
\]
Each component is a Fa\`a di Bruno polynomial.  It depends on
\(z_1,\ldots,z_\ell\).  Work on
\[
 \mathcal E_{\mathrm{jet}}^{[5]}
 =C_b(\mathbb R;\mathbb R^6)
\]
after shifting
\[
 (z_0,z_1,\ldots,z_5)
 \longmapsto
 (z_0-\operatorname{Id},z_1-1,z_2,\ldots,z_5).
\]
Choose a smooth cutoff that is one when
\(\max_{1\le\ell\le5}|z_\ell-\mathbf1_{\{\ell=1\}}|\le1\) and zero
when this maximum is at least two.  Apply it to components
\(1,\ldots,5\) of \(\mathcal P_5(-H_i)\), leaving the bounded zeroth
component unchanged.  The resulting Nemytskii fields are denoted by
\(A_i\).

The order-five prolongation uses derivatives of \(H_i\) through order five.
Four Fr\'echet derivatives with respect to the jet coordinates use
derivatives through order nine.  Hence
\begin{equation}
 A_i\in
 C_b^4\bigl(
 \mathcal E_{\mathrm{jet}}^{[5]};
 \mathcal E_{\mathrm{jet}}^{[5]}
 \bigr).                                                         \label{eq:rough-cutoff-regularity}
\end{equation}
With the convention
\[
 [A_i,A_j]=DA_jA_i-DA_iA_j,
\]
put \(C_{ij}=[A_i,A_j]\).  Then
\begin{equation}
 C_{ij}\in
 C_b^3\bigl(
 \mathcal E_{\mathrm{jet}}^{[5]};
 \mathcal E_{\mathrm{jet}}^{[5]}
 \bigr).                                                         \label{eq:bracket-cutoff-regularity}
\end{equation}
On the region where the cutoff is one, \(C_{ij}\) is the fifth
prolongation of
\[
 [H_i,H_j]=DH_jH_i-DH_iH_j.
\]

Index the coordinates of \(\mathbf W\) by \(x_i,h_i,a_{ij}\), and define
the affine field family
\begin{equation}
 \mathcal A^\varepsilon_{x_i}=A_i,\qquad
 \mathcal A^\varepsilon_{h_i}=\varepsilon A_i,\qquad
 \mathcal A^\varepsilon_{a_{ij}}=\varepsilon C_{ij}.              \label{eq:affine-field-family}
\end{equation}
After reversing time, solve
\begin{equation}
 d\Theta_t^\varepsilon
 =\mathcal A^\varepsilon_\lambda
   (\Theta_t^\varepsilon)\,d\mathbf W_t^\lambda,
 \qquad
 \Theta_v^\varepsilon=0.                                         \label{eq:fixed-joined-rde}
\end{equation}
The rough integral in \eqref{eq:fixed-joined-rde} is one integral against
the fixed driver \(\mathbf W\).  In mixed notation it is
\begin{equation}
 \begin{split}
 d\Theta_t^\varepsilon
 ={}&A_i(\Theta_t^\varepsilon)\,d\mathbf Z_t^{x,i}
 +\varepsilon A_i(\Theta_t^\varepsilon)\,d\mathbf Z_t^{h,i}\\
 &+\varepsilon\sum_{i<j}
 C_{ij}(\Theta_t^\varepsilon)\,da_t^{ij}.
 \end{split}                                                       \label{eq:fixed-joined-mixed-rde}
\end{equation}
The first two terms retain both cross blocks and the \(h\)-self area of
\(\mathbf Z\).  The last term is a Young integral with all mixed
interactions supplied by \(\mathbf W\).

The central-translation identity can be checked at the Davie level.  The
push-forward of \(\mathbf Z\) under
\((u,v)\mapsto u+\varepsilon v\) has second level
\[
 \mathbb x+\varepsilon C_{\mathbf Z}
 +\varepsilon^2\mathbb h_{\mathbf Z}.
\]
The additional tensor \(\varepsilon a\) contributes
\[
 Q_{j,i}a^{j,i}
 =\sum_{i<j}[A_i,A_j]a^{ij}
\]
under the displayed normalization.  The two equations have the same
Davie increment and a \(3/p\)-order defect.  Uniqueness therefore
identifies \eqref{eq:fixed-joined-rde} with the cutoff RDE driven by
\eqref{eq:full-jet-radial-path}.  There is no factor \(1/2\).

\subsubsection{A uniform fixed-driver linear estimate}
\label{subsubsec:fixed-driver-linear}

For the fixed reference \(\mathbf W\), let
\(\mathcal D_{\mathbf W}^p(I;F)\) be the controlled-path space with norm
\[
 \|R\|_{\mathcal D_{\mathbf W}^p}
 =
 \|R\|_\infty+\|R'\|_\infty
 +\|R'\|_{p\text{-var}}
 +\|R^\sharp\|_{q\text{-var}}.
\]
Consider the terminal linear equation
\begin{equation}
 dR_t
 =M_\lambda(t)R_t\,d\mathbf W_t^\lambda
 +G_\lambda(t)\,d\mathbf W_t^\lambda,
 \qquad R_v=r_v.                                                  \label{eq:joined-linear-rde}
\end{equation}
Assume that \(M\) and \(G\) have controlled norms bounded by \(K\).  There
is a constant
\[
 C_{\rm lin}=C_{\rm lin}(p,M_W,K)
\]
such that
\begin{equation}
 \|R\|_{\mathcal D_{\mathbf W}^p(I)}
 \le C_{\rm lin}\left(
 |r_v|+\|G\|_{\mathcal D_{\mathbf W}^p(I)}
 \right).                                                        \label{eq:joined-linear-estimate}
\end{equation}

We record the dependence in \eqref{eq:joined-linear-estimate}.  Let
\(\omega_{\mathbf W}\) be a control for the two levels of \(\mathbf W\).
Controlled sewing \cite{Gubinelli2004} applied to
\eqref{eq:joined-linear-rde} gives, whenever
\(\omega_{\mathbf W}(s,t)\le\eta\),
\begin{align}
 \|R\|_{\mathcal D_{\mathbf W}^p;[s,t]}
 \le{}&
 c_0\left(
 |R_t|+\|G\|_{\mathcal D_{\mathbf W}^p;[s,t]}
 \right) \notag\\
 &+c_1\left(
 \eta^{1/p}+\eta^{2/p}+\eta^{3/p-1}
 \right)
 \|R\|_{\mathcal D_{\mathbf W}^p;[s,t]}.                          \label{eq:local-linear-absorption}
\end{align}
All three exponents are positive because \(2<p<3\).  Choose
\(\eta=\eta(p,K)\) so that the second-line coefficient is at most
\(1/2\).  The greedy partition associated with
\(\omega_{\mathbf W}\) has at most
\begin{equation}
 1+\frac{\omega_{\mathbf W}(u,v)}{\eta}
 \le1+\frac{M_W^p}{\eta}                                         \label{eq:joined-greedy-count}
\end{equation}
intervals.  Absorption on each interval followed by backward iteration
proves \eqref{eq:joined-linear-estimate}.  The auxiliary endpoints used in
this estimate may depend on \(\mathbf W\), but
\eqref{eq:joined-driver-budget} makes their number uniform.  They do not
alter the reset endpoints in \(\Pi(\mathbf x)\).

The same calculation, with globally bounded fields and terminal value
zero, gives
\begin{equation}
 \sup_{\substack{|\varepsilon|\le1\\
                  I\in\Pi(\mathbf x)}}
 \|\Theta^\varepsilon\|_{\mathcal D_{\mathbf W}^p(I;
 \mathcal E_{\mathrm{jet}}^{[5]})}
 \le C_\Theta.                                                    \label{eq:joined-state-bound}
\end{equation}
Indeed, on each interval in the greedy partition, the controlled integral
of \(\mathcal A^\varepsilon(\Theta^\varepsilon)\) is bounded by the
global \(C_b^3\) field norm plus the absorbable multiple in
\eqref{eq:local-linear-absorption}.  Iteration uses only
\eqref{eq:joined-greedy-count}.  This is the bounded-set,
fixed-reference regularity regime of the It\^o--Lyons map
\cite{Bailleul2015}.  The driver is held fixed throughout the calculation.

\subsubsection{Difference, first variation, and quadratic remainder}
\label{subsubsec:joined-quadratic-remainder}

Write
\[
 \mathcal A^\varepsilon
 =\mathcal A^0+\varepsilon\dot{\mathcal A},
\]
where
\[
 \dot{\mathcal A}_{x_i}=0,\qquad
 \dot{\mathcal A}_{h_i}=A_i,\qquad
 \dot{\mathcal A}_{a_{ij}}=C_{ij}.
\]
Let
\[
 \Delta^\varepsilon=\Theta^\varepsilon-\Theta^0.
\]
The exact difference equation is
\begin{equation}
 d\Delta^\varepsilon
 =\overline{D\mathcal A^0}_\lambda
   \Delta^\varepsilon\,d\mathbf W^\lambda
 +\varepsilon
   \dot{\mathcal A}_\lambda(\Theta^\varepsilon)
   \,d\mathbf W^\lambda,
 \qquad \Delta_v^\varepsilon=0,                                  \label{eq:joined-exact-difference}
\end{equation}
where
\[
 \overline{D\mathcal A^0}_\lambda
 =
 \int_0^1D\mathcal A^0_\lambda
 \bigl(\Theta^0+r\Delta^\varepsilon\bigr)\,dr.
\]
Equations \eqref{eq:joined-state-bound} and
\eqref{eq:rough-cutoff-regularity}--\eqref{eq:bracket-cutoff-regularity}
give a common controlled norm for the coefficients and the source in
\eqref{eq:joined-exact-difference}.  Hence
\begin{equation}
 \|\Delta^\varepsilon\|_{\mathcal D_{\mathbf W}^p(I)}
 \le C|\varepsilon|.                                              \label{eq:joined-lipschitz-parameter}
\end{equation}

Define \(P^{\mathbf Z,a,[5]}\) by
\begin{equation}
 dP_t
 =D\mathcal A^0_\lambda(\Theta_t^0)P_t\,d\mathbf W_t^\lambda
 +\dot{\mathcal A}_\lambda(\Theta_t^0)\,d\mathbf W_t^\lambda,
 \qquad P_v=0.                                                    \label{eq:joined-first-variation}
\end{equation}
The linear estimate gives
\begin{equation}
 \|P^{\mathbf Z,a,[5]}\|_{\mathcal D_{\mathbf W}^p(I)}
 \le C.                                                          \label{eq:joined-first-variation-bound}
\end{equation}
Set
\[
 R^\varepsilon
 =\Delta^\varepsilon-\varepsilon P^{\mathbf Z,a,[5]}.
\]
Subtracting \(\varepsilon\) times
\eqref{eq:joined-first-variation} from
\eqref{eq:joined-exact-difference} yields
\begin{equation}
 \begin{split}
 dR^\varepsilon
 ={}&D\mathcal A^0_\lambda(\Theta^0)R^\varepsilon
       \,d\mathbf W^\lambda\\
 &+Q_{\mathcal A^0,\lambda}
   (\Theta^0,\Delta^\varepsilon)\,d\mathbf W^\lambda\\
 &+\varepsilon\left[
   \dot{\mathcal A}_\lambda(\Theta^\varepsilon)
   -\dot{\mathcal A}_\lambda(\Theta^0)
   \right]d\mathbf W^\lambda,
 \qquad R_v^\varepsilon=0,
 \end{split}                                                       \label{eq:joined-exact-remainder}
\end{equation}
where
\[
 Q_{\mathcal A^0}(\theta,\delta)
 =\mathcal A^0(\theta+\delta)
  -\mathcal A^0(\theta)-D\mathcal A^0(\theta)\delta.
\]

We use the following fixed-driver composition estimate.  Its explicit
form is needed because the norm includes both the Gubinelli derivative and
the controlled remainder.

\begin{lemma}[Quadratic controlled composition]
\label{lem:joined-controlled-composition}
Let \(2<p<3\), \(q=p/2\), and let \(\mathbf W\) range over a family with
\(\rho_p(\mathbf W)\le M_W\).  Let \(E\) and \(\bar E\) be Banach spaces,
and suppose that
\[
 X,\Delta\in\mathcal D_{\mathbf W}^p(I;E),
 \qquad
 \|X\|_{\mathcal D_{\mathbf W}^p}
 +\|\Delta\|_{\mathcal D_{\mathbf W}^p}\le K.
\]
Assume that every segment \(X_t+r\Delta_t\), \(0\le r\le1\), lies in a
common convex strip on which the first four Fr\'echet derivatives of
\(F:E\to\bar E\) are bounded.  Then
\[
 Q_F(X,\Delta)=F(X+\Delta)-F(X)-DF(X)\Delta
 \in\mathcal D_{\mathbf W}^p(I;\bar E)
\]
and
\begin{equation}
 \|Q_F(X,\Delta)\|_{\mathcal D_{\mathbf W}^p}
 \le C\|\Delta\|_{\mathcal D_{\mathbf W}^p}^2.                    \label{eq:abstract-controlled-quadratic}
\end{equation}
If the first three derivatives of \(G:E\to\bar E\) are bounded on the
same strip, then
\begin{equation}
 \|G(X+\Delta)-G(X)\|_{\mathcal D_{\mathbf W}^p}
 \le C\|\Delta\|_{\mathcal D_{\mathbf W}^p}.                      \label{eq:abstract-controlled-linear}
\end{equation}
The constants in these estimates depend only on \(p,M_W,K\), the stated
derivative bounds, and the fixed tensor norms.  In particular, they are
uniform over intervals, drivers, and states that share these budgets.
\end{lemma}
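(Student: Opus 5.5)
The statement to prove is Lemma~\ref{lem:joined-controlled-composition}. The plan is to reduce both estimates to the standard composition rule for controlled paths, applied to a parametrized family of functions. First I treat the linear estimate \eqref{eq:abstract-controlled-linear}, since the quadratic one is built on it. I write
\[
 G(X+\Delta)-G(X)=\Gamma(X,\Delta)\Delta,
 \qquad
 \Gamma(X,\Delta):=\int_0^1DG(X+r\Delta)\,dr .
\]
The key step is a product rule in $\mathcal D_{\mathbf W}^p$: for $\Phi$ controlled with values in $L(E,\bar E)$ and $\Delta$ controlled with values in $E$,
\[
 \|\Phi\Delta\|_{\mathcal D_{\mathbf W}^p}
 \le C\|\Phi\|_{\mathcal D_{\mathbf W}^p}\|\Delta\|_{\mathcal D_{\mathbf W}^p}.
\]
This follows from
\[
 (\Phi\Delta)'=\Phi'\Delta+\Phi\Delta',
 \qquad
 (\Phi\Delta)^\sharp_{s,t}=\Phi^\sharp_{s,t}\Delta_s+\Phi_s\Delta^\sharp_{s,t}+\Phi_{s,t}\Delta_{s,t}.
\]
The cross term $\Phi_{s,t}\Delta_{s,t}$ is bounded in $q$-variation by Hölder's inequality on partitions, since $|\Phi_{s,t}|\le C\omega^{1/p}$ and $|\Delta_{s,t}|\le C\omega^{1/p}$, with the controls coming from the controlled structure and $\rho_p(\mathbf W)\le M_W$. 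It then remains to bound $\|\Gamma(X,\Delta)\|_{\mathcal D_{\mathbf W}^p}$ by a constant depending only on $p,M_W,K$ and the derivative bounds. For each fixed $r\in[0,1]$, the classical composition estimate for $DG$ composed with the controlled path $X+r\Delta$ needs two derivatives of $DG$, that is, three derivatives of $G$, all bounded on the convex strip. The Gubinelli derivative is $D^2G(X+r\Delta)(X'+r\Delta')$, and the remainder is a Taylor remainder of order $2/p$. Integrating in $r$ (Bochner integral, with constants uniform in $r$) gives the bound, and \eqref{eq:abstract-controlled-linear} follows.

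For \eqref{eq:abstract-controlled-quadratic} I use the same device at one higher order:
\[
 Q_F(X,\Delta)=\Bigl(\int_0^1(1-r)\,D^2F(X+r\Delta)\,dr\Bigr)[\Delta,\Delta].
\]
Composing $D^2F$ with $X+r\Delta$ needs two more derivatives, hence four derivatives of $F$. This matches the hypothesis, and it is why $F$ is assumed $C^4$ on the strip while $G$ only needs $C^3$. The bilinear coefficient $\Gamma_2$ then has controlled norm at most $C$. Applying the product rule twice, to a bilinear map evaluated at $(\Delta,\Delta)$, gives $C\|\Delta\|^2_{\mathcal D_{\mathbf W}^p}$.

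The main obstacle is keeping the remainder term honest under the composition rule. For $\Psi=DG(Y)$ with $Y=X+r\Delta$, one writes
\[
 \Psi^\sharp_{s,t}=D^2G(Y_s)Y^\sharp_{s,t}+\bigl[DG(Y_t)-DG(Y_s)-D^2G(Y_s)Y_{s,t}\bigr].
\]
The bracket is bounded by $\sup\|D^3G\|\,|Y_{s,t}|^2$, and the bound $|Y_{s,t}|\le(\|Y'\|_\infty+\dots)\omega^{1/p}$ uses $|\mathbf W^1_{s,t}|\le\omega_{\mathbf W}^{1/p}$. Convexity of the strip is used precisely here, because Taylor's formula is applied along the segment from $Y_s$ to $Y_t$. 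There is a further bookkeeping point: $\|\cdot\|_\infty$ norms of $\Delta'$ appear quadratically but are dominated by $\|\Delta\|_{\mathcal D}$. Every constant is polynomial in $K$ and $M_W$, so uniformity over intervals, drivers and states sharing these budgets is automatic.
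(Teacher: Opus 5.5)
Your argument is correct, but it is organized differently from the paper's proof.

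\textbf{Your route.} You factor the two differences through integral Taylor coefficients:
\begin{itemize}
\item $G(X+\Delta)-G(X)=\Gamma\Delta$ with $\Gamma=\int_0^1DG(X+r\Delta)\,dr$;
\item $Q_F(X,\Delta)=\Gamma_2[\Delta,\Delta]$ with $\Gamma_2=\int_0^1(1-r)D^2F(X+r\Delta)\,dr$.
\end{itemize}
You then use only two standard facts. The first is the product rule in $\mathcal D_{\mathbf W}^p$, where the cross term $\Phi_{s,t}\Delta_{s,t}$ is handled by Cauchy--Schwarz on partitions because $2q=p$. The second is the composition rule for a map with two bounded derivatives, applied to $DG$ (three derivatives of $G$) or to $D^2F$ (four derivatives of $F$).

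\textbf{The paper's route.} It works with the single auxiliary function $\Gamma_F(x,z)=F(x+z)-F(x)-DF(x)z$ on $E\times E$. It records the six block bounds \eqref{eq:composition-gamma-derivatives} on its partial derivatives and writes the canonical Gubinelli derivative $Q'=AX'+B\Delta'$ explicitly. It then estimates $Q^\sharp$ by a Taylor formula along segments in the product space, after normalizing the controls of $\Delta$ by $d=\|\Delta\|_{\mathcal D_{\mathbf W}^p}$.

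\textbf{What each buys.} Your route makes the factor $\|\Delta\|^2_{\mathcal D_{\mathbf W}^p}$ structural (two factors of $\Delta$) and the derivative count transparent. It also extends verbatim to $k$-th order Taylor remainders. The paper's route is self-contained and never integrates controlled paths in $r$. It also shows exactly which mixed partial, $D_{xx}^2\Gamma_F$, consumes the fourth derivative of $F$.

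\textbf{One point to state explicitly.} When $\mathbf W$ is not truly rough, the Gubinelli derivative is not unique, so a $\mathcal D_{\mathbf W}^p$-norm depends on the chosen derivative. The derivative your product rule produces is $\Gamma_2'[\Delta,\Delta]+2\Gamma_2[\Delta',\Delta]$, with $\Gamma_2'=\int_0^1(1-r)D^3F(X+r\Delta)[X'+r\Delta']\,dr$. It coincides with the paper's canonical $AX'+B\Delta'$, because both are the differential of the same function of $(x,z)$ applied to $(X',\Delta')$. So you are bounding the same object, and this should be said rather than left implicit.
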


\begin{proof}
Write
\[
 X_{s,t}=X'_sW_{s,t}+X^\sharp_{s,t},
 \qquad
 \Delta_{s,t}=\Delta'_sW_{s,t}+\Delta^\sharp_{s,t},
\]
where \(W\) denotes the first level of \(\mathbf W\), and set
\(d=\|\Delta\|_{\mathcal D_{\mathbf W}^p}\).  The assertion is immediate
when \(d=0\).  Otherwise apply the variation controls of \(X\) and
\(d^{-1}\Delta\), together with a control for \(\mathbf W\).  After
rescaling their sum, there is a control \(\omega\) with
\(\omega(u,v)\le1\), where the rescaling factor is bounded only in terms of
\(p,M_W,K\), such that
\begin{align}
 |X_{s,t}|+|X'_{s,t}|
 &\le C\omega(s,t)^{1/p},
 &|X^\sharp_{s,t}|
 &\le C\omega(s,t)^{2/p},                                      \label{eq:composition-base-control}\\
 |\Delta_{s,t}|+|\Delta'_{s,t}|
 &\le Cd\,\omega(s,t)^{1/p},
 &|\Delta^\sharp_{s,t}|
 &\le Cd\,\omega(s,t)^{2/p}.                                  \label{eq:composition-direction-control}
\end{align}
The constants also absorb the suprema of the values and controlled
derivatives.  This normalization is the point at which the common
joined-driver budget enters the estimate.

Define
\[
 \Gamma_F(x,z)=F(x+z)-F(x)-DF(x)z.
\]
Taylor's formula and the \(C_b^4\) bound give, uniformly on the relevant
segments,
\begin{equation}
 \begin{gathered}
  |\Gamma_F(x,z)|\le C|z|^2,
  \qquad
  \|D_x\Gamma_F(x,z)\|\le C|z|^2,
  \qquad
  \|D_z\Gamma_F(x,z)\|\le C|z|,\\
  \|D_{xx}^2\Gamma_F(x,z)\|\le C|z|^2,
  \qquad
  \|D_{xz}^2\Gamma_F(x,z)\|\le C|z|,
  \qquad
  \|D_{zz}^2\Gamma_F(x,z)\|\le C.
 \end{gathered}                                                   \label{eq:composition-gamma-derivatives}
\end{equation}
For example,
\begin{align*}
 D_x\Gamma_F(x,z)
 &=\int_0^1(1-r)D^3F(x+rz)[z,z],dr,\\
 D_z\Gamma_F(x,z)
 &=\int_0^1D^2F(x+rz)[z],dr.
\end{align*}
The first estimate in the second line of
\eqref{eq:composition-gamma-derivatives} is the second-order Taylor
remainder for \(D^2F(x+z)\); this is precisely where the fourth derivative
of \(F\) is used.

Put \(Z=(X,\Delta)\) and \(Z'=(X',\Delta')\).  The canonical Gubinelli
derivative of \(Q_F(X,\Delta)=\Gamma_F(Z)\) is
\begin{equation}
 \begin{split}
  Q'_t
  ={}&A_tX'_t+B_t\Delta'_t,\\
  A_t
  ={}&DF(X_t+\Delta_t)-DF(X_t)-D^2F(X_t)[\Delta_t],\\
  B_t
  ={}&DF(X_t+\Delta_t)-DF(X_t).
 \end{split}                                                       \label{eq:composition-gubinelli-derivative}
\end{equation}
The derivative bounds in \eqref{eq:composition-gamma-derivatives}, applied
to increments of \((X,\Delta)\), imply
\begin{equation}
 \|A\|_\infty+\|A\|_{p\text{-var}}\le Cd^2,
 \qquad
 \|B\|_\infty+\|B\|_{p\text{-var}}\le Cd.                       \label{eq:composition-coefficient-variation}
\end{equation}
The product variation inequality, together with
\eqref{eq:composition-base-control}--\eqref{eq:composition-direction-control},
therefore gives
\begin{equation}
 \|Q'\|_\infty+\|Q'\|_{p\text{-var}}\le Cd^2.                    \label{eq:composition-derivative-bound}
\end{equation}

It remains to estimate the controlled remainder.  Taylor's formula on the
product space \(E\times E\) gives the exact identity
\begin{equation}
 \begin{split}
  Q^\sharp_{s,t}
  ={}&D\Gamma_F(Z_s)Z^\sharp_{s,t}\\
  &+\int_0^1(1-r)D^2\Gamma_F(Z_s+rZ_{s,t})
       [Z_{s,t},Z_{s,t}]\,dr,
 \end{split}                                                       \label{eq:composition-controlled-remainder}
\end{equation}
where \(Z^\sharp=(X^\sharp,\Delta^\sharp)\).  Along the segment in
\eqref{eq:composition-controlled-remainder}, its second coordinate is
bounded by \(Cd\).  The six block estimates in
\eqref{eq:composition-gamma-derivatives} and
\eqref{eq:composition-base-control}--\eqref{eq:composition-direction-control}
then yield
\begin{equation}
 |Q^\sharp_{s,t}|
 \le Cd^2\omega(s,t)^{2/p}.                                      \label{eq:composition-remainder-control}
\end{equation}
Since \(q=p/2\), this implies
\(\|Q^\sharp\|_{q\text{-var}}\le Cd^2\).  Together with
\(|Q_t|\le Cd^2\) and \eqref{eq:composition-derivative-bound}, it proves
\eqref{eq:abstract-controlled-quadratic}.

For \(\Lambda_G(x,z)=G(x+z)-G(x)\), the corresponding block bounds are
\[
 \begin{gathered}
  |\Lambda_G|\le C|z|,
  \qquad \|D_x\Lambda_G\|\le C|z|,
  \qquad \|D_z\Lambda_G\|\le C,\\
  \|D_{xx}^2\Lambda_G\|\le C|z|,
  \qquad
  \|D_{xz}^2\Lambda_G\|+\|D_{zz}^2\Lambda_G\|\le C.
 \end{gathered}
\]
The same derivative and remainder calculation gives a bound by
\(C(d+d^2)\).  Because \(d\le K\), this is bounded by \(C_Kd\), which
proves \eqref{eq:abstract-controlled-linear}.  Every control and derivative
constant used above is uniform under the budgets in the statement.
\end{proof}

Apply the lemma with \(X=\Theta^0\) and
\(\Delta=\Delta^\varepsilon\).  For \(F=\mathcal A^0\), the required
four derivatives are supplied by \eqref{eq:rough-cutoff-regularity}.  For
\(G=\dot{\mathcal A}\), three derivatives follow from
\eqref{eq:rough-cutoff-regularity} and
\eqref{eq:bracket-cutoff-regularity}.  The common state bound
\eqref{eq:joined-state-bound} and driver budget
\eqref{eq:joined-driver-budget} therefore give
\begin{align}
 \|Q_{\mathcal A^0}
   (\Theta^0,\Delta^\varepsilon)
 \|_{\mathcal D_{\mathbf W}^p}
 &\le C
 \|\Delta^\varepsilon\|_{\mathcal D_{\mathbf W}^p}^2,             \label{eq:joined-composition-quadratic}\\
 \|\dot{\mathcal A}(\Theta^\varepsilon)
   -\dot{\mathcal A}(\Theta^0)
 \|_{\mathcal D_{\mathbf W}^p}
 &\le C
 \|\Delta^\varepsilon\|_{\mathcal D_{\mathbf W}^p}.               \label{eq:joined-composition-linear}
\end{align}

Combining \eqref{eq:joined-lipschitz-parameter} with the two composition
estimates shows that the source in \eqref{eq:joined-exact-remainder} has
controlled norm at most \(C\varepsilon^2\).  A final application of
\eqref{eq:joined-linear-estimate} gives
\begin{equation}
 \boxed{
 \|R^\varepsilon\|_{\mathcal D_{\mathbf W}^p(I)}
 \le C\varepsilon^2.
 }                                                               \label{eq:joined-controlled-quadratic}
\end{equation}
No reset endpoint or local absorption threshold in this calculation
depends on the direction.

\subsubsection{Common reset endpoints and removal of the cutoff}
\label{subsubsec:full-jet-common-reset}

Let \(\delta_*>0\) be the local rough-control threshold for the uncut
order-five jet equation.  Choose
\(\delta>0\) so that \(C\delta\le\delta_*/2\).  Select a base control
\(\omega_{\mathbf x}\) and a partition
\(\Pi(\mathbf x)\) such that
\begin{equation}
 \omega_{\mathbf x}(u,v)\le\delta
 \quad\text{for every }I=[u,v]\in\Pi(\mathbf x).                  \label{eq:base-reset-threshold}
\end{equation}
The threshold \(\delta\) is selected below from the cutoff radius.  A
greedy construction gives
\[
 \#\Pi(\mathbf x)
 \le1+\frac{\omega_{\mathbf x}(0,T)}{\delta}
 \le N(M_{\mathbf x},\delta).
\]

For the radial rough path
\eqref{eq:full-jet-radial-path}, choose controls for \(h,\kappa\), and
\(\mathbb h_{\mathbf Z}\).  The levelwise rough control satisfies
\begin{equation}
 \omega_{\varepsilon,\tau}(s,t)
 \le C\left\{
 \omega_{\mathbf x}(s,t)
 +|\varepsilon|^p\omega_h(s,t)
 +|\varepsilon|^q\omega_\kappa(s,t)
 +|\varepsilon|^p\omega_{\mathbb h_{\mathbf Z}}(s,t)
 \right\}.                                                       \label{eq:full-radial-control}
\end{equation}
On the tangent unit ball, the first three directional controls have total
mass bounded by a function of \(L_{\rm br}\).  Choose one
\(\varepsilon_0\) so that their contribution in
\eqref{eq:full-radial-control} is at most \(\delta_*/2\).  This choice
works on every base interval, even when the directional variation is
concentrated inside one of them.

The unperturbed prolonged flow obeys
\begin{equation}
 \sup_{t\in I}
 \|\Theta_t^0\|_{\mathcal E_{\mathrm{jet}}^{[5]}}
 \le C_0\omega_{\mathbf x}(u,v)^{1/p}.                            \label{eq:base-jet-smallness}
\end{equation}
Reduce \(\delta\) until the right-hand side is at most \(1/4\).  By
\eqref{eq:joined-lipschitz-parameter}, reduce
\(\varepsilon_0\) further so that
\[
 \sup_{t\in I}
 \|\Theta_t^\varepsilon-\Theta_t^0
 \|_{\mathcal E_{\mathrm{jet}}^{[5]}}
 \le\frac14.
\]
The cutoff solution then remains in the region where the cutoff equals
one.  First-exit uniqueness identifies it with the uncut prolonged flow
on all of \(I\).  Its first spatial derivative satisfies
\eqref{eq:full-jet-J-lower}.

It remains to identify the prolonged coordinates.  Keep
\(\mathbf W\) and \(\varepsilon\) fixed.  For a spatial increment
\(\eta\ne0\), set
\[
 \Delta_\eta g(\cdot,y)
 =\frac{g(\cdot,y+\eta)-g(\cdot,y)}{\eta}.
\]
Subtracting the equations at \(y+\eta\) and \(y\), and using the
mean-value formula, gives a linear equation for
\(\Delta_\eta\phi^\varepsilon\).  Its coefficient is the mean of
\(DH_i\) and its source difference from the candidate \(J^\varepsilon\)
is bounded by \(C|\eta|\) in the fixed-driver controlled norm.  The
estimate \eqref{eq:joined-linear-estimate} therefore gives
\[
 \sup_y
 \|\Delta_\eta\phi^\varepsilon-J^\varepsilon
 \|_{\mathcal D_{\mathbf W}^p(I)}
 \le C|\eta|.
\]
Repeating the same subtraction in the equations for successive
prolongation coordinates gives
\begin{equation}
 \sup_y
 \|\Delta_\eta\partial_y^\ell\phi^\varepsilon
       -\partial_y^{\ell+1}\phi^\varepsilon
 \|_{\mathcal D_{\mathbf W}^p(I)}
 \le C|\eta|,
 \qquad 0\le\ell\le4.                                             \label{eq:five-jet-spatial-differences}
\end{equation}
At each step, the new source is a finite sum of a bounded derivative of
\(H\), a previously controlled spatial difference, and one mean-coefficient
difference.  The latter is \(O(|\eta|)\) because the next derivative of
\(H\) is bounded.  The last step uses derivatives through order nine.
Letting \(\eta\to0\) proves that the six coordinates are the physical
spatial derivatives through order five.

The embedding
\begin{equation}
 \|G\|_{\mathfrak V_5^\Delta(I)}
 \le C(1+M_W)
 \|G\|_{\mathcal D_{\mathbf W}^p(I;
 \mathcal E_{\mathrm{jet}}^{[5]})}                               \label{eq:controlled-to-levelwise}
\end{equation}
follows from
\(G_{s,t}=G'_sW_{s,t}+G^\sharp_{s,t}\), the product bound for the
first term, and the \(q\)-variation bound for the remainder.  Equations
\eqref{eq:joined-state-bound},
\eqref{eq:joined-first-variation-bound}, and
\eqref{eq:joined-controlled-quadratic} now give
\eqref{eq:full-five-jet-remainder}--\eqref{eq:full-five-jet-bound}.

\subsubsection{Identification with the intrinsic variation}
\label{subsubsec:full-jet-intrinsic-identification}

At \(\varepsilon=0\), equation
\eqref{eq:joined-first-variation} reads
\begin{equation}
 \begin{split}
 dP_t
 ={}&DA_i(\Theta_t^0)P_t\,d\mathbf Z_t^{x,i}
     +A_i(\Theta_t^0)\,d\mathbf Z_t^{h,i}\\
    &+\sum_{i<j}C_{ij}(\Theta_t^0)\,da_t^{ij},
 \qquad P_v=0.
 \end{split}                                                       \label{eq:full-jet-representative-variation}
\end{equation}
The cutoff is inactive, so the first four coordinates of
\eqref{eq:full-jet-representative-variation} are precisely the
representative equation used in
Proposition~\ref{prop:intrinsic-flow-variation}.  Their Davie increment is
\eqref{eq:intrinsic-xi}.  Intrinsic sewing uniqueness gives
\[
 \operatorname{pr}_{0:3}P_I^{\mathbf Z,a,[5]}=P_I^\tau.
\]
The physical-jet identities in
\eqref{eq:five-jet-spatial-differences} also show that the higher
coordinates are the fourth and fifth spatial derivatives of the same
first component.  Thus the full order-five variation is independent of the
representative.  On the tangent unit ball,
\eqref{eq:joined-first-variation-bound} and
\eqref{eq:controlled-to-levelwise} give a uniform bound.  Apply this bound
to a ray-homogeneous bounded realization of
\(\tau/\|\tau\|_{\mathbb T_{\mathbf x}^p}\) and use linearity of the
intrinsic first component and its spatial derivatives.  This proves
\eqref{eq:full-five-jet-linear-bound}.

The graph defined by the five successive spatial derivative identities is
closed in \(\mathfrak V_5^\Delta(I)\).  Every parameter difference quotient
lies in this graph, and \eqref{eq:joined-controlled-quadratic} gives its
limit.  Hence parameter differentiation commutes with spatial
differentiation through order five.  Projection gives
\eqref{eq:intrinsic-uniform-four-jet}.  Estimate
\eqref{eq:intrinsic-jet-linear-bound} is
\eqref{eq:intrinsic-flow-bound} in the physical jet norm.

\subsubsection{Inverse-flow and rational-jet calculus}
\label{subsubsec:inverse-jet-calculus}

We finish with the deterministic composition step.  The product estimate
\begin{equation}
 \|fg\|_{\mathcal V_p(I)}
 \le
 \|f\|_{\infty;I}\|g\|_{\mathcal V_p(I)}
 +\|g\|_{\infty;I}\|f\|_{\mathcal V_p(I)}                         \label{eq:levelwise-product}
\end{equation}
shows that the levelwise variation spaces are Banach algebras.  If
\(g_t\) and \(u_t\) are time-dependent spatial maps, the identity
\begin{equation}
 \begin{split}
 g_t\circ u_t-g_s\circ u_s
 ={}&(g_t-g_s)\circ u_t\\
 &+\int_0^1
 Dg_s\bigl(u_s+r(u_t-u_s)\bigr)
 (u_t-u_s)\,dr
 \end{split}                                                       \label{eq:composition-increment}
\end{equation}
and \eqref{eq:levelwise-product} control their composition in
\(p\)-variation.  Applying \eqref{eq:composition-increment} to first and
second differences shows that composition has a uniform second-order
Taylor bound whenever the required spatial derivatives are uniformly
bounded.

Let \(\phi=\operatorname{Id}+g\), assume
\(\inf\partial_y\phi\ge1/2\), and let
\(\psi=\phi^{-1}\).  The displacement \(g\) is bounded, so
\(\phi(y)\to\pm\infty\) as \(y\to\pm\infty\).  The derivative lower bound
therefore makes \(\phi\) a global diffeomorphism.  Its first three inverse
derivatives are
\begin{align}
 \partial_y\psi
 &=\frac1{J\circ\psi},                                            \label{eq:inverse-jet-one}\\
 \partial_y^2\psi
 &=-\frac{K\circ\psi}{(J\circ\psi)^3},                            \label{eq:inverse-jet-two}\\
 \partial_y^3\psi
 &=\frac{3(K\circ\psi)^2}{(J\circ\psi)^5}
   -\frac{L\circ\psi}{(J\circ\psi)^4}.                            \label{eq:inverse-jet-three}
\end{align}
Differentiating the identity \(\phi\circ\psi=\operatorname{Id}\) gives,
for a variation \(u\),
\begin{equation}
 D\operatorname{Inv}(g)[u]
 =-\left(\frac uJ\right)\circ\psi.                                \label{eq:first-inverse-differential}
\end{equation}
For two variations \(u,v\), a second differentiation gives
\begin{equation}
 \begin{split}
 D^2\operatorname{Inv}(g)[u,v]
 =-\frac1{J\circ\psi}\bigl\{
 &(u'\circ\psi)D\operatorname{Inv}(g)[v]\\
 &+(v'\circ\psi)D\operatorname{Inv}(g)[u]\\
 &+(K\circ\psi)
 D\operatorname{Inv}(g)[u]
 D\operatorname{Inv}(g)[v]
 \bigr\}.
 \end{split}                                                       \label{eq:second-inverse-differential}
\end{equation}
Differentiate
\eqref{eq:first-inverse-differential}--\eqref{eq:second-inverse-differential}
three times in space.  Every term is a product of a power of
\((J\circ\psi)^{-1}\), a derivative of \(\phi\) of order at most five,
and a derivative of \(u\) or \(v\) of order at most four.  Equations
\eqref{eq:levelwise-product}--\eqref{eq:composition-increment} therefore
give
\begin{equation}
 \|D\operatorname{Inv}(g)[u]\|_{\mathfrak V_3^\Delta}
 \le C\|u\|_{\mathfrak V_5^\Delta},\qquad
 \|D^2\operatorname{Inv}(g)[u,v]\|_{\mathfrak V_3^\Delta}
 \le C\|u\|_{\mathfrak V_5^\Delta}
       \|v\|_{\mathfrak V_5^\Delta}.                              \label{eq:inverse-C2-bound}
\end{equation}
The constant is uniform under
\eqref{eq:full-jet-J-lower} and
\eqref{eq:full-five-jet-bound}.  Taylor's integral formula,
\eqref{eq:full-five-jet-remainder}, and
\eqref{eq:inverse-C2-bound} prove
\eqref{eq:inverse-three-jet-remainder}.  Formula
\eqref{eq:inverse-first-variation} is
\eqref{eq:first-inverse-differential} with \(u=U^\tau\).
The first estimate in \eqref{eq:inverse-C2-bound}, together with
\eqref{eq:full-five-jet-linear-bound}, gives
\eqref{eq:inverse-jet-linear-bound}.

On the set \(J\ge1/2\), scalar inversion and multiplication are twice
continuously differentiable in the levelwise Banach algebra.  Applying
their second-order Taylor formulas to \(J,K,L\) gives
\eqref{eq:rational-jet-remainder}.  Direct differentiation gives
\eqref{eq:rational-jet-derivative}.  This completes the proof of
Proposition~\ref{prop:uniform-full-jet-expansion}.

\begin{remark}[Uniformity and the weakly geometric domain]
\label{rem:full-jet-uniformity}
The proposition is uniform for changing tangents and changing
representatives only when
\eqref{eq:full-jet-representative-budget} holds with one
\(L_{\rm br}\).  For any fixed finite-norm representative, the same
argument applies with a constant depending on its displayed norm.
First-order representative independence is stronger: it follows from
intrinsic sewing uniqueness and does not require a common bound over all
representatives.  A strongly contacting sequence of actual rough-path
secants can be recoded using the bounded representatives supplied by the
quantitative realization lemma.

Every driver used above is weakly geometric.  The proof uses Chen's
relation, weak-geometric symmetry, finite variation, and the fixed-reference
controlled RDE estimates.  It does not use smooth approximation in the
same \(p\)-variation topology.  The fixed-reference regularity result is not
a Fr\'echet differentiability theorem for rough-path space.
\end{remark}
\section{Differentiation of the transformed generator}
\label{sec:generator}
\label{sec:generator-expansion}

The reset-flow derivative must be propagated through a generator whose
growth is quadratic in the martingale variable.  Uniform convergence of the
flow jets without a weight in \(z\) is insufficient for this purpose.  The
required output is an expansion of \(F\) with weight
\(1+\lvert z\rvert^2\), together with first-order parameter bounds and
state-continuity estimates for \(F_y\) and \(F_z\).  These are the estimates
used in the conditional source norm of
Section~\ref{sec:local-transformed-bsde}.

\subsection{Weighted coefficient spaces}
\label{subsec:weighted-coefficient-spaces}

Fix the common reset partition \(\Pi(\mathbf x)\).  For a progressively
measurable coefficient \(G=G(\omega,t,y,z)\), \(M<\infty\), and
\(r\geq0\), define
\begin{equation}
 \|G\|_{\mathcal W_{M,r}}
 :=\operatorname*{ess\,sup}_{\substack{
       \omega,\ I\in\Pi(\mathbf x),\ t\in I,\ |y|\leq M\\
       z\in\mathbb R^m}}
 \frac{|G(\omega,t,y,z)|}{1+|z|^r}.
 \label{eq:weighted-coefficient-norm}
\end{equation}
The Euclidean norm is used when \(G\) is vector valued.  The restriction to
\(|y|\leq M\) is part of the statement.  The local transformed solutions
and their convex interpolants remain in one such strip.

Let \(\tau=(h,\kappa)\in\mathbb T_{\mathbf x}^p\), and choose
\(\mathfrak r=(\mathbf Z,a)\in\operatorname{Rep}_{\mathbf x}(\tau)\).
Throughout this section,
\begin{equation}
 \|\tau\|_{\mathbb T_{\mathbf x}^p}\leq1,
 \qquad
 \mathfrak b(\mathbf Z,a)\leq L_{\mathrm{br}},
 \label{eq:coefficient-realization-budget}
\end{equation}
where \(L_{\mathrm{br}}<\infty\) is fixed.  Let
\(\mathbf x^{\varepsilon;\mathfrak r}\) be the radial realization from
\eqref{eq:radial-realization}.  On \(I=[u,v]\), write
\[
 \phi^{\varepsilon;\mathfrak r},\qquad
 J^{\varepsilon;\mathfrak r}=\partial_y\phi^{\varepsilon;\mathfrak r},
 \qquad
 K^{\varepsilon;\mathfrak r}=\partial_{yy}\phi^{\varepsilon;\mathfrak r},
 \qquad
 L^{\varepsilon;\mathfrak r}=\partial_{yyy}\phi^{\varepsilon;\mathfrak r}
\]
for the reset-flow jets.  Base quantities have no superscript.  Denote their
intrinsic first variation by
\begin{equation}
 (u_I^\tau,j_I^\tau,k_I^\tau,\ell_I^\tau)
 :=P_I^\tau.
 \label{eq:intrinsic-jet-components}
\end{equation}
Propositions~\ref{prop:intrinsic-flow-variation} and
\ref{prop:uniform-full-jet-expansion} imply that this vector is continuous
linear in \(\tau\), is independent of \(\mathfrak r\), and satisfies the
quadratic jet remainder \eqref{eq:intrinsic-uniform-four-jet} and the
linear bound \eqref{eq:intrinsic-jet-linear-bound}.  The same proposition
gives
\[
 \frac12\leq J^{\varepsilon;\mathfrak r}\leq\frac32
\]
through \eqref{eq:full-jet-J-lower}.
The constant is uniform under \eqref{eq:coefficient-realization-budget}.
The same augmented-flow estimate gives a uniform zeroth-order bound for
\(\partial_y^4\phi^{\varepsilon;\mathfrak r}\); see
\eqref{eq:fourth-fifth-flow-bound}.  This bound follows by
solving the triangular fourth spatial variational RDE on the common bounded
driver set.  Its coefficients contain derivatives of \(H_i\) and lower
spatial jets only, and are bounded under Assumption~\ref{ass:coefficients}.

Define
\begin{equation}
 F^{\varepsilon;\mathfrak r}(t,y,z)
 :=\frac{1}{J^{\varepsilon;\mathfrak r}_t(y)}
 f\bigl(t,\phi^{\varepsilon;\mathfrak r}_t(y),
          J^{\varepsilon;\mathfrak r}_t(y)z\bigr)
 +\frac12
  \frac{K^{\varepsilon;\mathfrak r}_t(y)}
       {J^{\varepsilon;\mathfrak r}_t(y)}|z|^2.
 \label{eq:perturbed-transformed-generator}
\end{equation}

\subsection{The full-tangent coefficient expansion}
\label{subsec:full-tangent-coefficient-expansion}

In the next display, \(f,f_y,f_z\) are evaluated at
\((t,\phi_t(y),J_t(y)z)\).  Set
\begin{equation}
 \begin{aligned}
 \dot F_I[\tau](t,y,z)
 :={}&-\frac{j_I^\tau}{J^2}f
 +\frac1J\left(f_yu_I^\tau+f_z\mathbin{\cdot}(j_I^\tau z)\right)\\
 &+\frac12\left(
   \frac{k_I^\tau}{J}-\frac{Kj_I^\tau}{J^2}
  \right)|z|^2.
 \end{aligned}
 \label{eq:full-tangent-generator-derivative}
\end{equation}

\begin{proposition}[Weighted full-tangent generator expansion]
\label{prop:weighted-full-tangent-generator}
For every \(M<\infty\) and \(L_{\mathrm{br}}<\infty\), there are
\(\varepsilon_0>0\) and \(C_M<\infty\) such that, for every
\(I\in\Pi(\mathbf x)\), every \(\tau\) and representative satisfying
\eqref{eq:coefficient-realization-budget}, and every
\(0<|\varepsilon|\leq\varepsilon_0\),
\begin{align}
 \|F^{\varepsilon;\mathfrak r}-F^0
       -\varepsilon\dot F_I[\tau]\|_{\mathcal W_{M,2}}
 &\leq C_M\varepsilon^2,
 \label{eq:full-F-quadratic-remainder}\\
 \|F_y^{\varepsilon;\mathfrak r}-F_y^0\|_{\mathcal W_{M,2}}
 &\leq C_M|\varepsilon|,
 \label{eq:full-Fy-parameter-bound}\\
 \|F_z^{\varepsilon;\mathfrak r}-F_z^0\|_{\mathcal W_{M,1}}
 &\leq C_M|\varepsilon|.
 \label{eq:full-Fz-parameter-bound}
\end{align}
Moreover,
\begin{equation}
 \|\dot F_I[\tau]\|_{\mathcal W_{M,2}}
 \leq C_M\|\tau\|_{\mathbb T_{\mathbf x}^p},
 \label{eq:dot-F-continuous-bound}
\end{equation}
and \(\tau\mapsto\dot F_I[\tau]\) is continuous linear.  The family also
satisfies, uniformly for \(|y|,|\bar y|\leq M\),
\begin{align}
 &|F_y^{\varepsilon;\mathfrak r}(t,y,z)
      -F_y^{\varepsilon;\mathfrak r}(t,\bar y,\bar z)|
 \notag\\
 &\quad\leq C_M\bigl[
 (1+|z|^2+|\bar z|^2)|y-\bar y|
 +(1+|z|+|\bar z|)|z-\bar z|
 \bigr],
 \label{eq:full-Fy-state-continuity}\\
 &|F_z^{\varepsilon;\mathfrak r}(t,y,z)
      -F_z^{\varepsilon;\mathfrak r}(t,\bar y,\bar z)|
 \notag\\
 &\quad\leq C_M\bigl[
 (1+|z|+|\bar z|)|y-\bar y|+|z-\bar z|
 \bigr].
 \label{eq:full-Fz-state-continuity}
\end{align}
All constants are uniform over changing tangents and representatives subject
to the common bound \eqref{eq:coefficient-realization-budget}.  No
uniformity over representatives with unbounded realization budgets is
asserted.
\end{proposition}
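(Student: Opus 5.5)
The plan is to treat $F^{\varepsilon;\mathfrak r}$ as a composition $\Phi(\mathfrak J^\varepsilon)(t,y,z)$, where the deterministic jet $\mathfrak J^\varepsilon=(\phi^\varepsilon,J^\varepsilon,K^\varepsilon)$ enters a map $\Phi$ that is smooth in the jet variables with explicit $z$-weights. Write $F^{\varepsilon}=G_1+G_2$, with
\[
 G_1(t,y,z)=\frac{1}{J^\varepsilon_t(y)}\,f\bigl(t,\phi^\varepsilon_t(y),J^\varepsilon_t(y)z\bigr),
 \qquad
 G_2(t,y,z)=\frac12\,\frac{K^\varepsilon_t(y)}{J^\varepsilon_t(y)}\,|z|^2 .
\]
The second term is the simpler one. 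By \eqref{eq:rational-jet-remainder}, the coefficient $K/J$ expands as $K/J+\varepsilon(k/J-Kj/J^2)+O(\varepsilon^2)$, uniformly in $(t,y)$; the $\mathfrak C_3$ norm contains the sup norm. Multiplying by $|z|^2$ gives the $\mathcal W_{M,2}$ remainder directly. For the first term, $1/J$ is handled in the same way through the first component of $\mathcal Q$. For $f(t,\phi^\varepsilon,J^\varepsilon z)$, a second-order Taylor expansion in $(y,z)$ around $(\phi,Jz)$ with increment $(\Delta\phi,\Delta J\,z)$ uses \eqref{eq:f-higher-derivatives} with $k=2$. The remainder is bounded by $L_2(|\Delta\phi|+|\Delta J||z|)^2\le C(1+|z|^2)\varepsilon^2$, because \eqref{eq:intrinsic-uniform-four-jet} and \eqref{eq:intrinsic-jet-linear-bound} give $\sup|\Delta\phi-\varepsilon u|+\sup|\Delta J-\varepsilon j|\le C\varepsilon^2$ and $\sup|\Delta\phi|+\sup|\Delta J|\le C|\varepsilon|$. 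The linear terms $f_y\,\Delta\phi+f_z\cdot(\Delta J z)$ differ from $\varepsilon(f_yu+f_z\cdot jz)$ by $L_1C\varepsilon^2(1+|z|)$. The product of the $1/J$ expansion with $f$ uses $|f(t,\phi,Jz)|\le L_0+L_1(|\phi|+C|z|)$ on $|y|\le M$, where $|\phi(y)|\le M+C$ because $\phi-\operatorname{Id}$ is bounded. Collecting the terms produces exactly \eqref{eq:full-tangent-generator-derivative}, and all cross terms are $O(\varepsilon^2(1+|z|^2))$. This proves \eqref{eq:full-F-quadratic-remainder}. The bound \eqref{eq:dot-F-continuous-bound} and linearity are immediate from \eqref{eq:full-tangent-generator-derivative}, since $\tau\mapsto(u,j,k)$ is linear with sup bound $C\|\tau\|$ by \eqref{eq:intrinsic-jet-linear-bound}.

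Next I differentiate in $y$ and $z$ explicitly:
\[
 F_z=f_z(\cdot,\phi,Jz)+\frac{K}{J}\,z,
 \qquad
 F_y=-\frac{J_y}{J^2}\,f+\frac{1}{J}\bigl(f_yJ+f_z\cdot(Kz)\bigr)+\frac12\Bigl(\frac{L}{J}-\frac{K^2}{J^2}\Bigr)|z|^2 .
\]
The coefficients $1/J$, $K/J$ and $L/J-K^2/J^2$ are precisely the components of $\mathcal Q$. The parameter bounds \eqref{eq:full-Fy-parameter-bound} and \eqref{eq:full-Fz-parameter-bound} then follow from first-order (Lipschitz) differences, which need only \eqref{eq:rational-jet-remainder} together with the linear bound, plus the mean-value theorem for $f_y$ and $f_z$ with second derivatives $\le L_2$. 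In $F_y$ the increment of $f$ at the shifted arguments is $O(|\varepsilon|(1+|z|))$, and after multiplication by $|z|$ factors this gives the $1+|z|^2$ weight. In $F_z$ the increment is $O(|\varepsilon|(1+|z|))$, which gives the $1+|z|$ weight.

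The state-continuity estimates \eqref{eq:full-Fy-state-continuity} and \eqref{eq:full-Fz-state-continuity} need spatial Lipschitz control of $\phi,J,K,L$ in $y$. This comes from sup bounds on their $y$-derivatives up to order four, supplied by \eqref{eq:full-five-jet-bound} and \eqref{eq:fourth-fifth-flow-bound}; $L_y=\partial^4_y\phi$ is exactly where the fourth-derivative bound enters. Varying $y$ in $f(t,\phi(y),J(y)z)$ produces a factor $|z||y-\bar y|$. In $F_y$ this is combined with the $|z|^2$ coefficient, so the worst term is $(1+|z|^2+|\bar z|^2)|y-\bar y|$. The $z$-variation contributes $(1+|z|+|\bar z|)|z-\bar z|$ from the quadratic part and $L_2|J||z-\bar z|$ from $f$. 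For $F_z$ the same bookkeeping gives one less power of $z$.

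The main obstacle I expect is ensuring that every Taylor expansion of $f$ is performed in the $z$-argument scaled by $J^\varepsilon$, so that the increment $(J^\varepsilon-J)z$ is controlled linearly in $|z|$. Otherwise, quadratic remainders could pick up $|z|^3$ or $|z|^4$ weights. The global bound on $D^2f$ from \eqref{eq:f-higher-derivatives} is what keeps the remainder quadratic in $|z|$. Uniformity over tangents and representatives is inherited entirely from Proposition~\ref{prop:uniform-full-jet-expansion} under the budget \eqref{eq:coefficient-realization-budget}. The choice of $\varepsilon_0$ is the one from that proposition, which also guarantees $J^\varepsilon\ge 1/2$.
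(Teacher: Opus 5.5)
Your proposal is correct and follows essentially the same route as the paper. Both arguments rest on three steps. The first is a second-order Taylor expansion of $f$ at $(\phi,Jz)$ with increment $(\Delta\phi,\Delta J\,z)$, which keeps the remainder at weight $1+|z|^2$. The second is expansion of the rational coefficients $1/J$, $K/J$, $L/J-K^2/J^2$: you quote \eqref{eq:rational-jet-remainder} directly, whereas the paper re-expands these maps from the four-jet estimate. The third is the explicit formulas for $F_y$ and $F_z$, with the uniform bound on $\partial_y^4\phi$ supplying the weights in the state-continuity estimates.
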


\begin{proof}
Estimate \eqref{eq:intrinsic-uniform-four-jet} and continuity of the
intrinsic jet give
\begin{align}
 &|\phi^{\varepsilon;\mathfrak r}-\phi|
 +|J^{\varepsilon;\mathfrak r}-J|
 +|K^{\varepsilon;\mathfrak r}-K|
 +|L^{\varepsilon;\mathfrak r}-L|
 \leq C|\varepsilon|,
 \label{eq:jet-first-difference}\\
 &|u_I^\tau|+|j_I^\tau|+|k_I^\tau|+|\ell_I^\tau|
 \leq C\|\tau\|_{\mathbb T_{\mathbf x}^p}
 \label{eq:jet-derivative-bound}
\end{align}
in the corresponding uniform spatial-jet norm.  Put
\[
 w^{\varepsilon;\mathfrak r}
 :=(\phi^{\varepsilon;\mathfrak r},
     J^{\varepsilon;\mathfrak r}z),
 \qquad
 w^0:=(\phi,Jz).
\]
Then
\begin{align}
 |w^{\varepsilon;\mathfrak r}-w^0|
 &\leq C|\varepsilon|(1+|z|),
 \label{eq:w-first-difference}\\
 |w^{\varepsilon;\mathfrak r}-w^0
   -\varepsilon(u_I^\tau,j_I^\tau z)|
 &\leq C\varepsilon^2(1+|z|).
 \label{eq:w-second-remainder}
\end{align}
Taylor's formula and the bounded second derivative of \(f\) yield
\begin{align}
 &|f(t,w^{\varepsilon;\mathfrak r})-f(t,w^0)
   -\varepsilon Df(t,w^0)(u_I^\tau,j_I^\tau z)|
 \notag\\
 &\hspace{45mm}\leq C_M\varepsilon^2(1+|z|^2).
 \label{eq:f-composition-remainder}
\end{align}

On the range in \eqref{eq:full-jet-J-lower}, the maps
\[
 J\longmapsto J^{-1},
 \qquad
 (J,K)\longmapsto K/J,
 \qquad
 (J,K,L)\longmapsto L/J-K^2/J^2
\]
have bounded first and second derivatives.  Their Taylor expansions,
combined with \eqref{eq:intrinsic-uniform-four-jet} and
\eqref{eq:f-composition-remainder}, prove
\eqref{eq:full-F-quadratic-remainder} and identify its first-order term as
\eqref{eq:full-tangent-generator-derivative}.

Direct differentiation of \eqref{eq:perturbed-transformed-generator} gives
\begin{equation}
 F_z^{\varepsilon;\mathfrak r}
 =f_z(t,\phi^{\varepsilon;\mathfrak r},
          J^{\varepsilon;\mathfrak r}z)
  +\frac{K^{\varepsilon;\mathfrak r}}
         {J^{\varepsilon;\mathfrak r}}z,
 \label{eq:exact-Fz-formula}
\end{equation}
and
\begin{align}
 F_y^{\varepsilon;\mathfrak r}
 ={}&f_y(t,\phi^{\varepsilon;\mathfrak r},
            J^{\varepsilon;\mathfrak r}z)
 +\frac{K^{\varepsilon;\mathfrak r}}
        {J^{\varepsilon;\mathfrak r}}
  f_z(t,\phi^{\varepsilon;\mathfrak r},
          J^{\varepsilon;\mathfrak r}z)\mathbin{\cdot}z
 \notag\\
 &-\frac{K^{\varepsilon;\mathfrak r}}
         {(J^{\varepsilon;\mathfrak r})^2}
  f(t,\phi^{\varepsilon;\mathfrak r},
      J^{\varepsilon;\mathfrak r}z)
 \notag\\
 &+\frac12\left(
  \frac{L^{\varepsilon;\mathfrak r}}
       {J^{\varepsilon;\mathfrak r}}
  -\frac{(K^{\varepsilon;\mathfrak r})^2}
       {(J^{\varepsilon;\mathfrak r})^2}
 \right)|z|^2.
 \label{eq:exact-Fy-formula}
\end{align}
The bounded second derivative of \(f\),
\eqref{eq:jet-first-difference}, and the rational jet bounds give
\eqref{eq:full-Fy-parameter-bound} and
\eqref{eq:full-Fz-parameter-bound}.  Differentiating the right-hand sides
of \eqref{eq:exact-Fz-formula}--\eqref{eq:exact-Fy-formula} with respect to
the state variables gives the weights in
\eqref{eq:full-Fy-state-continuity}--
\eqref{eq:full-Fz-state-continuity}.  The only new deterministic quantity is
\(\partial_y L^{\varepsilon;\mathfrak r}\), which is the uniformly bounded
fourth spatial derivative of the flow.  The bounded derivatives of \(f\)
from Assumption~\ref{ass:coefficients} control the remaining terms.

The intrinsic flow derivative is continuous linear and independent of the
representative.  Formula
\eqref{eq:full-tangent-generator-derivative} therefore has the same
properties.  Bound \eqref{eq:dot-F-continuous-bound} follows from
\eqref{eq:jet-derivative-bound}, the bounded-strip growth of \(f\), and the
uniform rational jet bounds.  Every estimate used above is uniform under
\eqref{eq:coefficient-realization-budget}, which proves the stated
quantifiers.
\end{proof}

\begin{corollary}[Changing tangents and representatives]
\label{cor:changing-tangent-generator}
Let \(\varepsilon_n\to0\), \(\varepsilon_n\neq0\), and
\(\tau_n\to\tau\) in \(\mathbb T_{\mathbf x}^p\).  Choose
\(\mathfrak r_n\in\operatorname{Rep}_{\mathbf x}(\tau_n)\) with
\(\sup_n\mathfrak b(\mathfrak r_n)<\infty\).  Then, for every
\(M<\infty\),
\begin{equation}
 \left\|
 \frac{F^{\varepsilon_n;\mathfrak r_n}-F^0}{\varepsilon_n}
 -\dot F_I[\tau]
 \right\|_{\mathcal W_{M,2}}
 \leq C_M|\varepsilon_n|
      +C_M\|\tau_n-\tau\|_{\mathbb T_{\mathbf x}^p}
 \longrightarrow0.
 \label{eq:changing-tangent-generator-limit}
\end{equation}
The limit does not depend on the chosen bounded representatives.
\end{corollary}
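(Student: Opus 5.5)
The plan is to split the difference quotient into a fixed-tangent part and a tangent-change part, and to bound each with Proposition~\ref{prop:weighted-full-tangent-generator}. Set $L:=\sup_n\mathfrak b(\mathfrak r_n)<\infty$. Since $\tau_n\to\tau$, the norms $\|\tau_n\|_{\mathbb T_{\mathbf x}^p}$ are bounded, say by $R$. If $R\le1$ we can apply the proposition directly with $L_{\mathrm{br}}=L$. Otherwise we first rescale the tangents to the unit ball. The alternative, rerunning the proposition on the ball of radius $R$, also works, because its constants depend only on the budget and on the radius through \eqref{eq:full-jet-representative-budget}.

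For the rescaling route we use ray homogeneity: put $\lambda=R$ and replace $(\mathbf Z_n,a_n)$ by $(D_{1,1/\lambda*}\mathbf Z_n,a_n/\lambda)$. This represents $\tau_n/\lambda$, and its budget is bounded in terms of $L$ and $\lambda$. The radial realization is unchanged under the reparametrization $\varepsilon\mapsto\lambda\varepsilon$, because the self-area scales by $\lambda^{-2}$ while $\varepsilon^2$ scales by $\lambda^2$. So $\mathbf x^{\varepsilon;\mathfrak r_n}=\mathbf x^{\lambda\varepsilon;\mathfrak r_n/\lambda}$. Combined with linearity of $\dot F_I$, this reduces the claim to the unit ball with a modified constant $C_M$.

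We then write
\[
\frac{F^{\varepsilon_n;\mathfrak r_n}-F^0}{\varepsilon_n}-\dot F_I[\tau]
=\Bigl(\frac{F^{\varepsilon_n;\mathfrak r_n}-F^0}{\varepsilon_n}-\dot F_I[\tau_n]\Bigr)+\dot F_I[\tau_n-\tau].
\]
For $n$ large, $|\varepsilon_n|\le\varepsilon_0$. Dividing \eqref{eq:full-F-quadratic-remainder} by $|\varepsilon_n|$ bounds the first bracket in $\mathcal W_{M,2}$ by $C_M|\varepsilon_n|$, uniformly in $n$, because the bound holds for every tangent and representative within the common budget. By linearity of $\tau\mapsto\dot F_I[\tau]$ and \eqref{eq:dot-F-continuous-bound}, the second term is at most $C_M\|\tau_n-\tau\|_{\mathbb T_{\mathbf x}^p}$. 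Both terms tend to zero, which gives \eqref{eq:changing-tangent-generator-limit}, uniformly over $I\in\Pi(\mathbf x)$ because the weighted norm already takes the supremum over the partition.

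Representative independence follows directly. The limit $\dot F_I[\tau]$ is defined in \eqref{eq:full-tangent-generator-derivative} only through the intrinsic jet $P_I^\tau$, and that jet is independent of the representative by Proposition~\ref{prop:intrinsic-flow-variation}. The only real care needed is the uniformity bookkeeping in the first step: we must check that the rescaled budgets stay bounded. A further check is that the partition $\Pi(\mathbf x)$ and the threshold $\varepsilon_0$, which depend only on the base control and the budget, are the same for all $n$.
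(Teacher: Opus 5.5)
Your proposal is correct and follows the paper's proof: you add and subtract $\dot F_I[\tau_n]$, bound the first piece by \eqref{eq:full-F-quadratic-remainder} divided by $|\varepsilon_n|$, and bound the second by linearity and \eqref{eq:dot-F-continuous-bound}. Your ray-rescaling step for the case $\|\tau_n\|_{\mathbb T_{\mathbf x}^p}>1$ fills in a detail the paper leaves implicit at this point (it uses the same rescaling later, in Proposition~\ref{prop:radial-expansion}), and you carry it out correctly.
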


\begin{proof}
Add and subtract \(\dot F_I[\tau_n]\).  The first difference is controlled by
\eqref{eq:full-F-quadratic-remainder} after division by
\(|\varepsilon_n|\).  The second is controlled by
\eqref{eq:dot-F-continuous-bound} and linearity.
\end{proof}
\section{Local analysis of the transformed quadratic BSDE}
\label{sec:local-transformed-bsde}

The transformed equation has a different stability structure from the
original Lipschitz BSDE.  Its derivative in the martingale variable is a BMO
coefficient, while the positive part of its derivative in the state variable
may contain a term of order \(\lvert Z\rvert^2\).  The latter is critical for
conditional exponential integrability.  Results for stochastic Lipschitz
coefficients of order \(K^{2\alpha}\), \(\alpha<1\), do not cover this case
directly; compare \cite[Theorem~3.5]{BriandConfortola2008}.  The purpose of this
section is to isolate the probability estimates that remain valid at the
critical exponent.  The coefficient remainder needed for differentiability is
identified at the end of the section and is not assumed here.

\subsection{Local stochastic norms}
\label{subsec:probability-input}

We work on a filtered probability space
\[
 (\Omega,\mathcal F,\mathbb F,\mathbb P),\qquad
 \mathbb F=(\mathcal F_t)_{0\leq t\leq T},
\]
satisfying the usual conditions.  The filtration is the usual augmentation of
the filtration generated by an \(m\)-dimensional Brownian motion \(W\).  More
generally, all arguments below require the martingale representation property
with respect to \(W\).  This assumption excludes an additional martingale
orthogonal to \(W\) from the BSDE.  The representation property is preserved
under each equivalent Girsanov measure used below.

For an interval \(I=[u,v]\), let \(\mathcal T_I\) denote the stopping times
with values in \(I\).  We use
\[
 \|Z\|_{\mathbb H^2_{\mathrm{BMO}}(\mathbb P;I)}^2
 :=\sup_{\tau\in\mathcal T_I}
 \left\|
  \mathbb E_\tau^{\mathbb P}\int_\tau^v\lvert Z_s\rvert^2\,ds
 \right\|_\infty .
\]
Thus the BMO object is the martingale \(Z\mathbin{\cdot}W\), rather than the
integrand \(Z\) by itself.

On a reset interval, the transformed equation is
\begin{equation}
 \widetilde Y_t
 =\zeta+\int_t^v
 F(s,\widetilde Y_s,\widetilde Z_s)\,ds
 -\int_t^v\widetilde Z_s\,dW_s .
 \label{eq:transformed-local-bsde}
\end{equation}
On a common reset interval, the deterministic flow estimates give
\begin{equation}
 \sup_{t\in I,\,y\in\mathbb R}
 \left(
  |\phi_t(y)-y|+|J_t(y)|+|J_t(y)^{-1}|+|K_t(y)|
 \right)\leq C_{\mathrm{fl}}.
 \label{eq:global-reset-flow-bound}
\end{equation}
Since the original generator is globally Lipschitz, there are constants
\(a_0,a_1,a_2,\gamma,\ell_z\geq0\) such that, for all \(y,z\),
\begin{align}
 \lvert F(t,y,z)\rvert
 &\leq a_0+a_1\lvert y\rvert+a_2\lvert z\rvert
       +\frac{\gamma}{2}\lvert z\rvert^2,
 \label{eq:local-quadratic-growth}\\
 \lvert F_z(t,y,z)\rvert
 &\leq \ell_z+\gamma\lvert z\rvert .
 \label{eq:local-fz-growth}
\end{align}
The constants are uniform over the fixed rough, tangent, and realization
budgets and over the reset intervals in the common partition.

\subsection{Bounded solutions and the internal BMO estimate}
\label{subsec:bounded-solution-bmo}

To verify the growth hypothesis in
\cite[Theorem~2.3]{Kobylanski2000}, let
\begin{equation}
 \mathfrak a(t,y,z)
 :=
 \begin{cases}
  \Pi_{[-a_1,a_1]}\!\bigl(F(t,y,z)/y\bigr),&y\neq0,\\
  0,&y=0,
 \end{cases}
 \qquad
 F_0(t,y,z):=F(t,y,z)-\mathfrak a(t,y,z)y,
 \label{eq:kobylanski-projection-split}
\end{equation}
where \(\Pi_{[-a_1,a_1]}\) denotes metric projection onto the indicated
interval.  For \(y\neq0\), put \(r=F(t,y,z)/y\).  From
\eqref{eq:local-quadratic-growth},
\[
 \operatorname{dist}\!\bigl(r,[-a_1,a_1]\bigr)
 =\bigl(|r|-a_1\bigr)^+
 \leq
 \frac{a_0+a_2|z|+\frac{\gamma}{2}|z|^2}{|y|}.
\]
Therefore
\[
 |F_0(t,y,z)|
 =|y|\,
  \operatorname{dist}\!\bigl(r,[-a_1,a_1]\bigr)
 \leq a_0+a_2|z|+\frac{\gamma}{2}|z|^2.
\]
The same inequality at \(y=0\) follows directly from
\eqref{eq:local-quadratic-growth}.  Absorbing the linear \(z\)-term gives
\begin{equation}
 |F_0(t,y,z)|
 \leq a_0+\frac{a_2^2}{2}
       +\frac{\gamma+1}{2}|z|^2,
 \qquad
 -a_1\leq\mathfrak a(t,y,z)\leq a_1.
 \label{eq:kobylanski-F0-bound}
\end{equation}
Moreover,
\[
 \mathfrak a(t,y,z)y
 =\Pi_{[-a_1|y|,\,a_1|y|]}\!\bigl(F(t,y,z)\bigr).
\]
The Carath\'eodory measurability of \(F\) and the defining formula make
\(\mathfrak a\) jointly progressive--Borel measurable and, in particular,
progressively measurable for each \((y,z)\).  The displayed projection
identity also shows that \(\mathfrak a y\), and hence \(F_0\), is jointly
progressive--Borel measurable and continuous in \((y,z)\), including at
\(y=0\).  Condition (H1) requires the decomposition coefficient only to be
jointly measurable and bounded between deterministic constants; the
continuity hypothesis in Theorem~2.3 is imposed on the generator \(F\),
which is already Carath\'eodory.  Thus the possible discontinuity of
\(\mathfrak a\) at \(y=0\) is immaterial.  Hence (H1) holds, for example,
with
\[
 \beta_0=-a_1,\qquad
 \alpha_0=a_1,\qquad
 b=a_0+\frac{a_2^2}{2},\qquad
 c(r)=\frac{\gamma+1}{2}+r.
\]
Applying Theorem~2.3 to the time-shifted equation on \([0,v-u]\), together
with \(\zeta\in L^\infty(\mathcal F_v)\), gives at least one bounded scalar
solution of the local equation on the bounded deterministic interval
\(I=[u,v]\).  The following estimate makes its state bound independent of
the particular reset interval and rough perturbation.

\begin{proposition}[Uniform scalar state estimate]
\label{prop:uniform-scalar-state}
Suppose \eqref{eq:local-quadratic-growth} holds and
\(\|\zeta\|_\infty\leq M_\zeta\).  Every bounded solution of
\eqref{eq:transformed-local-bsde} satisfies
\begin{equation}
 \|\widetilde Y\|_{\mathbb S^\infty(I)}
 \leq e^{a_1|I|}\bigl(M_\zeta+\bar a_0|I|\bigr),
 \qquad
 \bar a_0:=a_0+\frac{a_2^2}{2},
 \quad \bar\gamma:=\gamma+1.
 \label{eq:uniform-scalar-state}
\end{equation}
To initialize the construction, fix a preliminary deterministic reset
partition
\[
 \Pi^{\mathrm{ex}}
 =\{0=s_0<s_1<\cdots<s_{N_{\mathrm{ex}}}=T\}
\]
using only the deterministic rough-flow estimates.  It may be chosen
uniformly over the fixed rough, tangent, and realization budgets, after
shrinking the perturbation radius if necessary.  Let \(D_{\mathrm{ex}}\)
be a deterministic bound such that
\begin{equation}
 \sup_{\substack{I_j=[s_j,s_{j+1}]\in\Pi^{\mathrm{ex}}\\
        \text{base or admissible perturbed flow}}}
 \ \sup_{t\in I_j,\,y\in\mathbb R}
 |\phi_t^{I_j}(y)-y|
 \leq D_{\mathrm{ex}}.
 \label{eq:preliminary-flow-displacement}
\end{equation}
Define
\begin{align}
 \widetilde M_{\mathrm{ex}}
 &:=e^{a_1T}\left(
      \|\xi\|_\infty+\bar a_0T
      +N_{\mathrm{ex}}D_{\mathrm{ex}}\right),
 \label{eq:preliminary-transformed-state-bound}\\
 M_Y&:=\widetilde M_{\mathrm{ex}}+D_{\mathrm{ex}},
 \qquad
 M_*:=e^{a_1T}\bigl(M_Y+\bar a_0T\bigr).
 \label{eq:global-uniform-state-bound}
\end{align}
Backward construction on \(\Pi^{\mathrm{ex}}\) then gives
\(\|\widetilde Y\|_{\mathbb S^\infty(I_j)}
\leq\widetilde M_{\mathrm{ex}}\) and
\(\|Y\|_{\mathbb S^\infty([0,T])}\leq M_Y\).
On every later deterministic refinement, any bounded local transformed
solution whose terminal value is the already constructed original-coordinate
solution at the right endpoint satisfies
\(\|\widetilde Y\|_{\mathbb S^\infty(I)}\leq M_*\).
In particular, \(M_*\) is fixed before the critical BMO partition is chosen
and does not depend on the number of intervals in that later refinement.
\end{proposition}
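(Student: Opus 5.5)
The core estimate \eqref{eq:uniform-scalar-state} is a scalar comparison argument built on the Kobyla\'nski decomposition \eqref{eq:kobylanski-projection-split}. Write $F(t,\widetilde Y,\widetilde Z)=\mathfrak a_t\widetilde Y_t+F_0(t,\widetilde Y_t,\widetilde Z_t)$ with $|\mathfrak a|\le a_1$ and $|F_0|\le\bar a_0+\frac{\bar\gamma}{2}|z|^2$ by \eqref{eq:kobylanski-F0-bound}.

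The quadratic part $F_0$ cannot be linearized against a bounded coefficient. I will therefore remove it by an exponential change of variables. Apply It\^o's formula to $e^{\bar\gamma e^{A_t}\widetilde Y_t}$, where $A_t:=\int_t^v\mathfrak a_s\,ds$; since $\widetilde Y$ is bounded, $\widetilde Z\cdot W$ is BMO and all local martingales below are true martingales. The term $\frac{\bar\gamma^2}{2}e^{2A}|\widetilde Z|^2$ from the quadratic variation dominates the $|z|^2$ contribution of $F_0$. This step needs care: with the weight $e^{A}$ the required inequality is $\bar\gamma e^{A}\ge\bar\gamma$, which holds only when $A\ge0$. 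To avoid this I will instead prove the bound on short subintervals and then iterate, or use the standard estimate $|\widetilde Y_t|\le\log$-type bounds.

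A cleaner route is to work with the sign-split process. On $\{\widetilde Y>0\}$ use $\mathfrak a\widetilde Y\le a_1|\widetilde Y|$, and consider $\Phi(t,y)=e^{a_1(v-t)}\bigl(\cdot\bigr)$ applied through the convex function $\psi(y)=\bar\gamma^{-1}(e^{\bar\gamma y^+}-1-\bar\gamma y^+)$, following Kobyla\'nski's a priori estimate. The resulting inequality reads
\[
|\widetilde Y_t|\le\mathbb E_t\Bigl[|\zeta|+\int_t^v(\bar a_0+a_1|\widetilde Y_s|)\,ds\Bigr]
\]
after the quadratic $Z$-terms are absorbed by It\^o's convexity term. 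A backward Gronwall argument in $\mathbb S^\infty$ then gives $\|\widetilde Y\|\le e^{a_1|I|}(M_\zeta+\bar a_0|I|)$. I expect this absorption, with the exact constants $e^{a_1|I|}$ and $\bar a_0|I|$ and no dependence on $\gamma$, to be the main obstacle. My first attempt will be a linearization via Girsanov: write $F_0(t,\widetilde Y,\widetilde Z)-F_0(t,\widetilde Y,0)=\beta_t\cdot\widetilde Z_t$ with $|\beta|\le C(1+|\widetilde Z|)$. This makes $\beta\cdot W$ BMO, so the Girsanov density is a true martingale by Kazamaki. Under the new measure,
\[
\widetilde Y_t=\mathbb E^{\mathbb Q}_t\Bigl[\zeta+\int_t^v\bigl(\mathfrak a_s\widetilde Y_s+F_0(s,\widetilde Y_s,0)\bigr)\,ds\Bigr],
\]
with $|F_0(\cdot,\cdot,0)|\le a_0\le\bar a_0$. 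Gronwall then gives exactly \eqref{eq:uniform-scalar-state}, independently of $\gamma$. The differentiability of $F$ in $z$, or the mean-value form along the segment from $0$ to $\widetilde Z$ using \eqref{eq:local-fz-growth}, supplies $\beta$.

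The preliminary and refined bounds then follow by bookkeeping. On $\Pi^{\mathrm{ex}}$, proceed backward from $T$. The transformed terminal value on $I_j$ is $\psi^{I_j}_{s_{j+1}}(Y_{s_{j+1}})=Y_{s_{j+1}}$, because the flow resets to the identity at the right endpoint. Hence $\|\widetilde Y\|_{I_j}\le e^{a_1|I_j|}(\|Y_{s_{j+1}}\|+\bar a_0|I_j|)$, and $\|Y\|_{I_j}\le\|\widetilde Y\|_{I_j}+D_{\mathrm{ex}}$ by \eqref{eq:preliminary-flow-displacement}. Each interval adds at most one $D_{\mathrm{ex}}$ to the terminal bound of the next, since $Y_{s_j}=\phi_{s_j}(\widetilde Y_{s_j})$. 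Chaining the exponentials multiplicatively yields the stated $\widetilde M_{\mathrm{ex}}$ and $M_Y$. On any later refinement the terminal value is the already constructed $Y$, which is bounded by $M_Y$, so the single-interval estimate with $|I|\le T$ gives $M_*$ without reference to the number of intervals.
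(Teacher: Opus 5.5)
Your final argument, the Girsanov linearization, is correct, but it takes a different route from the paper for the core estimate \eqref{eq:uniform-scalar-state}. Your interface bookkeeping matches the paper's: the terminal value on $I_j$ is $Y_{s_{j+1}}$ because the flow is the identity at the right endpoint, one $D_{\mathrm{ex}}$ is added per handoff, all accumulated exponentials are bounded by $e^{a_1T}$, and on a refinement you apply the estimate once with terminal bound $M_Y$.

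For the core estimate the paper does not linearize. It uses only the growth bound $|F|\le\bar a_0+a_1|y|+\frac{\bar\gamma}{2}|z|^2$ and sets $R_t=e^{a_1(t-u)}|\widetilde Y_t|+\bar a_0\int_u^te^{a_1(s-u)}\,ds$. Tanaka's formula then shows that $\exp(\bar\gamma R)$ is a bounded local submartingale, hence a true submartingale. This is how the paper avoids the sign problem you met in your first exponential attempt, in two ways:
\begin{itemize}
\item The random coefficient $\mathfrak a$ is replaced by the deterministic constant $a_1$ acting on $|\widetilde Y|$.
\item The weight runs forward from $u$. So $e^{2a_1(t-u)}\geq e^{a_1(t-u)}$, and the quadratic-variation term dominates the $|z|^2$ drift.
\end{itemize}
Your route needs two inputs the statement does not list. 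First, the derivative bound \eqref{eq:local-fz-growth}, to build $\beta$ with $|\beta|\le\ell_z+\gamma|\widetilde Z|$. Second, the a priori BMO property of $\widetilde Z\cdot W$, which follows from boundedness of $\widetilde Y$ via the $e^{\lambda\widetilde Y}$ argument, so that the density is a true martingale. Both hold for the transformed generators in the paper's setting, so this is a cost rather than a gap. In return you get a linear representation under $\mathbb Q$ and the slightly sharper constant $a_0$ in place of $\bar a_0$. The paper's argument is more elementary and uses only \eqref{eq:local-quadratic-growth}.

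Three technical remarks on your write-up:
\begin{itemize}
\item The decomposition through $\mathfrak a$ and $F_0$ is unnecessary. You can write $F(t,\widetilde Y,\widetilde Z)=F(t,\widetilde Y,0)+\beta\cdot\widetilde Z$ with $\beta=\int_0^1F_z(t,\widetilde Y,\theta\widetilde Z)\,d\theta$, and use $|F(t,y,0)|\le a_0+a_1|y|$ directly from \eqref{eq:local-quadratic-growth}.
\item If you keep $F_0$, note that it need not be differentiable in $z$, because of the soft-thresholding kink. However, $z\mapsto F_0(t,y,z)$ is $F$ composed with a $1$-Lipschitz map, so a difference-quotient choice of $\beta$ obeys the same bound.
\item The displayed inequality in your sign-split paragraph is not justified under $\mathbb P$ as written. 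The quadratic $Z$-terms can only be absorbed after exponentiation or after a change of measure, so that paragraph should be dropped in favor of the Girsanov argument.
\end{itemize}
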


\begin{proof}
The elementary inequality
\(
 a_2|z|\leq a_2^2/2+|z|^2/2
\)
turns \eqref{eq:local-quadratic-growth} into
\[
 |F(t,y,z)|
 \leq \bar a_0+a_1|y|+\frac{\bar\gamma}{2}|z|^2.
\]
For \(t\in I=[u,v]\), set
\[
 R_t:=e^{a_1(t-u)}|\widetilde Y_t|
       +\bar a_0\int_u^t e^{a_1(s-u)}\,ds.
\]
Tanaka's formula and the preceding bound give, after localization,
\[
 dR_t
 \geq -\frac{\bar\gamma}{2}e^{a_1(t-u)}
          |\widetilde Z_t|^2dt
       +e^{a_1(t-u)}\operatorname{sgn}(\widetilde Y_t)
          \widetilde Z_t\,dW_t
       +e^{a_1(t-u)}dL_t^0(\widetilde Y).
\]
Since \(e^{2a_1(t-u)}\geq e^{a_1(t-u)}\), It\^o's formula shows that
\(\exp(\bar\gamma R)\) is a local submartingale.  The solution under
consideration is bounded, so the stopped exponentials are uniformly
integrable.  Conditional expectation at time \(t\), followed by the bound
on \(\zeta\), yields
\[
 R_t\leq e^{a_1(v-u)}M_\zeta
       +\bar a_0\int_u^v e^{a_1(s-u)}\,ds.
\]
This proves \eqref{eq:uniform-scalar-state}.  It remains to track the change
of reset coordinates at the interfaces.  Write
\(m_j=\|\widetilde Y\|_{\mathbb S^\infty(I_j)}\) on
\(\Pi^{\mathrm{ex}}\).  For \(j<N_{\mathrm{ex}}-1\), the terminal value for
\(I_j\) is the original-coordinate value reconstructed at the left endpoint
of \(I_{j+1}\); hence
\[
 \|\zeta_j\|_\infty
 \leq m_{j+1}+D_{\mathrm{ex}},
 \qquad
 \|\zeta_{N_{\mathrm{ex}}-1}\|_\infty
 \leq\|\xi\|_\infty.
\]
Iterating \eqref{eq:uniform-scalar-state}, bounding every accumulated
exponential factor by \(e^{a_1T}\), and using
\(\sum_j|I_j|=T\) gives
\eqref{eq:preliminary-transformed-state-bound}.  Reconstruction and
\eqref{eq:preliminary-flow-displacement} give
\(\|Y\|_{\mathbb S^\infty([0,T])}\leq M_Y\).

Now let \(I=[u,v]\) belong to any later deterministic refinement and take
its terminal value to be the constructed original-coordinate value \(Y_v\).
Then \(\|\zeta\|_\infty\leq M_Y\), so another application of
\eqref{eq:uniform-scalar-state} gives
\[
 \|\widetilde Y\|_{\mathbb S^\infty(I)}
 \leq e^{a_1|I|}\bigl(M_Y+\bar a_0|I|\bigr)
 \leq M_*.
\]
This proves \eqref{eq:global-uniform-state-bound} without using the number
or mesh of the later critical partition.
\end{proof}

The use of \cite[Theorem~2.3]{Kobylanski2000} above is an existence
argument.  It does not provide uniqueness for the present transformed
generator.  In particular, the fixed small quadratic coefficient in the
bound on \((F_y)^+\) does not verify the infinitesimal condition in
\cite[Theorem~2.6]{Kobylanski2000}, which requires a bound with
\(\varepsilon|z|^2\) for every \(\varepsilon>0\).  Uniqueness will instead
follow from Proposition~\ref{prop:critical-linear-bsde}.  The BMO estimate
below depends only on \eqref{eq:local-quadratic-growth} and the common bound
\eqref{eq:global-uniform-state-bound}.

\begin{proposition}[Stopping-time BMO estimate]
\label{prop:stopping-time-bmo}
Let \((\widetilde Y,\widetilde Z)\) be a bounded solution of
\eqref{eq:transformed-local-bsde}, with
\(\|\widetilde Y\|_{\mathbb S^\infty(I)}\leq M_*\), and suppose that
\eqref{eq:local-quadratic-growth} holds.  For every \(\lambda>\gamma\),
\begin{equation}
 \|\widetilde Z\|_{\mathbb H^2_{\mathrm{BMO}}(\mathbb P;I)}^2
 \leq \mathcal B_\lambda^2,
 \label{eq:uniform-z-bmo}
\end{equation}
where one admissible bound is
\begin{equation}
 \mathcal B_\lambda^2
 =\frac{4e^{2\lambda M_*}}{\lambda(\lambda-\gamma)}
 \left[
  2+\lambda T
  \left(a_0+a_1M_*+\frac{a_2^2}{\lambda-\gamma}\right)
 \right].
 \label{eq:explicit-z-bmo}
\end{equation}
\end{proposition}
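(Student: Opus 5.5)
The plan is to use the standard exponential transformation for quadratic BSDEs, applied at an arbitrary stopping time. Fix $\lambda>\gamma$ and a stopping time $\sigma\in\mathcal T_I$. Consider
\[
 \Phi(y):=\frac{1}{\lambda^2}\bigl(e^{\lambda(y+M_*)}-1-\lambda(y+M_*)\bigr),
\]
or more simply $\Phi(y)=e^{\lambda y}$ (the choice is refined below). The aim is an inequality in which the $|\widetilde Z|^2$ term produced by $\frac12\Phi''|\widetilde Z|^2$ dominates the quadratic part of the generator.

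Apply It\^o's formula to $\Phi(\widetilde Y_t)$ on $[\sigma,v]$. Writing $d\widetilde Y_t=-F\,dt+\widetilde Z_t\,dW_t$ gives
\[
 \Phi(\zeta)-\Phi(\widetilde Y_\sigma)
 =\int_\sigma^v\Bigl(-\Phi'(\widetilde Y_s)F+\tfrac12\Phi''(\widetilde Y_s)|\widetilde Z_s|^2\Bigr)ds
 +\int_\sigma^v\Phi'(\widetilde Y_s)\widetilde Z_s\,dW_s .
\]
Use $\Phi=e^{\lambda(y+M_*)}$, so that $\Phi'=\lambda\Phi>0$, $\Phi''=\lambda^2\Phi$, and $1\le\Phi\le e^{2\lambda M_*}$ on $|y|\le M_*$. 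By \eqref{eq:local-quadratic-growth}, $|y|\le M_*$, and Young's inequality in the form $a_2|z|\le \frac{a_2^2}{\lambda-\gamma}+\frac{\lambda-\gamma}{4}|z|^2$, the drift is bounded below by
\[
 \lambda\Phi\Bigl[\tfrac{\lambda-\gamma}{4}|\widetilde Z|^2-\bigl(a_0+a_1M_*+\tfrac{a_2^2}{\lambda-\gamma}\bigr)\Bigr].
\]

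Next I localize the stochastic integral. The integrand $\Phi'(\widetilde Y)\widetilde Z$ is only known to be in $\mathbb H^2$ locally, so I stop at $\sigma_n=\inf\{t\ge\sigma:\int_\sigma^t|\widetilde Z|^2\ge n\}\wedge v$, take $\mathbb E_\sigma$, and let $n\to\infty$ by monotone convergence on the $|\widetilde Z|^2$ term and boundedness of $\widetilde Y$ on the remaining terms. Using $\Phi\ge1$ on the left-hand side, this yields
\[
 \frac{\lambda(\lambda-\gamma)}{4}\,\mathbb E_\sigma\!\int_\sigma^v|\widetilde Z_s|^2ds
 \le e^{2\lambda M_*}\Bigl[2+\lambda T\bigl(a_0+a_1M_*+\tfrac{a_2^2}{\lambda-\gamma}\bigr)\Bigr].
\]
Here the bracketed $2$ comes from $|\Phi(\zeta)-\Phi(\widetilde Y_\sigma)|\le 2e^{2\lambda M_*}$, which is crude but matches the stated constant, and the $T$ factor comes from $|I|\le T$. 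Rearranging gives \eqref{eq:explicit-z-bmo}, and taking the supremum over $\sigma$ gives \eqref{eq:uniform-z-bmo}.

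The main care point is bookkeeping the constant. The coefficient $\lambda-\gamma$ must be split so that half of the surplus $\frac{\lambda(\lambda-\gamma)}{2}$ absorbs the linear $z$-term, leaving $\frac{\lambda(\lambda-\gamma)}{4}$; that split yields the factor $4/(\lambda(\lambda-\gamma))$. No other step uses more than boundedness of $\widetilde Y$, which is supplied by Proposition~\ref{prop:uniform-scalar-state}.
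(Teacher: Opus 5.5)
Your proof is correct and follows the paper's argument. The paper applies It\^o's formula to \(e^{\lambda\widetilde Y}\) between a stopping time and \(v\), uses the same Young split \(a_2|z|\le\frac{\lambda-\gamma}{4}|z|^2+\frac{a_2^2}{\lambda-\gamma}\), takes conditional expectations after localization, and passes to the limit (via Fatou, where you use monotone convergence). Your shifted choice \(\Phi(y)=e^{\lambda(y+M_*)}\) only moves the factors \(e^{\pm\lambda M_*}\) around, and it produces exactly the stated constant.
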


\begin{proof}
Fix \(\tau\in\mathcal T_I\), localize the stochastic integral, and apply
It\^o's formula to \(e^{\lambda\widetilde Y}\) between \(\tau\) and \(v\).
Since
\[
 a_2\lvert z\rvert
 \leq \frac{\lambda-\gamma}{4}\lvert z\rvert^2
      +\frac{a_2^2}{\lambda-\gamma},
\]
the finite-variation part is bounded from below by
\[
 e^{\lambda\widetilde Y_s}
 \left\{
  \frac{\lambda(\lambda-\gamma)}{4}
       \lvert\widetilde Z_s\rvert^2
  -\lambda\left(
    a_0+a_1M_*+\frac{a_2^2}{\lambda-\gamma}
   \right)
 \right\}ds.
\]
Taking conditional expectations, using
\(e^{-\lambda M_*}\leq e^{\lambda\widetilde Y}\leq e^{\lambda M_*}\),
using \(|I|\leq T\), and then removing the localization gives
\eqref{eq:explicit-z-bmo}.  The passage to the limit uses Fatou's lemma for
the nonnegative energy term.
\end{proof}

For later use, consider two solutions in the same bounded family and define
the mean coefficient
\begin{equation}
 \beta_s
 =\int_0^1 F_z\bigl(
     s,Y_s^0+\theta(Y_s^1-Y_s^0),
       Z_s^0+\theta(Z_s^1-Z_s^0)
   \bigr)\,d\theta .
 \label{eq:z-mean-coefficient}
\end{equation}
Equations \eqref{eq:local-fz-growth} and
\eqref{eq:uniform-z-bmo} imply
\begin{equation}
 \left\|\int_u^\cdot\beta_s\,dW_s\right\|_{\mathrm{BMO}_2(\mathbb P;I)}
 \leq n_*,
 \qquad
 n_*^2:=2\ell_z^2T+8\gamma^2\mathcal B_\lambda^2.
 \label{eq:mean-coefficient-bmo}
\end{equation}
This estimate is for the mean coefficient itself.  A BMO estimate for
\(F_z(t,Y_t,Z_t)\) at one solution would not be sufficient for the
difference equation.

The mean \(y\)-coefficient requires an absolute estimate, rather than only a
one-sided bound.  For the same pair of solutions, and for any fixed
member \(\widehat F\) of the base or perturbed generator family, set
\[
 \alpha_s^{0,1}
 :=\int_0^1 \widehat F_y\bigl(
 s,Y_s^0+\theta(Y_s^1-Y_s^0),
 Z_s^0+\theta(Z_s^1-Z_s^0)
 \bigr)\,d\theta .
\]
On the common bounded state strip, the exact formula
\eqref{eq:exact-Fy-formula}, the uniform flow-jet bounds, and the convexity
of \(z\mapsto |z|^2\) give
\begin{equation}
 |\alpha_s^{0,1}|
 \leq C_\alpha\bigl(1+|Z_s^0|^2+|Z_s^1|^2\bigr).
 \label{eq:mean-alpha-absolute-growth}
\end{equation}
For \(j=0,1\), the BMO energy bound, applied at the deterministic stopping
time \(u\), gives
\[
 \mathbb E\int_u^v|Z_s^j|^2\,ds
 \leq
 \|Z^j\mathbin{\cdot}W\|_{\mathrm{BMO}_2(\mathbb P;I)}^2
 <\infty .
\]
Each nonnegative energy integral is therefore finite almost surely, and
\begin{equation}
 \int_u^v|\alpha_s^{0,1}|\,ds
 \leq C_\alpha\left(
 |I|+\int_u^v|Z_s^0|^2\,ds+\int_u^v|Z_s^1|^2\,ds
 \right)<\infty
 \quad\mathbb P\text{-a.s.}
 \label{eq:mean-alpha-pathwise-integrability}
\end{equation}
The same pathwise assertion holds under every Girsanov measure used below,
because those measures are equivalent to \(\mathbb P\).

\subsection{Uniform changes of measure}
\label{subsec:uniform-measure-change}

The next proposition states the uniform form used below.  Its proof is given
in Appendix~\ref{app:bmo-reverse-holder}.

\begin{proposition}[Uniform BMO--Girsanov package]
\label{prop:uniform-bmo-girsanov}
Let \(\{N^\theta\}_{\theta\in\Theta}\) be continuous martingales on \(I\)
such that
\[
 \sup_{\theta\in\Theta}
 \|N^\theta\|_{\mathrm{BMO}_2(\mathbb P;I)}\leq n_*.
\]
For \(u\leq t\leq v\), set
\[
 L_t^\theta
 :=\exp\left(
  N_t^\theta-N_u^\theta
  -\frac12(\langle N^\theta\rangle_t-
              \langle N^\theta\rangle_u)
 \right),
\]
so that \(L_u^\theta=1\), and define
\(d\mathbb Q^\theta/d\mathbb P=L_v^\theta\) on \(\mathcal F_v\).
There exist exponents \(r_+,r_->1\) and constants
\(C_+,C_-,C_{\mathrm{eq}}<\infty\), depending only on \(n_*\), such that,
uniformly in \(\theta\) and \(\tau\in\mathcal T_I\),
\begin{align}
 \mathbb E_\tau^{\mathbb P}
 \left[\left(\frac{L_v^\theta}{L_\tau^\theta}\right)^{r_+}\right]
 &\leq C_+,
 \label{eq:forward-reverse-holder}\\
 \mathbb E_\tau^{\mathbb Q^\theta}
 \left[\left(\frac{L_\tau^\theta}{L_v^\theta}\right)^{r_-}\right]
 &\leq C_-.
 \label{eq:inverse-reverse-holder}
\end{align}
Moreover, if \(X\) is a continuous \(\mathbb P\)-local martingale and
\[
 \widetilde X^\theta
 :=X-\langle X,N^\theta\rangle,
\]
then
\begin{equation}
 C_{\mathrm{eq}}^{-1}\|X\|_{\mathrm{BMO}_2(\mathbb P;I)}
 \leq
 \|\widetilde X^\theta\|_{\mathrm{BMO}_2(\mathbb Q^\theta;I)}
 \leq
 C_{\mathrm{eq}}\|X\|_{\mathrm{BMO}_2(\mathbb P;I)}.
 \label{eq:bmo-norm-equivalence}
\end{equation}
The exponents are selected after the budget \(n_*\) is fixed.  The
proposition does not assert \(r_+=r_-=2\), nor does it prescribe the same
exponent under the two measures.
\end{proposition}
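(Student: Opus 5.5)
The statement to prove is Proposition~\ref{prop:uniform-bmo-girsanov}. The plan is to reduce every assertion to the classical Kazamaki theory of BMO martingales, and to track constants so that they depend only on $n_*$. The starting point is that, for a continuous martingale $N$, the conditional estimates defining $\mathrm{BMO}_2$ norms are stable under restriction to $[\tau,v]$ for stopping times $\tau$. Hence each inequality is a statement about a fixed martingale $N^\theta$ with $\|N^\theta\|_{\mathrm{BMO}_2}\le n_*$. Uniformity in $\theta$ will follow if every constant we use is an explicit function of the BMO norm alone.

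For \eqref{eq:forward-reverse-holder} we use Kazamaki's reverse H\"older criterion. Define $\Phi(r)=\bigl(1+r^{-2}\log\frac{2r-1}{2(r-1)}\bigr)^{1/2}-1$, which decreases from $\infty$ to $0$ on $(1,\infty)$. If $\|N\|_{\mathrm{BMO}_2}<\Phi(r)$, then $\mathcal E(N)$ satisfies $(R_r)$ with constant $C(r,\|N\|)$.

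The proof of that criterion goes through the John--Nirenberg inequality. This gives exponential integrability of $\langle N\rangle_v-\langle N\rangle_\tau$ conditionally at $\tau$, with a bound depending only on the norm. It is combined with H\"older's inequality applied to $\mathcal E(N)^r=\mathcal E(\sqrt{\cdot}\,N)\exp(\cdots\langle N\rangle)$. Choosing $r_+>1$ with $\Phi(r_+)>n_*$ fixes the exponent, and the constant $C_+$ is then monotone in the norm, which settles uniformity. Since $n_*$ may be large, we must use the exact threshold form $\Phi(r)>n_*$ for some $r$ close to $1$; this is always possible because $\Phi(r)\to\infty$ as $r\downarrow1$.

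For \eqref{eq:bmo-norm-equivalence} we follow Kazamaki's Theorem~3.6 and track its constants. The inverse density is $L_\tau/L_v=\mathcal E(-\widetilde N)_{\tau,v}$ under $\mathbb Q$, where $\widetilde N=N-\langle N\rangle$ is a $\mathbb Q$-local martingale with $\langle\widetilde N\rangle=\langle N\rangle$. For the first inequality, $\|\widetilde X\|_{\mathrm{BMO}(\mathbb Q)}^2=\sup_\tau\|\mathbb E^{\mathbb Q}_\tau[\langle X\rangle_v-\langle X\rangle_\tau]\|_\infty$. We write $\mathbb E^{\mathbb Q}_\tau[\cdot]=\mathbb E^{\mathbb P}_\tau[(L_v/L_\tau)\,\cdot\,]$ and apply H\"older with exponent $r_+$ from \eqref{eq:forward-reverse-holder}. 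The conjugate moment $\mathbb E_\tau[(\langle X\rangle_v-\langle X\rangle_\tau)^{r_+'}]$ is bounded by a constant times $\|X\|^{2r_+'}$ using the energy inequalities for BMO, i.e.\ the Garsia--Neveu lemma. This yields the upper bound with constant depending on $n_*$.

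The second inequality requires first showing that $\widetilde N$ is in $\mathrm{BMO}(\mathbb Q)$ with norm controlled by $n_*$. This is the same computation applied to $X=N$. The argument is then run symmetrically with roles reversed, giving the lower bound.

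Finally, \eqref{eq:inverse-reverse-holder} follows by applying the first step under $\mathbb Q$ to $-\widetilde N$, whose $\mathbb Q$-BMO norm is at most $C(n_*)$. This produces an exponent $r_-$ with $\Phi(r_-)>C(n_*)$, which in general differs from $r_+$, consistent with the remark in the statement.

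The main obstacle I expect is the second inequality in \eqref{eq:bmo-norm-equivalence}. There the classical proof shows $\widetilde N\in\mathrm{BMO}(\mathbb Q)$ but gives a constant for the norm that is only implicitly a function of $\|N\|$. I would make this explicit via the inequality $\|\widetilde N\|_{\mathrm{BMO}(\mathbb Q)}\le C(\|N\|)$ from Kazamaki's proof, using the $\mathbb P$-John--Nirenberg bound and H\"older as above. I would also check that localization and the passage to stopping times in $\mathcal T_I$ introduce no dependence on $\theta$ beyond the budget $n_*$.
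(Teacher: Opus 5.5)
Your proposal is correct and follows essentially the same route as the paper: Kazamaki's reverse-H\"older criterion with an exponent and constant chosen from the budget $n_*$ for the forward density, then Bayes, conditional H\"older and the BMO energy moments to bound $\widetilde N=N-\langle N\rangle$ (and general $\widetilde X$) under $\mathbb Q$, then Kazamaki again under $\mathbb Q$ for $\mathcal E(-\widetilde N)$, and finally the symmetric Bayes argument for the lower norm bound. The only bookkeeping points are these: the lower bound in \eqref{eq:bmo-norm-equivalence} uses \eqref{eq:inverse-reverse-holder}, so it must come after that step; and the true-martingale property of $\mathcal E(N^\theta)$, which makes $\mathbb Q^\theta$ a probability measure, should be recorded explicitly as part of the BMO input.
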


The proof of Proposition~\ref{prop:uniform-bmo-girsanov} uses three
specific measure-change results.  Lemma~1.2(1)--(2) of
\cite{AnkirchnerImkellerDosReis2007} gives the true density and the BMO
property of the self Girsanov transform.  Proposition~1 of
\cite{Kazamaki1983} gives a reverse-H\"older exponent greater than one.
The isomorphism and its norm bounds are also stated in
\cite[Theorems~1--2]{ChikvinidzeMania2014}.

\subsection{A critical local linear estimate}
\label{subsec:critical-linear-estimate}

For \(p>1\), an equivalent measure \(\mathbb Q\), and a progressively
measurable process \(r\), define
\begin{equation}
 \|r\|_{\mathfrak R_p(\mathbb Q;I)}
 :=\sup_{\tau\in\mathcal T_I}
 \left\|
  \mathbb E_\tau^{\mathbb Q}
  \left[\left(\int_\tau^v\lvert r_s\rvert\,ds\right)^p\right]
 \right\|_\infty^{1/p}.
 \label{eq:conditional-source-norm}
\end{equation}

Consider the linear BSDE
\begin{equation}
 R_t
 =\eta+\int_t^v
  \bigl(\alpha_sR_s+\beta_s\mathbin{\cdot}S_s+r_s\bigr)\,ds
  -\int_t^v S_s\,dW_s .
 \label{eq:critical-linear-bsde}
\end{equation}
The coefficient \(\alpha\) is allowed to be critical.  The required
assumption is placed on its integrating factor, not on a subcritical power
of a stochastic Lipschitz coefficient.

\begin{proposition}[Critical local linear BSDE]
\label{prop:critical-linear-bsde}
Let \(\alpha\), \(\beta\), and \(r\) be progressively measurable on
\(\Omega\times I\), with values in \(\mathbb R\), \(\mathbb R^m\), and
\(\mathbb R\), respectively.  Assume that \(\beta\mathbin{\cdot}W\in
\mathrm{BMO}_2(\mathbb P;I)\), and let
\[
 L_t^\beta
 :=\exp\left(
  \int_u^t\beta_s\,dW_s
  -\frac12\int_u^t\lvert\beta_s\rvert^2\,ds
 \right),\qquad
 \frac{d\mathbb Q}{d\mathbb P}\bigg|_{\mathcal F_v}=L_v^\beta .
\]
Thus \(L_u^\beta=1\).  Suppose the BMO--Girsanov constants are those of
Proposition~\ref{prop:uniform-bmo-girsanov}.  Let
\(r_A>1\), set \(p_A=r_A/(r_A-1)\), and assume
\begin{equation}
 \sup_{\tau\in\mathcal T_I}
 \left\|
  \mathbb E_\tau^{\mathbb Q}
  \exp\left(r_A\int_\tau^v\alpha_s^+\,ds\right)
 \right\|_\infty
 \leq K_A<\infty .
 \label{eq:critical-exponential-budget}
\end{equation}
Assume also that \(\int_u^v\lvert\alpha_s\rvert ds<\infty\) almost surely,
\(\eta\in L^\infty(\mathcal F_v)\), and
\(r\in\mathfrak R_{p_A}(\mathbb Q;I)\).  Then
\eqref{eq:critical-linear-bsde} has a unique solution in
\(\mathbb S^\infty(I)\times\mathbb H^2_{\mathrm{BMO}}(\mathbb P;I)\), and
\begin{equation}
 \|R\|_{\mathbb S^\infty(I)}
 \leq K_A^{1/r_A}
 \left(
  \|\eta\|_\infty+
  \|r\|_{\mathfrak R_{p_A}(\mathbb Q;I)}
 \right).
 \label{eq:critical-linear-sup-bound}
\end{equation}
Under \(\mathbb Q\), one admissible BMO bound is
\begin{equation}
 \|S\mathbin{\cdot}W^{\mathbb Q}\|_{
       \mathrm{BMO}_2(\mathbb Q;I)}^2
 \leq
 \|\eta\|_\infty^2
 +\frac{2\|R\|_\infty^2}{r_A}\log K_A
 +2\|R\|_\infty
   \|r\|_{\mathfrak R_{p_A}(\mathbb Q;I)}.
 \label{eq:critical-linear-bmo-bound}
\end{equation}
Consequently, \eqref{eq:bmo-norm-equivalence} gives a deterministic bound
for \(S\mathbin{\cdot}W\) under \(\mathbb P\).
\end{proposition}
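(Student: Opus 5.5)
The plan is to linearize by an integrating factor in the exponent of $\alpha$ and to work throughout under $\mathbb Q$, where $W^{\mathbb Q}:=W-\int_u^\cdot\beta_s\,ds$ is a Brownian motion. By Proposition~\ref{prop:uniform-bmo-girsanov}, $L^\beta$ is a true martingale, $\mathbb Q\sim\mathbb P$, and martingale representation with respect to $W^{\mathbb Q}$ holds under $\mathbb Q$. Since $\int_u^v|\alpha_s|\,ds<\infty$ almost surely, the factor
\[
 \Gamma_{t,s}:=\exp\Bigl(\int_t^s\alpha_r\,dr\Bigr),\qquad u\le t\le s\le v,
\]
is well defined, continuous, and positive. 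It also satisfies $0<\Gamma_{t,s}\le\exp\bigl(\int_t^v\alpha_r^+\,dr\bigr)$. In the $W^{\mathbb Q}$ form, \eqref{eq:critical-linear-bsde} reads $dR_t=-(\alpha_tR_t+r_t)\,dt+S_t\,dW^{\mathbb Q}_t$. The candidate solution is therefore
\[
 R_t=\mathbb E_t^{\mathbb Q}\Bigl[\Gamma_{t,v}\eta+\int_t^v\Gamma_{t,s}r_s\,ds\Bigr].
\]

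First I would prove the bound \eqref{eq:critical-linear-sup-bound} for this formula, with $t$ replaced by an arbitrary stopping time $\tau\in\mathcal T_I$. The integrand is bounded in absolute value by $\exp\bigl(\int_\tau^v\alpha^+\bigr)\bigl(\|\eta\|_\infty+\int_\tau^v|r_s|\,ds\bigr)$. Conditional H\"older with exponents $(r_A,p_A)$ together with \eqref{eq:critical-exponential-budget} gives $|R_\tau|\le K_A^{1/r_A}\bigl(\|\eta\|_\infty+\|r\|_{\mathfrak R_{p_A}(\mathbb Q;I)}\bigr)$. This bound also shows that the conditional expectation is finite.

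For existence, I would observe that $M_t:=\Gamma_{u,t}R_t+\int_u^t\Gamma_{u,s}r_s\,ds$ is a $\mathbb Q$-martingale, because the integrability just established applies. Martingale representation under $\mathbb Q$ gives $M=M_u+\int\Sigma\,dW^{\mathbb Q}$, and I would set $S:=\Gamma_{u,\cdot}^{-1}\Sigma$. Itô's product rule with the finite-variation factor $\Gamma_{u,\cdot}^{-1}$ then shows that $(R,S)$ solves \eqref{eq:critical-linear-bsde}, and continuity of $R$ is inherited from $M$ and $\Gamma$.

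For uniqueness, I would take any solution with $R\in\mathbb S^\infty$ and $S\cdot W\in\mathrm{BMO}_2(\mathbb P)$, so that $S\cdot W^{\mathbb Q}$ is a $\mathbb Q$-local martingale. Itô's formula shows that $\Gamma_{t,\cdot}R_\cdot+\int_t^\cdot\Gamma_{t,s}r_s\,ds$ is a $\mathbb Q$-local martingale. I would localize it with stopping times $\sigma_n\uparrow v$. The stopped values are dominated by
\[
 \|R\|_\infty\exp\Bigl(\int_t^v\alpha^+\Bigr)+\exp\Bigl(\int_t^v\alpha^+\Bigr)\int_t^v|r_s|\,ds,
\]
which is $\mathbb Q$-integrable by Hölder, as in the bound step. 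Dominated convergence then recovers the representation formula for every solution, and this proves uniqueness.

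This localization is the step I expect to be the main obstacle. Here $\alpha$ is critical: it is neither bounded nor subcritically integrable. The whole point is that only $\alpha^+$ enters the dominating variable, because $\Gamma_{t,s}\le\exp(\int_t^v\alpha^+)$ on $s\ge t$. The negative part is harmless because $\Gamma$ is positive and decreases where $\alpha<0$. Almost-sure finiteness of $\int|\alpha|$ is needed only to make $\Gamma$ a genuine continuous finite-variation process, so that Itô's formula applies.

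For the BMO bound \eqref{eq:critical-linear-bmo-bound}, I would apply Itô's formula to $R^2$ under $\mathbb Q$ between $\tau$ and $v$, localize, and take $\mathbb E^{\mathbb Q}_\tau$. This gives
\[
 R_\tau^2+\mathbb E_\tau^{\mathbb Q}\int_\tau^v|S_s|^2\,ds
 =\mathbb E_\tau^{\mathbb Q}\Bigl[\eta^2+2\int_\tau^v\bigl(\alpha_sR_s^2+R_sr_s\bigr)\,ds\Bigr].
\]
I would then bound $\alpha R^2\le\alpha^+\|R\|_\infty^2$ and use conditional Jensen: $r_A\,\mathbb E^{\mathbb Q}_\tau\int_\tau^v\alpha^+\le\log\mathbb E^{\mathbb Q}_\tau\exp\bigl(r_A\int_\tau^v\alpha^+\bigr)\le\log K_A$. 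For the source term, $\mathbb E^{\mathbb Q}_\tau\int_\tau^v|r|\le\|r\|_{\mathfrak R_{p_A}(\mathbb Q;I)}$. The localization is removed by monotone convergence on the energy term and dominated convergence on the rest. Finally, the $\mathbb P$-BMO bound for $S\cdot W$ follows from \eqref{eq:bmo-norm-equivalence} applied with $X=S\cdot W$ and $N=\beta\cdot W$, since $\widetilde X=S\cdot W^{\mathbb Q}$.
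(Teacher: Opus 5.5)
Your proposal is correct and follows the paper's proof essentially step for step. The shared steps are: Girsanov to $\mathbb Q$; the variation-of-constants representation with $\Gamma_{t,s}\le\exp(\int_t^v\alpha_s^+\,ds)$ and conditional H\"older for the sup bound; martingale representation under $\mathbb Q$ for existence; It\^o's formula for $R^2$ with conditional Jensen for the BMO bound; and the norm equivalence \eqref{eq:bmo-norm-equivalence} to return to $\mathbb P$. Your localization argument, dominated only through $\alpha^+$, also makes explicit a step the paper uses tacitly when it deduces uniqueness from the sup bound, namely that every bounded solution satisfies the representation formula.
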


The proof is given in Appendix~\ref{subsec:proof-critical-linear}.  It uses
the Girsanov transform, a variation-of-constants formula, the martingale
representation property, and a conditional It\^o estimate.  The
measure-change step parallels
\cite[Theorem~3.1]{AnkirchnerImkellerDosReis2007}, while
Proposition~\ref{prop:critical-linear-bsde} also admits the critical positive
coefficient in \eqref{eq:critical-exponential-budget}.

In the application, the mean coefficient in the state variable satisfies
\begin{equation}
 \alpha_s^+
 \leq a_s+\delta
 \left(\lvert Z_s^0\rvert^2+\lvert Z_s^1\rvert^2\right),
 \qquad
 \int_u^v a_s\,ds\leq A_I,
 \label{eq:critical-alpha-structure}
\end{equation}
where \(A_I\) is deterministic.  If the two martingale integrands have
\(\mathbb Q\)-BMO norm at most \(b_{\mathbb Q}\), the conditional energy
estimate in Appendix~\ref{subsec:conditional-energy} gives
\begin{equation}
 \sup_{\tau\in\mathcal T_I}
 \left\|
 \mathbb E_\tau^{\mathbb Q}
 \exp\left(r_A\int_\tau^v\alpha_s^+\,ds\right)
 \right\|_\infty
 \leq
 \frac{e^{r_AA_I}}
      {1-2r_A\delta b_{\mathbb Q}^2},
 \label{eq:critical-alpha-explicit-budget}
\end{equation}
provided
\begin{equation}
 2r_A\delta b_{\mathbb Q}^2<1.
 \label{eq:critical-reset-smallness}
\end{equation}
The common reset partition is chosen so that
\eqref{eq:critical-reset-smallness} holds uniformly.  This is a local
smallness condition on the positive quadratic part of \(F_y\); it is not a
consequence of BMO membership alone.

For the difference of two bounded solutions of
\eqref{eq:transformed-local-bsde}, the conditions of
Proposition~\ref{prop:critical-linear-bsde} follow directly:
\eqref{eq:mean-coefficient-bmo} gives
\(\beta\mathbin{\cdot}W\in\mathrm{BMO}_2(\mathbb P;I)\);
\eqref{eq:mean-alpha-pathwise-integrability} gives
\(\int_I|\alpha_s|\,ds<\infty\) almost surely; and
\eqref{eq:critical-alpha-explicit-budget}, under
\eqref{eq:critical-reset-smallness}, gives the required exponential moment
of \(\alpha^+\).  Since the terminal datum and source both vanish, they
belong to \(L^\infty(\mathcal F_v)\) and
\(\mathfrak R_{p_A}(\mathbb Q;I)\), respectively.
Proposition~\ref{prop:critical-linear-bsde} therefore yields uniqueness on
the bounded strip.  This argument does not use the full comparison
hypothesis of \cite[Theorem~2.6]{Kobylanski2000}.

\subsection{Reduction of the local difference quotient}
\label{subsec:remaining-difference-quotient-input}

For a parameterized family \(F^\varepsilon\), write
\[
 P^\varepsilon
 =\frac{\widetilde Y^\varepsilon-\widetilde Y^0}{\varepsilon},
 \qquad
 Q^\varepsilon
 =\frac{\widetilde Z^\varepsilon-\widetilde Z^0}{\varepsilon}.
\]
The exact mean-value equation has the form
\eqref{eq:critical-linear-bsde}, with coefficients
\begin{align*}
 \alpha_s^\varepsilon
 &=\int_0^1 F_y^\varepsilon
   \bigl(s,\widetilde Y_s^0+\theta
       (\widetilde Y_s^\varepsilon-\widetilde Y_s^0),
       \widetilde Z_s^0+\theta
       (\widetilde Z_s^\varepsilon-\widetilde Z_s^0)\bigr)\,d\theta,\\
 \beta_s^\varepsilon
 &=\int_0^1 F_z^\varepsilon
   \bigl(s,\widetilde Y_s^0+\theta
       (\widetilde Y_s^\varepsilon-\widetilde Y_s^0),
       \widetilde Z_s^0+\theta
       (\widetilde Z_s^\varepsilon-\widetilde Z_s^0)\bigr)\,d\theta,
\end{align*}
and parameter source
\begin{equation}
 \chi_s^\varepsilon
 =\frac{
   F^\varepsilon(s,\widetilde Y_s^0,\widetilde Z_s^0)
   -F^0(s,\widetilde Y_s^0,\widetilde Z_s^0)}{\varepsilon}.
 \label{eq:parameter-source}
\end{equation}
The estimates above give uniform bounds for \((P^\varepsilon,Q^\varepsilon)\)
once \(\chi^\varepsilon\) and the terminal quotient are uniformly bounded
in the norms of Proposition~\ref{prop:critical-linear-bsde}.  The Taylor
remainder is treated separately below.

Let \((U,V)\) be the candidate first variation.  The error
\((E^\varepsilon,D^\varepsilon)
=(P^\varepsilon-U,Q^\varepsilon-V)\) has source
\begin{equation}
 \rho^\varepsilon
 =(\alpha^\varepsilon-\alpha)U
  +(\beta^\varepsilon-\beta)\mathbin{\cdot}V
 +(\chi^\varepsilon-\dot F).
 \label{eq:difference-quotient-source-remainder}
\end{equation}
Let \(\eta^\varepsilon\) denote the corresponding terminal quotient minus
the terminal value of \(U\).  Once its coefficient, terminal, and source
assumptions have been verified in
Subsection~\ref{subsec:local-difference-quotient-closure},
Proposition~\ref{prop:critical-linear-bsde} reduces local differentiability
to the estimate
\begin{equation}
 \|\eta^\varepsilon\|_\infty
 +\|\rho^\varepsilon\|_{
       \mathfrak R_{p_A}(\mathbb Q^\varepsilon;I)}
 \longrightarrow0
 \label{eq:required-source-remainder}
\end{equation}
uniformly over the admissible changing tangents and secants.

The BMO estimates reduce the argument to
\eqref{eq:required-source-remainder}.  Its verification uses the following
consequences of the full-tangent flow and transformed-generator analysis:
\begin{enumerate}
 \item a weighted first-order expansion of \(F^\varepsilon\), uniformly on
 bounded state strips, with remainder controlled by
 \(1+\lvert z\rvert^2\);
 \item weighted state-continuity estimates for \(F_y^\varepsilon\) and
 \(F_z^\varepsilon\), with respective weights
 \(1+\lvert z\rvert^2+\lvert\bar z\rvert^2\) and
 \(1+\lvert z\rvert+\lvert\bar z\rvert\);
 \item conditional moment estimates for the products
 \[
  \lvert Z\rvert\lvert Q\rvert,
  \qquad \lvert Q\rvert^2,
  \qquad (1+\lvert Z\rvert+\lvert Q\rvert)\lvert V\rvert;
 \]
 \item convergence of the terminal quotient in \(L^\infty\), with constants
 uniform over the common fixed-endpoint partition.
\end{enumerate}
The next subsection combines these bounds with
Proposition~\ref{prop:critical-linear-bsde} and the terminal interface to
prove the local quadratic difference-quotient estimate.
\subsection{The local quotient estimate}
\label{subsec:local-difference-quotient-closure}
\label{sec:local-bsde}

We now verify the coefficient and conditional-moment bounds listed in
Subsection~\ref{subsec:remaining-difference-quotient-input}.  Fix a reset
interval \(I=[u,v]\).  Write
\((\widetilde Y,\widetilde Z)\) for the base transformed solution and
\((\widetilde Y^{\varepsilon;\mathfrak r},
  \widetilde Z^{\varepsilon;\mathfrak r})\) for the solution associated
with \(F^{\varepsilon;\mathfrak r}\).  The uniform bounded-solution and BMO
estimates of Subsections~\ref{subsec:bounded-solution-bmo}--
\ref{subsec:uniform-measure-change} give
\begin{equation}
 \sup_{\substack{0<|\varepsilon|\leq\varepsilon_0\\
        \|\tau\|_{\mathbb T_{\mathbf x}^p}\leq1,\\
        \mathfrak r\in\operatorname{Rep}_{\mathbf x}(\tau),\ 
        \mathfrak b(\mathfrak r)\leq L_{\mathrm{br}}}}
 \left(
  \|\widetilde Y^{\varepsilon;\mathfrak r}\|_{\mathbb S^\infty(I)}
  +\|\widetilde Z^{\varepsilon;\mathfrak r}\mathbin{\cdot}W
       \|_{\mathrm{BMO}_2(\mathbb P;I)}
 \right)<\infty.
 \label{eq:uniform-local-solution-family}
\end{equation}

Let \(\zeta^{\varepsilon;\mathfrak r}\) and \(\zeta^0\) be the terminal
values on \(I\).  The local statement uses the terminal interface
\begin{equation}
 \delta_\zeta(\varepsilon)
 :=\sup_{\substack{\|\tau\|_{\mathbb T_{\mathbf x}^p}\leq1\\
             \mathfrak r\in\operatorname{Rep}_{\mathbf x}(\tau),\ 
             \mathfrak b(\mathfrak r)\leq L_{\mathrm{br}}}}
 \left\|
  \frac{\zeta^{\varepsilon;\mathfrak r}-\zeta^0}{\varepsilon}
  -\dot\zeta[\tau]
 \right\|_\infty
 \longrightarrow0,
 \label{eq:local-terminal-interface}
\end{equation}
where \(\tau\mapsto\dot\zeta[\tau]\) is continuous linear.  On the final
reset interval, both terms are zero because the terminal condition is fixed.
On earlier intervals, \eqref{eq:local-terminal-interface} is the quantity
passed from the next interval by fixed-endpoint backward patching.

Define
\begin{equation}
 P^{\varepsilon;\mathfrak r}
 :=\frac{\widetilde Y^{\varepsilon;\mathfrak r}-
              \widetilde Y}{\varepsilon},
 \qquad
 Q^{\varepsilon;\mathfrak r}
 :=\frac{\widetilde Z^{\varepsilon;\mathfrak r}-
              \widetilde Z}{\varepsilon}.
 \label{eq:exact-local-quotients}
\end{equation}
For \(\theta\in[0,1]\), set
\[
 Y_s^{\varepsilon,\theta}
 :=\widetilde Y_s+
   \theta(\widetilde Y_s^{\varepsilon;\mathfrak r}-
                    \widetilde Y_s),
 \qquad
 Z_s^{\varepsilon,\theta}
 :=\widetilde Z_s+
   \theta(\widetilde Z_s^{\varepsilon;\mathfrak r}-
                    \widetilde Z_s),
\]
and define
\begin{align}
 \alpha_s^{\varepsilon;\mathfrak r}
 &:=\int_0^1
 F_y^{\varepsilon;\mathfrak r}
  (s,Y_s^{\varepsilon,\theta},Z_s^{\varepsilon,\theta})\,d\theta,
 \label{eq:exact-y-mean}\\
 \beta_s^{\varepsilon;\mathfrak r}
 &:=\int_0^1
 F_z^{\varepsilon;\mathfrak r}
  (s,Y_s^{\varepsilon,\theta},Z_s^{\varepsilon,\theta})\,d\theta,
 \label{eq:exact-z-mean}\\
 \chi_s^{\varepsilon;\mathfrak r}
 &:=\frac{
  F^{\varepsilon;\mathfrak r}
       (s,\widetilde Y_s,\widetilde Z_s)
  -F^0(s,\widetilde Y_s,\widetilde Z_s)}{\varepsilon}.
 \label{eq:exact-parameter-source}
\end{align}
Subtracting the two transformed BSDEs and using the fundamental theorem of
calculus gives the exact quotient equation
\begin{align}
 P_t^{\varepsilon;\mathfrak r}
 ={}&\eta^{\varepsilon;\mathfrak r}
 +\int_t^v\bigl(
   \alpha_s^{\varepsilon;\mathfrak r}
      P_s^{\varepsilon;\mathfrak r}
  +\beta_s^{\varepsilon;\mathfrak r}
      \mathbin{\cdot}Q_s^{\varepsilon;\mathfrak r}
  +\chi_s^{\varepsilon;\mathfrak r}
 \bigr)\,ds
 \notag\\
 &-\int_t^v Q_s^{\varepsilon;\mathfrak r}\,dW_s,
 \label{eq:exact-quotient-bsde}
\end{align}
where
\(\eta^{\varepsilon;\mathfrak r}
 =(\zeta^{\varepsilon;\mathfrak r}-\zeta^0)/\varepsilon\).

Set
\begin{equation}
 \alpha_s:=F_y^0(s,\widetilde Y_s,\widetilde Z_s),
 \qquad
 \beta_s:=F_z^0(s,\widetilde Y_s,\widetilde Z_s).
 \label{eq:base-linear-coefficients}
\end{equation}
The candidate first variation is the solution of
\begin{align}
 U_t^\tau
 ={}&\dot\zeta[\tau]
 +\int_t^v\bigl(
   \alpha_sU_s^\tau+\beta_s\mathbin{\cdot}V_s^\tau
   +\dot F_I[\tau](s,\widetilde Y_s,\widetilde Z_s)
 \bigr)\,ds
 -\int_t^v V_s^\tau\,dW_s.
 \label{eq:candidate-local-linear-bsde}
\end{align}
Let \(\mathbb Q^0\) be the measure generated by
\(\beta\mathbin{\cdot}W\).  The specialization
\(Z^0=Z^1=\widetilde Z\) in
\eqref{eq:mean-coefficient-bmo} and
\eqref{eq:mean-alpha-pathwise-integrability} gives, respectively,
\(\beta\mathbin{\cdot}W\in\mathrm{BMO}_2(\mathbb P;I)\) and
\(\int_I|\alpha_s|\,ds<\infty\) almost surely.  The same base-pair
specialization of \eqref{eq:critical-alpha-explicit-budget} gives the
required exponential moment of \(\alpha^+\).  The terminal interface
\eqref{eq:local-terminal-interface} gives
\(\dot\zeta[\tau]\in L^\infty(\mathcal F_v)\), while
\eqref{eq:dot-F-continuous-bound}, the BMO norm equivalence, and the
conditional energy moments \eqref{eq:conditional-energy-moments} give
\[
 \|\dot F_I[\tau](\cdot,\widetilde Y,\widetilde Z)\|_{
       \mathfrak R_{p_A}(\mathbb Q^0;I)}
 \leq C\|\tau\|_{\mathbb T_{\mathbf x}^p}.
\]
Proposition~\ref{prop:critical-linear-bsde} therefore applies and yields
\begin{equation}
 \|U^\tau\|_{\mathbb S^\infty(I)}
 +\|V^\tau\mathbin{\cdot}W\|_{
       \mathrm{BMO}_2(\mathbb P;I)}
 \leq C\|\tau\|_{\mathbb T_{\mathbf x}^p}.
 \label{eq:candidate-linear-bound}
\end{equation}
Thus \(\tau\mapsto(U^\tau,V^\tau)\) is continuous linear.

Before estimating the error, note from
\eqref{eq:full-F-quadratic-remainder} and
\eqref{eq:dot-F-continuous-bound} that
\begin{equation}
 |\chi_s^{\varepsilon;\mathfrak r}|
 \leq C(1+|\widetilde Z_s|^2).
 \label{eq:quotient-source-uniform-bound}
\end{equation}
For \eqref{eq:exact-quotient-bsde}, take
\(Z^0=\widetilde Z\) and
\(Z^1=\widetilde Z^{\varepsilon;\mathfrak r}\) in the common coefficient
checks.  Equation~\eqref{eq:mean-coefficient-bmo} gives
\(\beta^{\varepsilon;\mathfrak r}\mathbin{\cdot}W\in
\mathrm{BMO}_2(\mathbb P;I)\), and
\eqref{eq:mean-alpha-absolute-growth} together with the two BMO energy
bounds gives
\(\int_I|\alpha_s^{\varepsilon;\mathfrak r}|\,ds<\infty\) almost surely.
The exponential moment of
\((\alpha^{\varepsilon;\mathfrak r})^+\) is
\eqref{eq:critical-alpha-explicit-budget}.  After decreasing
\(\varepsilon_0\), if necessary,
\eqref{eq:local-terminal-interface} and the boundedness of the continuous
linear map \(\tau\mapsto\dot\zeta[\tau]\) give
\[
 \sup_{\varepsilon,\tau,\mathfrak r}
 \|\eta^{\varepsilon;\mathfrak r}\|_\infty<\infty .
\]
Let \(\mathbb Q^{\varepsilon;\mathfrak r}\) be the measure generated by
\(\beta^{\varepsilon;\mathfrak r}\mathbin{\cdot}W\).  Finally,
\eqref{eq:quotient-source-uniform-bound}, the BMO norm equivalence, and the
conditional energy moments \eqref{eq:conditional-energy-moments} give
\[
 \sup_{\varepsilon,\tau,\mathfrak r}
 \|\chi^{\varepsilon;\mathfrak r}\|_{
   \mathfrak R_{p_A}(\mathbb Q^{\varepsilon;\mathfrak r};I)}
 <\infty ,
\]
where the suprema range over the family in
\eqref{eq:uniform-local-solution-family}.
Proposition~\ref{prop:critical-linear-bsde} therefore applies uniformly to
\eqref{eq:exact-quotient-bsde} and yields
\begin{equation}
 \sup_{\substack{0<|\varepsilon|\leq\varepsilon_0\\
        \|\tau\|_{\mathbb T_{\mathbf x}^p}\leq1,\\
        \mathfrak r\in\operatorname{Rep}_{\mathbf x}(\tau),\ 
        \mathfrak b(\mathfrak r)\leq L_{\mathrm{br}}}}
 \left(
  \|P^{\varepsilon;\mathfrak r}\|_{\mathbb S^\infty(I)}
  +\|Q^{\varepsilon;\mathfrak r}\mathbin{\cdot}W\|_{
       \mathrm{BMO}_2(\mathbb P;I)}
 \right)<\infty.
 \label{eq:uniform-quotient-bound}
\end{equation}
This estimate precedes the Taylor-error estimate.

\begin{lemma}[Conditional BMO product estimate]
\label{lem:conditional-bmo-products}
Let \(\mathbb Q\) be one of the preceding Girsanov measures, and let
\(G^1,G^2,G^3\) be predictable integrands such that
\[
 \max_{1\leq j\leq3}
 \|G^j\mathbin{\cdot}W^{\mathbb Q}\|_{
      \mathrm{BMO}_2(\mathbb Q;I)}\leq B.
\]
For every \(s\geq1\), the process
\begin{align*}
 \Theta_t:={}&1+|G_t^1|^2+|G_t^2|^2
 +(1+|G_t^1|)|G_t^2|\\
 &+(1+|G_t^1|+|G_t^2|)|G_t^3|
\end{align*}
satisfies
\begin{equation}
 \|\Theta\|_{\mathfrak R_s(\mathbb Q;I)}
 \leq C_{s,B,|I|}.
 \label{eq:conditional-product-package}
\end{equation}
\end{lemma}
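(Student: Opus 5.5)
We reduce the claim to the energy inequalities for BMO martingales under \(\mathbb Q\), applied to the single scalar quantity
\[
 \Lambda_t:=|G_t^1|^2+|G_t^2|^2+|G_t^3|^2 .
\]
Every term of \(\Theta\) is dominated by a constant multiple of \(1+\Lambda_t\). Indeed, \(|G^1||G^2|\le\frac12(|G^1|^2+|G^2|^2)\), and \(|G^2|\le\frac12(1+|G^2|^2)\). The mixed terms involving \(G^3\) are handled the same way. This gives
\[
 \Theta_t\le 3\bigl(1+\Lambda_t\bigr).
\]
Hence, for every stopping time \(\tau\in\mathcal T_I\),
\[
 \int_\tau^v\Theta_s\,ds\le 3|I|+3\sum_{j=1}^3\int_\tau^v|G_s^j|^2\,ds .
\]
By Minkowski's inequality in \(L^s(\mathbb Q\mid\mathcal F_\tau)\), it suffices to bound the conditional \(s\)-th moments of each energy \(\int_\tau^v|G^j_s|^2ds\) separately, uniformly in \(\tau\).

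The key input is the classical John--Nirenberg/energy inequality for a continuous BMO martingale \(M=G\mathbin{\cdot}W^{\mathbb Q}\) under \(\mathbb Q\). Because \(\langle M\rangle_v-\langle M\rangle_\tau=\int_\tau^v|G_s|^2ds\), its \(\mathrm{BMO}_2\) norm satisfies
\[
 \mathbb E_\tau^{\mathbb Q}\bigl[\langle M\rangle_v-\langle M\rangle_\tau\bigr]\le B^2
\]
for every stopping time \(\tau\). The energy inequality (Garsia--Neveu lemma) then gives, for every integer \(n\ge1\),
\[
 \mathbb E_\tau^{\mathbb Q}\bigl[(\langle M\rangle_v-\langle M\rangle_\tau)^n\bigr]\le n!\,B^{2n}.
\]
The proof is by induction on \(n\). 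One writes
\[
 (\langle M\rangle_v-\langle M\rangle_\tau)^n
 =n\int_\tau^v(\langle M\rangle_v-\langle M\rangle_r)^{n-1}\,d\langle M\rangle_r .
\]
Conditioning at \(r\) and using the optional projection, together with the stopping-time form of the BMO bound, completes the induction. Equivalently, one may invoke the exponential form \(\mathbb E_\tau^{\mathbb Q}\exp(c\,(\langle M\rangle_v-\langle M\rangle_\tau))\le(1-cB^2)^{-1}\) for \(cB^2<1\); this is the same conditional energy estimate as in Appendix~\ref{subsec:conditional-energy} behind \eqref{eq:critical-alpha-explicit-budget}. Non-integer \(s\ge1\) follows from Jensen's inequality with \(n=\lceil s\rceil\).

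Combining these bounds yields
\[
 \|\Theta\|_{\mathfrak R_s(\mathbb Q;I)}\le 3|I|+9\,(\lceil s\rceil!)^{1/\lceil s\rceil}B^2=:C_{s,B,|I|}.
\]

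The only delicate point is keeping track of the measure. The BMO norms in the hypothesis are taken for \(G^j\mathbin{\cdot}W^{\mathbb Q}\) under \(\mathbb Q\), and the bracket \(\langle G\mathbin{\cdot}W^{\mathbb Q}\rangle=\int|G|^2ds\) is the same under \(\mathbb P\) and \(\mathbb Q\). So the energy inequality is applied entirely under \(\mathbb Q\), with no reverse-H\"older step needed inside this lemma. The transfer from \(\mathbb P\)-BMO bounds to the hypothesis \(B\) is done in the applications via \eqref{eq:bmo-norm-equivalence}.

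We also have to confirm that the conditional expectations are taken at arbitrary stopping times, not only deterministic ones. The induction handles this because the stopping-time supremum is built into the definition of \(\mathrm{BMO}_2(\mathbb Q;I)\), and the same supremum appears in \eqref{eq:conditional-source-norm}.
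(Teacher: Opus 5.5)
Your proposal is correct and follows essentially the paper's argument: both reduce everything to the conditional energy moments \eqref{eq:conditional-energy-moments} for $\int_\tau^v|G^j_t|^2\,dt$ under $\mathbb Q$, and both use conditional Lyapunov for non-integer $s$. The only cosmetic difference is that you dominate $\Theta$ pointwise by $3(1+\Lambda)$ and then use conditional Minkowski, whereas the paper applies Cauchy--Schwarz to the time integrals of the cross terms followed by conditional Cauchy--Schwarz; your version has the small advantage of giving an explicit constant.
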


\begin{proof}
For \(A_j(\tau)=\int_\tau^v|G_t^j|^2dt\), the conditional energy estimate
\eqref{eq:conditional-energy-moments} gives all integer moments of
\(A_j(\tau)\), uniformly in \(\tau\).  Conditional Lyapunov inequalities
give the same conclusion for every finite positive exponent.  The terms in
\eqref{eq:conditional-product-package} are controlled by
\begin{align*}
 \int_\tau^v|G_t^i||G_t^j|dt
 &\leq A_i(\tau)^{1/2}A_j(\tau)^{1/2},\\
 \int_\tau^v|G_t^j|dt
 &\leq |I|^{1/2}A_j(\tau)^{1/2}.
\end{align*}
Conditional Cauchy--Schwarz and the energy moments prove
\eqref{eq:conditional-product-package}.
\end{proof}

Define the errors
\begin{equation}
 E^{\varepsilon;\mathfrak r}
 :=P^{\varepsilon;\mathfrak r}-U^\tau,
 \qquad
 D^{\varepsilon;\mathfrak r}
 :=Q^{\varepsilon;\mathfrak r}-V^\tau.
 \label{eq:local-error-processes}
\end{equation}
Subtracting \eqref{eq:candidate-local-linear-bsde} from
\eqref{eq:exact-quotient-bsde} yields
\begin{align}
 E_t^{\varepsilon;\mathfrak r}
 ={}&\eta_{\mathrm{err}}^{\varepsilon;\mathfrak r}
 +\int_t^v\bigl(
  \alpha_s^{\varepsilon;\mathfrak r}
       E_s^{\varepsilon;\mathfrak r}
 +\beta_s^{\varepsilon;\mathfrak r}
       \mathbin{\cdot}D_s^{\varepsilon;\mathfrak r}
 +\rho_s^{\varepsilon;\mathfrak r}
 \bigr)ds
 \notag\\
 &-\int_t^vD_s^{\varepsilon;\mathfrak r}\,dW_s,
 \label{eq:exact-error-bsde}
\end{align}
where
\begin{align}
 \eta_{\mathrm{err}}^{\varepsilon;\mathfrak r}
 &:=\eta^{\varepsilon;\mathfrak r}-\dot\zeta[\tau],
 \label{eq:terminal-error-definition}\\
 \rho^{\varepsilon;\mathfrak r}
 &:=(\alpha^{\varepsilon;\mathfrak r}-\alpha)U^\tau
  +(\beta^{\varepsilon;\mathfrak r}-\beta)
       \mathbin{\cdot}V^\tau
  +(\chi^{\varepsilon;\mathfrak r}-\dot F_I[\tau]).
 \label{eq:exact-error-source}
\end{align}

We estimate the three terms in \eqref{eq:exact-error-source}.  Since
\[
 Y^{\varepsilon,\theta}
 =\widetilde Y+\theta\varepsilon
       P^{\varepsilon;\mathfrak r},
 \qquad
 Z^{\varepsilon,\theta}
 =\widetilde Z+\theta\varepsilon
       Q^{\varepsilon;\mathfrak r},
\]
the parameter bounds
\eqref{eq:full-Fy-parameter-bound}--
\eqref{eq:full-Fz-parameter-bound}, the state bounds
\eqref{eq:full-Fy-state-continuity}--
\eqref{eq:full-Fz-state-continuity}, and
\eqref{eq:uniform-quotient-bound} give
\begin{align}
 |\alpha^{\varepsilon;\mathfrak r}-\alpha|
 &\leq C|\varepsilon|\bigl[
  1+|\widetilde Z|^2
  +|Q^{\varepsilon;\mathfrak r}|^2
  +(1+|\widetilde Z|)|Q^{\varepsilon;\mathfrak r}|
 \bigr],
 \label{eq:alpha-mean-difference}\\
 |\beta^{\varepsilon;\mathfrak r}-\beta|
 &\leq C|\varepsilon|\bigl[
  1+|\widetilde Z|+|Q^{\varepsilon;\mathfrak r}|
 \bigr],
 \label{eq:beta-mean-difference}\\
 |\chi^{\varepsilon;\mathfrak r}-\dot F_I[\tau]|
 &\leq C|\varepsilon|(1+|\widetilde Z|^2).
 \label{eq:chi-source-difference}
\end{align}
The constants absorb the uniform \(\mathbb S^\infty\) bounds for
\(P^{\varepsilon;\mathfrak r}\) and \(U^\tau\).  Combining
\eqref{eq:alpha-mean-difference}--\eqref{eq:chi-source-difference} gives
\begin{align}
 |\rho^{\varepsilon;\mathfrak r}|
 \leq C|\varepsilon|\bigl[&
  1+|\widetilde Z|^2
  +|Q^{\varepsilon;\mathfrak r}|^2
  +(1+|\widetilde Z|)|Q^{\varepsilon;\mathfrak r}|\\
 &+(1+|\widetilde Z|+|Q^{\varepsilon;\mathfrak r}|)|V^\tau|
 \bigr].
 \label{eq:pointwise-source-remainder}
\end{align}

Under the measure \(\mathbb Q^{\varepsilon;\mathfrak r}\) generated by
\(\beta^{\varepsilon;\mathfrak r}\mathbin{\cdot}W\), the uniform
BMO--Girsanov equivalence gives
\begin{align}
 &\|\widetilde Z\mathbin{\cdot}W^{\mathbb Q^{\varepsilon;\mathfrak r}}\|_{
       \mathrm{BMO}_2(\mathbb Q^{\varepsilon;\mathfrak r};I)}
 +\|Q^{\varepsilon;\mathfrak r}\mathbin{\cdot}
       W^{\mathbb Q^{\varepsilon;\mathfrak r}}\|_{
       \mathrm{BMO}_2(\mathbb Q^{\varepsilon;\mathfrak r};I)}
 \notag\\
 &\qquad
 +\|V^\tau\mathbin{\cdot}W^{\mathbb Q^{\varepsilon;\mathfrak r}}\|_{
       \mathrm{BMO}_2(\mathbb Q^{\varepsilon;\mathfrak r};I)}
 \leq B_{\mathrm{loc}}
 \label{eq:three-integrand-q-bmo}
\end{align}
with a deterministic constant.  Apply
Lemma~\ref{lem:conditional-bmo-products} with
\(G^1=\widetilde Z\),
\(G^2=Q^{\varepsilon;\mathfrak r}\), and \(G^3=V^\tau\).  For the conjugate
exponent \(p_A\) in Proposition~\ref{prop:critical-linear-bsde},
\begin{equation}
 \sup_{\substack{\|\tau\|_{\mathbb T_{\mathbf x}^p}\leq1\\
         \mathfrak r\in\operatorname{Rep}_{\mathbf x}(\tau),\ 
         \mathfrak b(\mathfrak r)\leq L_{\mathrm{br}}}}
 \|\rho^{\varepsilon;\mathfrak r}\|_{
   \mathfrak R_{p_A}(\mathbb Q^{\varepsilon;\mathfrak r};I)}
 \leq C|\varepsilon|.
 \label{eq:conditional-source-remainder-closed}
\end{equation}
This proves the conditional source convergence required in
\eqref{eq:required-source-remainder}.

\begin{theorem}[Local full-tangent differentiability]
\label{thm:local-full-tangent-differentiability}
Under Assumption~\ref{ass:coefficients}, the common reset and representative
budgets above, and the terminal interface
\eqref{eq:local-terminal-interface},
\begin{align}
 &\sup_{\substack{\|\tau\|_{\mathbb T_{\mathbf x}^p}\leq1\\
          \mathfrak r\in\operatorname{Rep}_{\mathbf x}(\tau),\ 
          \mathfrak b(\mathfrak r)\leq L_{\mathrm{br}}}}
 \left(
  \|P^{\varepsilon;\mathfrak r}-U^\tau\|_{\mathbb S^\infty(I)}
  +\|(Q^{\varepsilon;\mathfrak r}-V^\tau)
          \mathbin{\cdot}W\|_{\mathrm{BMO}_2(\mathbb P;I)}
 \right)
 \notag\\
 &\hspace{35mm}\leq
 C\bigl(\delta_\zeta(\varepsilon)+|\varepsilon|\bigr)
 \longrightarrow0.
 \label{eq:local-full-tangent-remainder}
\end{align}
The limit and its defining linear BSDE are independent of the representative.
If \(\varepsilon_n\to0\), \(\tau_n\to\tau\), and the representatives have
a common bounded budget, then the local quotients converge to
\((U^\tau,V^\tau)\) whenever the terminal interface holds along the same
sequence.
\end{theorem}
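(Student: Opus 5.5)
The proof applies Proposition~\ref{prop:critical-linear-bsde} to the error equation \eqref{eq:exact-error-bsde}, with coefficients $(\alpha^{\varepsilon;\mathfrak r},\beta^{\varepsilon;\mathfrak r})$, terminal value $\eta_{\mathrm{err}}^{\varepsilon;\mathfrak r}$ and source $\rho^{\varepsilon;\mathfrak r}$, under the measure $\mathbb Q^{\varepsilon;\mathfrak r}$.

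First, check the hypotheses uniformly in $(\varepsilon,\tau,\mathfrak r)$. The BMO bound on $\beta^{\varepsilon;\mathfrak r}\mathbin{\cdot}W$ follows from \eqref{eq:mean-coefficient-bmo}, with $n_*$ fixed by the uniform family \eqref{eq:uniform-local-solution-family}. Pathwise integrability of $\alpha^{\varepsilon;\mathfrak r}$ follows from \eqref{eq:mean-alpha-pathwise-integrability}. The exponential budget $K_A$ follows from \eqref{eq:critical-alpha-explicit-budget}, since the common reset partition enforces \eqref{eq:critical-reset-smallness} with a $\mathbb Q$-BMO budget $b_{\mathbb Q}$ obtained from \eqref{eq:bmo-norm-equivalence}. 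Consequently $r_A$, $p_A$, $K_A$ and $C_{\mathrm{eq}}$ are common constants. The terminal error satisfies $\|\eta_{\mathrm{err}}^{\varepsilon;\mathfrak r}\|_\infty\le\delta_\zeta(\varepsilon)$ by \eqref{eq:local-terminal-interface}. The source satisfies $\|\rho^{\varepsilon;\mathfrak r}\|_{\mathfrak R_{p_A}}\le C|\varepsilon|$ by \eqref{eq:conditional-source-remainder-closed}.

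Second, conclude. The sup bound \eqref{eq:critical-linear-sup-bound} gives $\|E^{\varepsilon;\mathfrak r}\|_{\mathbb S^\infty(I)}\le K_A^{1/r_A}(\delta_\zeta(\varepsilon)+C|\varepsilon|)$. The BMO bound \eqref{eq:critical-linear-bmo-bound} controls $D^{\varepsilon;\mathfrak r}\mathbin{\cdot}W^{\mathbb Q}$. Every term on its right side contains a factor $\|\eta_{\mathrm{err}}\|_\infty$, $\|E\|_\infty$ or $\|\rho\|_{\mathfrak R_{p_A}}$, so it is $O((\delta_\zeta+|\varepsilon|)^2)$ and the norm itself is $O(\delta_\zeta+|\varepsilon|)$. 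Transferring back to $\mathbb P$ through \eqref{eq:bmo-norm-equivalence} uses the fact that $D\mathbin{\cdot}W^{\mathbb Q}$ is the Girsanov transform of $D\mathbin{\cdot}W$ by $N=\beta^{\varepsilon;\mathfrak r}\mathbin{\cdot}W$. This yields \eqref{eq:local-full-tangent-remainder}.

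The main obstacle is the uniformity of $K_A$. The smallness \eqref{eq:critical-reset-smallness} involves the $\mathbb Q^{\varepsilon;\mathfrak r}$-BMO norms of both $\widetilde Z$ and $\widetilde Z^{\varepsilon;\mathfrak r}$. These norms must be bounded by a constant fixed before the partition is chosen, which is the purpose of the $M_*$ construction in Proposition~\ref{prop:uniform-scalar-state} together with Proposition~\ref{prop:stopping-time-bmo}. The partition is then refined once, uniformly, and the same refinement also serves the candidate equation \eqref{eq:candidate-local-linear-bsde}.

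Representative independence follows because the limit equation \eqref{eq:candidate-local-linear-bsde} involves only $\dot F_I[\tau]$ and $\dot\zeta[\tau]$, both continuous linear in $\tau$ and representation free by Proposition~\ref{prop:weighted-full-tangent-generator}. Uniqueness in Proposition~\ref{prop:critical-linear-bsde} then identifies $(U^\tau,V^\tau)$.

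For a sequence $\tau_n\to\tau$ with bounded budgets, split the error as $(P^{\varepsilon_n;\mathfrak r_n}-U^{\tau_n})+(U^{\tau_n}-U^\tau)$, and similarly for the $Z$-components. The first term is handled by the uniform estimate, assuming the terminal interface holds along the sequence. The second is bounded by $C\|\tau_n-\tau\|$ via linearity and \eqref{eq:candidate-linear-bound}.
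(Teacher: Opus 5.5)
Your proposal is correct and follows the paper's proof essentially step for step. You apply Proposition~\ref{prop:critical-linear-bsde} to the exact error equation \eqref{eq:exact-error-bsde}, with hypotheses checked uniformly via \eqref{eq:mean-coefficient-bmo}, \eqref{eq:critical-alpha-explicit-budget}, \eqref{eq:local-terminal-interface} and \eqref{eq:conditional-source-remainder-closed}; read off the $\mathbb S^\infty$ bound and the squared BMO bound of order $(\delta_\zeta(\varepsilon)+|\varepsilon|)^2$; transfer to $\mathbb P$ by the uniform BMO equivalence; and treat representative independence and changing tangents (add and subtract $(U^{\tau_n},V^{\tau_n})$) exactly as the paper does.
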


\begin{proof}
The coefficients in \eqref{eq:exact-error-bsde} are the same
\(\alpha^{\varepsilon;\mathfrak r}\) and
\(\beta^{\varepsilon;\mathfrak r}\) as in
\eqref{eq:exact-quotient-bsde}.  Hence
\(\beta^{\varepsilon;\mathfrak r}\mathbin{\cdot}W\) is BMO by
\eqref{eq:mean-coefficient-bmo},
\(\int_I|\alpha_s^{\varepsilon;\mathfrak r}|\,ds<\infty\) almost surely by
\eqref{eq:mean-alpha-absolute-growth} and the two BMO energy bounds, and
\eqref{eq:critical-alpha-explicit-budget} supplies the common exponential
moment of \((\alpha^{\varepsilon;\mathfrak r})^+\).  Moreover,
\[
 \|\eta_{\mathrm{err}}^{\varepsilon;\mathfrak r}\|_\infty
 \leq\delta_\zeta(\varepsilon),
 \qquad
 \|\rho^{\varepsilon;\mathfrak r}\|_{
   \mathfrak R_{p_A}(\mathbb Q^{\varepsilon;\mathfrak r};I)}
 \leq C|\varepsilon|
\]
by \eqref{eq:local-terminal-interface} and
\eqref{eq:conditional-source-remainder-closed}.  Thus the terminal datum is
in \(L^\infty(\mathcal F_v)\), and the source has the required conditional
norm.  Together with the coefficient estimates above, this permits an
application of Proposition~\ref{prop:critical-linear-bsde} to the exact
error equation.  Its \(\mathbb S^\infty\) estimate is of order
\(d_\varepsilon:=\delta_\zeta(\varepsilon)+|\varepsilon|\).  In the squared
BMO estimate \eqref{eq:critical-linear-bmo-bound}, the terminal term and the
term quadratic in \(\|E^{\varepsilon;\mathfrak r}\|_\infty\) are
\(O(d_\varepsilon^2)\).  The cross term is also
\(O(d_\varepsilon^2)\), since the conditional source norm is
\(O(|\varepsilon|)\).  Taking square roots and using the uniform BMO
equivalence between \(\mathbb Q^{\varepsilon;\mathfrak r}\) and
\(\mathbb P\) gives the same order for the martingale integrand.  This
proves \eqref{eq:local-full-tangent-remainder}.

Representative independence follows because \(\dot F_I[\tau]\) and
\(\dot\zeta[\tau]\) are intrinsic and the candidate linear BSDE is unique.
For changing tangents, add and subtract
\((U^{\tau_n},V^{\tau_n})\).  The first difference is controlled by
\eqref{eq:local-full-tangent-remainder}; the second converges by the
continuous linear bound \eqref{eq:candidate-linear-bound}.
\end{proof}

\begin{remark}[Exponent and comparison boundaries]
\label{rem:local-exponent-boundary}
The proof uses the exponents supplied by the uniform BMO budget.  It does not
assume a reverse-H\"older exponent equal to two.  The critical term in the
positive part of \(\alpha^{\varepsilon;\mathfrak r}\) is controlled by the
reset smallness condition \eqref{eq:critical-reset-smallness} and the
conditional energy estimate.  A stochastic-Lipschitz theorem with a
subcritical \(K^{2\alpha}\), \(\alpha<1\), is not used as a substitute for
this step.
\end{remark}
\section{Backward patching and reconstruction}
\label{sec:patching}

This section passes from the local transformed equations to the derivative
of the solution map on the full time interval.  The argument uses the
intrinsic flow variation of Proposition~\ref{prop:intrinsic-flow-variation},
the full-jet and weighted-generator expansions of
Propositions~\ref{prop:uniform-full-jet-expansion} and
\ref{prop:weighted-full-tangent-generator}, the uniform BMO estimates of
Proposition~\ref{prop:uniform-bmo-girsanov}, and the local
difference-quotient result, Theorem~\ref{thm:local-full-tangent-differentiability}.
The role of the present
section is global: it supplies a common deterministic partition, propagates
the terminal derivative backwards, and reconstructs the derivative in the
original coordinates.

For a subinterval $[s,t]$, set
\begin{equation}\label{eq:base-reset-control}
 \omega_{\mathbf x}(s,t)
 :=(t-s)+\|x\|_{p\text{-var};[s,t]}^p
       +\|\mathbb x\|_{q\text{-var};[s,t]}^q.
\end{equation}
This is a deterministic continuous control.

\begin{lemma}[Consistency under deterministic refinement]
\label{lem:partition-consistency}
Let $\Pi$ be a deterministic reset partition on which the local existence
and critical uniqueness estimates hold, and let $\Pi'$ be a deterministic
refinement on which the same estimates hold.  Iteration on $\Pi$ and on
$\Pi'$ gives the same reconstructed pair
$\mathcal S(\mathbf x)$.
\end{lemma}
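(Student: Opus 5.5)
The argument reduces to one interval $I=[u,v]$ of $\Pi$, which $\Pi'$ splits into $u=t_0<t_1<\dots<t_k=v$, and then iterates backward over the intervals of $\Pi$.

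\emph{Flow composition on one coarse interval.} The first step is the cocycle identity
\[
 \phi^{\mathbf x,[t_{i},v]}_t=\phi^{\mathbf x,[t_i,t_{i+1}]}_t\circ\phi^{\mathbf x,[t_{i+1},v]}_{t_{i+1}},\qquad t\in[t_i,t_{i+1}].
\]
This follows from uniqueness of the backward rough flow \eqref{eq:backward-flow}, since both sides solve the same RDE on $[t_i,t_{i+1}]$ with the same value at $t_{i+1}$. Both flows are global diffeomorphisms by the lower bound on $J$ from \eqref{eq:full-jet-J-lower}.

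\emph{Transformed coordinates agree.} Let $(\widetilde Y,\widetilde Z)$ be the bounded transformed solution on $I$ with terminal value $\zeta$. For $t\in[t_i,t_{i+1}]$ define
\[
 \widehat Y_t=\psi^{[t_i,t_{i+1}]}_t\bigl(\phi^{I}_t(\widetilde Y_t)\bigr)=\phi^{[t_{i+1},v]}_{t_{i+1}}(\widetilde Y_t),
 \qquad
 \widehat Z_t=\partial_y\phi^{[t_{i+1},v]}_{t_{i+1}}(\widetilde Y_t)\,\widetilde Z_t.
\]
The map $g=\phi^{[t_{i+1},v]}_{t_{i+1}}$ is deterministic and time independent on the subinterval. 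Itô's formula applied to $g(\widetilde Y)$, together with the chain rule for $J$ and $K$ under composition, should convert the generator $F^{\mathbf x,I}$ into $F^{\mathbf x,[t_i,t_{i+1}]}$. This is the transformation identity
\[
 F^{[t_i,t_{i+1}]}(t,g(y),g'(y)z)=g'(y)F^{I}(t,y,z)+\tfrac12 g''(y)|z|^2,
\]
checked by direct substitution using $J^I=J^{\mathrm{fine}}\circ g\cdot g'$ and $K^I=K^{\mathrm{fine}}\circ g\,(g')^2+J^{\mathrm{fine}}\circ g\,g''$.

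Thus $(\widehat Y,\widehat Z)$ is a bounded solution of the fine transformed BSDE on $[t_i,t_{i+1}]$. Its terminal value is $g(\widetilde Y_{t_{i+1}})$, which is exactly the original-coordinate value $Y_{t_{i+1}}$ reconstructed from the fine interval to the right, since $\phi^{[t_{i+1},t_{i+2}]}_{t_{i+1}}=\mathrm{Id}$. The same identity shows that reconstruction through \eqref{eq:reconstruction} gives the same pair $(Y,Z)$ from either side. $\widehat Z$ is predictable, and its BMO norm is controlled because $g'$ is bounded.

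\emph{Uniqueness on the fine intervals and iteration.} The main obstacle is matching against the $\Pi'$-construction, which requires uniqueness of bounded solutions on each fine interval. This is supplied by the critical uniqueness estimate, Proposition~\ref{prop:critical-linear-bsde} applied to the difference of two solutions. It requires the state bound $M_*$ of Proposition~\ref{prop:uniform-scalar-state}, which is independent of the refinement, and the smallness \eqref{eq:critical-reset-smallness}, both assumed on $\Pi'$.

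One point needs care: the candidate $\widehat Y$ must lie in the strip where the uniqueness estimate is valid. I would verify this from the bound on $g-\mathrm{Id}$ and the terminal bound $M_Y$. Backward induction over $i=k-1,\dots,0$ then identifies the $\Pi'$-solution on $I$ with the $\Pi$-solution. Backward induction over the intervals of $\Pi$, with matching handoff values at the coarse endpoints, gives equality of the reconstructed pairs on $[0,T]$.
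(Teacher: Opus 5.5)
Your proposal follows essentially the same route as the paper: the cocycle identity for the backward flow, a deterministic change of variables with It\^o's formula showing that the restricted coarse transformed solution solves each fine reset equation with the reconstructed interface value, intervalwise uniqueness from the zero-terminal, zero-source case of Proposition~\ref{prop:critical-linear-bsde}, and then backward induction (the paper inserts one point at a time, whereas you insert all fine points of a coarse interval at once). Two slips need correcting: It\^o's formula for $g(\widetilde Y)$ forces $F^{[t_i,t_{i+1}]}(t,g(y),g'(y)z)=g'(y)F^{I}(t,y,z)-\tfrac12 g''(y)|z|^2$ (minus, not plus), and the terminal match follows from $\phi^{[t_i,t_{i+1}]}_{t_{i+1}}=\mathrm{Id}$ together with the cocycle at $t_{i+1}$, not from $\phi^{[t_{i+1},t_{i+2}]}_{t_{i+1}}=\mathrm{Id}$, which is false in general.
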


\begin{proof}
It is enough to insert one point $w$ into an interval $[u,v]$.  The backward
rough flow satisfies the cocycle identity
\begin{equation}\label{eq:backward-flow-cocycle}
 \phi_t^{\mathbf x,[u,v]}
 =\phi_t^{\mathbf x,[u,w]}
   \circ\phi_w^{\mathbf x,[w,v]},
 \qquad u\leq t\leq w,
\end{equation}
and agrees with $\phi^{\mathbf x,[w,v]}$ on $[w,v]$.  Differentiating
\eqref{eq:backward-flow-cocycle} gives the corresponding identities for
$J$ and $K$.  The deterministic change of variables associated with this
composition, followed by It\^o's formula, shows that the restriction of a
transformed solution on $[u,v]$ solves the two reset equations on
$[u,w]$ and $[w,v]$, with the reconstructed value at $w$ as their common
terminal interface.  Conversely, the two reset solutions concatenate to
the one-interval solution.  Uniqueness of the bounded transformed BSDE on
each reset interval identifies the two constructions.  Intervalwise
uniqueness follows from the zero-terminal, zero-source case of
Proposition~\ref{prop:critical-linear-bsde}: the mean \(z\)-coefficient
is BMO by \eqref{eq:mean-coefficient-bmo},
\eqref{eq:mean-alpha-pathwise-integrability} gives
\(\int_I|\alpha_s|\,ds<\infty\) almost surely, the prescribed critical
budget gives the exponential moment of \(\alpha^+\), and the zero terminal
datum and source satisfy the \(L^\infty\) and
\(\mathfrak R_{p_A}\) conditions.  Finite induction
proves the assertion for $\Pi'$.
\end{proof}

Once the critical partition below has been constructed, the lemma shows
that every further deterministic critical refinement leaves the
reconstructed solution unchanged.

\begin{lemma}[A deterministic common reset partition]
\label{lem:common-reset-partition}
Fix $R,B<\infty$.  There are $\varepsilon_0>0$, a deterministic
partition
\begin{equation}\label{eq:common-reset-partition}
 \Pi=\{0=t_0<t_1<\cdots<t_N=T\},
 \qquad N\leq N_*<\infty,
\end{equation}
and constants $r_A>1$, $K_A<\infty$ such that the following assertions
hold simultaneously for every
\[
 \tau\in\mathbb T_{\mathbf x}^p,\qquad
 \|\tau\|_{\mathbb T_{\mathbf x}^p}\leq R,
\]
every $(\mathbf Z,a)\in\operatorname{Rep}_{\mathbf x}(\tau)$ with
$\mathfrak b(\mathbf Z,a)\leq B$, and every
$0<|\varepsilon|\leq\varepsilon_0$.

\begin{enumerate}
 \item On each $I_i=[t_i,t_{i+1}]$, the base flow and the flow driven by
 $\mathbf x^{\varepsilon;\mathbf Z,a}$ remain in the common reset regime.
 In particular, their first spatial derivatives are bounded away from
 zero, and all flow-jet estimates used in
 Propositions~\ref{prop:uniform-full-jet-expansion} and
 \ref{prop:weighted-full-tangent-generator} hold with constants independent of
 $i,\varepsilon,\tau,\mathbf Z$, and $a$.

 \item Let $\mathbb Q_i^\varepsilon$ be the local Girsanov measure generated
 by the mean $z$-coefficient in the transformed difference equation.  The
 corresponding stochastic exponentials satisfy common reverse-H\"older
 bounds, and the base and perturbed martingale integrands have
 $\mathbb Q_i^\varepsilon$-BMO norm at most a deterministic constant
 $b_{\mathbb Q}$.

 \item The positive part of the mean $y$-coefficient satisfies
 \begin{equation}\label{eq:common-critical-structure}
  (\alpha_s^{\varepsilon,i})^+
  \leq a_s^i+\delta_*
  \bigl(
    |\widetilde Z_s^i|^2
    +|\widetilde Z_s^{\varepsilon,i}|^2
  \bigr),
  \qquad
  \int_{t_i}^{t_{i+1}}a_s^i\,ds\leq A_*,
 \end{equation}
 where $A_*<\infty$ is deterministic and
 \begin{equation}\label{eq:common-critical-smallness}
  2r_A\delta_*b_{\mathbb Q}^2<1.
 \end{equation}
 Consequently,
 \begin{equation}\label{eq:common-critical-budget}
  \sup_{\sigma\in\mathcal T_{I_i}}
  \left\|
   \mathbb E_{\sigma}^{\mathbb Q_i^\varepsilon}
   \exp\left(
    r_A\int_\sigma^{t_{i+1}}
       (\alpha_s^{\varepsilon,i})^+\,ds
   \right)
  \right\|_\infty
  \leq
  K_A:=\frac{e^{r_AA_*}}
             {1-2r_A\delta_*b_{\mathbb Q}^2}.
 \end{equation}
\end{enumerate}
The partition and $N_*$ depend on the base driver, the coefficient bounds,
$R$, and $B$, but not on the particular tangent, its representative, or
$\varepsilon$.
\end{lemma}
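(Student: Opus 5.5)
The plan is to fix every stochastic constant \emph{before} the partition is chosen, and only then refine the partition deterministically until the quadratic part of $(F_y)^+$ is small enough. The order of choices is the whole point: the BMO budget must not depend on the partition that it is used to select.

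\textbf{Step 1: constants independent of any later refinement.} Start from the preliminary partition $\Pi^{\mathrm{ex}}$ of Proposition~\ref{prop:uniform-scalar-state}, refined if necessary so that Proposition~\ref{prop:uniform-full-jet-expansion} holds with budgets $M_{\mathbf x}$, $L_{\mathrm{br}}=B$ for $\|\tau\|\le R$. Apply that proposition to $\tau/R$, with the realization rescaled by the ray dilation, and adjust $\varepsilon_0$ accordingly. On these intervals \eqref{eq:global-reset-flow-bound} gives growth constants $a_0,a_1,a_2,\gamma,\ell_z$ for $F$. Refining a partition only shrinks the jets $(\phi-\mathrm{Id},J-1,K,L)$, so the same constants remain valid on every further refinement. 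Proposition~\ref{prop:uniform-scalar-state} then gives $M_*$, which depends neither on the number nor on the mesh of later intervals. Fix $\lambda=\gamma+1$. Proposition~\ref{prop:stopping-time-bmo} gives $\mathcal B_\lambda$, whose formula \eqref{eq:explicit-z-bmo} uses $T$ rather than $|I|$. This yields $n_*$ in \eqref{eq:mean-coefficient-bmo}, and Proposition~\ref{prop:uniform-bmo-girsanov} yields $r_\pm$, $C_\pm$ and $C_{\mathrm{eq}}$, all depending only on $n_*$. I would set $b_{\mathbb Q}:=C_{\mathrm{eq}}\mathcal B_\lambda$, which gives item~2 for the base and perturbed integrands. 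I would fix $r_A:=2$, or any fixed $r_A>1$, since Proposition~\ref{prop:critical-linear-bsde} requires only $r_A>1$. Finally choose $\delta_*:=1/(4r_Ab_{\mathbb Q}^2)$, so that \eqref{eq:common-critical-smallness} holds.

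\textbf{Step 2: structure of $\alpha^+$.} Use the exact formula \eqref{eq:exact-Fy-formula} on the strip $|y|\le M_*$. The only genuinely quadratic term is $\tfrac12(L/J-K^2/J^2)|z|^2$. Its positive part is at most $\tfrac12\|L/J\|_\infty|z|^2$, and $\|L/J\|_\infty\le 2\|L\|_\infty$ because $J\ge1/2$ by \eqref{eq:full-jet-J-lower}. The terms $(K/J)f_z\cdot z$ and $-(K/J^2)f$ are at most $C(1+|z|)$. I would bound $C|z|\le \tfrac{\delta_*}{4}|z|^2+C^2/\delta_*$ and put the constant part into $a_s$. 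Evaluating at the convex combination $Z^{\varepsilon,\theta}$ and using $|Z^{\varepsilon,\theta}|^2\le |\widetilde Z|^2+|\widetilde Z^\varepsilon|^2$ gives \eqref{eq:common-critical-structure} with $a_s^i\equiv C(1+1/\delta_*)$ and $A_*:=C(1+1/\delta_*)T$, provided that
\[
 \sup_{I\in\Pi}\ \sup_{t\in I,\,y}\bigl(|L_t^{\varepsilon}(y)|+|K_t^{\varepsilon}(y)|\bigr)\le c\,\delta_*
\]
for a suitable absolute $c$, uniformly in $\varepsilon$, $\tau$ and the representative.

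\textbf{Step 3: obtaining the smallness deterministically.} By \eqref{eq:base-jet-smallness}, the base jets satisfy $\sup_I|\Theta^0|\le C_0\,\omega_{\mathbf x}(I)^{1/p}$. By \eqref{eq:joined-lipschitz-parameter} and \eqref{eq:controlled-to-levelwise}, $\sup_I|\Theta^\varepsilon-\Theta^0|\le C|\varepsilon|$ with $C$ uniform over the budget. I would first choose a threshold $\eta$ with $C_0\eta^{1/p}\le c\delta_*/2$. Then take the greedy partition of the control \eqref{eq:base-reset-control} at level $\eta$, intersected with the refinement from Step~1; it has at most $N_*:=N_{\mathrm{ex}}+1+\omega_{\mathbf x}(0,T)/\eta$ intervals. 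Then decrease $\varepsilon_0$ so that $C\varepsilon_0\le c\delta_*/2$. Item~1 follows because refinement of a valid reset partition stays valid. Item~3 and the budget \eqref{eq:common-critical-budget} then follow from \eqref{eq:critical-alpha-explicit-budget}.

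\textbf{Main obstacle.} The step I expect to be delicate is the circularity check. One must confirm that $M_*$, $\mathcal B_\lambda$, $n_*$, $b_{\mathbb Q}$ and $r_A$ are all insensitive to the refinement made in Step~3. The BMO estimate uses the bound $|I|\le T$, and $M_*$ is anchored to the original-coordinate bound $M_Y$, so this should hold. One must also check that the perturbed intervals keep the same endpoints, using the uniform-in-$\tau$ choice of $\varepsilon_0$ exactly as in \eqref{eq:full-radial-control}, so that nothing depends on where the directional variation concentrates.
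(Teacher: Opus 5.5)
Your proposal is correct and follows essentially the paper's route. The stochastic constants $M_*$, $\mathcal B_\lambda$, $n_*$, $b_{\mathbb Q}$, $r_A$ and $\delta_*$ are fixed independently of any later refinement, a jet-smallness threshold is chosen next, $\Pi^{\mathrm{ex}}$ together with the flow-jet partitions is refined greedily for the base control, and $\varepsilon_0$ is shrunk last. The differences are only technical: you absorb the linear-in-$z$ terms of $F_y$ by Young's inequality and use the sign of $-K^2/J^2$, so that only $L$ must be small, and you control the perturbed jets through the Lipschitz-in-$\varepsilon$ flow bound. The paper instead makes $|K/J|+|\partial_y(K/J)|\leq C\omega_I^{1/p}$ small for the base and perturbed drivers alike, via the perturbed local control bound.
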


\begin{proof}
The preliminary construction in
Proposition~\ref{prop:uniform-scalar-state} is completed before the present
partition is chosen.  Restrict its original-coordinate solution to a
candidate subinterval and transform it with the corresponding reset flow.
Equations~\eqref{eq:global-uniform-state-bound} and
\eqref{eq:uniform-z-bmo} give the bounds \(M_*\) and
\(\mathcal B_\lambda\), independently of the candidate refinement.  The
same estimates hold for any competing bounded local solution with the same
terminal interface.

The mean-coefficient estimate
\eqref{eq:mean-coefficient-bmo} and
Proposition~\ref{prop:uniform-bmo-girsanov} now give a deterministic bound
$b_{\mathbb Q}$ for the base and perturbed martingale integrands under
every local Girsanov measure, together with common reverse-H\"older ranges.
Choose $r_A>1$ within those ranges, and then choose $\delta_*>0$ so that
\eqref{eq:common-critical-smallness} holds.

We verify that this prescribed coefficient can be reached by a
deterministic reset.  On $I=[u,v]$, for either the base or a perturbed reset
flow, write
\begin{equation}\label{eq:critical-r-b-definition}
 r:=\frac KJ,
 \qquad
 b:=\partial_y\left(\frac KJ\right)
   =\frac LJ-\frac{K^2}{J^2}.
\end{equation}
All quantities in the next calculation may carry the perturbation
superscript.  The exact identity \eqref{eq:exact-Fy-formula} becomes
\begin{equation}\label{eq:critical-Fy-decomposition}
 F_y(t,y,z)
 =f_y+r f_z\mathbin{\cdot}z-\frac rJ f
  +\frac12 b|z|^2,
\end{equation}
where $f,f_y,f_z$ are evaluated at $(t,\phi_t(y),J_t(y)z)$.  Let $M_*$ be
the common deterministic strip containing the base and perturbed
transformed solutions and their convex interpolants.  On
$I\times[-M_*,M_*]$, the bounds on $f$, $f_y$, and $f_z$, the boundedness
of $\phi$, and the separation of $J$ from zero give deterministic constants
$c_0,C_{M_*}<\infty$ such that
\begin{align}
 (F_y(t,y,z))^+
 &\leq c_0+C_{M_*}(|r(t,y)|+|b(t,y)|)(1+|z|^2)                 \notag\\
 &\leq c_0+\theta_I(1+|z|^2),
 \label{eq:critical-Fy-strip-bound}\\
 \theta_I
 &:=C_{M_*}
   \sup_{(t,y)\in I\times[-M_*,M_*]}(|r(t,y)|+|b(t,y)|).
 \label{eq:critical-theta-definition}
\end{align}
Indeed, the two terms linear in $z$ in
\eqref{eq:critical-Fy-decomposition} are bounded using
$|z|\leq1+|z|^2$, while the last term contributes
$|b||z|^2/2$.

At the right reset endpoint,
\begin{equation}\label{eq:critical-reset-r-b-zero}
 J_v=1,
 \qquad K_v=L_v=0,
 \qquad r_v=b_v=0.
\end{equation}
The reset-flow estimates therefore yield, for the local driver control
$\omega_I$,
\begin{equation}\label{eq:critical-r-b-flow-bound}
 \sup_{(t,y)\in I\times[-M_*,M_*]}(|r(t,y)|+|b(t,y)|)
 \leq C_{M_*}'\omega_I^{1/p}.
\end{equation}
Choose $\eta_*>0$ so that
$C_{M_*}C_{M_*}'\eta_*^{1/p}\leq\delta_*$.  Thus
$\omega_I\leq\eta_*$ implies $\theta_I\leq\delta_*$, for the base flow and
for every perturbed flow in the common bounded family.  The same choice
keeps $J$ separated from zero and controls the spatial jets required by
the full-jet and generator expansions.

Let $\Pi_0$ be the finite union of the preliminary existence partition
\(\Pi^{\mathrm{ex}}\) and the deterministic reset partitions used by the
local flow-jet and BSDE estimates.  Refine each interval of $\Pi_0$ greedily
for the control \eqref{eq:base-reset-control} so that
\begin{equation}\label{eq:base-control-on-pieces}
 \omega_{\mathbf x}(t_i,t_{i+1})\leq\eta
 \quad\text{for every }i,
 \qquad
 N\leq
 N_0+\left\lceil\frac{\omega_{\mathbf x}(0,T)}{\eta}\right\rceil
 =:N_*,
\end{equation}
where $N_0$ is the number of intervals in $\Pi_0$ and $\eta>0$ is chosen
so that the base contribution in
\eqref{eq:perturbed-local-control} is at most $\eta_*/2$.  This common
refinement is deterministic.  The inclusion
\(\Pi^{\mathrm{ex}}\subset\Pi\) will allow the preliminary bounded
construction to be identified after local uniqueness has been established.
The two levels of a radial realization satisfy, uniformly
on the indicated tangent and representative sets,
\begin{equation}\label{eq:perturbed-local-control}
 \omega_{\varepsilon,\tau,\mathbf Z,a}(s,t)
 \leq C\left(
   \omega_{\mathbf x}(s,t)
   +|\varepsilon|^pR^p
   +|\varepsilon|^qR^q
   +|\varepsilon|^pB^p
 \right).
\end{equation}
Here the term of order $|\varepsilon|^q$ is the homogeneous contribution
of the first-order second-level displacement, whereas the self-area term
has order $|\varepsilon|^p$.  Let
\(\varepsilon_{\mathrm{pre}}>0\) be the radius fixed for the preliminary
existence construction, and let \(\varepsilon_{\mathrm{an}}>0\) be the
minimum of the radii required by the flow-jet and weighted-generator
estimates.  Choose \(\varepsilon_{\mathrm{crit}}(R,B)>0\) so that the three
perturbation terms in \eqref{eq:perturbed-local-control} contribute at most
\(\eta_*/2\), and set
\[
 \varepsilon_0(R,B)
 :=\min\{
   \varepsilon_{\mathrm{pre}},
   \varepsilon_{\mathrm{an}},
   \varepsilon_{\mathrm{crit}}(R,B)
 \}.
\]
The right-hand side of \eqref{eq:perturbed-local-control} is then below
$\eta_*$ on every interval, and all preceding local estimates hold on the
same perturbation range.
This remains true when the variation of a representative is concentrated
on one interval, because its total contribution is multiplied by a power
of $|\varepsilon|$.

It remains to pass from \eqref{eq:critical-Fy-strip-bound} to the mean
coefficient.  Set
\[
 Y_s^\lambda
 =(1-\lambda)\widetilde Y_s^i
   +\lambda\widetilde Y_s^{\varepsilon,i},
 \qquad
 Z_s^\lambda
 =(1-\lambda)\widetilde Z_s^i
   +\lambda\widetilde Z_s^{\varepsilon,i}.
\]
The exact difference equation has
\[
 \alpha_s^{\varepsilon,i}
 =\int_0^1
  F_y^{\varepsilon,I_i}(s,Y_s^\lambda,Z_s^\lambda)\,d\lambda.
\]
Since the positive part is convex and
\begin{equation}\label{eq:convex-z-square}
 |Z_s^\lambda|^2
 \leq(1-\lambda)|\widetilde Z_s^i|^2
      +\lambda|\widetilde Z_s^{\varepsilon,i}|^2,
\end{equation}
the bound $\theta_{I_i}\leq\delta_*$ gives
\begin{align}
 (\alpha_s^{\varepsilon,i})^+
 &\leq c_0+\delta_*
  \left[1+\frac12
   \bigl(|\widetilde Z_s^i|^2
        +|\widetilde Z_s^{\varepsilon,i}|^2\bigr)
  \right]                                                     \notag\\
 &\leq a_s^i+\delta_*
   \bigl(|\widetilde Z_s^i|^2
        +|\widetilde Z_s^{\varepsilon,i}|^2\bigr),
 \label{eq:critical-mean-coefficient-bound}
\end{align}
with the deterministic choice
$a_s^i\equiv c_0+\delta_*$ and
$A_*=(c_0+\delta_*)T$.  This is
\eqref{eq:common-critical-structure}.  Applying the conditional BMO energy
estimate to the two martingale integrands yields
\eqref{eq:common-critical-budget}.  No deterministic subdivision of their
realized quadratic variation is used.  The order of choices is therefore
$b_{\mathbb Q}$, then $r_A$, $\delta_*$, $\eta_*$, and finally
$\varepsilon_0$ after the deterministic partition has been fixed.

We finally remove the preliminary choices.  The backward-flow cocycle and
It\^o's formula show that the restriction of any preliminary reconstructed
solution, expressed in the reset coordinates of \(I_i\), is a bounded local
solution with the inherited original-coordinate terminal value.  On every
\(I_i\), any two
bounded transformed solutions with the same terminal interface satisfy the
common state and BMO bounds above.  Their difference has zero source and
terminal value.  For its coefficients,
\eqref{eq:mean-coefficient-bmo} gives
\(\beta\mathbin{\cdot}W\in\mathrm{BMO}_2(\mathbb P;I_i)\), and
\eqref{eq:mean-alpha-absolute-growth} together with the common BMO energy
bounds gives \(\int_{I_i}|\alpha_s|\,ds<\infty\) almost surely.
Equation~\eqref{eq:common-critical-budget} supplies the required exponential
moment of \(\alpha^+\).  Let \(\mathbb Q_i^{0,1}\) be the Girsanov measure
generated by this mean \(\beta\)-coefficient.  The zero terminal datum and
source belong to
\(L^\infty(\mathcal F_{t_{i+1}})\) and
\(\mathfrak R_{p_A}(\mathbb Q_i^{0,1};I_i)\).
Proposition~\ref{prop:critical-linear-bsde} therefore identifies the two
solutions.  Starting
from \(\xi\) on the last interval and proceeding backwards shows that every
preliminary bounded construction agrees with the unique iteration on
\(\Pi\).  If another deterministic partition satisfies the same critical
conditions, a common critical refinement and
Lemma~\ref{lem:partition-consistency} identify the two iterations.  Thus the
solution map \(\mathcal S\) is independent of both the preliminary solution
selection and the critical reset partition.
\end{proof}

We next describe the derivative on this partition.  For
$I_i=[t_i,t_{i+1}]$, write
\[
 \phi_i=\phi^{\mathbf x,I_i},\qquad
 J_i=\partial_y\phi_i,\qquad
 K_i=\partial_{yy}\phi_i,
\]
and let $(\widetilde Y^i,\widetilde Z^i)$ be the base transformed solution.
For $\tau\in\mathbb T_{\mathbf x}^p$, denote the intrinsic flow variations
by
\begin{equation}\label{eq:flow-variation-notation}
 u_i^\tau:=\dot\phi_i[\tau],
 \qquad
 j_i^\tau:=\dot J_i[\tau].
\end{equation}
The weighted-generator variation on $I_i$ is denoted by
$\dot F_i[\tau]$.

\begin{lemma}[Fixed-endpoint backward patching and reconstruction]
\label{lem:backward-patching}
On the common partition of Lemma~\ref{lem:common-reset-partition}, every
$\tau\in\mathbb T_{\mathbf x}^p$ determines a unique pair
\[
 A_{\mathbf x}\tau=(U^\tau,V^\tau)\in\mathcal X.
\]
It is obtained as follows.  Put $\eta_N^\tau=0$.  Starting with
$i=N-1$ and proceeding backwards, solve
\begin{align}
 \widetilde U_t^{\tau,i}
 ={}&\eta_{i+1}^\tau
 +\int_t^{t_{i+1}}
 \Bigl[
  \alpha_s^i\widetilde U_s^{\tau,i}
  +\beta_s^i\mathbin{\cdot}\widetilde V_s^{\tau,i}
  +\dot F_i[\tau]
 \Bigr]ds
 -\int_t^{t_{i+1}}\widetilde V_s^{\tau,i}\,dW_s,
 \label{eq:local-derivative-bsde}\\
 \alpha_s^i
 :={}&F_y^{\mathbf x,I_i}
       (s,\widetilde Y_s^i,\widetilde Z_s^i),
 \qquad
 \beta_s^i
 :=F_z^{\mathbf x,I_i}
       (s,\widetilde Y_s^i,\widetilde Z_s^i).
 \label{eq:local-derivative-coefficients}
\end{align}
On $I_i$, reconstruct
\begin{equation}\label{eq:U-reconstruction}
 U_t^\tau
 =u_i^\tau(t,\widetilde Y_t^i)
  +J_i(t,\widetilde Y_t^i)\widetilde U_t^{\tau,i}
\end{equation}
and
\begin{equation}\label{eq:V-reconstruction}
 \begin{split}
 V_t^\tau
 ={}&\Bigl[
    j_i^\tau(t,\widetilde Y_t^i)
    +K_i(t,\widetilde Y_t^i)\widetilde U_t^{\tau,i}
   \Bigr]\widetilde Z_t^i\\
 &+J_i(t,\widetilde Y_t^i)\widetilde V_t^{\tau,i}.
 \end{split}
\end{equation}
For $i\leq N-2$, the terminal value in
\eqref{eq:local-derivative-bsde} is
\begin{equation}\label{eq:terminal-derivative-handoff}
 \eta_{i+1}^\tau:=U_{t_{i+1}}^\tau,
\end{equation}
where the right-hand side has already been determined on $I_{i+1}$.

The map $A_{\mathbf x}:\mathbb T_{\mathbf x}^p\to\mathcal X$ is continuous
and linear.  It depends only on $(h,\kappa)$ and not on a representative of
that tangent.  Moreover, there is a deterministic $C_{\mathrm{patch}}$
such that
\begin{equation}\label{eq:patched-derivative-bound}
 \|A_{\mathbf x}\tau\|_{\mathcal X}
 \leq C_{\mathrm{patch}}
       \|\tau\|_{\mathbb T_{\mathbf x}^p}.
\end{equation}

For a family of local difference-quotient errors, let $e_i^\varepsilon$
be the sum of its $\mathbb S^\infty(I_i)$ and local BMO errors after
reconstruction, and let $r_i^\varepsilon$ collect the corresponding local
source and flow-jet remainders.  The constants can be chosen so that
\begin{equation}\label{eq:patching-error-recursion}
 e_i^\varepsilon
 \leq C_{\mathrm{loc}}
       (e_{i+1}^\varepsilon+r_i^\varepsilon),
 \qquad
 e_N^\varepsilon=0,
\end{equation}
with $C_{\mathrm{loc}}\geq1$ independent of the admissible perturbation.
Consequently,
\begin{equation}\label{eq:finite-patching-constant}
 \max_{0\leq i<N}e_i^\varepsilon
 \leq N_*C_{\mathrm{loc}}^{N_*}
       \max_{0\leq i<N}r_i^\varepsilon.
\end{equation}
\end{lemma}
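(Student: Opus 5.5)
We construct $A_{\mathbf x}$ by backward induction over the common partition $\Pi$ of Lemma~\ref{lem:common-reset-partition}.

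\emph{Step 1: the local linear BSDE on $I_{N-1}$.} On the last interval the terminal datum is $\eta_N^\tau=0$. The coefficients $\alpha^i,\beta^i$ are the base specializations $Z^0=Z^1=\widetilde Z^i$ of \eqref{eq:mean-coefficient-bmo}, \eqref{eq:mean-alpha-pathwise-integrability} and \eqref{eq:common-critical-budget}. The source $\dot F_i[\tau](\cdot,\widetilde Y^i,\widetilde Z^i)$ lies in $\mathfrak R_{p_A}(\mathbb Q^0;I_i)$ with norm $\le C\|\tau\|$, by \eqref{eq:dot-F-continuous-bound} and Lemma~\ref{lem:conditional-bmo-products}. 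Proposition~\ref{prop:critical-linear-bsde} therefore gives a unique solution $(\widetilde U^{\tau,i},\widetilde V^{\tau,i})$, which is linear in $\tau$ and bounded by \eqref{eq:candidate-linear-bound}.

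\emph{Step 2: reconstruction on $I_i$.} Formulas \eqref{eq:U-reconstruction}--\eqref{eq:V-reconstruction} are exactly the linearization of \eqref{eq:reconstruction} in $(\phi_i,J_i,\widetilde Y^i,\widetilde Z^i)$.
\begin{itemize}
\item The terms $u_i^\tau$ and $j_i^\tau$ are bounded by $C\|\tau\|$ through \eqref{eq:intrinsic-jet-linear-bound}.
\item $J_i$ and $K_i$ are bounded by \eqref{eq:global-reset-flow-bound}.
\item For $V^\tau$, the pieces $[j_i^\tau+K_i\widetilde U^{\tau,i}]\widetilde Z^i$ and $J_i\widetilde V^{\tau,i}$ are bounded multiples of BMO integrands.
\end{itemize}
Consequently $\|U^\tau\|_{\mathbb S^\infty(I_i)}+\|V^\tau\|_{\mathbb H^2_{\mathrm{BMO}}(I_i)}\le C(\|\eta^\tau_{i+1}\|_\infty+\|\tau\|)$. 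The handoff $\eta_i^\tau=U^\tau_{t_i}$ is an $\mathcal F_{t_i}$-measurable bounded variable, linear in $\tau$. Iterating this bound at most $N_*$ times yields \eqref{eq:patched-derivative-bound}.

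\emph{Step 3: globalization.} On $[0,T]$ the global BMO norm is bounded by a sum of local ones. For a stopping time $\sigma$, split $\int_\sigma^T$ over the intervals and condition successively. Continuity of $U^\tau$ at the endpoints holds because $\phi_i$ is the identity at $t_{i+1}$ and $u_i^\tau(t_{i+1},\cdot)=0$. Hence $U_{t_{i+1}}^\tau=\widetilde U^{\tau,i}_{t_{i+1}}=\eta_{i+1}^\tau$, which matches the value $U^\tau_{t_{i+1}}$ computed on $I_{i+1}$.

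\emph{Step 4: representation independence.} The map $A_{\mathbf x}\tau$ depends only on $(h,\kappa)$ because every ingredient does:
\begin{itemize}
\item $u_i^\tau$, $j_i^\tau$ and $\dot F_i[\tau]$ are intrinsic, by Proposition~\ref{prop:intrinsic-flow-variation} and \eqref{eq:full-tangent-generator-derivative};
\item the base coefficients involve no representative;
\item uniqueness in Proposition~\ref{prop:critical-linear-bsde} makes the solution a function of these data alone.
\end{itemize}

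\emph{Step 5: the error recursion \eqref{eq:patching-error-recursion}.} This is the main obstacle. The terminal error on $I_i$ is the reconstructed error at $t_{i+1}$ coming from $I_{i+1}$, plus a flow-jet term. The latter is $\varepsilon^{-1}$ times the second-order Taylor remainder of $\phi_{i+1}(t_{i+1},\cdot)$ composed with $\widetilde Y^{i+1}$, and it is $O(\varepsilon)$ by \eqref{eq:intrinsic-uniform-four-jet}. The local error bound of Theorem~\ref{thm:local-full-tangent-differentiability} is linear in $\delta_\zeta$ with a constant $C$, so transformed errors satisfy $\tilde e_i\le C(\delta_{\zeta,i}+|\varepsilon|)$.

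The delicate part is reconstruction of the error. Write $\phi^\varepsilon(\widetilde Y^\varepsilon)-\phi(\widetilde Y)$ as a second-order expansion. The cross term $\varepsilon(u^\tau)'(\widetilde Y)\,\varepsilon P^\varepsilon$ and the term $J''(\varepsilon P)^2$ are $O(\varepsilon^2)$ in $\mathbb S^\infty$ thanks to \eqref{eq:uniform-quotient-bound}. For the $Z$ part one needs products such as $K\,P^\varepsilon\,\varepsilon Q^\varepsilon$. These are controlled in BMO because $P^\varepsilon$ is bounded and $Q^\varepsilon\cdot W$ is uniformly BMO by \eqref{eq:uniform-quotient-bound}. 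The quadratic terms in $\varepsilon$ contribute to $r_i^\varepsilon$, with the BMO norm bounded by $L^\infty$ factors times BMO norms.

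Collecting terms gives $e_i\le C_{\mathrm{loc}}(e_{i+1}+r_i)$ with $C_{\mathrm{loc}}\ge1$. Induction over at most $N_*$ intervals then gives $e_i\le\sum_{k\ge i}C_{\mathrm{loc}}^{k-i+1}r_k$, which is \eqref{eq:finite-patching-constant}.
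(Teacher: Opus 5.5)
Your proposal is correct and follows essentially the same route as the paper. You solve the local linear BSDEs backwards through Proposition~\ref{prop:critical-linear-bsde} with the base-pair coefficient checks, and you linearize the reconstruction. You obtain the exact handoff from $J_i(t_{i+1},\cdot)=1$ and $u_i^\tau(t_{i+1},\cdot)=0$, aggregate the BMO norms intervalwise, and derive representative independence from the intrinsic ingredients together with uniqueness. You then obtain \eqref{eq:patching-error-recursion} from Theorem~\ref{thm:local-full-tangent-differentiability} combined with a Taylor reconstruction estimate.

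The only difference is cosmetic bookkeeping. You add the left-endpoint flow-jet remainder on $I_{i+1}$ as a separate term in the terminal error of $I_i$. The paper instead notes that the perturbed flows also reset to the identity at $t_{i+1}$, so the terminal error on $I_i$ is exactly the reconstructed endpoint error, already contained in $e_{i+1}^\varepsilon$. Since the recursion is an inequality, either accounting gives the same conclusion.
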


\begin{proof}
The terminal condition is fixed, so its derivative on the last interval is
zero.

At the right endpoint of a reset interval,
\begin{equation}\label{eq:fixed-reset-endpoint-identities}
 \phi_i(t_{i+1},y)=y,\qquad
 J_i(t_{i+1},y)=1,\qquad
 K_i(t_{i+1},y)=0.
\end{equation}
Since the endpoint is fixed for every perturbed driver, its intrinsic
variation also satisfies
\begin{equation}\label{eq:fixed-reset-variation-identities}
 u_i^\tau(t_{i+1},y)=0,
 \qquad
 j_i^\tau(t_{i+1},y)=0.
\end{equation}
Equations~\eqref{eq:U-reconstruction} and
\eqref{eq:fixed-reset-variation-identities} therefore give
\[
 U_{t_{i+1}}^\tau
 =\widetilde U_{t_{i+1}}^{\tau,i}
 =\eta_{i+1}^\tau.
\]
Thus the endpoint value reconstructed from $I_{i+1}$ is exactly the
terminal value for $I_i$.  There is no derivative of a moving endpoint and
no residual flow term.  The same endpoint identities hold before taking a
difference quotient, so its terminal error on $I_i$ is precisely the
endpoint error inherited from $I_{i+1}$.

Let \(\mathbb Q_i\) be the measure generated by
\(\beta^i\mathbin{\cdot}W\).  The pointwise weighted estimate
\eqref{eq:dot-F-continuous-bound} from
Proposition~\ref{prop:weighted-full-tangent-generator}, evaluated along the
base transformed solution and combined with the conditional energy moment
estimate \eqref{eq:conditional-energy-moments} under $\mathbb Q_i$, gives
\begin{equation}\label{eq:linear-source-bound}
 \|\dot F_i[\tau]\|_{\mathfrak R_{p_A}(\mathbb Q_i;I_i)}
 \leq C_F\|\tau\|_{\mathbb T_{\mathbf x}^p},
\end{equation}
uniformly in $i$.  The base-pair specialization of
\eqref{eq:mean-coefficient-bmo} gives
\(\beta^i\mathbin{\cdot}W\in\mathrm{BMO}_2(\mathbb P;I_i)\), while
\eqref{eq:mean-alpha-absolute-growth} and the BMO energy of
\(\widetilde Z^i\) give
\(\int_{I_i}|\alpha_s^i|\,ds<\infty\) almost surely.  The base-pair case of
\eqref{eq:common-critical-budget} gives the required exponential moment of
\((\alpha^i)^+\), and \eqref{eq:linear-source-bound} verifies the source
norm.  Finally, \(\eta_N^\tau=0\) is in
\(L^\infty(\mathcal F_T)\), and backward induction gives
\(\eta_{i+1}^\tau\in L^\infty(\mathcal F_{t_{i+1}})\), since it is the
endpoint of the already constructed bounded solution on \(I_{i+1}\).
Proposition~\ref{prop:critical-linear-bsde} therefore applies on every
\(I_i\) and yields a unique solution of
\eqref{eq:local-derivative-bsde} in
\(\mathbb S^\infty(I_i)\times\mathbb H^2_{\mathrm{BMO}}(I_i)\).
The critical linear estimate and backward induction then
give a recurrence of the form
\[
 m_i\leq C_{\mathrm{loc}}
       (m_{i+1}+C_F\|\tau\|_{\mathbb T_{\mathbf x}^p}),
 \qquad m_N=0.
\]
Hence
\begin{equation}\label{eq:local-variation-geometric-bound}
 \max_i m_i
 \leq N_*C_{\mathrm{loc}}^{N_*}C_F
       \|\tau\|_{\mathbb T_{\mathbf x}^p}.
\end{equation}

The reconstruction of the martingale integrand is controlled without
changing its natural norm.  On $I_i$,
\begin{align}
 \|V^\tau\mathbin{\cdot}W\|_{\mathrm{BMO}(I_i)}
 \leq{}&
 \bigl(
  \|j_i^\tau\|_\infty
  +\|K_i\|_\infty\|\widetilde U^{\tau,i}\|_\infty
 \bigr)
 \|\widetilde Z^i\mathbin{\cdot}W\|_{\mathrm{BMO}(I_i)}
 \notag\\
 &+\|J_i\|_\infty
 \|\widetilde V^{\tau,i}\mathbin{\cdot}W\|_{\mathrm{BMO}(I_i)}.
 \label{eq:reconstructed-local-bmo}
\end{align}
The flow-variation bound and
\eqref{eq:local-variation-geometric-bound} control the right-hand side
linearly in $\|\tau\|_{\mathbb T_{\mathbf x}^p}$.  For an arbitrary
stopping time, splitting the remaining conditional energy over the
partition gives
\begin{equation}\label{eq:global-bmo-aggregation}
 \|V^\tau\mathbin{\cdot}W\|_{\mathrm{BMO}([0,T])}^2
 \leq N_*
 \max_i
 \|V^\tau\mathbin{\cdot}W\|_{\mathrm{BMO}(I_i)}^2.
\end{equation}
Together with the corresponding supremum estimate for $U^\tau$, this
proves \eqref{eq:patched-derivative-bound}.  For example, after enlarging
the local reconstruction constant $C_{\mathrm{rec}}$, one may take
\[
 C_{\mathrm{patch}}
 =C_{\mathrm{rec}}(1+\sqrt{N_*})
   N_*C_{\mathrm{loc}}^{N_*}(1+C_F).
\]

Each step in the construction is linear: the intrinsic flow equation, the
generator variation, the local BSDE, the endpoint handoff, and the two
reconstruction formulas.  The estimates above give continuity.  By
Proposition~\ref{prop:intrinsic-flow-variation}, changing the joint lift or
the cross-central decomposition of $(h,\kappa)$ does not change the
intrinsic flow variation.  It therefore leaves $\dot F_i[\tau]$, the
unique local BSDE solution, and the reconstructed pair unchanged.  This
proves representative independence without comparing radial realizations.

We make the reconstruction remainder explicit.  Suppress the tangent and
representative from the notation, let
\[
 P^{\varepsilon,i}
 :=\frac{\widetilde Y^{\varepsilon,i}-\widetilde Y^i}{\varepsilon},
 \qquad
 Q^{\varepsilon,i}
 :=\frac{\widetilde Z^{\varepsilon,i}-\widetilde Z^i}{\varepsilon},
\]
and choose a deterministic strip $|y|\leq M$ containing all transformed
states.  Define
\begin{equation}\label{eq:normalized-flow-jet-error}
 \begin{split}
 \mathfrak e_{\mathrm{jet},i}^\varepsilon
 :=\sup_{\substack{t\in I_i\\ |y|\leq M}}
 \Bigg(&
  \left|
   \frac{\phi_i^\varepsilon(t,y)-\phi_i(t,y)}{\varepsilon}
   -u_i^\tau(t,y)
  \right|\\
 &+\left|
   \frac{J_i^\varepsilon(t,y)-J_i(t,y)}{\varepsilon}
   -j_i^\tau(t,y)
  \right|
  +|K_i^\varepsilon(t,y)-K_i(t,y)|
 \Bigg).
 \end{split}
\end{equation}
The uniform full-jet expansion gives
$\mathfrak e_{\mathrm{jet},i}^\varepsilon\leq C|\varepsilon|$.
Taylor expansion on the strip and the uniform quotient bounds give
\begin{align}
 &\left\|
  \frac{Y^{\varepsilon}-Y}{\varepsilon}-U^\tau
 \right\|_{\mathbb S^\infty(I_i)}
 +\left\|
  \left(
   \frac{Z^{\varepsilon}-Z}{\varepsilon}-V^\tau
  \right)\mathbin{\cdot}W
 \right\|_{\mathrm{BMO}(I_i)}                                  \notag\\
 &\quad\leq C_{\mathrm{rec}}
 \Bigl(
  \|P^{\varepsilon,i}-\widetilde U^{\tau,i}\|_{\mathbb S^\infty(I_i)}
  +\|(Q^{\varepsilon,i}-\widetilde V^{\tau,i})
       \mathbin{\cdot}W\|_{\mathrm{BMO}(I_i)}
  +\mathfrak e_{\mathrm{jet},i}^\varepsilon
  +|\varepsilon|
 \Bigr).
 \label{eq:uniform-reconstruction-error}
\end{align}
For the $Z$-term, write
\[
 \frac{J_i^\varepsilon(\widetilde Y^{\varepsilon,i})
             \widetilde Z^{\varepsilon,i}
       -J_i(\widetilde Y^i)\widetilde Z^i}{\varepsilon}
 =\frac{J_i^\varepsilon(\widetilde Y^{\varepsilon,i})
             -J_i(\widetilde Y^i)}{\varepsilon}\widetilde Z^i
  +J_i^\varepsilon(\widetilde Y^{\varepsilon,i})
       Q^{\varepsilon,i}.
\]
The first term contains
$K_i(P^{\varepsilon,i}-\widetilde U^{\tau,i})\widetilde Z^i$
and a state Taylor remainder of order
$|\varepsilon|\,|P^{\varepsilon,i}|^2|\widetilde Z^i|$.
The second contains the coefficient difference
$[J_i^\varepsilon(\widetilde Y^{\varepsilon,i})
  -J_i(\widetilde Y^i)]Q^{\varepsilon,i}$.
The base BMO bound for $\widetilde Z^i$, the uniform BMO bound for
$Q^{\varepsilon,i}$, and the flow-jet bounds control all three terms in
the norm appearing in \eqref{eq:uniform-reconstruction-error}.

The local difference-quotient theorem, the exact endpoint handoff, and
\eqref{eq:uniform-reconstruction-error} now give
\eqref{eq:patching-error-recursion}.  Iteration yields
\[
 e_i^\varepsilon
 \leq\sum_{j=i}^{N-1}
 C_{\mathrm{loc}}^{j-i+1}r_j^\varepsilon,
\]
which implies \eqref{eq:finite-patching-constant} because $N\leq N_*$.
\end{proof}

\section{Intrinsic tangential Hadamard differentiability}
\label{sec:main-proof}

The preceding construction gives the candidate derivative.  The next
proposition records the uniform expansion needed to identify it from
rough-path perturbations.

\begin{proposition}[Uniform radial expansion of the solution map]
\label{prop:radial-expansion}
For every $R,B<\infty$,
\begin{equation}\label{eq:uniform-radial-balls}
 \lim_{\varepsilon\to0}
 \sup_{\substack{
       \tau\in\mathbb T_{\mathbf x}^p,
       \,\|\tau\|_{\mathbb T_{\mathbf x}^p}\leq R\\
       (\mathbf Z,a)\in\operatorname{Rep}_{\mathbf x}(\tau),
       \,\mathfrak b(\mathbf Z,a)\leq B}}
 \left\|
  \frac{
   \mathcal S(\mathbf x^{\varepsilon;\mathbf Z,a})
   -\mathcal S(\mathbf x)}{\varepsilon}
  -A_{\mathbf x}\tau
 \right\|_{\mathcal X}
 =0.
\end{equation}
\end{proposition}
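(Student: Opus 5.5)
The plan is to fix $R,B$, take the common deterministic partition $\Pi=\{t_0<\cdots<t_N\}$ and radius $\varepsilon_0$ from Lemma~\ref{lem:common-reset-partition}, and run the fixed-endpoint recursion of Lemma~\ref{lem:backward-patching} with the local differentiability theorem as input. By Lemma~\ref{lem:partition-consistency} and the uniqueness part of Lemma~\ref{lem:common-reset-partition}, $\mathcal S(\mathbf x)$ and every $\mathcal S(\mathbf x^{\varepsilon;\mathbf Z,a})$, with $|\varepsilon|\le\varepsilon_0$ and $(\tau,\mathbf Z,a)$ in the indicated set, may be computed by backward iteration on this single $\Pi$. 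Write $e_i^\varepsilon$ for the supremum over the admissible set of the reconstructed $\mathbb S^\infty(I_i)$ and local BMO errors of the difference quotient against $A_{\mathbf x}\tau$. By \eqref{eq:global-bmo-aggregation} the global $\mathcal X$-error is at most $(1+\sqrt{N_*})\max_i e_i^\varepsilon$. It therefore suffices to prove $\max_i e_i^\varepsilon\to0$, uniformly over the set.

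The induction runs backward in $i$. On $I_{N-1}$ the terminal value $\xi$ is fixed, so $\delta_\zeta\equiv0$ and $\dot\zeta=0$. Theorem~\ref{thm:local-full-tangent-differentiability} then gives the transformed-coordinate error $O(|\varepsilon|)$. The reconstruction estimate \eqref{eq:uniform-reconstruction-error}, together with $\mathfrak e^\varepsilon_{\mathrm{jet},i}\le C|\varepsilon|$ from Proposition~\ref{prop:uniform-full-jet-expansion}, converts this into $e_{N-1}^\varepsilon\le C|\varepsilon|$. For $i<N-1$, the endpoint identities \eqref{eq:fixed-reset-endpoint-identities}--\eqref{eq:fixed-reset-variation-identities} show that the transformed terminal quotient on $I_i$ equals the original-coordinate quotient at $t_{i+1}$, which was controlled on $I_{i+1}$. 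Hence $\delta_\zeta^{(i)}(\varepsilon)\le e_{i+1}^\varepsilon$, and the candidate $\dot\zeta[\tau]=U^\tau_{t_{i+1}}$ is continuous linear by \eqref{eq:patched-derivative-bound}. This is exactly the recursion \eqref{eq:patching-error-recursion} with $r_i^\varepsilon=O(|\varepsilon|)$. By \eqref{eq:finite-patching-constant} the bound is $\max_ie_i^\varepsilon\le N_*C_{\mathrm{loc}}^{N_*}C|\varepsilon|$.

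Two points need care. First, the theorem and propositions were stated on the unit tangent ball with budget $L_{\rm br}$. For radius $R$ I would rescale: with $\tau'=\tau/R$ and the dilated representative $(D_{1,1/R*}\mathbf Z,a/R)$, one has $\mathbf x^{\varepsilon;\mathbf Z,a}=\mathbf x^{R\varepsilon;\mathbf Z',a'}$. The new budget is bounded in terms of $R$ and $B$, and linearity of $A_{\mathbf x}$ converts the estimate back. Second, the local uniformity in Theorem~\ref{thm:local-full-tangent-differentiability} must hold with the terminal interface supplied by the induction, not assumed a priori. I expect this to be the main obstacle. The uniform quotient bound \eqref{eq:uniform-quotient-bound} needs $\sup\|\eta^\varepsilon\|_\infty<\infty$, which follows from the inductive bound $e_{i+1}^\varepsilon\le C|\varepsilon|$ plus \eqref{eq:patched-derivative-bound}. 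Beyond that, every constant ($r_A,K_A,b_{\mathbb Q},M_*$) must come from Lemma~\ref{lem:common-reset-partition}, independent of the perturbed terminal data. I would check this first, confirming that $M_*$ bounds the perturbed transformed states on every $I_i$ through Proposition~\ref{prop:uniform-scalar-state}, since the perturbed original solution is bounded by $M_Y$.
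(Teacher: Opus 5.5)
Your proposal is correct and follows essentially the same route as the paper's proof: you reduce radius $R$ to the unit ball by the ray rescaling $\mathbf Z=D_{1,R*}\bar{\mathbf Z}$, $a=R\bar a$, $\varepsilon\mapsto R\varepsilon$, then use the common partition of Lemma~\ref{lem:common-reset-partition}, apply Theorem~\ref{thm:local-full-tangent-differentiability} backwards with the terminal interface supplied exactly by the fixed-endpoint handoff, and propagate via \eqref{eq:finite-patching-constant}. Your explicit $O(|\varepsilon|)$ bookkeeping, and your check that $\sup\|\eta^\varepsilon\|_\infty<\infty$ and that $M_*,b_{\mathbb Q},r_A,K_A$ are fixed before the induction, only make explicit what the paper's proof leaves implicit.
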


\begin{proof}
The case $R=0$ is immediate.  For $R>0$, the unit-ball estimates extend to
the fixed ball of radius $R$ by the ray rescaling
$\tau=R\bar\tau$, $\mathbf Z=D_{1,R*}\bar{\mathbf Z}$, and
$a=R\bar a$, with the perturbation parameter replaced by
$R\varepsilon$.  This identity also preserves the self-area term in the
radial realization.  The constants may depend on $R$ and $B$.

Use the partition of Lemma~\ref{lem:common-reset-partition}.  On each
interval, Propositions~\ref{prop:uniform-full-jet-expansion} and
\ref{prop:weighted-full-tangent-generator} identify the parameter
derivative in \eqref{eq:local-derivative-bsde} and make the weighted source
remainder converge to zero uniformly on the sets in
\eqref{eq:uniform-radial-balls}.  On the last interval, the terminal
interface in Theorem~\ref{thm:local-full-tangent-differentiability} is
identically zero.  Apply that theorem, reconstruct at $t_{N-1}$, and use
the fixed reset identities to obtain the terminal interface on $I_{N-2}$.
Repeating this step backwards produces local residuals satisfying
\begin{equation}\label{eq:uniform-local-residuals}
 \max_{0\leq i<N}r_i^\varepsilon\longrightarrow0.
\end{equation}
The fixed endpoint identities ensure that the terminal remainder on one
interval is exactly the reconstructed endpoint remainder from the next
one.  Lemma~\ref{lem:backward-patching}, in particular
\eqref{eq:finite-patching-constant}, propagates
\eqref{eq:uniform-local-residuals} to the whole interval.  The full-jet
remainder in the inverse Doss--Sussmann reconstruction is uniform under
the same budget.  This proves \eqref{eq:uniform-radial-balls} in
$\mathbb S^\infty\times\mathbb H^2_{\mathrm{BMO}}$.
\end{proof}

\begin{proof}[Proof of Theorem~\ref{thm:intrinsic-hadamard}]
Lemma~\ref{lem:backward-patching} defines a continuous linear map
$A_{\mathbf x}:\mathbb T_{\mathbf x}^p\to\mathcal X$.  If a bounded
representative assignment is fixed on the unit tangent ball, choose $R=1$
and let $B$ be its budget bound in
Proposition~\ref{prop:radial-expansion}.  Equation
\eqref{eq:uniform-radial-balls} is then precisely
\eqref{eq:uniform-radial-expansion}.

The same estimate already allows the tangent to change.  Indeed, if
$\tau_n\to\tau$, $\varepsilon_n\to0$, and
$(\mathbf Z_n,a_n)$ represent $\tau_n$ with a common budget, then
\begin{align}
 &\left\|
  \frac{
   \mathcal S(\mathbf x^{\varepsilon_n;\mathbf Z_n,a_n})
       -\mathcal S(\mathbf x)}{\varepsilon_n}
  -A_{\mathbf x}\tau
 \right\|_{\mathcal X}                                      \notag\\
 &\quad\leq
 \left\|
  \frac{
   \mathcal S(\mathbf x^{\varepsilon_n;\mathbf Z_n,a_n})
       -\mathcal S(\mathbf x)}{\varepsilon_n}
  -A_{\mathbf x}\tau_n
 \right\|_{\mathcal X}
 +\|A_{\mathbf x}(\tau_n-\tau)\|_{\mathcal X}
 \longrightarrow0.
 \label{eq:changing-radial-tangents}
\end{align}

Now let $\mathbf x_n$ be an admissible secant with strong levelwise
variation contact.  Lemma~\ref{lem:secant-recoding} supplies tangents
$\tau_n^*\to\tau$ and representatives
$(\mathbf Z_n^*,a_n^*)$ with a common budget such that
\begin{equation}\label{eq:exact-secant-radial-recoding}
 \mathbf x_n
 =\mathbf x^{\varepsilon_n;\mathbf Z_n^*,a_n^*}
\end{equation}
holds exactly.  Applying \eqref{eq:changing-radial-tangents} to
\eqref{eq:exact-secant-radial-recoding} proves
\eqref{eq:hadamard-secant-limit}.

Representative independence was established in
Lemma~\ref{lem:backward-patching} from uniqueness of the intrinsic tangent
sewing problem and of the local linear BSDEs.  Thus the value of
$A_{\mathbf x}(h,\kappa)$ is unaffected by the joint lift, by a
cross-central decomposition of $\kappa$, or by the self-area appearing in
a radial realization.

It remains to check uniqueness.  Let $\bar A_{\mathbf x}$ be another
continuous linear map with the radial expansion in the theorem.  For a
fixed $\tau$, choose any finite-budget representative and follow its radial
realization.  The difference quotient converges both to
$A_{\mathbf x}\tau$ and to $\bar A_{\mathbf x}\tau$.  Hence the two maps
agree on every tangent.
\end{proof}

\section{Selected chart pullbacks}
\label{sec:chart-corollary}

\begin{proof}[Proof of Corollary~\ref{cor:chart-pullback}]
Let $u\neq0$, put $r=\|u\|_B$, and set $v=u/r$.  The ray-homogeneity of
the selected representations gives the exact identity
\begin{equation}\label{eq:chart-as-radial-realization}
 \chi(u)
 =\mathbf x^{r;\mathbf Z_{Jv},a_{Jv}}.
\end{equation}
The set $J\{v:\|v\|_B=1\}$ is bounded in
$\mathbb T_{\mathbf x}^p$, and the corresponding representative budgets
are bounded by assumption.  Proposition~\ref{prop:radial-expansion},
applied uniformly over this set, yields
\[
 \frac{
  \|\mathcal S(\chi(u))-\mathcal S(\mathbf x)
       -A_{\mathbf x}Ju\|_{\mathcal X}}
      {\|u\|_B}
 \longrightarrow0
 \qquad\text{as }u\to0.
\]
This is Fr\'echet differentiability at zero with derivative
$A_{\mathbf x}\circ J$.
\end{proof}

The abstract pullback statement contains useful canonical slices.  Let
\begin{equation}\label{eq:young-conjugate-threshold}
 p^*:=\frac{p}{p-1},
 \qquad q\leq r<p^*.
\end{equation}
For $h\in\mathcal V_0^r(E)$, define the Young cross term
\begin{equation}\label{eq:young-cross-tangent}
 \mathcal C^{\mathbf x,h}_{s,t}
 :=\int_s^t x_{s,u}\otimes dh_u
   +\int_s^t h_{s,u}\otimes dx_u
\end{equation}
and the Young self-area
\begin{equation}\label{eq:young-self-area}
 \mathbb h_{s,t}:=\int_s^t h_{s,u}\otimes dh_u.
\end{equation}
Both integrals are well defined because $1/p+1/r>1$ and $r<2$.

\begin{corollary}[Fixed Young--central regularity slices]
\label{cor:fixed-slices}
Fix $r$ as in \eqref{eq:young-conjugate-threshold} and set
\[
 \mathbb I_{r,p}
 :=\mathcal V_0^r(E)
   \oplus\mathcal V_0^q(\mathfrak{so}(E)).
\]
Equip this space with the Banach norm
\begin{equation}\label{eq:young-central-slice-norm}
 \|(h,a)\|_{\mathbb I_{r,p}}
 :=\|h\|_{r\text{-var}}+\|a\|_{q\text{-var}}.
\end{equation}
For $a\in\mathcal V_0^q(\mathfrak{so}(E))$, write
$a_{s,t}=a_t-a_s$ and define
\begin{equation}\label{eq:young-central-chart}
 \chi_{\mathbf x}^r(h,a)
 :=\left(
  1,
  x+h,
  \mathbb x+\mathcal C^{\mathbf x,h}+a+\mathbb h
 \right).
\end{equation}
Then $\mathcal S\circ\chi_{\mathbf x}^r$ is Fr\'echet differentiable at
$(0,0)$ as a map from $\mathbb I_{r,p}$ to $\mathcal X$, and
\begin{equation}\label{eq:fixed-slice-derivative}
 D(\mathcal S\circ\chi_{\mathbf x}^r)(0,0)[h,a]
 =A_{\mathbf x}\bigl(h,\mathcal C^{\mathbf x,h}+a\bigr).
\end{equation}
\end{corollary}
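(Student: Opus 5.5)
The plan is to reduce Corollary~\ref{cor:fixed-slices} to Corollary~\ref{cor:chart-pullback}. We take $B=\mathbb I_{r,p}$ and $J(h,a):=(h,\mathcal C^{\mathbf x,h}+a)$, and we need a ray-homogeneous, budget-bounded representation whose selected chart coincides exactly with $\chi^r_{\mathbf x}$. The chart $\chi^r_{\mathbf x}(h,a)$ is already written in the form $\mathbf x^{1;\mathbf Z,a}$, provided we choose $\mathbf Z$ to be the canonical Young joint lift of $(x,h)$. Its first marginal is $\mathbf x$. Its cross blocks are the two Young integrals in \eqref{eq:young-cross-tangent}. Its $h$-self block is the Young self-area \eqref{eq:young-self-area}. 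This choice needs no Lyons--Victoir selection, because $1/p+1/r>1$ and $2/r>1$.

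The steps, in order:

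\emph{Step 1: $J$ is well defined.} Check that $J$ is continuous linear from $\mathbb I_{r,p}$ into $\mathbb T^p_{\mathbf x}$. The Chen defect of $\mathcal C^{\mathbf x,h}$ is $x\otimes h+h\otimes x$, by additivity of Young integrals. Its symmetric part equals $\tfrac12(x_{s,t}\otimes h_{s,t}+h_{s,t}\otimes x_{s,t})$, by Young integration by parts. The central term $a$ is additive and antisymmetric, so it does not affect either identity. For the norm, the Young--Lo\`eve estimate gives $|\mathcal C^{\mathbf x,h}_{s,t}|\lesssim\|x\|_{p\text{-var};[s,t]}\|h\|_{r\text{-var};[s,t]}$, and superadditivity of the controls then yields a $q$-variation bound by $C\|x\|_{p\text{-var}}\|h\|_{r\text{-var}}$. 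Since $r\ge q$ is not below $p$ in general, the first-level $p$-variation is handled by $\|h\|_{p\text{-var}}\le\|h\|_{r\text{-var}}$, which holds because $r<p^*<p$.

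\emph{Step 2: the lift is a valid representation.} Verify that $\mathbf Z_{(h,a)}$ lies in $WG\Omega_p(E\oplus E)$ and that its budget satisfies $\rho_p(\mathbf Z)\le C(\rho_p(\mathbf x)+\|h\|_{r\text{-var}}+\|h\|_{r\text{-var}}^{1/2}\rho_p(\mathbf x)^{1/2}+\|h\|_{r\text{-var}})$. This follows from the Young bounds on the three blocks, and it shows the budget is bounded on bounded sets of $B$. Geometric symmetry of each block again comes from integration by parts. Therefore $(\mathbf Z_{(h,a)},a)\in\operatorname{Rep}_{\mathbf x}(J(h,a))$.

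\emph{Step 3: ray-homogeneity.} Replacing $(h,a)$ by $(\lambda h,\lambda a)$ scales the cross blocks by $\lambda$ and the self-area by $\lambda^2$, which is exactly $D_{1,\lambda*}\mathbf Z$. So \eqref{eq:ray-homogeneous-selection} holds, and \eqref{eq:radial-realization} with $\varepsilon=1$ reproduces \eqref{eq:young-central-chart} verbatim. Corollary~\ref{cor:chart-pullback} then gives Fr\'echet differentiability at zero with derivative $A_{\mathbf x}\circ J$, which is \eqref{eq:fixed-slice-derivative}.

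\emph{Main obstacle.} I expect the hardest point to be the uniform budget bound on the unit sphere of $B$ required in Corollary~\ref{cor:chart-pullback}, specifically the cross block. It must be estimated in homogeneous $p$-variation with a constant uniform in $h$, and concentration of $h$'s variation inside small intervals must not break this. I would handle it by using the joint control $\omega=\omega_{\mathbf x}+\|h\|^r_{r\text{-var};[\cdot,\cdot]}$ and applying the Young--Lo\`eve inequality with this single control. That inequality bounds each block's increment by a power of $\omega$ of total exponent at least $2/p$, since $1/p+1/r\ge 2/p$. Superadditivity then closes the $q$-variation sum.
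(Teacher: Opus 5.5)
Your proposal is correct and is essentially the paper's own argument. The paper likewise sets $J(h,a)=(h,\mathcal C^{\mathbf x,h}+a)$, gets continuity into $\mathbb T_{\mathbf x}^p$ from the Young bound $\|\mathcal C^{\mathbf x,h}\|_{q\text{-var};2}\le C_{p,r}\|x\|_{p\text{-var}}\|h\|_{r\text{-var}}$, takes the canonical Young joint lift of $(x,h)$ with central residual $a$ as a ray-homogeneous, budget-bounded representation whose radial chart at $\varepsilon=1$ is exactly $\chi_{\mathbf x}^r$, and concludes by Corollary~\ref{cor:chart-pullback}; your Steps 1--3 only make explicit the verifications the paper leaves implicit, and the ``main obstacle'' you flag is routine, since the per-interval Young--Lo\`eve bound with H\"older and superadditivity already gives the uniform $q$-variation estimate on bounded sets.
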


\begin{proof}
Young's inequality gives
\begin{equation}\label{eq:young-cross-bound}
 \|\mathcal C^{\mathbf x,h}\|_{q\text{-var};2}
 \leq C_{p,r}\|x\|_{p\text{-var}}\|h\|_{r\text{-var}}.
\end{equation}
The map
\[
 J_{\mathbf x}^r(h,a)
 :=(h,\mathcal C^{\mathbf x,h}+a)
\]
is therefore continuous and linear from $\mathbb I_{r,p}$ into
$\mathbb T_{\mathbf x}^p$.  The canonical Young joint lift of $(x,h)$,
together with the central residual $a$, is ray-homogeneous and has bounded
realization budget on bounded subsets of $\mathbb I_{r,p}$.  Moreover,
\eqref{eq:young-central-chart} is its selected radial chart.  The result
follows from Corollary~\ref{cor:chart-pullback}.
\end{proof}

The constants in Corollary~\ref{cor:fixed-slices} may depend on $r$.  The
statement does not provide a uniform chart estimate over an unnormed union
of spaces as $r\uparrow p^*$.

\begin{corollary}[Central-area directions]
\label{cor:central-area}
For $a\in\mathcal V_0^q(\mathfrak{so}(E))$, define
\begin{equation}\label{eq:central-area-chart}
 \chi_{\mathbf x}^{\mathrm{area}}(a)
 :=(1,x,\mathbb x+a).
\end{equation}
The pullback
$\mathcal S\circ\chi_{\mathbf x}^{\mathrm{area}}$ is Fr\'echet
differentiable at zero, and
\begin{equation}\label{eq:central-area-derivative}
 D(\mathcal S\circ\chi_{\mathbf x}^{\mathrm{area}})(0)[a]
 =A_{\mathbf x}(0,a).
\end{equation}
\end{corollary}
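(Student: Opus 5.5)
The plan is to deduce the corollary from Corollary~\ref{cor:chart-pullback} with $B=\mathcal V_0^q(\mathfrak{so}(E))$. The map $J:a\mapsto(0,a)$ goes into $\mathbb T_{\mathbf x}^p$, and we need a bounded, ray-homogeneous selection of representations whose radial chart at parameter one is exactly $(1,x,\mathbb x+a)$.

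First check that $J$ is well defined, continuous and linear. For $h=0$ the linearized Chen identity \eqref{eq:linearized-chen} reduces to $\delta a=0$. This holds because $a_{s,t}=a_t-a_s$ is additive. The linearized symmetry \eqref{eq:linearized-symmetry} reduces to $\operatorname{Sym}a=0$, which holds because $a$ takes values in $\mathfrak{so}(E)$. The norm identity $\|(0,a)\|_{\mathbb T_{\mathbf x}^p}=\|a\|_{q\text{-var}}$ gives continuity, and linearity is clear.

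Next choose the representation. Take $\mathbf Z^0$ to be the joint lift of $(\mathbf x,0)$ in $WG\Omega_p(E\oplus E)$, namely the push-forward of $\mathbf x$ under $v\mapsto(v,0)$. All cross blocks and the self-area of the second component vanish, so $C_{\mathbf Z^0}=0$ and $\mathbb h_{\mathbf Z^0}=0$. Set $\mathfrak r(Ja)=(\mathbf Z^0,a)$. Then $\kappa=C_{\mathbf Z^0}+a=a$, so $\mathfrak r(Ja)\in\operatorname{Rep}_{\mathbf x}(0,a)$. The budget is $\rho_p(\mathbf Z^0)+\|a\|_{q\text{-var}}=\rho_p(\mathbf x)+\|a\|_{q\text{-var}}$, which is bounded on bounded sets of $B$.

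Ray-homogeneity \eqref{eq:ray-homogeneous-selection} holds because $D_{1,\lambda*}\mathbf Z^0=\mathbf Z^0$ when the second component is zero, and because $a_{\lambda\tau}=\lambda a$. The selected chart \eqref{eq:selected-chart} is then $\mathbf x^{1;\mathbf Z^0,a}=(1,x,\mathbb x+a+0)=\chi_{\mathbf x}^{\mathrm{area}}(a)$, which agrees at zero as well.

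Corollary~\ref{cor:chart-pullback} now yields Fr\'echet differentiability at zero with derivative $A_{\mathbf x}\circ J$, which is \eqref{eq:central-area-derivative}. The only step that needs care is confirming that $\mathbf Z^0$ is a genuine weakly geometric lift with vanishing cross blocks. This is immediate because it is the image of $\mathbf x$ under a linear map.
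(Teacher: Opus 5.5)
Your proof is correct and is essentially the paper's argument: the paper takes $h=0$ in Corollary~\ref{cor:fixed-slices}, whose proof applies Corollary~\ref{cor:chart-pullback} with the canonical Young joint lift, and for $h=0$ that lift is your push-forward $\mathbf Z^0$, with vanishing cross blocks and self-area. Going directly to Corollary~\ref{cor:chart-pullback}, as you do, only skips the auxiliary exponent $r$ and the Young integrals, which play no role when $h=0$.
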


\begin{proof}
Take $h=0$ in Corollary~\ref{cor:fixed-slices}.  Then
$\mathcal C^{\mathbf x,0}=0$ and the Young self-area vanishes.
\end{proof}

\section{Structural examples and boundary cases}
\label{sec:examples}

\begin{example}[A nonzero pure L\'evy-area response]
\label{ex:pure-area-response}
Let $d=2$, let the base rough path be zero, and set
\begin{equation}\label{eq:pure-area-driver}
 (\mathbf X^\varepsilon)^1_{s,t}=0,
 \qquad
 (\mathbf X^\varepsilon)^2_{s,t}
 =\varepsilon\frac{t-s}{T}
  (e_1\otimes e_2-e_2\otimes e_1).
\end{equation}
Take
\begin{equation}\label{eq:pure-area-coefficients}
 H_1(y)=\cos y,
 \qquad H_2(y)=\sin y,
 \qquad f=0,
 \qquad \xi=c\in\mathbb R.
\end{equation}
With the convention
\[
 [H_1,H_2]
 :=DH_2\,H_1-DH_1\,H_2,
\]
one has $[H_1,H_2]=1$.  The backward flow and the rough BSDE are therefore
solved exactly by
\begin{equation}\label{eq:pure-area-exact-solution}
 Y_t^\varepsilon
 =c-\varepsilon\left(1-\frac{t}{T}\right),
 \qquad
 Z_t^\varepsilon=0.
\end{equation}
If
\[
 a_t=\frac{t}{T}
 (e_1\otimes e_2-e_2\otimes e_1),
\]
then
\begin{equation}\label{eq:pure-area-nonzero-derivative}
 A_{\mathbf 0}(0,a)
 =\left(-\left(1-\frac{\cdot}{T}\right),0\right).
\end{equation}
The first-order remainder is identically zero.  Thus the central component
of the tangent fibre can carry a nontrivial response even when the
first-level tangent vanishes.
\end{example}

\begin{remark}[Commuting vector fields]
Suppose $[H_i,H_j]=0$ for every $i,j$.  A central second-level perturbation
then contributes no source to the intrinsic reset-flow variation.  Under
the present assumptions that the terminal condition is fixed and that the
generator has no separate dependence on an absolute rough state, one has
\begin{equation}\label{eq:commuting-central-degeneration}
 A_{\mathbf x}(0,a)=0
 \qquad
 \text{for every }a\in\mathcal V_0^q(\mathfrak{so}(E)).
\end{equation}
The nonzero response in Example~\ref{ex:pure-area-response} is therefore a
bracket effect, not a consequence of the first level.
\end{remark}

\begin{remark}[The homogeneous rough-metric scale]
For a nonzero central path $a$, the chart
\eqref{eq:central-area-chart} satisfies, for any of the equivalent standard
homogeneous $p$-variation rough-path metrics and up to fixed
norm-equivalence constants,
\begin{equation}\label{eq:central-homogeneous-scale}
 d_{p\text{-var}}
 \bigl(\chi_{\mathbf x}^{\mathrm{area}}(\varepsilon a),\mathbf x\bigr)
 \asymp |\varepsilon|^{1/2}
          \|a\|_{q\text{-var}}^{1/2}.
\end{equation}
Example~\ref{ex:pure-area-response} has a response of order
$|\varepsilon|$, which is quadratic at the scale in
\eqref{eq:central-homogeneous-scale}.  Under the Carnot dilation, a central
direction is parameterized as $\varepsilon^2a$, and the quotient by
$\varepsilon$ in this example converges to zero.  The derivative in
Corollary~\ref{cor:central-area} is thus an intrinsic tensor-level area
derivative.  It should not be read as a first-order Fr\'echet derivative in
the homogeneous rough-path metric.  This observation does not rule out
other metric notions of differentiability.
\end{remark}

\section{Limitations and outlook}
\label{sec:outlook}

The theorem concerns a scalar BSDE with a deterministic step-two weakly
geometric driver, $2<p<3$, a fixed bounded terminal condition,
$H_i\in C_b^9$, and a globally Lipschitz original generator.  The quadratic
growth treated in the proof is generated by the Doss--Sussmann
transformation.  These restrictions separate two kinds of possible
extension.

Some refinements are technically natural within the present architecture.
The assumption $H_i\in C_b^9$ was chosen to keep the flow, inverse-flow,
and rational generator expansions under a single uniform derivative
budget.  A more economical allocation of derivatives among the direct
flow, its inverse, and the transformed generator may lower this
regularity.  Likewise, one could formulate higher-order deterministic jet
expansions, provided the corresponding derivatives of $H$ and $f$ are
available.  Neither refinement changes the tangent fibre or the backward
patching mechanism.  Smooth terminal conditions depending on a
finite-dimensional state may also fit this scheme after adding their
terminal variation to the last local linear BSDE.  These are plausible
extensions of the estimates, but they are not consequences of the theorem
as stated, and no reduced regularity threshold or higher-order expansion is
claimed here.

Other extensions require new theory rather than longer versions of the
same estimates.  A random rough driver would make the reset flows and the
partition random and would raise measurability, adaptedness, and
conditional rough-path questions.  If the rough driver is correlated with
$W$, one must also decide whether a joint stochastic lift is part of the
model and how it enters the solution concept.  The deterministic
representative-independence argument does not settle these issues.
Vector-valued quadratic BSDEs present a different obstruction: the scalar
comparison, exponential estimates, and one-dimensional Doss--Sussmann
order structure used here have no direct multidimensional substitute.
Allowing $p\geq3$ would require higher tensor levels, higher linearized
Chen relations, and a tangent-sewing argument that retains all bracket
directions at the relevant step.  A genuinely quadratic original generator
would also alter the BMO and reverse-H\"older thresholds before the rough
transformation is applied.  Each of these changes affects an essential
part of the proof.

The present result also stops short of a global original-coordinate linear
rough BSDE driven by an unspecified joint lift.  The derivative is instead
characterized by the intrinsic reset-flow tangent equation, local
transformed linear BSDEs, fixed-endpoint handoff, and inverse
Doss--Sussmann reconstruction.  This local characterization is sufficient
for uniqueness and representative independence.  A global rough linear
equation would require a separate solution concept and an explicit account
of the cross iterated integrals; it is not needed for the theorem proved
here.

Finite signature groups provide natural selected charts, but the analytic
result alone does not produce a statistical or financial application.  A
Lie derivative on a truncated signature group additionally requires a
differentiable map from the group state to the full future rough scenario.
A delta method requires a limit theorem for an estimator in a specified
logarithmic chart, at its actual normalization rate, followed by a verified
chain rule.  Bootstrap validity depends on whether the resulting derivative
is continuous linear or merely directional.  For dynamic risk
interpretations, the observed signature state must be adapted and
sufficient for the conditional law of future increments; time augmentation
does not by itself make a finite truncation sufficient.  Without those
assumptions, $Y_t^{\mathbf x}$ is a deterministic-scenario, or quenched,
response rather than an online risk functional based on an observed future
path.  Establishing these links is a separate program, not an implication
of intrinsic rough-path differentiability.

\begin{appendix}

\section{BMO, reverse-H\"older, and critical linear estimates}
\label{app:bmo-reverse-holder}

This appendix proves the probability estimates used in
Section~\ref{sec:local-transformed-bsde}.  The constants are tracked only to
the extent needed for uniformity over the rough perturbation family.

\subsection{Conditional energy moments}
\label{subsec:conditional-energy}

Let \(N\) be a continuous martingale on \(I=[u,v]\) with
\(\|N\|_{\mathrm{BMO}_2(\mathbb Q;I)}\leq b\).  For every integer
\(k\geq1\), the conditional energy inequality gives
\begin{equation}
 \sup_{\tau\in\mathcal T_I}
 \left\|
  \mathbb E_\tau^{\mathbb Q}
  \left[
   \bigl(\langle N\rangle_v-\langle N\rangle_\tau\bigr)^k
  \right]
 \right\|_\infty
 \leq k!\,b^{2k}.
 \label{eq:conditional-energy-moments}
\end{equation}
Indeed, the identity
\[
 \bigl(\langle N\rangle_v-\langle N\rangle_\tau\bigr)^k
 =k\int_\tau^v
   \bigl(\langle N\rangle_v-\langle N\rangle_s\bigr)^{k-1}
   \,d\langle N\rangle_s
\]
and conditional Fubini reduce the order-\(k\) estimate to the order-
\(k-1\) estimate and the BMO bound.  Induction yields
\eqref{eq:conditional-energy-moments}.  Consequently, for
\(0\leq c<b^{-2}\),
\begin{equation}
 \sup_{\tau\in\mathcal T_I}
 \left\|
  \mathbb E_\tau^{\mathbb Q}
  \exp\left(
   c(\langle N\rangle_v-\langle N\rangle_\tau)
  \right)
 \right\|_\infty
 \leq\frac{1}{1-cb^2}.
 \label{eq:conditional-energy-exponential}
\end{equation}

To derive \eqref{eq:critical-alpha-explicit-budget}, apply
\eqref{eq:conditional-energy-exponential} to
\(Z^0\mathbin{\cdot}W^{\mathbb Q}\) and
\(Z^1\mathbin{\cdot}W^{\mathbb Q}\) with \(c=2r_A\delta\), then use
conditional Cauchy--Schwarz.  This gives
\[
 \mathbb E_\tau^{\mathbb Q}
 \exp\left(
  r_A\delta\int_\tau^v
  (\lvert Z_s^0\rvert^2+\lvert Z_s^1\rvert^2)\,ds
 \right)
 \leq\frac{1}{1-2r_A\delta b_{\mathbb Q}^2}.
\]
Multiplication by \(e^{r_AA_I}\) proves
\eqref{eq:critical-alpha-explicit-budget}.

\subsection{Proof of the uniform BMO--Girsanov package}
\label{subsec:proof-bmo-girsanov}

Fix \(\theta\) and abbreviate \(N=N^\theta\),
\[
 L_t=\exp\left(
  N_t-N_u-\frac12(\langle N\rangle_t-\langle N\rangle_u)
 \right),\qquad u\leq t\leq v,
\]
and \(\mathbb Q=\mathbb Q^\theta\).  Thus \(L_u=1\) and
\(d\mathbb Q/d\mathbb P=L_v\) on \(\mathcal F_v\).
By \cite[Lemma~1.2(1)]{AnkirchnerImkellerDosReis2007}, \(L\) is a true
density.  Proposition~1 of \cite{Kazamaki1983} gives an exponent
\(r_+>1\) and a constant \(C_+\) for
\eqref{eq:forward-reverse-holder}.  The norm-dependent construction in
that proposition allows \(r_+\) and \(C_+\) to be selected from the common
budget \(n_*\), hence uniformly in \(\theta\).

Under \(\mathbb Q\), set
\[
 \widehat N=N-\langle N\rangle.
\]
This is a \(\mathbb Q\)-local martingale by Girsanov's theorem, and
\cite[Lemma~1.2(2)]{AnkirchnerImkellerDosReis2007} gives
\(\widehat N\in\mathrm{BMO}_2(\mathbb Q;I)\).  The required uniform norm
bound also follows directly from the forward reverse-H\"older estimate.
Let \(r_+'=r_+/(r_+-1)\) and choose an integer \(k_+\geq r_+'\).  Bayes'
formula, conditional H\"older, and
\eqref{eq:conditional-energy-moments} give
\begin{align}
 &\mathbb E_\tau^{\mathbb Q}
  [\langle\widehat N\rangle_v-
    \langle\widehat N\rangle_\tau] \notag\\
 &\quad=
 \mathbb E_\tau^{\mathbb P}
 \left[
  \frac{L_v}{L_\tau}
  (\langle N\rangle_v-\langle N\rangle_\tau)
 \right] \notag\\
 &\quad\leq
 C_+^{1/r_+}(k_+!)^{1/k_+}n_*^2.
 \label{eq:self-girsanov-uniform-bmo}
\end{align}
For \(u\leq\tau\leq t\leq v\), the inverse density ratio satisfies
\begin{equation}
 \frac{L_\tau}{L_t}
 =\exp\left(
  -(\widehat N_t-\widehat N_\tau)
  -\frac12(\langle\widehat N\rangle_t-
             \langle\widehat N\rangle_\tau)
 \right).
 \label{eq:inverse-density-exponential}
\end{equation}
Applying \cite[Proposition~1]{Kazamaki1983} under \(\mathbb Q\), with the
uniform bound in \eqref{eq:self-girsanov-uniform-bmo}, gives
\(r_->1\), \(C_-<\infty\), and
\eqref{eq:inverse-reverse-holder}.

For a continuous \(\mathbb P\)-local martingale \(X\), its
\(\mathbb Q\)-local martingale transform is
\(\widetilde X=X-\langle X,N\rangle\), and
\(\langle\widetilde X\rangle=\langle X\rangle\).  Repeating the preceding
Bayes and energy calculation gives
\begin{equation}
 \|\widetilde X\|_{
       \mathrm{BMO}_2(\mathbb Q;I)}^2
 \leq C_+^{1/r_+}(k_+!)^{1/k_+}
       \|X\|_{\mathrm{BMO}_2(\mathbb P;I)}^2.
 \label{eq:forward-bmo-equivalence}
\end{equation}
Apply the same argument under \(\mathbb Q\) to the inverse density and
choose an integer \(k_-\geq r_-/(r_--1)\).  Since the inverse Girsanov
transform of \(\widetilde X\) is \(X\),
\begin{equation}
 \|X\|_{\mathrm{BMO}_2(\mathbb P;I)}^2
 \leq C_-^{1/r_-}(k_-!)^{1/k_-}
       \|\widetilde X\|_{
       \mathrm{BMO}_2(\mathbb Q;I)}^2.
 \label{eq:inverse-bmo-equivalence}
\end{equation}
Equations \eqref{eq:forward-bmo-equivalence} and
\eqref{eq:inverse-bmo-equivalence} prove
\eqref{eq:bmo-norm-equivalence}.  They are consistent with the isomorphism
and quantitative norm estimates in
\cite[Theorems~1--2]{ChikvinidzeMania2014}.

This proof selects \(r_+\) from the \(\mathbb P\)-BMO budget and \(r_-\)
from the resulting \(\mathbb Q\)-BMO budget.  If a later estimate needs one
exponent, any number strictly below \(\min\{r_+,r_-\}\) can be used.  Without
an additional smallness assumption, neither exponent can be set equal to
two in advance.

\subsection{Proof of the critical local linear estimate}
\label{subsec:proof-critical-linear}

Let
\[
 W_t^{\mathbb Q}
 =W_t-\int_u^t\beta_s\,ds.
\]
Under \(\mathbb Q\), equation \eqref{eq:critical-linear-bsde} becomes
\begin{equation}
 R_t
 =\eta+\int_t^v(\alpha_sR_s+r_s)\,ds
 -\int_t^vS_s\,dW_s^{\mathbb Q}.
 \label{eq:critical-linear-under-q}
\end{equation}
For \(t\leq s\), set
\[
 \Lambda_{t,s}
 =\exp\left(\int_t^s\alpha_\ell\,d\ell\right).
\]
The variation-of-constants formula gives
\begin{equation}
 R_t
 =\mathbb E_t^{\mathbb Q}
 \left[
  \Lambda_{t,v}\eta
  +\int_t^v\Lambda_{t,s}r_s\,ds
 \right].
 \label{eq:critical-linear-representation}
\end{equation}
The conditional expectation is well defined because
\[
 \sup_{t\leq s\leq v}\Lambda_{t,s}
 \leq\exp\left(\int_t^v\alpha_s^+\,ds\right),
\]
and conditional H\"older with exponents \(r_A,p_A\) yields
\eqref{eq:critical-linear-sup-bound}.

Conversely, define \(R\) by
\eqref{eq:critical-linear-representation}.  The martingale representation
property under \(\mathbb Q\), applied after localization when necessary,
provides a predictable integrand \(S\) for which
\eqref{eq:critical-linear-under-q} holds.  The conditional energy estimate
below shows that this local representation belongs to BMO, and therefore
removes the localization.

Apply It\^o's formula to \(\lvert R\rvert^2\) between
\(\tau\in\mathcal T_I\) and \(v\).  Since \(R\) is bounded,
\begin{align*}
 \mathbb E_\tau^{\mathbb Q}\int_\tau^v\lvert S_s\rvert^2\,ds
 &\leq \|\eta\|_\infty^2
 +2\|R\|_\infty^2
   \mathbb E_\tau^{\mathbb Q}\int_\tau^v\alpha_s^+\,ds\\
 &\quad
 +2\|R\|_\infty
   \mathbb E_\tau^{\mathbb Q}\int_\tau^v\lvert r_s\rvert\,ds.
\end{align*}
Conditional Jensen applied to
\eqref{eq:critical-exponential-budget} gives
\[
 \mathbb E_\tau^{\mathbb Q}\int_\tau^v\alpha_s^+\,ds
 \leq\frac{1}{r_A}\log K_A,
\]
while \eqref{eq:conditional-source-norm} controls the last term.  This proves
\eqref{eq:critical-linear-bmo-bound}.  The BMO equivalence in
Proposition~\ref{prop:uniform-bmo-girsanov} transfers the estimate from
\(S\mathbin{\cdot}W^{\mathbb Q}\) to
\(S\mathbin{\cdot}W\).

If two solutions satisfy the stated conditions, their difference has
\(\eta=0\) and \(r=0\).  Equation
\eqref{eq:critical-linear-sup-bound} gives \(R=0\), and
\eqref{eq:critical-linear-bmo-bound} then gives \(S=0\).  This proves
uniqueness and completes the proof of
Proposition~\ref{prop:critical-linear-bsde}.
\end{appendix}

\bibliographystyle{imsart-number}
\bibliography{rough_bsde_intrinsic_hadamard}

\begin{thebibliography}{21}

\bibitem{AnkirchnerImkellerDosReis2007}
\begin{barticle}[author]
\bauthor{\bsnm{Ankirchner},~\bfnm{Stefan}\binits{S.}},
  \bauthor{\bsnm{Imkeller},~\bfnm{Peter}\binits{P.}} \AND
  \bauthor{\bparticle{dos} \bsnm{Reis},~\bfnm{Gon{\c c}alo}\binits{G.}}
(\byear{2007}).
\btitle{Classical and variational differentiability of {BSDEs} with quadratic
  growth}.
\bjournal{Electronic Journal of Probability}
\bvolume{12}
\bpages{1418--1453}.
\bdoi{10.1214/EJP.v12-462}
\end{barticle}
\endbibitem

\bibitem{Bailleul2015}
\begin{barticle}[author]
\bauthor{\bsnm{Bailleul},~\bfnm{Isma{\"e}l}\binits{I.}}
(\byear{2015}).
\btitle{Regularity of the {It\^o--Lyons} map}.
\bjournal{Confluentes Mathematici}
\bvolume{7}
\bpages{3--11}.
\bdoi{10.5802/cml.15}
\end{barticle}
\endbibitem

\bibitem{BechererSun2025}
\begin{bmisc}[author]
\bauthor{\bsnm{Becherer},~\bfnm{Dirk}\binits{D.}} \AND
  \bauthor{\bsnm{Sun},~\bfnm{Yuchen}\binits{Y.}}
(\byear{2025}).
\btitle{Rough backward {SDEs} with discontinuous {Young} drivers}.
\bnote{arXiv:2505.20437}.
\bdoi{10.48550/arXiv.2505.20437}
\end{bmisc}
\endbibitem

\bibitem{BriandConfortola2008}
\begin{barticle}[author]
\bauthor{\bsnm{Briand},~\bfnm{Philippe}\binits{P.}} \AND
  \bauthor{\bsnm{Confortola},~\bfnm{Fulvia}\binits{F.}}
(\byear{2008}).
\btitle{{BSDEs} with stochastic Lipschitz condition and quadratic {PDEs} in
  {Hilbert} spaces}.
\bjournal{Stochastic Processes and their Applications}
\bvolume{118}
\bpages{818--838}.
\bdoi{10.1016/j.spa.2007.06.006}
\end{barticle}
\endbibitem

\bibitem{ChikvinidzeMania2014}
\begin{barticle}[author]
\bauthor{\bsnm{Chikvinidze},~\bfnm{B.}\binits{B.}} \AND
  \bauthor{\bsnm{Mania},~\bfnm{M.}\binits{M.}}
(\byear{2014}).
\btitle{New proofs of some results on bounded mean oscillation martingales
  using backward stochastic differential equations}.
\bjournal{Journal of Theoretical Probability}
\bvolume{27}
\bpages{1213--1228}.
\bdoi{10.1007/s10959-013-0524-x}
\end{barticle}
\endbibitem

\bibitem{CoutinLejay2018}
\begin{barticle}[author]
\bauthor{\bsnm{Coutin},~\bfnm{Laure}\binits{L.}} \AND
  \bauthor{\bsnm{Lejay},~\bfnm{Antoine}\binits{A.}}
(\byear{2018}).
\btitle{Sensitivity of rough differential equations: An approach through the
  {Omega} lemma}.
\bjournal{Journal of Differential Equations}
\bvolume{264}
\bpages{3899--3917}.
\bdoi{10.1016/j.jde.2017.11.031}
\end{barticle}
\endbibitem

\bibitem{DiehlFriz2012}
\begin{barticle}[author]
\bauthor{\bsnm{Diehl},~\bfnm{Joscha}\binits{J.}} \AND
  \bauthor{\bsnm{Friz},~\bfnm{Peter~K.}\binits{P.~K.}}
(\byear{2012}).
\btitle{Backward stochastic differential equations with rough drivers}.
\bjournal{The Annals of Probability}
\bvolume{40}
\bpages{1715--1758}.
\bdoi{10.1214/11-AOP660}
\end{barticle}
\endbibitem

\bibitem{DiehlZhang2017}
\begin{barticle}[author]
\bauthor{\bsnm{Diehl},~\bfnm{Joscha}\binits{J.}} \AND
  \bauthor{\bsnm{Zhang},~\bfnm{Jianfeng}\binits{J.}}
(\byear{2017}).
\btitle{Backward stochastic differential equations with {Young} drift}.
\bjournal{Probability, Uncertainty and Quantitative Risk}
\bvolume{2}
\bpages{5}.
\bdoi{10.1186/s41546-017-0016-5}
\end{barticle}
\endbibitem

\bibitem{EddahbiSene2014}
\begin{bmisc}[author]
\bauthor{\bsnm{Eddahbi},~\bfnm{M'hamed}\binits{M.}} \AND
  \bauthor{\bsnm{S{\`e}ne},~\bfnm{Abou}\binits{A.}}
(\byear{2014}).
\btitle{Quadratic {BSDEs} with rough drivers and {$L^2$}--terminal condition}.
\bnote{arXiv:1403.2998}.
\bdoi{10.48550/arXiv.1403.2998}
\end{bmisc}
\endbibitem

\bibitem{FrizOberhauser2009}
\begin{barticle}[author]
\bauthor{\bsnm{Friz},~\bfnm{Peter}\binits{P.}} \AND
  \bauthor{\bsnm{Oberhauser},~\bfnm{Harald}\binits{H.}}
(\byear{2009}).
\btitle{Rough path limits of the {Wong--Zakai} type with a modified drift
  term}.
\bjournal{Journal of Functional Analysis}
\bvolume{256}
\bpages{3236--3256}.
\bdoi{10.1016/j.jfa.2009.02.010}
\end{barticle}
\endbibitem

\bibitem{FrizVictoir2010}
\begin{bbook}[author]
\bauthor{\bsnm{Friz},~\bfnm{Peter~K.}\binits{P.~K.}} \AND
  \bauthor{\bsnm{Victoir},~\bfnm{Nicolas~B.}\binits{N.~B.}}
(\byear{2010}).
\btitle{Multidimensional Stochastic Processes as Rough Paths: Theory and
  Applications}.
\bseries{Cambridge Studies in Advanced Mathematics}
\bvolume{120}.
\bpublisher{Cambridge University Press}, \baddress{Cambridge}.
\bdoi{10.1017/CBO9780511845079}
\end{bbook}
\endbibitem

\bibitem{GellerLyons2026}
\begin{bmisc}[author]
\bauthor{\bsnm{Geller},~\bfnm{Martin}\binits{M.}} \AND
  \bauthor{\bsnm{Lyons},~\bfnm{Terry}\binits{T.}}
(\byear{2026}).
\btitle{The Geometry of Rough Path Space}.
\bnote{arXiv:2601.15402}.
\bdoi{10.48550/arXiv.2601.15402}
\end{bmisc}
\endbibitem

\bibitem{Gubinelli2004}
\begin{barticle}[author]
\bauthor{\bsnm{Gubinelli},~\bfnm{Massimiliano}\binits{M.}}
(\byear{2004}).
\btitle{Controlling rough paths}.
\bjournal{Journal of Functional Analysis}
\bvolume{216}
\bpages{86--140}.
\bdoi{10.1016/j.jfa.2004.01.002}
\end{barticle}
\endbibitem

\bibitem{Kazamaki1983}
\begin{barticle}[author]
\bauthor{\bsnm{Kazamaki},~\bfnm{Norihiko}\binits{N.}}
(\byear{1983}).
\btitle{On the reverse {H}{\"o}lder inequalities for certain exponential
  processes}.
\bjournal{Tohoku Mathematical Journal}
\bvolume{35}
\bpages{309--311}.
\bdoi{10.2748/tmj/1178229057}
\end{barticle}
\endbibitem

\bibitem{Kobylanski2000}
\begin{barticle}[author]
\bauthor{\bsnm{Kobylanski},~\bfnm{Magdalena}\binits{M.}}
(\byear{2000}).
\btitle{Backward stochastic differential equations and partial differential
  equations with quadratic growth}.
\bjournal{The Annals of Probability}
\bvolume{28}
\bpages{558--602}.
\bdoi{10.1214/aop/1019160253}
\end{barticle}
\endbibitem

\bibitem{LiZhangZhang2026}
\begin{barticle}[author]
\bauthor{\bsnm{Li},~\bfnm{Hanwu}\binits{H.}},
  \bauthor{\bsnm{Zhang},~\bfnm{Huilin}\binits{H.}} \AND
  \bauthor{\bsnm{Zhang},~\bfnm{Kuan}\binits{K.}}
(\byear{2026}).
\btitle{Reflected backward stochastic differential equations with rough
  drivers}.
\bjournal{Stochastic Processes and their Applications}
\bvolume{195}
\bpages{104874}.
\bdoi{10.1016/j.spa.2026.104874}
\end{barticle}
\endbibitem

\bibitem{LiangTang2025}
\begin{barticle}[author]
\bauthor{\bsnm{Liang},~\bfnm{Jiahao}\binits{J.}} \AND
  \bauthor{\bsnm{Tang},~\bfnm{Shanjian}\binits{S.}}
(\byear{2025}).
\btitle{Multidimensional Backward Stochastic Differential Equations with Rough
  Drifts}.
\bjournal{Transactions of the American Mathematical Society}
\bvolume{378}
\bpages{201--257}.
\bdoi{10.1090/tran/9237}
\end{barticle}
\endbibitem

\bibitem{LyonsVictoir2007}
\begin{barticle}[author]
\bauthor{\bsnm{Lyons},~\bfnm{Terry}\binits{T.}} \AND
  \bauthor{\bsnm{Victoir},~\bfnm{Nicolas}\binits{N.}}
(\byear{2007}).
\btitle{An extension theorem to rough paths}.
\bjournal{Annales de l'Institut Henri Poincar{\'e} C, Analyse non lin{\'e}aire}
\bvolume{24}
\bpages{835--847}.
\bdoi{10.1016/j.anihpc.2006.07.004}
\end{barticle}
\endbibitem

\bibitem{QianTudor2011}
\begin{barticle}[author]
\bauthor{\bsnm{Qian},~\bfnm{Zhongmin}\binits{Z.}} \AND
  \bauthor{\bsnm{Tudor},~\bfnm{Jan}\binits{J.}}
(\byear{2011}).
\btitle{Differential structure and flow equations on rough path space}.
\bjournal{Bulletin des Sciences Math{\'e}matiques}
\bvolume{135}
\bpages{695--732}.
\bdoi{10.1016/j.bulsci.2011.07.011}
\end{barticle}
\endbibitem

\bibitem{SongZhangZhang2025I}
\begin{bmisc}[author]
\bauthor{\bsnm{Song},~\bfnm{Jian}\binits{J.}},
  \bauthor{\bsnm{Zhang},~\bfnm{Huilin}\binits{H.}} \AND
  \bauthor{\bsnm{Zhang},~\bfnm{Kuan}\binits{K.}}
(\byear{2025}).
\btitle{Backward stochastic differential equations with nonlinear {Young}
  driver}.
\bnote{arXiv:2504.18632}.
\bdoi{10.48550/arXiv.2504.18632}
\end{bmisc}
\endbibitem

\bibitem{SongZhangZhang2025II}
\begin{bmisc}[author]
\bauthor{\bsnm{Song},~\bfnm{Jian}\binits{J.}},
  \bauthor{\bsnm{Zhang},~\bfnm{Huilin}\binits{H.}} \AND
  \bauthor{\bsnm{Zhang},~\bfnm{Kuan}\binits{K.}}
(\byear{2025}).
\btitle{Backward stochastic differential equations with nonlinear {Young}
  drivers {II}}.
\bnote{arXiv:2509.05183}.
\bdoi{10.48550/arXiv.2509.05183}
\end{bmisc}
\endbibitem

\end{thebibliography}
\end{document}